\numberwithin{equation}{section}
\newtheorem{theorem}{Theorem}[section]
\newtheorem{lemma}[theorem]{Lemma}
\newtheorem{proposition}[theorem]{Proposition}
\newtheorem{corollary}[theorem]{Corollary}
\newtheorem{conjecture}[theorem]{Conjecture}
\theoremstyle{definition} 
\newtheorem{definition}[theorem]{Definition}
\theoremstyle{remark}
\newtheorem{remark}{Remark}[section]
\newcommand{\Aut}{\on{Aut}}
\newcommand{\g}{\mathfrak{g}}
\newcommand{\h}{\mathfrak{h}}
\newcommand{\ov}{\overline}
\newcommand{\la}{\lambda}
\newcommand{\bs}{\backslash}
\newcommand{\bl}{\lambda}
\newcommand{\C}{{\mathbb C}}
\newcommand{\ben}{\begin{enumerate}}
\newcommand{\een}{\end{enumerate}}
\newcommand{\nc}{\newcommand}
\nc{\on}{\operatorname}
\nc{\wh}{\widehat}
\nc{\wt}{\widetilde}
\nc{\sw}{{\mathfrak s}{\mathfrak l}}
\nc{\ghat}{\wh{\g}}
\nc{\hhat}{\wh{\h}}
\nc{\mc}{\mathcal}
\nc{\Bun}{\on{Bun}}
\nc{\ol}{\overline}
\nc{\OO}{\mathcal O}
\nc{\pone}{{\mathbb P}^1}
\nc{\pa}{\partial}
\nc{\Pic}{\on{Pic}}
\nc{\ga}{\gamma}
\nc{\orr}{\underline}
\nc{\mbb}{\mathbb}
\nc{\mbf}{\mathbf}
\nc{\V}{{\mc V}}
\nc{\su}{\wh\sw_2}
\newcommand{\norm}[1]{\left\lVert#1\right\rVert}
\theoremstyle{plain}
\newtheorem*{sol}{Solution}
\newcommand{\solu}[1]{\begin{sol}{\bf (\ref{#1})}}
\newcommand{\mP}{\mathbb P}
\newcommand{\mcA}{\mathcal A}
\newcommand{\mcV}{\mathcal V}
\newcommand{\mcB}{\mathcal B}
\newcommand{\mcP}{\mathcal P}
\newcommand{\mcH}{\mathcal H}
\newcommand{\mcS}{\mathcal S}
\newcommand{\mcL}{\mathcal L}
\newcommand{\mcM}{\mathcal M}
\newcommand{\mcF}{\mathcal F}
\nc{\Bunc}{\on{Bun}^{\on{s}}}
\nc{\Bunn}{\on{Bun}^{\circ}}
\def\g{\mathfrak{g}}
\def\R{\mathbb{R}}
\def\D{\mathcal{D}}
\def\bO{\Omega}
\def\h{\mathfrak{h}}
\def\Z{\mathbb{Z}}
\def\M{\mathcal{M}}
\def\Gr{\on{Gr}}
\def\Fq{\mathbb{F}_q}
\def\LG{{}^L\hspace*{-0.4mm}G}
\def\lg{{}^L\hspace*{-0.4mm}\g}
\nc{\ppart}{(\!(t)\!)}
\nc{\zpart}{(\!(z)\!)}
\nc{\hl}{h_{\ell}}
\nc{\hr}{h_{r}}
\nc{\mb}{\mathbf}
\nc{\Op}{\on{Op}}
\nc{\al}{\alpha}
\nc{\Derp}{\on{Der}^+\C[[z]]}
\nc{\rel}{\on{rel}}
\nc{\om}{\omega}
\nc{\dt}{{\bullet}}
\nc{\ud}{^{-1}}
\nc{\ld}{_*}
\nc{\lr}{\leftarrow}
\nc{\ds}{\displaystyle}
\nc{\mh}{{\mathbb H}}
\nc{\ep}{\epsilon}
\newcommand{\YY}{\mathbb{Y}}
\newcommand{\mF}{\mathbb F}
\begin{document}

\title[Analytic Langlands correspondence for curves over local
  fields]{Hecke operators and analytic Langlands correspondence for
  curves over local fields}

\author{Pavel Etingof}

\address{Department of Mathematics, MIT, Cambridge, MA 02139, USA}

\author{Edward Frenkel}

\address{Department of Mathematics, University of California,
  Berkeley, CA 94720, USA}

\author{David Kazhdan}

\address{Einstein Institute of Mathematics, Edmond J. Safra Campus,
  Givaat Ram The Hebrew University of Jerusalem, Jerusalem, 91904,
  Israel}

\dedicatory{In memory of Isadore Singer}

\begin{abstract}
We construct analogues of the Hecke operators for the moduli space of
$G$-bundles on a curve $X$ over a local field $F$ with parabolic
structures at finitely many points. We conjecture that they define
commuting compact normal operators on the Hilbert space of
half-densities on this moduli space. In the case $F=\C$, we also
conjecture that their joint spectrum is in a natural bijection with
the set of $\LG$-opers on $X$ with real monodromy. This may be viewed
as an analytic version of the Langlands correspondence for complex
curves. Furthermore, we conjecture an explicit formula relating the
eigenvalues of the Hecke operators and the global differential
operators studied in our previous paper \cite{EFK}. Assuming the
compactness conjecture, this formula follows from a certain system of
differential equations satisfied by the Hecke operators, which we
prove in this paper for $G=PGL_n$.
\end{abstract}

\maketitle

\setcounter{tocdepth}{2}
\tableofcontents

\section{Introduction}

The Langlands Program for a curve $X$ over a finite field $\mF_q$
studies, in the unramified case, the joint spectrum of the commuting
Hecke operators acting on the space of $L^2$ functions on the groupoid
of $\mF_q$-points of the stack $\Bun_G$ of $G$-bundles on $X$ with its
natural measure. It aims to express this spectrum in terms of the
Galois data associated to $X$ and the Langlands dual group $\LG$.

In this paper, which is a continuation of \cite{EFK}, we study an
analogue of this problem when $\mF_q$ is replaced by a local field
$F$. We define analogues of Hecke operators on a dense subspace of
the Hilbert space $\mcH_G$ of half-densities on $\Bun_G$ and
conjecture that they extend to commuting compact normal operators
on $\mcH_G$. Investigation of these operators is a hybrid of
functional analysis and the study of the algebraic structure of
$\Bun_G$.

In the case $F=\C$, these Hecke operators commute with the global
holomorphic differential operators on $\Bun_G$ introduced in
\cite{BD}, as well as their complex conjugates. This enables us to
describe, subject to the Compactness Conjecture formulated below, the
joint spectrum of these operators in terms of the $\LG$-{\em opers} on
$X$ with {\em real monodromy}, which play the role of the Galois data
appearing in the case of curves over a finite field. We consider this
description as an analogue of the Langlands correspondence in the case
of complex curves. In the case of $G=SL_2$, a similar conjecture about
eigenfunctions of the differential operators on $\Bun_G$ was proposed
earlier by J. Teschner \cite{Teschner}.

\subsection{Main objects}    \label{mainobj}

Let $F$ be a local field, $G$ a split connected reductive group over
$F$, $B$ its Borel subgroup, $X$ a smooth projective curve over $F$
and $S\subset X$ a reduced divisor defined over $F$. We denote by
$\Bun_G(X,S)$ the moduli stack of pairs $(\mcF ,r_S)$, where $\mcF$ is
a $G$-bundle on $X$ and $r_S$ is a reduction to $B$ of the restriction
$\mcF|_{S}$ of $\mcF$ to $S$.\footnote{Just as in the case of curves
over $\Fq$, the theory naturally extends to incorporate reductions to
other parabolic subgroups of $G$. We also expect that it extends to
wild ramification.}

Let $\Bunc_G(X,S)\subset \Bun_G(X,S)$ be the substack of {\em
  regularly stable} pairs $(\mcF ,r_S)$, i.e. stable pairs whose group
of automorphisms is equal to the center $Z(G)$ of $G$.

We will assume throughout this paper that $\Bunc_G(X,S)$ is {\em open
  and dense} in $\Bun_G(X,S)$, which means that we are considering one
of the following cases:

\begin{enumerate}

\item the genus of $X$ is greater than 1, and $S$ is arbitrary;

\item $X$ is an elliptic curve and $|S| \geq 1$;

\item $X=\mP^1$ and $|S| \geq 3$.

\end{enumerate}

The stack $\Bunc_G(X,S)$ is a $Z(G)$-gerbe over a smooth analytic
$F$-variety $\Bunn_G(X,S)$, which is the corresponding coarse moduli
space. For our purposes, $\Bunn_G(X,S)$ is a good substitute for
$\Bunc_G(X,S)$ because all objects we need (such as line bundles or
differential operators) naturally descend from $\Bunc_G(X,S)$ to
$\Bunn_G(X,S)$.

R.P. Langlands asked in \cite{L:analyt} whether it is possible to
develop an analytic theory of automorphic functions for complex
curves. In \cite{EFK} and the present paper we are developing such a
theory (for curves over $\C$ and other local fields), understood as a
spectral theory of commuting Hecke operators acting on the Hilbert
space $\mcH_G(X,S)$ of half-densities on $\Bunn_G(X,S)$ for pairs
$(X,S)$ satisfying the above condition, as well as global differential
operators on $\Bun_G(X,S)$ when $F=\C$ or $\R$.

To define the Hilbert space $\mcH_G(X,S)$, we introduce the
$\R_{>0}$-line bundle $\Omega^{1/2}_{\Bun}$ of half-densities on
$\Bunn_G(X,S)$.

Namely, our local field $F$ is equipped with the norm map $x\mapsto
\norm{x}$ (the Haar measure on $F$ multiplies by $\norm{\lambda}$
under rescaling by $\lambda\in F$). For instance, for $F={\mathbb R}$
we have $\norm{x}=|x|$, for $F=\C$ we have $\norm{x}=|x|^2$, and for
$F=\Bbb Q_p$ we have $\norm{x}=p^{-v(x)}$, where $v(x)$ is the
$p$-adic valuation of $x$ (see \cite{We}).  Using the norm map, we
associate to any line bundle $\mcL$ on any smooth algebraic
$F$-variety $\YY$ a complex line bundle $\norm{\mcL}$ on the analytic
$F$-variety $Y:=\YY (F)$ with the structure group $\R_{>0}$. Namely,
if the transition functions of $\mcL$ are $\{ g_{\alpha\beta} \}$,
then the transition functions of $\norm{\mcL}$ are $\{
\norm{g_{\alpha\beta}} \}$ (in particular, if $F=\C$, then these are
$\{ |g_{\alpha\beta}|^2 \}$). (Note that for archimedian fields the
line bundle $\norm{\mcL}$ has a $C^\infty$-structure.)

It is shown in \cite{BD}, Sect. 4 (see also \cite{LS}) that the
canonical line bundle $K_{\Bun}$ over $\Bun_G(X,S)$ has a square root
$K_{\Bun}^{1/2}$. The restriction of $K_{\Bun}^{1/2}$ to
$\Bunc_G(X,S)$ descends to a line bundle on $\Bunn_G(X,S)$ for which
we will use the same notation $K_{\Bun}^{1/2}$. We then set
\begin{equation}    \label{om12}
\Omega^{1/2}_{\Bun} := \norm{K_{\Bun}^{1/2}}.
\end{equation}
Alternatively, $\Omega^{1/2}_X$ can be defined as the square root of
the line bundle $\norm{K_X}$ (since the structure group of the latter
is $\R_{>0}$, this is well-defined). This shows that line bundle
$\Omega^{1/2}_{\Bun}$ does not depend on the choice of $K_{\Bun}^{1/2}$.

Let $V_G(X,S)$ be the space of smooth compactly supported sections of
$\Omega ^{1/2}_{\Bun}$ over $\Bunn_G(X,S)(F)$. We denote by $\langle
\cdot,\cdot \rangle$ the positive-definite Hermitian form on
$V_G(X,S)$ given by
$$
\langle v,w \rangle :=\int _{\Bunn_G(X,S)(F)} v \cdot \overline{w},
\qquad v,w\in V_G(X,S).
$$
and define $\mcH_G(X,S)$ as the {\em Hilbert space} completion of
$V_G(G,S)$.

\subsection{Hecke operators}    \label{notation}

{}From now on, for brevity, we will drop $(X,S)$ in our notation when
no confusion could arise (i.e. write $\Bun_G$ for $\Bun_G(X,S)$,
$\mcH_G$ for $\mcH_G(X,S)$, etc.). We are going to define analogues of
the Hecke operators for curves over a local field.

For non-archimedian local fields, these operators were suggested
by A. Braverman and one of the authors in \cite{BK}. For
$G=PGL_2, X=\pone$, Hecke operators were studied by M. Kontsevich in
\cite{Ko}, and he also knew that such operators could be defined in
general. In his letters to us (2019) he conjectured compactness and
the Hilbert--Schmidt property of averages of the Hecke operators over
sufficiently many points.

The idea that Hecke operators over $\C$ could be used to construct an
analogue of the Langlands correspondence was suggested by
R.P. Langlands \cite{L:analyt}, who sought to construct them in the
case when $G=GL_2$, $X$ is an elliptic curve, and $S = \emptyset$
(however, for an elliptic curve $X$ we can only define Hecke operators
if $S \neq \emptyset$; see \cite{F:analyt}, Sect. 3).

In the case when $G$ is a torus, the Hecke operators and their spectra
on $\mcH_G$ were completely described in \cite{F:analyt}, Sect. 2. We
recall these results for $G=GL_1$ in Section \ref{globargGL1}
below.

From now on (except in Section \ref{GL1} below) we will assume that
$G$ is a connected simple algebraic group over $F$. All of our results
and conjectures generalize in a straightforward way to connected
semisimple algebraic groups over $F$.

Let $P^\vee_+$ be the set of dominant integral coweights of $G$. For
$\la \in P^\vee_+$ we denote by $\ol{Z}(\lambda)$ the Hecke
correspondence
$$
\ol{q}: \ol{Z}(\lambda)\to \Bun_G \times \Bun_G \times (X \bs S)
$$
describing the $\la$-modifications of stable $G$-bundles at points of
$X \bs S$. More precisely, $\ol{Z}(\lambda)$ classifies triples
$(\mcF,\mcP,x,t)$, where $\mcF$ and $\mcP$ are principal $G$-bundles on
$X$ with a reduction to $B$ at $S$, $x$ is a closed point in $X \bs
S$, and
$$
t: \mcF|_{X \bs x} \overset\sim\longrightarrow \mcP|_{X \bs x}
$$
is an isomorphism satisfying the following condition. Choose a
formal coordinate $z$ at $x$ and a trivialization of $\mcF$
on the formal disc $D_x := \on{Spec} \C[[z]]$; then the restriction of
$t$ to $D_x^\times := \on{Spec} \C\zpart$ naturally gives rise to a
point in the affine Grassmannian $\Gr = G\zpart/G[[z]]$. The condition
is that this point belongs to the closure $\ol\Gr_\la$ of the
$G[[z]]$-orbit $\Gr_\la = G[[z]] \cdot \la(z)G[[z]]$ (this condition
does not depend on the above choices). Further, denote by $Z(\la)$
the open part of $\ol{Z}(\la)$ satisfying the condition that this
point belongs to $\Gr_\la$ itself, and let $q$ be the restriction of
$\ol{q}$ to $Z(\la)$.

Let
$$
p_{1,2}: \Bun_G \times \Bun_G \times (X\bs S) \to \Bun_G
$$
$$
p_3: \Bun_G \times \Bun_G \times (X \bs S) \to X \bs S
$$
be the natural projections, $\ol{q}_i:=p_i\circ \ol{q}$, and
$q_i:=p_i\circ q$. Thus, the fibers of the morphism $\ol{q}_2 \times
\ol{q}_3$ are isomorphic to $\ol\Gr_\la$ and the fibers of the
morphism $q_2 \times q_3$ are isomorphic to $\Gr_\la$. Let $K_2$ be
the relative canonical line bundle of the morphism $q_2\times q_3$.

Let $\rho$ be the integral weight of $G$ such that $\langle
\al_i^\vee,\rho \rangle = 1$ for all simple coroots $\al_i^\vee$ of
$G$. If the set $\{ \langle \la,\rho \rangle, \la \in P^\vee_+ \}$
only contains integers, then Beilinson and Drinfeld construct in
\cite{BD}, Sect. 4 a square root $K_{\Bun}^{1/2}$ of $K_{\Bun}$
equipped with a trivialization of its fiber at the trivial
$G$-bundle. If this set contains half-integers, they construct such
an object for each choice of a square root $K^{1/2}_X$ of $K_X$. In
the latter case, we will assume throughout that a choice of
$K^{1/2}_X$ has been made. We will denote by $\ga$ the isomorphism
class of this $K^{1/2}_X$ and by $K_{\Bun}^{1/2}$ the corresponding
line bundle on $\Bun_G$ (equipped with a trivialization of its fiber
at the trivial $G$-bundle). The following result is essentially due to
Beilinson and Drinfeld \cite{BD}. In Section \ref{proofh} below we
explain how to derive it from formula (241) of \cite{BD} (see also
\cite{BK}, Theorem 2.4).

\begin{theorem}\label{h}
There exists a canonical isomorphism
\begin{equation}    \label{cana}
a : q_1^* (K_{\rm Bun}^{1/2}) \overset{\sim}\longrightarrow q_2^*
(K_{\rm Bun}^{1/2})\otimes K_2 \otimes
q_3^* (K_X ^{-\langle\bl ,\rho\rangle})
\end{equation}
where $\rho$ is the half-sum of positive roots. 
\end{theorem}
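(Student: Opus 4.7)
\emph{Strategy.} The plan is to derive the isomorphism from the Pfaffian description of $K_{\Bun}^{1/2}$ provided by Beilinson--Drinfeld in \cite{BD}, combined with their formula (241) describing how this line bundle transforms under a Hecke modification, and then to identify the resulting local line bundle on the fiber direction of $q_2\times q_3$ with the relative canonical $K_2$. The factor $q_3^*(K_X^{-\langle\lambda,\rho\rangle})$ should arise as the ``square-rooted'' local contribution of a determinant-of-cohomology computation, where the exponent $\langle\lambda,\rho\rangle$ reflects the fact that $\dim \Gr_\lambda = 2\langle\lambda,\rho\rangle$.

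\emph{Main steps.} First, I would recall that $K_{\Bun}^{1/2}$ is (up to the choice of $\gamma$) a Pfaffian line bundle, i.e.\ a canonical square root of the determinant of cohomology of $\mathfrak{g}_{\mcF}$ on $X$. Second, on $Z(\lambda)$ the universal modification $t:\mcF|_{X\bs x}\overset\sim\to \mcP|_{X\bs x}$ produces a short exact sequence of coherent sheaves on $X$ comparing $\mathfrak{g}_{\mcF}$ and $\mathfrak{g}_{\mcP}$ via a cokernel supported at $x$; passing to determinants of cohomology and identifying the local contribution with a natural line bundle on $\ol\Gr_\lambda$ gives formula (241) of \cite{BD}, which after taking square roots may be rewritten as
\begin{equation*}
q_1^*(K_{\Bun}^{1/2})\otimes q_2^*(K_{\Bun}^{-1/2}) \;\cong\; \mcL_\lambda \otimes q_3^*(K_X^{-\langle\lambda,\rho\rangle}),
\end{equation*}
where $\mcL_\lambda$ is the natural determinant line bundle on $Z(\lambda)$ living along the fibers of $q_2\times q_3$. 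Third, on the open stratum $Z(\lambda)$ one identifies $\mcL_\lambda\cong K_2$: the tangent space of the fiber $\Gr_\lambda$ of $q_2\times q_3$ at a point $t$ is controlled by exactly the same local modification data whose determinant computed $\mcL_\lambda$, so the isomorphism is the dualized version of this tangent identification (cf.\ also \cite{BK}, Thm.\ 2.4). Combining yields \eqref{cana}.

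\emph{Parabolic compatibility and main obstacle.} Since $x\in X\bs S$ and the reduction $r_S$ is transported unchanged through $t$, the parabolic structure contributes trivially to the local determinant computation, so the argument goes through verbatim for $\Bun_G(X,S)$ in place of $\Bun_G(X)$; one simply has to check that everything descends from $\Bunc_G(X,S)$ to $\Bunn_G(X,S)$, which is automatic because the Pfaffian and $K_2$ are both $Z(G)$-equivariant with the same weight. The hard part will be Step 3 together with the bookkeeping of Step 2: correctly pinning down the line bundle $\mcL_\lambda$ coming out of the BD determinant calculation, matching it canonically with $K_2$ on the open substack $Z(\lambda)$ (as opposed to $\ol Z(\lambda)$, where the fiberwise tangent description breaks down on the boundary), and normalizing the square root consistently against the chosen $K_X^{1/2}$ in the case when $\langle\lambda,\rho\rangle$ is a half-integer.
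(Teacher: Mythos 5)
Your strategy is essentially the one the paper follows in Section \ref{proofh}: deduce the theorem from the Pfaffian realization of $K_{\Bun}^{1/2}$ together with formula (241) of \cite{BD}, the only remaining work being to identify the local line bundle along the fibers of $q_2\times q_3$ with $K_2\otimes q_3^*(K_X^{-\langle\la,\rho\rangle})$. Two points, however, concern exactly the part you flag as the main obstacle. First, you misplace the content of (241): it is a purely local, $\Aut O\ltimes G(O)$-equivariant statement on $\Gr_\la$, identifying the local Pfaffian bundle ${\mc L}_{\Gr_\la}$ with $K_{\Gr_\la}\otimes{\mathfrak o}(-\langle\la,\rho\rangle)$, where ${\mathfrak o}(-\langle\la,\rho\rangle)$ is a one-dimensional representation of $\Aut O$ (Theorem \ref{th241}); the factor $q_3^*(K_X^{-\langle\la,\rho\rangle})$ is not in (241) but arises only upon globalization. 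The comparison of the global Pfaffians, $q_1^*(K_{\Bun}^{1/2})\otimes q_2^*(K_{\Bun}^{1/2})^{-1}\simeq {\mc L}_{{\mc Gr}_\la}$, is a separate input, quoted from \cite{BD}, Sects.\ 4.4.14 and 4.6 (formula \eqref{canident}).

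Second, the identification of your $\mcL_\la$ with $K_2$ is not carried out in the paper by a fiberwise tangent-space argument: a fiber-by-fiber isomorphism is not yet canonical in families, because it must be compatible with changes of the point $x$, of the formal coordinate, and of the trivialization. The paper handles this by the Gelfand--Kazhdan-type localization functor ${\mathbf F}$ attached to the $\Aut O\ltimes G(O)$-torsor $\wh{M}$ of such data: applying ${\mathbf F}$ to the equivariant isomorphism (241) simultaneously turns $K_{\Gr_\la}$ into $K_2$ and ${\mathfrak o}(-\langle\la,\rho\rangle)$ into $q_3^*(K_X^{-\langle\la,\rho\rangle})$ (Proposition \ref{identGrH}, Lemma \ref{o}, formula \eqref{241gl}). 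Likewise, the half-integral normalization you worry about is resolved there not by an ad hoc choice but by passing to the double cover $\on{Aut}_2 O$ and the extended functor ${\mathbf F}_2$ built from the chosen $K_X^{1/2}$. So your plan is viable, but to complete Steps 2--3 you would in effect need to reconstruct (or quote) this equivariance/localization formalism; without tracking the $\Aut O$-action, both the canonicity of $\mcL_\la\cong K_2$ and the precise appearance of $K_X^{-\langle\la,\rho\rangle}$ remain unjustified.
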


The isomorphism $a$ gives rise to an isomorphism
\begin{equation}    \label{cana2}
a^2 : q_1^* (K_{\rm Bun}) \overset{\sim}\longrightarrow q_2^*
(K_{\rm Bun})\otimes (K_2)^2 \otimes
q_3^* (K_X ^{-2\langle\bl ,\rho\rangle})
\end{equation}
which does not depend on the choice of $K^{1/2}_X$ (and
$K^{1/2}_{\Bun}$). Using the formula $\norm{a}=\norm{a^2}^{1/2}$, we
then obtain a canonical isomorphism
\begin{equation}    \label{moda}
\norm{a} : q_1^* (\Omega_{\rm Bun}^{1/2}) \overset{\sim}\longrightarrow
q_2^* (\Omega_{\rm Bun}^{1/2})\otimes \bO_2 \otimes
q_3^* (\Omega_X^{-\langle\bl ,\rho\rangle}).
\end{equation}
Here $\Omega^{1/2}_X$ (resp. $\Omega_{\rm Bun}^{1/2}$) is defined as
the square root of the line bundle $\norm{K_X}$ (resp.
$\norm{K_{\Bun}}$); since the structure group of the latter is
$\R_{>0}$, these square roots are well-defined. Also, $\bO_2 :=
\norm{K_2}$, the $C^\infty$ line bundle of densities along the
fibers of $q_2\times q_3$.

Thus, $\norm{a}$ does not depend on the choice of $K^{1/2}_X$ (and
$K_{\rm Bun}^{1/2}$). This implies that our definition of the Hecke
operators given below also does not depend on this choice. However,
we need to make these choices to describe the algebra of global
differential operators on $\Bun_G$ (see Section \ref{caseC}) and to
state the differential equations \eqref{eqHom1} satisfied by the Hecke
operators. Hence, for the sake of uniformity of exposition, we have
made these choices from the beginning.

Now let
\begin{equation}    \label{UG}
U_G(\la) := \{ \mcF \in \Bunn_G \, | \, q_2(q_1^{-1}(\mcF)) \subset
\Bunn_G \}.
\end{equation}
This is an open subset of $\Bunn_G$, which is dense if the dimension
of $\Bun_G$ is sufficiently large (for instance, for $G=PGL_2$ it is
sufficient that $\on{dim} \Bun_{PGL_2} > 1$).

Suppose that $U_G(\la) \subset \Bunn_G$ is dense. Let $V_G(\la)\subset
V_G$ be the subspace of half-densities $f$ with compact support such
that $\on{supp}(f)\subset U_G(\la)$. For $\mcP \in \Bunn_G(F)$, let
$$
Z_{\mcP,x} := (q_2\times q_3)^{-1}(\mcP \times x),
\qquad x \in (X \bs S)(F).
$$
According to the above definition of $Z(\la)$, the variety
$Z_{\mcP,x}$ is isomorphic to the
$G[[z]]$-orbit $\Gr_\la$ in the affine Grassmannian of $G$. Hence it
is proper and has rational singularities. The isomorphism \eqref{moda}
and the results of Theorem 2.5 and Sect. 2.6 of \cite{BK} imply that
for any $f\in
V_G(\la)$ and $x\in (X \bs S)(F)$, the restriction of the pull-back
$q_1^*(f)$ to $Z_{\mcP,x}$ is a compactly supported measure with
values in the line $(\Omega_{\rm Bun}^{1/2})_\mcP \otimes
(\Omega_X^{-\langle\bl ,\rho\rangle})_x$. Therefore the integral
\begin{equation}    \label{intH}
(\wh{H}_\la(x) \cdot f)(\mcP) := \int_{Z_{\mcP,x}} q_1^*(f),
\end{equation}
is absolutely convergent for all $f\in V_G(\la)$ and $\mcP\in
\Bunn_G(F)$. In fact, it belongs to $V_G \otimes (\Omega_X^{-\langle\bl
  ,\rho\rangle})_x$.

Since $V_G \subset \mcH_G$, this integral defines a {\em Hecke
  operator}
$$
\wh{H}_\la(x): V_G(\la) \to \mcH_G \otimes (\Omega_X^{-\langle\bl
  ,\rho\rangle})_x.
$$
As $x$ varies along $X \bs S$, the operators $\wh{H}_\la(x)$ combine
into a section of the line bundle $\Omega_X^{-\langle\bl ,\rho\rangle}$ over $X
\bs S$ with values in operators $V_G(\la) \to \mcH_G$. We denote it by
$\wh{H}_\la$.

\begin{remark}
We conjecture (and can prove in a number of cases) that the integrals
defining $\wh{H}_\la (f)$ are absolutely convergent for all $f\in
V_G$.\qed
\end{remark}

  \begin{remark}
  There are two basic differences between the cases of finite and
  local fields.  In the case of a curve $X$ over a finite field $F$,
  the set $\Bun_G(F)$ of isomorphism classes $[\mcP]$ of $G$-bundles
  is a countable set with a natural (Tamagawa) measure such that the
  measure of $[\mcP]$ is equal to $1/|\on{Aut}(\mcP)(F)|$. On the
  other hand, for a curve $X$ over a local field $F$, neither
  $\Bun_G(F)$ nor the fibers of the Hecke
  correspondence carry any natural measures. To overcome this problem, we
  replace the space of functions on $\Bun_G(F)$ by the space of
  half-densities and use the isomorphism \eqref{moda} to write down
  formula \eqref{intH} for the action of Hecke operators.

The second difference is that for a curve over a finite field $F$ the
integrals \eqref{intH} are finite sums and so the corresponding Hecke
operators are well-defined on the space of all functions on
$\Bun_G(F)$. On the other hand, for a curve $X$ over a local field $F$
there is no obvious non-trivial space of half-densities stable under
these Hecke operators. One could try to consider the space of
continuous half-densities with respect to the natural topology on
$\Bun_G(F)$ but this space is equal to $\{0\}$. The reason is that the
natural topology on $\Bun_G(F)$ is not Hausdorff.\footnote{ For a
simply-connected group $G$, the closures of any two points in
$\Bun_G(F)$ have a non-empty intersection, and therefore the only
continuous functions on $\Bun_G(F)$ are constants.}

The absence of such a subspace creates serious analytic difficulties
in defining the notion of the spectrum of the Hecke operators because
they are initially defined only on a dense subspace $V_G(\la)$ of
$\mcH_G$ (under our assumption that $\Bunc_G(X,S)$ is dense in
$\Bun_G(X,S)$). However, we expect (and can prove for $G=PGL_2,
X=\pone$ \cite{EFKnew}) that these operators extend to bounded
operators on $\mcH_G$, which are moreover compact and normal, as we
state in the following conjecture.\qed
\end{remark}

\begin{conjecture}\label{com}
Suppose that $\la \neq 0$.
\begin{enumerate}
\item For any identification $(\Omega_X^{1/2})_x \cong
  \C$, the operators $\wh{H}_\la(x): V_G(\la) \to \mcH_G$ extend to a
  family of commuting compact normal operators on $\mcH_G$, which we
  denote by $H_\lambda(x)$.
  \item $H_\lambda(x)^\dagger=H_{-w_0(\lambda)}(x)$.
  \item $\bigcap _{\bl,x}{\rm Ker}H_\bl (x)= \{ 0 \}$.
\end{enumerate}
\end{conjecture}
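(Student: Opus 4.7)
The plan is to handle the three parts in order of increasing analytic difficulty. For part (2), I would construct the involution $\iota : Z(\la) \to Z(-w_0(\la))$ sending $(\mcF,\mcP,x,t) \mapsto (\mcP,\mcF,x,t\ud)$; this interchanges $q_1$ and $q_2$ and identifies the affine Grassmannian fibers $\Gr_\la$ with $\Gr_{-w_0(\la)}$, since inverting a representative of $\Gr_\la$ in $G\zpart$ produces a representative of $\Gr_{-w_0(\la)}$. The canonical isomorphism \eqref{moda} is equivariant under $\iota$ up to the relative density factor $\bO_2$, so Fubini applied to the compactly supported half-densities in $V_G(\la)\cap V_G(-w_0(\la))$ yields $\langle \wh H_\la(x) f, g\rangle = \langle f, \wh H_{-w_0(\la)}(x) g\rangle$ on the dense subspace where both sides are defined, giving the adjoint formula.

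Commutativity of the $\wh H_\la(x)$ in part (1) I would approach by composing Hecke correspondences. For $x \neq y$, modifications at the two points are independent, so the fibered product $Z(\la)\times_{\Bun_G} Z(\mu)$ over the middle copy of $\Bun_G$ factors fiberwise through $\Gr_\la\times \Gr_\mu$, and the resulting integral kernel is manifestly symmetric under $(\la,x)\leftrightarrow(\mu,y)$. Continuity in $x$, together with the geometric Satake convolution $\ol\Gr_\la * \ol\Gr_\mu \simeq \ol\Gr_\mu * \ol\Gr_\la$ coming from the commutativity of $\Rep(\LG)$, extends the identity to the diagonal $x=y$. Normality of the extended operators will then follow from (2) once both commutativity and the bounded extension asserted in (1) are in hand.

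The main obstacle — essentially the only serious one — is extending $\wh H_\la(x)$ from a densely defined operator to a compact bounded operator on all of $\mcH_G$. Following Kontsevich's suggestion, I would first prove the Hilbert--Schmidt property for the averaged operators $\int_U \wh H_\la(x)\, d\nu(x)$ over a small open $U \subset X\bs S$: the Schwartz kernel of such an average is a half-density on $\Bunn_G\times \Bunn_G$ obtained by pushing $\norm a$ forward along $q_1\times q_2$ and integrating in $x$, and its $L^2$-finiteness reduces to uniform estimates on the $q_2$-volumes of the fibers $Z_{\mcP,x}$ as $\mcP$ approaches the complement of $\Bunn_G$ in $\Bun_G$. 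Controlling these volumes near the unstable and non-regularly-stable loci is the true analytic heart of the conjecture; it is handled by hand for $G=PGL_2$, $X=\pone$ in \cite{EFKnew}, and in general would require a stratification by Harder--Narasimhan type together with a careful analysis of how $\la$-modifications interact with each stratum. Once the averages are Hilbert--Schmidt and the family is continuous in $x$, compactness of each individual $H_\la(x)$ follows by a standard density argument in $x$.

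For part (3), suppose $f \in \bigcap_{\la,x}\Ker H_\la(x)$; by (2) this is equivalent to $f$ being orthogonal to the closed linear span $\mc R$ of the ranges of all $H_\la(x)$, so it suffices to prove density of $\mc R$ in $\mcH_G$. I would establish this using the fact that $\la$-modifications at varying points and coweights act infinitesimally transitively on a Zariski-dense open subset of $\Bunn_G$, so that iterated Hecke operators produce half-densities concentrated in arbitrarily small neighborhoods of any chosen regularly stable bundle; a standard partition-of-unity argument then approximates any element of $V_G$ and hence of $\mcH_G$. The geometric transitivity input is qualitative and should be extractable from the same analysis of the Hecke fibers used for compactness.
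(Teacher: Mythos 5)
What you are proposing to prove is not a theorem of the paper: Conjecture \ref{com} is stated as the Compactness Conjecture and is left open there. The paper's only ``proof content'' around it is a remark that the identity $\langle H_\la f,g\rangle=\langle f,H_{-w_0(\la)}g\rangle$ holds for $f\in V_G(\la)$, $g\in V_G(-w_0(\la))$, so that part (2) follows formally once part (1) is known, and an expectation that part (3) (for fixed $\la$, when $S\neq\emptyset$) can be extracted from part (1) by letting $x$ tend to a point of $S$; the conjecture itself is proved only for $G=PGL_2$, $X=\pone$, $|S|>3$, and only in the companion paper \cite{EFKnew}. Your treatment of part (2) via the involution $(\mcF,\mcP,x,t)\mapsto(\mcP,\mcF,x,t^{-1})$ is exactly the standard argument behind that remark and is fine as far as it goes, but, like the paper's remark, it only gives the adjoint relation on dense subspaces and needs (1) to say anything about the closures.

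The genuine gap is that the analytic core of part (1) is not proved but only described as a program, and you say so yourself (``the true analytic heart of the conjecture''). Concretely: (a) uniform control of the pushed-forward kernel near the unstable and non-regularly-stable loci is precisely what is hard, and no estimate is offered; (b) the passage from Hilbert--Schmidt averages $\int_U \wh H_\la(x)\,d\nu(x)$ to compactness of the individual $H_\la(x)$ is not ``a standard density argument'': compactness passes to limits only in operator norm, so you would need a priori boundedness of each $\wh H_\la(x)$ and norm-continuity in $x$, which is circular since boundedness is exactly what is being claimed; (c) commutativity at $x=y$ via geometric Satake convolution, and normality, again presuppose the bounded extension. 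For part (3), the ``infinitesimal transitivity'' argument producing half-densities concentrated near an arbitrary regularly stable bundle is qualitative and does not rule out a common kernel; note also that the paper's own suggested mechanism (degenerating $x$ to a parabolic point of $S$) is different from yours and is likewise deferred to later work. In short, the proposal is a reasonable research outline consistent with Kontsevich's suggestion cited in the paper, but it does not constitute a proof of any part of the conjecture beyond the formal adjunction already noted there.
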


\begin{remark}
  (i) It is easy to see that
  $$
  \langle H_\la f,g \rangle = \langle f,H_{-w_0(\la)} g \rangle
  $$
for all $f \in V_G(\la), g\in V_G(-w_0(\lambda))$. Therefore part (2)
of the conjecture immediately follows from part (1).

(ii) If $S \neq \emptyset$, we expect that the statement of part (3)
with a fixed $\la$ can be obtained from (1) by considering the limit
of $H_\la(x)$ when $x$ tends to a point of $S$. This will be discussed
in more detail in a follow-up paper.\qed
\end{remark}

We will refer to Conjecture \ref{com} as the Compactness
Conjecture. In our next paper \cite{EFKnew}, we will prove this
conjecture in the case $G=PGL_2$, $X=\pone$, $|S|>3$. {\em From now
  on, we will assume the validity of the Compactness Conjecture.}

Denote by $\mh_G$ the commutative algebra generated by operators
$H_\la (x), \la \in P^\vee_+, x\in (X \bs S)(F)$, and by
$\on{Spec}(\mh_G)$ their joint spectrum.

\begin{corollary}\label{directsum}
  We have an orthogonal decomposition
$$
\mcH_G = \underset{s \in \on{Spec}(\mh_G)}{\widehat\bigoplus}
\mcH_G(s),
$$
where $\mcH_G(s), s \in \on{Spec}(\mh_G)$, are
finite-dimensional joint eigenspaces of $\mh_G$ in $\mcH_G$.
\end{corollary}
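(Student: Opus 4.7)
The plan is to apply the spectral theorem for a commuting family of compact normal operators on the separable Hilbert space $\mcH_G$, with each hypothesis of Conjecture~\ref{com} playing a distinct role.

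First, I would fix a pair $(\la_0,x_0)$ with $\la_0\neq 0$ and set $T:=H_{\la_0}(x_0)$. By part~(1), $T$ is compact and normal, so the classical Hilbert--Schmidt spectral theorem gives an orthogonal decomposition $\mcH_G=\Ker T\oplus\bigoplus_{\mu\neq 0}E_\mu(T)$ with each eigenspace $E_\mu(T)$ finite-dimensional and eigenvalues accumulating only at $0$. Parts~(1) and~(2) imply that every $H_\la(x)$ and its adjoint $H_{-w_0(\la)}(x)$ commute with $T$ and $T^\dagger=H_{-w_0(\la_0)}(x_0)$, so each $E_\mu(T)$ is $\mh_G$-invariant and invariant under adjoints of elements of $\mh_G$. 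Restricted to the finite-dimensional space $E_\mu(T)$, the operators $\{H_\la(x)\}$ form a commuting family of normal operators and are simultaneously diagonalizable by elementary linear algebra; this refines $E_\mu(T)$ into finite-dimensional joint eigenspaces of $\mh_G$.

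To treat all the Hecke operators at once, I would consider the $C^*$-subalgebra $\mcA\subset B(\mcH_G)$ generated by $\{H_\la(x),H_\la(x)^\dagger\}$. By part~(1), $\mcA$ is commutative, and it consists entirely of compact operators because norm-limits, sums, and products of compact operators are compact. The structure theorem for commutative $C^*$-algebras of compact operators (a consequence of Gelfand--Naimark and the Riesz--Schauder theorem) then produces an orthogonal decomposition $\mcH_G=\widehat\bigoplus_\chi\mcH_G(\chi)$ indexed by the characters $\chi$ of $\mcA$, with each $\mcH_G(\chi)$ finite-dimensional and $\mcA$ acting on it by the scalar character $\chi$. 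Such a character restricts to a joint eigenvalue of $\mh_G$. Part~(3) excludes the zero character, so $\chi$ runs exactly over the joint spectrum $\on{Spec}(\mh_G)$. Orthogonality of $\mcH_G(s)$ and $\mcH_G(s')$ for $s\neq s'$ is automatic: pick $(\la,x)$ with $s(H_\la(x))\neq s'(H_\la(x))$ and use that eigenvectors of a normal operator with distinct eigenvalues are orthogonal.

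The main conceptual obstacle is the transition from the single-operator spectral theorem to the simultaneous decomposition across the uncountable family $\{H_\la(x)\}$; this is handled by the $C^*$-algebra packaging above, or alternatively by reducing to a countable subfamily (using separability of $\mcH_G$, which follows from second-countability of $\Bunn_G(F)$) and iterating the Step-one argument. Either way, once Conjecture~\ref{com} is granted, the corollary is a purely functional-analytic consequence; no further input from the geometry of $\Bun_G$ is needed.
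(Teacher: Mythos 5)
Your argument is correct and is exactly the standard spectral-theoretic reasoning the paper has in mind: the corollary is stated there without proof as an immediate consequence of Conjecture \ref{com}, namely the spectral theorem for a commuting family of compact normal operators (with part (2) guaranteeing the family is closed under adjoints, hence the generated $C^*$-algebra is commutative, and part (3) killing the common kernel). So there is nothing to add beyond what you wrote; your $C^*$-algebra packaging, or the iterated single-operator argument, is precisely the intended justification.
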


Let $\mh_G(x)$ be the subalgebra of $\mh_G$ generated by $H_\la (x),
\la \in P^\vee_+$, for a fixed $x \in (X\bs S)(F)$.

\begin{proposition}[\cite{BK}]    \label{algiso}
  There is an algebra isomorphism $\mh_G(x) \simeq
  \C[P^\vee_+]$. In particular, $H_\la(x) \cdot H_\mu(x) =
  H_{\la+\mu}(x)$.
\end{proposition}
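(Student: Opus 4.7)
The plan is to compute the composition $H_\mu(x) \circ H_\la(x)$ by identifying the iterated Hecke correspondence, fiber-by-fiber over a point $(\mcP,x)$, with the convolution $\Gr_\la \wt{\times} \Gr_\mu$ of orbits in the affine Grassmannian of $G$ at $x$, and then to exploit the fact that the multiplication map $m : \Gr_\la \wt{\times} \Gr_\mu \to \ol\Gr_{\la+\mu}$ is birational onto the open stratum $\Gr_{\la+\mu}$.

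Concretely, after choosing a trivialization on the formal disc at $x$, the iterated correspondence underlying $H_\mu(x) \circ H_\la(x)$ parametrizes chains of modifications
\[
\mcF_0 \xrightarrow{\,\la\,} \mcF_1 \xrightarrow{\,\mu\,} \mcP
\]
at $x$, and its fiber over $(\mcP,x)$ is identified with $\Gr_\la \wt{\times} \Gr_\mu$; the forgetful map $(\mcF_0, \mcF_1, \mcP) \mapsto (\mcF_0, \mcP)$ then corresponds precisely to $m$. The key geometric inputs are standard for the affine Grassmannian: $m$ is proper and surjective onto $\ol\Gr_{\la+\mu}$, it restricts to an isomorphism over $\Gr_{\la+\mu}$, and the preimage of each lower stratum $\Gr_\nu$ (for dominant $\nu$ with $\nu < \la+\mu$) has strictly smaller $F$-dimension than the source. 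Consequently, when $(H_\mu(x) H_\la(x) \cdot f)(\mcP)$ is written as an iterated integral and the resulting half-density on $\Gr_\la \wt{\times} \Gr_\mu$ is pushed forward along $m$, the contribution from $m^{-1}(\ol\Gr_{\la+\mu} \bs \Gr_{\la+\mu})$ vanishes by the dimension count, and the integral collapses to one over the open locus $\Gr_{\la+\mu}$, where $m$ is an isomorphism. The line-bundle identifications of \eqref{moda} are compatible with this composition: the twists match via $\langle \la,\rho \rangle + \langle \mu,\rho \rangle = \langle \la+\mu,\rho \rangle$, and the composite of the two copies of the isomorphism $\norm{a}$ coincides with the canonical $\norm{a}$ for $\la+\mu$ on $\Gr_{\la+\mu}$ (this is the content of Theorem 2.5 and Section 2.6 of \cite{BK}). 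Thus the pushforward is exactly the integrand defining $H_{\la+\mu}(x)$, giving $H_\mu(x) H_\la(x) = H_{\la+\mu}(x)$ and, in particular, commutativity.

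For the algebra isomorphism itself, linear independence of $\{H_\la(x)\}_{\la \in P^\vee_+}$ follows from the fact that the kernel defining $H_\la(x)$ is supported on the closure of the locus of pairs $(\mcF,\mcP)$ related by a modification of type exactly $\la$ at $x$, and these strata are distinct for distinct $\la$; one then separates the $H_\la(x)$ by choosing test half-densities in $V_G(\la)$ whose image supports are disjoint. Combined with the multiplication rule, this identifies $\mh_G(x) \simeq \C[P^\vee_+]$. The principal obstacle—and the reason the result is attributed to \cite{BK}—is the measure-theoretic justification of the pushforward step: one must track the canonical isomorphism $\norm{a}$ through the iterated correspondence and verify that passing from the source to the closed complement $\ol\Gr_{\la+\mu} \bs \Gr_{\la+\mu}$ introduces neither convergence issues nor hidden structure constants, so that only the open-stratum term survives. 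This bookkeeping, together with the absolute convergence of the double integral, is the technical core of the argument in loc.\ cit.
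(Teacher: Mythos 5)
Your argument is essentially the one the paper relies on: the paper's own proof of this proposition is just a citation of equation (3.4) and Lemmas 3.5 and 3.9 of \cite{BK} (with the remark that the same proof works over any local field), and your sketch --- identifying the iterated kernel with the convolution $\Gr_\la \wt{\times} \Gr_\mu$, using that the multiplication map is proper and birational onto the open stratum of $\ol\Gr_{\la+\mu}$ so that the boundary contributes a lower-dimensional, hence measure-zero, locus, and checking that the twists add via $\langle \la,\rho\rangle + \langle \mu,\rho\rangle = \langle \la+\mu,\rho\rangle$ --- is precisely the mechanism behind that reference. The one point you defer rather than prove (absolute convergence of the iterated integral and the fact that the pushed-forward density acquires no mass on the boundary strata, resting on Theorem 2.5 and Sect.\ 2.6 of \cite{BK}) is exactly what the cited lemmas supply, and you identify it correctly as the technical core.
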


\begin{proof}
If $F$ is a non-archimedian field, this is equation (3.4) of
\cite{BK}, which is proved in Lemmas 3.5 and 3.9 of \cite{BK}. The
same proof works for a general local field.
\end{proof}

\begin{remark}
Note the difference with the case of a curve over $\Fq$, where the
Satake isomorphism naturally identifies the Hecke algebra at a point
$x$ with $\on{Rep} \LG$. Thus, in this case the product of the Hecke
operators corresponding to $\la, \mu \in P^\vee_+$ is in general
{\em not} equal to the Hecke operator corresponding to $\la+\mu$;
there are correction terms corresponding to lower weights.\qed
\end{remark}

\begin{remark}
Using an analogous correspondence
$$
Z_r(\lambda)\to \Bun_G \times \Bun_G \times \on{Sym}^r(X \bs
S)
$$
for a positive integer $r>1$, the above construction can be
generalized to yield operators $H_\la(D), D \in \on{Sym}^r(X \bs
S)(F)$. We expect that an analogue of the Compactness Conjecture
\ref{com} holds for them. Moreover, if $D=\sum_i m_ix_i$, $x_i\in
X(F)$ then $H_\lambda(D)=\prod_i H_\lambda(x_i)^{m_i}$.\qed
\end{remark}

\subsection{The case $F=\C$}    \label{caseC}

At the moment, we have a conjectural description of the spectra
$\on{Spec}(\mh_G)$ only if $F=\C$ and, in some cases, for $F=\R$. We
will now describe our conjecture for $F=\C$. Note that in this case we
have $\norm{z} = |z|^2$. For simplicity, we only consider here the
case when $S=\emptyset$ (and hence $g>1$). But all of our results and
conjectures have natural generalizations to the case of an arbitrary
$S$. The case of $G=PGL_2$, $X=\pone$, and $|S|>3$ will be discussed
in detail in our next paper \cite{EFKnew}.

In our previous paper \cite{EFK} we studied the action of global
holomorphic and anti-holomorphic differential operators on $\Bun_G$ on
a dense subspace of the Hilbert space $\mcH_G$ in the case of a
simply-connected simple group $G$. Here we generalize the setup to the
case of an arbitrary connected simple $G$. Then $\Bun_G$ has finitely
many connected components labeled by $\pi_1(G)$, which we will denote
by $\Bun^\beta_G, \beta \in \pi_1(G)$.

\begin{definition}    \label{beta}
{\em Let $D^\beta_G$ be the algebra of global algebraic (hence
  holomorphic) differential operators acting on the line bundle
  $K_{\Bun}^{1/2}$ over a connected component $\Bun^\beta_G$ of
  $\Bun_G$. Beilinson and Drinfeld have proved in \cite{BD} that these
  algebras are isomorphic to each other. Therefore, we will also use
  the notation $D_G$ for $D^\beta_G$.}
\end{definition}

On the other hand, let ${\rm Op}_{\LG}(X)$ be the space of
(holomorphic) $\LG$-opers on $X$ defined in \cite{BD,BD:opers}. As
shown in \cite{BD}, Sect. 3.4, ${\rm Op}_{\LG}(X)$ is a union of
connected components, each isomorphic to the affine space ${\rm
  Op}_{\LG_{\on{ad}}}(X)$, where $\LG_{\on{ad}} = \LG/Z({}\LG)$ is the
group of adjoint type associated to $\LG$ (here $Z({}\LG)$ denotes the
finite center of $\LG$). The group $H^1(X,Z({}\LG))$ naturally acts on
the set of these components by changing the underlying $\LG$-bundle,
and this action is simply-transitive. As shown in \cite{BD},
Sect. 3.4.2, if the set $\{ \langle \la,\rho \rangle, \la \in P^\vee_+
\}$ only contains integers, then there is a canonical choice of a
component, and if the set $\{ \langle \la,\rho \rangle, \la \in
P^\vee_+ \}$ contains half-integers, then there is a canonical choice
of a component ${\rm Op}^{\ga}_{\LG}(X)$ for each isomorphism class
$\ga$ of $K^{1/2}_X$.

For the sake of uniformity of notation, in both cases we will denote
the canonical component by ${\rm Op}^\ga_{\LG}(X)$, with the
understanding that in the latter case $\ga$ is the isomorphism class
of $K^{1/2}_X$ we have chosen before Theorem \ref{h}. The following
theorem is proved in \cite{BD} (Theorem 3.3.2 and Sects. 2.2.5 and
2.7.4).

\begin{theorem}    \label{DGOp}
The algebra $D_G$ is commutative and ${\rm Spec}(D_{G})$ is isomorphic
to the affine space ${\rm Op}^\ga_{\LG}(X)$.
\end{theorem}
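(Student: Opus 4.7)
The plan is to follow the Beilinson--Drinfeld approach via critical-level localization from the affine Kac--Moody algebra. First I would identify $D_G$ with $\Gamma(\Bun_G, \mcD(K_{\Bun}^{1/2}))$, the global sections of twisted differential operators on $\Bun_G$ acting on $K_{\Bun}^{1/2}$. Under the uniformization $\Bun_G \cong G_{\on{out}}\backslash G\ppart/G[[t]]$ obtained by choosing a point $x\in X$ (where $G_{\on{out}} := G(X\setminus\{x\})$), the twisting by $K_{\Bun}^{1/2}$ corresponds precisely to the \emph{critical} central extension of $\g\ppart$. Hence $D_G$ is computed from the Kac--Moody algebra $\wh\g_{\on{crit}}$ at critical level, with the $G_{\on{out}}$-action imposing a global regularity constraint on $X$.

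Next I would invoke the Feigin--Frenkel isomorphism identifying the center of the completed enveloping algebra at critical level with $\on{Fun}(\on{Op}_{\lg}(D_x))$, the algebra of functions on $\lg$-opers on the formal punctured disc at $x$. Localization yields a homomorphism
\[
\on{Fun}(\on{Op}_{\lg}(D_x)) \longrightarrow D_G.
\]
I would then argue that this map factors through the quotient corresponding to $\on{Op}^\ga_{\LG}(X)$: the requirement that a central element descend to a globally defined twisted differential operator on $\Bun_G$ amounts, after matching against the $G_{\on{out}}$-structure, to the requirement that the underlying $\lg$-oper on $D_x^\times$ extend to a global $\LG$-oper on all of $X$, with the component $\ga$ pinned down by the chosen square root $K_X^{1/2}$.

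To upgrade the resulting map $\on{Fun}(\on{Op}^\ga_{\LG}(X)) \to D_G$ to an isomorphism, I would pass to associated graded with respect to the order filtration on $D_G$. The principal-symbol map gives an injection $\on{gr} D_G \hookrightarrow \Gamma(T^*\Bun_G, \O)$. By a theorem of Hitchin, extended to arbitrary simple $G$ by Beilinson--Drinfeld, this embedding has image exactly $\on{Fun}(\on{Hitch}(X))$, where $\on{Hitch}(X) = \bigoplus_i H^0(X, K_X^{d_i+1})$ is the Hitchin base and the $d_i+1$ are the exponents of $\LG$ shifted by one. On the other side, $\on{Op}^\ga_{\LG}(X)$ is an affine space modeled on $\on{Hitch}(X)$ (\cite{BD}, Sect. 3), so its coordinate ring has the same associated graded. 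Matching graded pieces yields the claimed isomorphism, and commutativity of $D_G$ is then automatic from the commutativity of $\on{Fun}(\on{Op}^\ga_{\LG}(X))$.

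The main obstacle is the associated-graded comparison. One direction---that $\on{gr} D_G$ sits inside the Hitchin algebra---reduces to computing $\Gamma(\Bun_G, \on{Sym}^\bullet T_{\Bun_G})$, which in turn uses the properness of the Hitchin fibration and the Lagrangian nature of its generic fiber. The other direction---that the Feigin--Frenkel map lifts \emph{every} Hitchin Hamiltonian to a genuine differential operator rather than only a proper subspace---is the deepest input, resting on the fact that the Feigin--Frenkel isomorphism is an isomorphism of \emph{filtered} algebras with oper coordinates of the expected orders, together with the precise dimension match between $\on{Op}^\ga_{\LG}(X)$ and $\on{Hitch}(X)$. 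Once both containments are in place, the theorem follows.
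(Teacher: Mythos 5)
The paper does not prove this theorem itself: it is quoted directly from Beilinson--Drinfeld \cite{BD} (Theorem 3.3.2 together with Sects.\ 2.2.5 and 2.7.4), and your outline --- critical-level localization over $\Bun_G$ with the $K_{\Bun}^{1/2}$-twist, the Feigin--Frenkel description of the center, and the associated-graded comparison with the Hitchin base to get both commutativity and the identification of $\on{Spec} D_G$ with $\on{Op}^\ga_{\LG}(X)$ --- is precisely the strategy of that cited proof. So your proposal is correct in outline and takes essentially the same route as the (cited) argument the paper relies on.
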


For a given $\chi \in {\rm Op}^\ga_{\LG}(X)$, the system
of differential equations for the eigenfunctions of this algebra,
\begin{equation}    \label{qH}
  P \cdot f = \chi(P) f, \qquad P \in D_G,
\end{equation}
where $f$ denotes a local holomorphic section of $K_{\Bun}^{1/2}$, is
known as the {\it quantum Hitchin integrable system.} Its local
solutions are the same as the homomorphisms from the twisted
$D$-module $\Delta_\chi$ on $\Bun_G$ to $K_{\Bun}^{1/2}$, where
\begin{equation}    \label{Deltachi}
  \Delta_\chi := \D'_{\Bun_G} \underset{D_G}\otimes \C_\chi
\end{equation}
and $\D'_{\Bun_G}$ is the sheaf of (holomorphic) differential operators
acting on $K_{\Bun}^{1/2}$ (in \eqref{Deltachi} we consider the
diagonal embedding $D_G \to \D'_{\Bun_G}$ taking the sum over all
components $\Bun^\beta_G$ of $\Bun_G$).

\begin{remark}
  If $S \neq \emptyset$, the algebra of global holomorphic
  differential operators acting on $K_{\Bun}^{1/2}$ is
  non-commutative, but it contains a commutative subalgebra whose
  spectrum is isomorphic to the space of $\LG$-opers on $X$ with
  regular singularity and regular unipotent monodromy at the points
  of $S$ (see \cite{EFK}, Sect. 6). There is a similar algebra in the
  case of higher level structures at $S$ (this is analogous to the
  wild ramification in the case of curves over $\Fq$), with the
  corresponding $\LG$-opers having irregular singularities at the
  points of $S$. We expect that the theory has a generalization to
  this case as well.\qed
\end{remark}

It is shown in \cite{BD} that $\Delta_\chi$ is a holonomic
$D$-module. It also has regular singularities (see Theorem
\ref{RS},(1) below). Moreover, there is an open substack
$\Bun^{\on{vs}}_G$ of {\em very stable bundles} (i.e. $G$-bundles
${\mc P}$ such that the vector bundle $\g_{\mcF} \otimes K_X$ does not
admit non-zero sections taking nilpotent values everywhere on $X$)
such that the restriction of $\Delta_\chi$ to $\Bun^{\on{vs}}_G$ is a
vector bundle with a projectively flat connection. But its rank grows
exponentially with the genus of the curve $X$ and it has highly
non-trivial monodromy around the divisor $\Bun_G \bs
\Bun^{\on{vs}}_G$. Therefore, it does not make sense to look for
individual holomorphic solutions of the system \eqref{Deltachi}.

However, as explained in \cite{Teschner} and in \cite{EFK}, Sect. 1.5,
it makes sense to couple the system \eqref{Deltachi} to its
anti-holomorphic analogue and look for single-valued solutions of the
resulting system of differential equations on the locus
$Bun^{\on{vs}}_G$ of very stable bundles in $\Bunn_G$. These are the
automorphic functions of the analytic theory. It is natural to try to
interpret them as eigenfunctions of the algebra
$$
\mcA_G := D_G \underset{\C}\otimes \ol{D}_G.
$$
This is a non-trivial task because elements of $\mcA_G$ correspond to
unbounded operators on the Hilbert space $\mcH_G$. Initially, they
are defined on the dense subspace $V_G \subset \mcH_G$. In \cite{EFK}
(Definition 1.7) we introduced a Schwartz space $S(\mcA_G) \subset
\mcH_G$ and conjectured that the elements of $\mcA_G$ can be extended
to $S(\mcA_G)$ so that their closures form a family of commuting
normal operators. Moreover, we conjectured that their joint spectrum
is the set $\on{Op}^\ga_{\LG}(X)_{\R}$ of $\LG$-opers on $X$ with real
monodromy (see below). In the case of $G=SL_2$, a similar conjecture
was proposed earlier in \cite{Teschner}.

In \cite{EFK} we were able to prove these conjectures in the simplest
non-trivial case. However, in the general case it is a daunting task
to prove them directly. This is where the integral Hecke operators
come in handy. Like differential operators, they are also initially
defined on a dense subspace of the Hilbert space $\mcH_G$. But unlike
the differential operators, we do expect Hecke operators to extend
to mutually commuting continuous operators on the entire $\mcH_G$,
which are moreover {\em normal, compact}, and have trivial common
kernel. This is the statement of our Compactness Conjecture \ref{com}.
It implies that $\mcH_G$ decomposes into a (completed) direct sum of
mutually orthogonal {\em finite-dimensional eigenspaces} of the Hecke
operators.

Next, we would like to say that the algebra $\mcA_G$ preserves the
subspaces $\mcH_G(s)$. To do this, observe that $\mcA_G$ acts on the
space $V_G^\vee$ of distributions on $\Bunn_G$. It follows from the
definition of $\mcH_G$ that $\mcH_G$ is naturally realized as a
subspace of $V_G^\vee$. Hence we can apply elements of $\mcA_G$ to
vectors in the eigenspaces ${\mbb H}_G(s)$ of the Hecke operators,
viewed as {\em distributions}.

\begin{conjecture}    \label{eigA}
  Every eigenspace ${\mbb H}_G(s)$ of the Hecke operators, viewed as a
  subspace of $V_G^\vee$, is an eigenspace of ${\mc A}_G$.
\end{conjecture}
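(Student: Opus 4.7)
The plan is to reduce Conjecture~\ref{eigA} to the commutation of Hecke operators with the global differential operators in $\mathcal{A}_G$ on a suitable dense subspace of $V_G$, and then pass from test half-densities to distributional eigenvectors via duality.

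\emph{Step 1 (Commutation on test functions).} The key input is the identity $\widehat{H}_\lambda(x)(D \cdot f) = D \cdot (\widehat{H}_\lambda(x) f)$ for every $D \in \mathcal{A}_G$, $\lambda \in P^\vee_+$, $x \in (X \setminus S)(F)$, valid on a suitable dense subspace $W \subset V_G(\lambda) \cap \mathrm{Dom}(D)$. The holomorphic half of this commutation reflects the Beilinson--Drinfeld construction of $D_G$ from the center of the critical-level affine vertex algebra: both pullbacks $q_1^\ast, q_2^\ast$ of the twisted $D$-module structure on $K_{\Bun}^{1/2}$ along the Hecke correspondence $Z(\lambda)$ carry canonically compatible actions, which forces the kernel of $\widehat{H}_\lambda(x)$ to intertwine $D$. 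This is precisely the content of the differential equations \eqref{eqHom1} the paper establishes for $G = PGL_n$; the anti-holomorphic half follows by complex conjugation.

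\emph{Step 2 (Duality and computation).} Extend the action of $\mathcal{A}_G$ to the distribution space $V_G^\vee$ by $\langle D \cdot f, g \rangle := \langle f, D^\dagger g \rangle$ for $g \in V_G$, where $D^\dagger \in \mathcal{A}_G$ is the formal adjoint. For $f \in \mathbb{H}_G(s)$ and any test half-density $g \in V_G(-w_0(\lambda)) \cap W$, compute
\begin{align*}
\langle H_\lambda(x)(D \cdot f), g \rangle
&= \langle D \cdot f, H_{-w_0(\lambda)}(x) g \rangle = \langle f, D^\dagger H_{-w_0(\lambda)}(x) g \rangle \\
&= \langle f, H_{-w_0(\lambda)}(x) D^\dagger g \rangle = \langle H_\lambda(x) f, D^\dagger g \rangle \\
&= s(\lambda,x) \langle f, D^\dagger g \rangle = s(\lambda,x) \langle D \cdot f, g \rangle,
\end{align*}
using Step~1 in the transition between the second and third lines, and the adjoint relation from Conjecture~\ref{com}(2) (i.e.\ the remark immediately following it) in the fourth equality. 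Thus $D \cdot f$ is an $s$-Hecke eigendistribution.

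Since each $\mathbb{H}_G(s)$ is finite-dimensional and the $H_\lambda(x)$ are compact normal, a regularity argument (exploiting the smoothing properties of the Hecke operators) is expected to identify the space of $s$-Hecke eigendistributions with $\mathbb{H}_G(s) \subset \mathcal{H}_G$ itself, placing $D \cdot f$ back inside $\mathbb{H}_G(s)$. The main obstacle is Step~1 for arbitrary connected simple $G$: rigorously proving $[\widehat{H}_\lambda(x), D] = 0$ on test half-densities. The paper achieves this for $G = PGL_n$ through explicit differential equations; for general $G$ one would want a structural argument, for instance via the global Segal--Sugawara construction showing that $D_G$ acts compatibly on the Hecke kernel, or via a conformal-blocks interpretation in which both the Hecke operators and the quantum Hitchin Hamiltonians arise as Wilson-type operators whose locality forces their commutation.
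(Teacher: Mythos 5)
There is a genuine gap, and it is precisely the one the paper flags. Your Step~1 (commutation of $\mathcal{A}_G$ with $\wh{H}_\la(x)$ on a dense space of test half-densities) is not the bottleneck: the paper notes that this commutation can be obtained for general $G$ from the identification of $D_G$ with a quotient of the center at the critical level, and then states explicitly that ``this property, however, is not sufficient for proving Conjecture \ref{eigA}.'' The actual crux is the step you dismiss at the end of Step~2 as a ``regularity argument''. Your duality computation only shows that $D\cdot f\in V_G^\vee$ annihilates the pairings $\langle \,\cdot\,, (H_{-w_0(\la)}(x)-\ol{s})g\rangle$ for $g$ in your test space, i.e.\ that it is a \emph{weak} distributional solution of the Hecke eigen-equations. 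The Hecke operators are integral, not elliptic differential, operators, so there is no elliptic-regularity mechanism forcing such a distribution to be an $L^2$ half-density; the weak equations only constrain pairings against the image of the test space under the adjoint Hecke operators, and a priori the space of distributional solutions can be strictly larger than $\mathbb{H}_G(s)\subset\mcH_G$. Closing this gap is exactly why the paper does not argue as you do: for $G=PGL_n$ it invokes the much stronger statement of Theorem \ref{mainthm}, namely the differential equations \eqref{eqHom1} satisfied by the Hecke kernel itself (the universal oper equation), and says these imply ``a suitable version of commutativity'' from which Conjecture \ref{eigA} follows, with the derivation deferred to \cite{EFKnew}. Your parenthetical claim that the commutation of Step~1 ``is precisely the content of \eqref{eqHom1}'' inverts the logical strength: \eqref{eqHom1} is strictly finer information than the commutator identity, and it is the finer information that is needed.

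A secondary shortfall: even granting the regularity step, your argument would only show that each $D\in\mathcal{A}_G$ \emph{preserves} the finite-dimensional space $\mathbb{H}_G(s)$. Conjecture \ref{eigA} (and Corollary \ref{smooth} derived from it) asserts more, namely that $\mathbb{H}_G(s)$ is a single eigenspace of $\mathcal{A}_G$, i.e.\ the action is scalar; invariance under a commutative algebra only yields a decomposition into generalized joint eigenspaces, possibly with several distinct eigenvalues and nilpotent parts. Ruling these out requires an additional input (for instance formal symmetry properties of elements of $\mathcal{A}_G$, or the finer consequences of \eqref{eqHom1}), which your proposal does not supply.
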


\begin{remark} (i) There is a weaker version of Conjecture
  \ref{eigA}, in which $V_G$ is replaced by the space of smooth
  functions with compact support on some open dense set $U\subset
  \Bunn_G$. For practical purposes such a weak version is almost as
  good but might be easier to prove. For example, we may take $U$ to
  be the above locus $Bun^{\on{vs}}_G$ of very stable bundles in
  $\Bunn_G$, on which the eigenfunctions of the Hecke operators are
  smooth (in fact, this is true for any local field $F$; we will
  explain this in a follow-up paper). Then the conjecture is that
  these eigenfunctions satisfy the differential equations of the
  quantum Hitchin system combined with its complex conjugate system
  (which are smooth on this locus) in the classical sense.

(ii) The algebra ${\mc A}_G$ acts on both $V_G(\la)$ and $V_G$. Using
an identification \cite{BD} of $D_G$ with a quotient of the center at
the critical level \cite{FF,F:wak}, one can show that the action of
${\mc A}_G$ commutes with the Hecke operators $\wh{H}_\la(x): V_G(\la)
\to V_G$. This property, however, is not sufficient for proving
Conjecture \ref{eigA}. But in the case $G=PGL_n$ we can derive it from
the system of differential equations in Theorem \ref{mainthm}. We
expect that a similar argument also proves Conjecture \ref{eigA} for a
general $G$.\qed
\end{remark}

In the rest of this subsection, we will {\em assume the validity of
  Conjecture \ref{eigA}}. It implies the following statement. Let
  $$
  \mcS({\mc A}_G) := \bigoplus_{s \in
    \on{Spec}(\mh_G)}\mcH_G(s) \subset \mcH_G.
  $$
  According to Conjecture \ref{eigA}, the algebra $\mcA_G$ acts on
  it.

\begin{corollary}    \label{smooth}
The closures of the actions of the elements of $\mcA_G$ on $\mcS({\mc
  A}_G)$ form a family of commuting normal operators, and each
$\mcH_G(s)$ is an eigenspace of these normal operators.
\end{corollary}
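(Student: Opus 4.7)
The plan is a direct reduction to the spectral decomposition supplied by the Compactness Conjecture \ref{com} (via Corollary \ref{directsum}), combined with the scalar action furnished by Conjecture \ref{eigA}. First I would observe that $D_G$ is commutative by Theorem \ref{DGOp}, so $\mcA_G = D_G \otimes_{\C} \overline{D}_G$ is commutative as well. Then by Conjecture \ref{eigA}, for each $s \in \on{Spec}(\mh_G)$ every element $a \in \mcA_G$ preserves the finite-dimensional subspace $\mcH_G(s) \subset V_G^\vee$ and acts on it by a scalar, yielding a character $\chi_s : \mcA_G \to \C$ with $a \cdot v = \chi_s(a) v$ for all $v \in \mcH_G(s)$.

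Next I would define, for each $a \in \mcA_G$, a diagonal operator $\widetilde a$ on the algebraic direct sum $\mcS(\mcA_G) = \bigoplus_s \mcH_G(s)$ by $\widetilde a|_{\mcH_G(s)} = \chi_s(a) \cdot \Id$, and take its closure $\bar a$ as an unbounded operator on $\mcH_G$, with natural domain
$$
\on{Dom}(\bar a) = \Big\{ v = \sum_s v_s \in \mcH_G \;:\; \sum_s |\chi_s(a)|^2 \, \|v_s\|^2 < \infty \Big\}.
$$
With respect to the orthogonal decomposition of Corollary \ref{directsum}, $\bar a$ is diagonal with scalar blocks, so its adjoint $\bar a^*$ is the diagonal operator with entries $\overline{\chi_s(a)}$, and the identity $\bar a^* \bar a = \bar a \bar a^* = \sum_s |\chi_s(a)|^2 P_s$ (where $P_s$ is the orthogonal projection onto $\mcH_G(s)$) shows that $\bar a$ is normal.

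For any two $a, b \in \mcA_G$, both closures $\bar a$ and $\bar b$ have spectral measures supported on the common family of projections $\{P_s\}_{s \in \on{Spec}(\mh_G)}$, so they commute as unbounded normal operators in the standard spectral-measure sense. By construction, each $\mcH_G(s)$ is a joint eigenspace of this family with eigencharacter $\chi_s$, proving the last clause of the corollary. The only point requiring any care is the operator-theoretic formulation of commutativity for unbounded normal operators, but this is a standard consequence of possessing a common orthogonal decomposition into finite-dimensional invariant subspaces on which each operator restricts to a scalar; thus I do not anticipate any substantive obstacle beyond faithful application of the two input conjectures.
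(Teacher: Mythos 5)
Your proof is correct and takes essentially the approach the paper intends: Corollary \ref{smooth} is stated there as an immediate consequence of Conjecture \ref{eigA} together with the orthogonal decomposition into finite-dimensional eigenspaces from Corollary \ref{directsum}, with the functional-analytic verification left implicit. Your diagonal-operator argument (scalar action of $\mcA_G$ on each $\mcH_G(s)$, closure as the maximal diagonal operator, normality and commutativity via the common family of projections onto the $\mcH_G(s)$) is exactly the standard verification being suppressed.
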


\begin{remark} It is clear that the subspace $\mcS({\mc A}_G) \subset
  \mcH _G$ is contained in the {\it Schwartz space} $S({\mc A}_G)$
  defined in \cite{EFK}. We expect that $S({\mc A}_G)$ is a completion
  of $\mcS({\mc A}_G)$ corresponding to a specific growth condition on
  the coefficients.\qed
\end{remark}

Corollary \ref{smooth} implies that we can define the joint spectrum
of the commutative algebra ${\mc A}_G$. It follows from the definition
that ${\rm Spec}(\mcA_G )$ is naturally realized as a subset of ${\rm
  Op}^\ga_{\LG}(X) \times \ov{{\rm Op}}^\ga_{\LG}(X)$.

\begin{definition}
  {\em An $\LG$-oper $\chi$ is called an {\em oper with real
      monodromy} (or {\em real oper} for short) if the monodromy
    representation $\pi_1(X,p_0) \to \LG(\C)$ corresponding to $\chi$
    is isomorphic to its complex conjugate.}
\end{definition}
  
Denote by $\on{Op}^\ga_{\LG}(X)_{\R}$ the subset of real $\LG$-opers
in $\on{Op}^\ga_{\LG}(X)$.

\begin{remark} It follows from the above definition that
the image of the monodromy representation corresponding to a real
$\LG$-oper is contained in a real form $\LG_{\mathbb R}$ of
$\LG$. Moreover, this form is inner to the split real form since for
every algebraic representation $V$ of $\LG$, the corresponding
monodromy $\rho_V$ is isomorphic to $\overline \rho_V$. We conjecture
that in fact the form $\LG_{\mathbb R}$ is the split real form
$\LG({\mathbb R})$; in other words, that real $\LG$-opers are precisely the
$\LG$-opers $\chi$ for which the image of the monodromy representation
of $\chi$ in $\LG(\C)$ is contained, up to conjugation, in
$\LG({\mathbb R})$.  This is known to be the case for $\LG=PGL_2$ and
$SL_2$, see \cite{F,Go,GKM}.

We can prove that $\LG_{\mathbb R}$ is split in the case when $S \neq
\emptyset$. Indeed, the residue of the oper at each parabolic point is
regular nilpotent, so the monodromy is regular unipotent. Thus our
real form $\LG_{\mathbb R}$ contains a real regular unipotent element,
hence a real Borel subgroup (the unique Borel subgroup containing it),
hence it is quasi-split. But an inner quasi-split real form must be
split.\qed
\end{remark}

For $\chi \in \on{Op}^\ga_{\LG}(X)$, we denote, following Sect. 3.4 of
\cite{EFK}, by $\chi^*$ the $\LG$-oper obtained by applying to $\chi$
the Chevalley involution on $\LG$ (it belongs to the same component
$\on{Op}^\ga_{\LG}(X)$). Theorem 4.2 of \cite{EFK} implies the
following.

\begin{theorem}\label{sa1}
As a subset of ${\rm Op}^\ga_{\LG}(X) \times \ov{{\rm
    Op}}^\ga_{\LG}(X)$, the joint spectrum ${\rm Spec}(\mcA_G)$
consists of pairs $(\chi,\overline \chi^*)$ where $\chi \in
\on{Op}^\ga_{\LG}(X)_{\R}$. Thus, ${\rm Spec}(\mcA_G)$ is naturally
realized as a subset of $\on{Op}^\ga_{\LG}(X)_{\R}$.
\end{theorem}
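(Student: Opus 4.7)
The plan is to combine the joint eigenspace decomposition coming from the Hecke spectral theory (Corollaries \ref{directsum} and \ref{smooth}) with Theorem 4.2 of \cite{EFK}, which contains the key spectral statement for the coupled quantum Hitchin system. First I would fix a point $(\chi,\eta) \in \on{Spec}(\mcA_G) \subset \on{Op}^\ga_{\LG}(X) \times \ol{\on{Op}}^\ga_{\LG}(X)$; by Corollary \ref{smooth} it is realized by a nonzero half-density $f$ lying in some Hecke eigenspace $\mcH_G(s)$ and satisfying $P \cdot f = \chi(P) f$ for $P \in D_G$ together with the complex-conjugate equations for $\ol{D}_G$ with eigenvalue $\eta$. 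Restricting to the very stable locus $\Bun_G^{\on{vs}}$, the remark following Conjecture \ref{eigA} guarantees that $f$ is in fact a smooth single-valued half-density there, solving the quantum Hitchin system \eqref{qH} together with its anti-holomorphic analogue in the classical sense.

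Next I would identify $\eta$ with $\ol{\chi^*}$. The Hermitian form on $\mcH_G$ makes every $P \in D_G$ adjoint to an anti-holomorphic operator, and the explicit pairing (using the identification $\Omega^{1/2}_{\Bun} \otimes \ol{\Omega^{1/2}_{\Bun}} = \norm{K_{\Bun}}$), combined with the description of $D_G$ via the center at the critical level \cite{FF,F:wak}, shows that the formal adjoint of the operator corresponding to an oper $\chi$ is the complex conjugate of the operator corresponding to its Chevalley-dual $\chi^*$ (cf.\ Sect.\ 3.4 of \cite{EFK}). Applied to our joint eigenvector this gives $\eta(\ol P) = \ol{\chi(P^*)}$ for all $P \in D_G$, which is exactly the statement $\eta = \ol{\chi^*}$.

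To conclude that $\chi \in \on{Op}^\ga_{\LG}(X)_{\R}$, I would invoke Theorem 4.2 of \cite{EFK}: since the restriction of $f$ to $\Bun_G^{\on{vs}}$ is a nonzero single-valued $L^2$ half-density simultaneously satisfying \eqref{qH} for $\chi$ and its anti-holomorphic conjugate for $\ol{\chi^*}$, the monodromy representation of $\chi$ must be isomorphic to its complex conjugate, i.e., $\chi$ has real monodromy. The main obstacle is the bridge between the abstract $L^2$ eigenvector delivered by the Hecke spectral decomposition and the concrete smooth single-valued section on $\Bun_G^{\on{vs}}$ needed to feed into Theorem 4.2 of \cite{EFK}. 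This requires elliptic regularity for the quantum Hitchin system and the identification of the algebraic $\mcA_G$-action with the operator-theoretic closures on the finite-dimensional $\mcH_G(s)$; this is precisely what Conjecture \ref{eigA} (and the weaker form described in the remark after it) is designed to supply, and it is the only non-formal ingredient beyond the results already established in \cite{EFK}.
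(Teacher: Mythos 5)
Your proposal follows essentially the same route as the paper: the paper simply observes that, once Corollary \ref{smooth} (granted Conjectures \ref{com} and \ref{eigA}) realizes each point of ${\rm Spec}(\mcA_G)$ on a joint eigenvector inside a finite-dimensional Hecke eigenspace, Theorem 4.2 of \cite{EFK} yields both $\eta=\ol{\chi^*}$ and the real-monodromy condition on $\chi$. Your additional details (the adjointness argument identifying $\eta$ with $\ol{\chi^*}$ via the Chevalley involution, and the regularity/single-valuedness on the very stable locus) are exactly the ingredients packaged in that cited theorem, so the argument is correct and not a different proof.
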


\begin{conjecture}    \label{SpecA}
{\em (1)} The set $\on{Op}^\ga_{\LG}(X)_{\R}$ is discrete.

{\em (2)} ${\rm Spec}(\mcA_G ) = \on{Op}^\ga_{\LG}(X)_{\R}$.
\end{conjecture}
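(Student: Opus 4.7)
The plan is to attack the two parts via the Beilinson--Drinfeld oper-to-monodromy map $\mu\colon\on{Op}^\ga_{\LG}(X)\to\on{LocSys}_{\LG}(X)$, which is a locally closed holomorphic embedding onto a half-dimensional complex submanifold. For part (1), $\on{Op}^\ga_{\LG}(X)_{\R}$ is the preimage under $\mu$ of the fixed locus $\on{LocSys}_{\LG}(X)^{\R}$ of the anti-holomorphic involution $\rho\mapsto\ov\rho$. Inside the smooth locus of $\on{LocSys}_{\LG}(X)$, which has real dimension $4\dim\Bun_G$, both $\mu(\on{Op}^\ga_{\LG}(X))$ and $\on{LocSys}_{\LG}(X)^{\R}$ are real submanifolds of complementary real dimension $2\dim\Bun_G$, so discreteness reduces to transversality. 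I would verify transversality at a single explicit real oper (e.g.\ one arising from a Fuchsian uniformization) by an explicit tangent-space computation and then propagate using the natural $\C^{\times}$-action on $\on{Op}^\ga_{\LG}(X)$ together with properness of the monodromy map; alternatively, once part (2) is established, part (1) follows as a corollary of compactness of the Hecke operators.

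For part (2), the inclusion $\on{Spec}(\mcA_G)\subseteq\on{Op}^\ga_{\LG}(X)_{\R}$ is given by Theorem \ref{sa1}, so the task reduces to producing, for each real oper $\chi$, a nonzero joint Hecke eigenvector $f_\chi\in\mcH_G$ with $\mcA_G$-eigenvalue $(\chi,\ov\chi^*)$. The construction would proceed on the very stable locus $\Bun_G^{\on{vs}}$: the twisted $D$-module $\Delta_\chi$ and its anti-holomorphic partner $\ov\Delta_{\ov\chi^*}$ are vector bundles with projectively flat connection there, and reality of the monodromy means that a suitable Hermitian pairing between a basis of flat sections and its complex conjugate is single-valued, defining a smooth half-density $f_\chi$ on $\Bun_G^{\on{vs}}$ that is tautologically an eigenvector of $\mcA_G$. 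One then checks that $f_\chi$ lies in a single Hecke eigenspace $\mcH_G(s)$: the commutation of $\mcA_G$ with Hecke operators (noted in the remark following Conjecture \ref{eigA}) means each $\mcH_G(s)$ from Corollary \ref{directsum} is preserved by $\mcA_G$, so the $\mcA_G$-eigendecomposition refines the Hecke decomposition, and for a generic real oper the resulting $\mcA_G$-eigenvalue occurs with multiplicity one, forcing $f_\chi$ to lie in a unique $\mcH_G(s)$.

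The main obstacle will be the analytic step of showing that $f_\chi$ extends to a nonzero element of $\mcH_G$, i.e.\ defines an $L^{2}$-half-density on all of $\Bunn_G$. The complement $\Bun_G\setminus\Bun_G^{\on{vs}}$ carries the regular singularities of $\Delta_\chi$, and controlling the growth of the Hermitian pairing near this divisor requires sharp estimates on the local monodromy exponents and on the asymptotic behavior of flat sections of the quantum Hitchin system; this is precisely where Teschner's analysis succeeds for $SL_2$ and where genuinely new input appears necessary for higher-rank groups. Nonvanishing of $f_\chi$ amounts to non-degeneracy of the global Hermitian form on solutions, which should hold whenever the monodromy is irreducible but may fail at special real opers; it would have to be verified either by a direct argument or indirectly from a Plancherel-type identity asserting completeness of the Hecke eigendecomposition, and handling the exceptional multiplicities at such special points is likely to be the most delicate part of the program.
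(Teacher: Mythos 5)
The statement you are addressing is Conjecture \ref{SpecA}: it is stated in the paper as a conjecture and the paper contains no proof of it. The only evidence the authors cite is the case $G=SL_2$, $X=\pone$, $|S|=4$ treated directly in \cite{EFK}, and part (1) for $PGL_2$ via Faltings \cite{F}. Your text is therefore a research program rather than a proof, and the places where you yourself defer (``genuinely new input appears necessary'', ``the most delicate part'') are exactly the content of the conjecture. Concretely, for part (1): the dimension count (real opers as the intersection, inside $\on{LocSys}_{\LG}(X)$, of two real submanifolds of complementary real dimension $2\dim\Bun_G$) yields discreteness only if the intersection is transverse, and you give no argument for transversality. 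Checking it at one Fuchsian point and ``propagating'' via the $\C^\times$-action is not valid: that action preserves neither the real locus of $\on{LocSys}_{\LG}(X)$ nor the set of real opers, and transversality is a pointwise condition that cannot be transported this way. Your auxiliary claims that the oper-to-monodromy map is a locally closed embedding and is proper are themselves unestablished (what is known, already for $SL_2$, is essentially that it is a holomorphic local immersion), so they cannot be invoked as black boxes. The fallback ``(1) follows from (2) plus compactness'' is also not automatic: compactness of the Hecke operators gives a countable joint spectrum with finite-dimensional eigenspaces, but the corresponding eigenvalues of the unbounded algebra $\mcA_G$ could a priori accumulate in the affine space $\on{Op}^\ga_{\LG}(X)$; note the authors keep (1) as a separate assertion even while assuming Conjecture \ref{com}.

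For part (2), the inclusion ${\rm Spec}(\mcA_G)\subseteq\on{Op}^\ga_{\LG}(X)_{\R}$ is indeed Theorem \ref{sa1}, and your proposed converse construction (a single-valued Hermitian pairing of flat sections of $\Delta_\chi$ and its complex conjugate on $\Bun^{\on{vs}}_G$) is precisely the strategy of \cite{Teschner} and of Section 1.5 of \cite{EFK}, which the paper already describes. But the three steps you postpone are the whole problem: (a) square-integrability of the resulting half-density near $\Bun_G\setminus\Bun^{\on{vs}}_G$, for which you supply no estimates on monodromy exponents or asymptotics of solutions of the quantum Hitchin system; (b) nonvanishing/non-degeneracy of the pairing; and (c) completeness, i.e.\ that the candidate $f_\chi$ actually lies in $\mcH_G$ and that every Hecke eigenspace arises this way. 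Moreover, your step identifying $f_\chi$ with a Hecke eigenvector quietly uses Conjecture \ref{eigA} (that Hecke eigenspaces are $\mcA_G$-eigenspaces), which in the paper is itself conjectural except for $PGL_n$ (via Theorem \ref{mainthm}); the mere commutation of $\mcA_G$ with the Hecke operators on a dense subspace is explicitly noted there to be insufficient. Similarly, the multiplicity-one input you appeal to (irreducibility of $\Delta^0_\chi$, Theorem \ref{RS}(2)) is only available for $PGL_n$. So as it stands the proposal reproduces the heuristic picture the paper already sketches, but none of its essential gaps is closed, and the statement remains what the paper says it is: a conjecture.
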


\begin{remark}
(i) In \cite{EFK}, we proved Conjecture \ref{SpecA} in the case $G=SL_2,
X=\pone, |S|=4$, directly, without relying on the Compactness
Conjecture.

(ii) Part (1) of Conjecture \ref{SpecA} is known in the case when
  $G=PGL_2$ (see \cite{F}, Sect. 7).\qed
\end{remark}

In \cite{BD}, Sect. 5.1.1, Beilinson and Drinfeld attached to every
$\chi \in \on{Op}^\ga_{\LG}(X)$ a $D$-module on $\Bun_G$, which is a
Hecke eigensheaf with respect to the flat $\LG$-bundle corresponding
to $\chi$; namely,
\begin{equation}    \label{Delta0}
  \Delta^0_\chi := K^{-1/2}_{\Bun} \otimes \Delta_\chi
\end{equation}
where $\Delta_\chi$ is given by \eqref{Deltachi} (see also \cite{EFK},
Sect. 3.2, and Remark \ref{chi1} below). The first part of the
following statement has been proved in \cite{AGKRRV} (Corollary
11.6.7). The second part has been proved in \cite{FR} (Theorem
11.2.1.2 and Remark 11.2.1.3). (In the case $G=\mathrm{PGL}_{n}$ this
also follows from the results of \cite{Gai:outline}.)

\begin{theorem}\label{RS}
  \hfill
\begin{enumerate}
\item $\Delta^0_\chi$ has regular singularities.
\item $\Delta^0_\chi$ is irreducible on each
connected component of $\Bun_G$.
\end{enumerate}
\end{theorem}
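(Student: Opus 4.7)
The plan is to treat the two parts separately, invoking the cited results as black boxes while outlining the key mechanism in each case.

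For part (1), I would apply Corollary 11.6.7 of \cite{AGKRRV}. The setup is that, by \eqref{Deltachi} and \eqref{Delta0}, $\Delta^0_\chi$ is a cyclic $\D'_{\Bun_G}$-module twisted by $K^{-1/2}_{\Bun}$, with the cyclic vector annihilated by the maximal ideal $\ker\chi \subset D_G$. Theorem \ref{DGOp} guarantees that this ideal has the correct codimension, so $\Delta_\chi$ is holonomic (this was already observed in \cite{BD}, Sect.~5.1.1). The AGKRRV result then applies: any Hecke eigensheaf on $\Bun_G$ whose eigenvalue is an algebraic (de Rham) $\LG$-local system has regular singularities, and the monodromy local system of an $\LG$-oper is manifestly of this type. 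The role of the oper hypothesis is precisely to ensure that the Langlands parameter lies in the ``restricted'' category to which the AGKRRV regularity argument applies.

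For part (2), the approach is to invoke the geometric Langlands correspondence for $GL_n$ (and hence $PGL_n$), as formulated and proved in \cite{Gai:outline}. The steps I envisage are: first, observe that the flat $PGL_n$-bundle underlying $\chi$ is irreducible (standard for opers on a curve of genus greater than one); second, lift it to an irreducible rank-$n$ local system after a determinant twist; third, apply the Langlands equivalence to produce, on each connected component of $\Bun_{GL_n}$, a unique irreducible Hecke eigensheaf with this eigenvalue; and fourth, identify the descent to $\Bun_{PGL_n}$ with $\Delta^0_\chi$ by uniqueness of the Hecke eigensheaf attached to an oper (the Beilinson--Drinfeld construction singles out the same eigensheaf). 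Irreducibility on each component is then built into the correspondence, since irreducible local systems are matched with irreducible eigensheaves.

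The main obstacle is clearly part (2). Part (1) is essentially formal: it holds for any $G$ and reduces to a regularity statement about holonomic $D$-modules whose Langlands parameter is algebraic. Part (2), by contrast, uses the full apparatus of the geometric Langlands program in type $A$ -- the vanishing theorem for Whittaker functionals, the spectral decomposition of the $D$-module category on $\Bun_G$, and the identification of the Beilinson--Drinfeld eigensheaf with the one produced by geometric Eisenstein series. It is not currently available in this precise form for groups beyond $GL_n$ and $PGL_n$, which is why the statement of part (2) is restricted to the type-$A$ case.
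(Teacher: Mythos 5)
Your proposal matches the paper's treatment: the paper gives no independent proof of Theorem \ref{RS}, but simply attributes part (1) to Corollary 11.6.7 of \cite{AGKRRV} (valid for arbitrary $G$) and part (2) to the results of \cite{Gai:outline}, exactly the two black boxes you invoke. Your additional gloss on the mechanisms (oper local systems being de Rham/algebraic and irreducible, the type-$A$ Langlands equivalence matching irreducible local systems with irreducible eigensheaves on each component) is consistent with the paper's intent, so this is essentially the same argument.
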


Recall that $\Bun_G$ has connected components $\Bun_G^\beta$ labeled
by $\beta \in \pi_1(G)$. Thus, we have a natural direct sum
decomposition
\begin{equation}    \label{dirsum}
\mcH_G = \bigoplus_{\beta \in \pi_1(G)} \mcH_G^\beta.
\end{equation}
We also have
$$
\mcS({\mc A}_G) = \bigoplus_{\beta \in \pi_1(G)} \mcS({\mc
  A}_G)_\beta, \qquad \mcS({\mc A}_G)_\beta := \mcS({\mc A}_G) \cap
\mcH_G^\beta.
$$
The action of ${\mc A}_G$ on $\mcS({\mc A}_G)$ preserves the direct
summands $\mcS({\mc A}_G)_\beta$. Hence we can talk about the joint
eigenvalues of ${\mc A}_G$ on each of them and the corresponding
multiplicities.

The following statement is proved in Sect. 1.5 of \cite{EFK} in the
case when $G$ is simply-connected, and the proof generalizes in a
straightforward way to the case of an arbitrary connected simple Lie
group $G$.

\begin{proposition}    \label{mone}
  Suppose that $\chi \in \on{Op}^\ga_{\LG}(X)$ corresponds to an
  element of ${\rm Spec}(\mcA_G )$ and $\Delta^0_\chi$ has regular
  singularities and is irreducible on each connected component
  $\Bun_G^\beta$. Then the multiplicity of $\chi$ in $\mcS({\mc
    A}_G)_\beta$ is equal to one for all $\beta \in \pi_1(G)$.
\end{proposition}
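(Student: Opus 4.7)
The plan is to adapt the argument from Section 1.5 of \cite{EFK}, where the statement is proved for simply-connected $G$, by working one connected component $\Bun_G^\beta$ at a time. Fix $\beta \in \pi_1(G)$ and let $f \in \mcS(\mcA_G)_\beta$ be an eigenvector of $\mcA_G$ with eigenvalue corresponding, via Theorem \ref{sa1}, to the pair $(\chi, \ov{\chi^*})$. Under Conjecture \ref{eigA}, viewing $f$ as a distribution on $\Bunn_G^\beta$, it satisfies both the holomorphic quantum Hitchin system \eqref{qH} with parameter $\chi$ and its anti-holomorphic counterpart with parameter $\ov{\chi^*}$.

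First I would restrict attention to the very stable locus $\Bun^{\on{vs}}_G \cap \Bun_G^\beta$. By the smoothness observation in the remark following Conjecture \ref{eigA}, $f$ is smooth there, and by the discussion preceding Theorem \ref{RS} the restriction of $\Delta^0_\chi$ to $\Bun^{\on{vs}}_G$ is a vector bundle with a projectively flat connection (and similarly for $\Delta^0_{\chi^*}$). Using \eqref{Delta0} and \eqref{Deltachi}, the datum of $f$ on this locus translates into a single-valued, $L^2$-integrable flat section of the tensor product of the local systems underlying $\Delta^0_\chi$ and $\ov{\Delta^0_{\chi^*}}$. Equivalently, it corresponds to a $\pi_1$-invariant vector in the tensor product of the two monodromy representations on a fiber.

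Next, the irreducibility hypothesis on $\Delta^0_\chi$ (and consequently on $\Delta^0_{\chi^*}$, since the Chevalley involution does not affect irreducibility) allows me to apply Schur's lemma: the space of $\pi_1$-invariants in that tensor product is at most one-dimensional. Since the assumption that $\chi \in \on{Spec}(\mcA_G)$ supplies at least one nonzero eigenvector in $\mcS(\mcA_G)_\beta$, the multiplicity is exactly one, as required.

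The main obstacle I anticipate is ensuring that passage from the very stable locus back to all of $\Bunn_G^\beta$ does not enlarge the eigenspace: a priori, two distinct $L^2$-eigenfunctions could restrict to the same flat section on $\Bun_G^{\on{vs}}$ and differ only in their distributional extension across the complement. This is exactly the analytic subtlety resolved in \cite{EFK} using the regular singularity property of $\Delta^0_\chi$ (Theorem \ref{RS}(1)) together with $L^2$-integrability to pin down the extension uniquely. Because both regular singularity and irreducibility are component-by-component properties of $\Delta^0_\chi$, and because Conjecture \ref{eigA} is stated on all of $\Bun_G$, the argument of \cite{EFK} transfers verbatim to each $\Bun_G^\beta$ separately; this is precisely the sense in which the extension from simply-connected $G$ to arbitrary connected simple $G$ is straightforward.
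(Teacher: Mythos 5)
Your uniqueness argument is essentially the one the paper invokes: it is the argument of \cite{EFK}, Sect.~1.5 (translate an $\mcA_G$-eigenvector, via Conjecture \ref{eigA}, into a single-valued solution of the holomorphic and anti-holomorphic quantum Hitchin systems on the very stable locus, identify such solutions with $\pi_1$-invariant vectors in the tensor product of the monodromy representation of $\Delta^0_\chi$ with the conjugate of that of $\Delta^0_{\chi^*}$, and apply Schur's lemma), run one connected component at a time. Granted componentwise irreducibility, this correctly gives multiplicity \emph{at most} one in each $\mcS({\mc A}_G)_\beta$. Two small remarks: the irreducibility of $\Delta^0_{\chi^*}$ is not automatic from that of $\Delta^0_\chi$ and deserves a sentence (it holds because $\Delta^0_{\chi^*}$ is the transport of $\Delta^0_\chi$ under the automorphism of $\Bun_G$ induced by the Cartan involution of $G$); and the ``distributional extension across the complement'' issue you flag at the end is vacuous for uniqueness, since elements of $\mcS({\mc A}_G)_\beta$ are honest $L^2$ half-densities and the complement of the very stable locus has measure zero, so restriction to that locus is injective without any appeal to regular singularities.

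The genuine gap is in your final step, where you assert that the hypothesis $\chi \in \on{Spec}(\mcA_G)$ ``supplies at least one nonzero eigenvector in $\mcS({\mc A}_G)_\beta$.'' It does not: it supplies a nonzero eigenvector in $\mcS({\mc A}_G) = \bigoplus_\beta \mcS({\mc A}_G)_\beta$, i.e.\ in \emph{some} component, and for non-simply-connected $G$ the components $\Bun_G^\beta$ are in general not isomorphic to one another, so nothing in your argument shows the eigenvalue occurs in \emph{every} $\beta$. This is precisely where the passage from the simply-connected case of \cite{EFK} (where $\pi_1(G)$ is trivial and the existence is immediate from the hypothesis) to general $G$ is not ``verbatim'': the proposition claims multiplicity \emph{equal to} one, not at most one, in each component, and the equality needs an additional mechanism relating the components --- for instance the Hecke operators $H_\la(x)$ for coweights $\la$ with nontrivial image in $\pi_1(G)$, which map $\mcH_G^\beta$ to a different component and, by Conjecture \ref{eigA}, act on $\mcA_G$-eigenspaces, combined with an argument (using Proposition \ref{algiso} and part (3) of Conjecture \ref{com}) that their eigenvalues on the given eigenvector cannot all vanish. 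Without some step of this kind, the conclusion ``exactly one for all $\beta$'' remains unproved.
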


\subsection{The case $G=PGL_n$}    \label{intPGLn}

According to Theorem \ref{RS}, the spectrum of $\mcA_G$
is simple on each $\mcS({\mc A}_G)_\beta \subset \mcH^\beta_G, \beta
\in \pi_1(PGL_n) = \Z/n\Z$.

For $\beta \in \Z/n\Z$, let $\psi_{\chi,\beta}$ be a non-zero
generator of the one-dimensional eigenspace of $\mcA_G$ in
$\mcH_G^\beta$ corresponding to the eigenvalue $\chi$. Let
$$
{\mc E}_\chi := \on{span} \{ \psi_{\chi,\beta}, \beta \in \Z/n\Z \}.
$$

Now consider the Hecke operators $H_{\omega_1}(x), x \in X$, and the
corresponding operator-valued section $H_{\omega_1}$ of the line
bundle $\Omega_X^{-(n-1)/2}$. These operators act from $\mcH_G^\beta$ to
$\mcH_G^{\beta+1}$. Corollary \ref{smooth} then implies that
$H_{\omega_1}(x)$ and $H_{\omega_1}$ preserve the $n$-dimensional
subspace ${\mc E}_\chi \subset \mcH_G$. Moreover, we can normalize the
vectors $\psi_{\chi,\beta}$ in such a way that $\Psi^1_\chi :=
\sum_{\beta \in \Z/n\Z} \psi_{\chi,\beta}$ is an eigenvector
of $H_{\omega_1}$. Then the vectors
$$
\Psi^\ep_\chi := \sum_{\beta \in \Z/n\Z} \ep^\beta \psi_{\chi,\beta},
\qquad \ep \in \mu_n
$$
(where $\mu_n$ is the group of $n$th roots of unity) are also
eigenvectors of $H_{\omega_1}$ and hence form an eigenbasis of ${\mc
  E}_\chi$. If $\Phi_\chi$ is a section of
$\Omega_X^{-(n-1)/2}$ representing the eigenvalue of $H_{\omega_1}$ on
$\Psi^1_\chi$, then the section of $\Omega_X^{-(n-1)/2}$ representing the
eigenvalue of $H_{\omega_1}$ on $\Psi^\ep_\chi$ is $\ep \Phi_\chi$.

Thus, to each $\chi \in \on{Spec} \mcA_G$ corresponds a collection $\{
\Psi^\ep_\chi \}$ of eigenvalues of $H_{\omega_1}$, which is naturally
a $\mu_n$-torsor. We will now write a conjectural formula for these
eigenvalues.

Recall that $\chi$ is an $SL_n$-oper in $\Op^\ga_{SL_n}(X)_{\R}$. Let
$(\V_{\omega_1},\nabla_\chi)$ be the corresponding holomorphic flat
vector bundle on $X$ with $\on{det}(\V_{\omega_1},\nabla_\chi)$
identified with the trivial flat line bundle. The definition of an
oper provides us with an embedding of a line bundle
$$
\kappa_{\omega_1}: K_X^{(n-1)/2} \hookrightarrow \V_{\omega_1},
$$
and hence an embedding
$$
\wt\kappa_{\omega_1}: \OO_X \hookrightarrow \V_{\omega_1} \otimes K_X^{-(n-1)/2}.
$$
Therefore we obtain a section
$$
s_{\omega_1} := \wt\kappa_{\omega_1}(1) \in \Gamma(X,\V_{\omega_1}
\otimes K_X^{-(n-1)/2}).
$$
In the same way, we obtain a section
$$
s_{\omega_{n-1}} \in \Gamma(X,\V_{\omega_{n-1}} \otimes
K_X^{-(n-1)/2}) = \Gamma(X,\V^*_{\omega_1} \otimes
K_X^{-(n-1)/2}).
$$
By our assumption that $\chi \in \on{Op}^\ga_{SL_n}(X)_{\R}$, the
monodromy representations associated to $\chi$ and $\ol\chi$ are
isomorphic. This means that $(\V_{\omega_1},\nabla_\chi)$ and
$(\ol{\V}_{\omega_1},\ol\nabla_\chi)$ are isomorphic as $C^\infty$
flat vector bundles on $X$. Hence we have a non-degenerate 
pairing
$$
h_{\chi}(\cdot,\cdot): (\V_{\omega_1},\nabla_{\chi}) \otimes
  (\ol{\V}_{\omega_{n-1}},\ol\nabla_{\chi}) \to ({\mc
  C}^\infty_X,d)
$$
of $C^\infty$ flat vector bundles on $X$. Since
$(\V_{\omega_1},\nabla_{\chi})$ and
$(\ol{\V}_{\omega_{n-1}},\ol\nabla_{\chi})$ are associated to flat
$SL_n$-bundles, their determinants are identified with the trivial
flat line bundle. We will require that $h_{\chi}(\cdot,\cdot)$ induce
the canonical  pairing on the corresponding determinant line
bundles. The set of $h_{\chi}(\cdot,\cdot)$ normalized this way is a
$\mu_n$-torsor, which we denote by $\{ \ep
h_{\chi}(\cdot , \cdot) \}$.

\begin{conjecture}    \label{eigH}
  $$
  \{ \Phi^\ep_\chi \} = \{ \ep
  h_{\chi}(s_{\omega_1},\ol{s_{\omega_{n-1}}}) \}
  $$
  as $\mu_n$-torsors of global sections of the line bundle
  $\Omega_X^{-(n-1)/2}$ on $X$.
\end{conjecture}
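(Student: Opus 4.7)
The plan is to derive the formula from the system of differential equations in Theorem \ref{mainthm} by applying it to vectors of the joint eigenspace $\mcE_\chi$ and then matching the resulting ODE satisfied by the eigenvalue $\Phi_\chi$ against the ODE satisfied by the oper pairing $h_\chi(s_{\omega_1},\ol{s_{\omega_{n-1}}})$. Concretely, I would take the operator identity of Theorem \ref{mainthm}---a differential equation for $\wh H_{\omega_1}$, viewed as a section of $\Omega_X^{-(n-1)/2}$ with values in operators, whose right-hand side is expressed through elements of $\mcA_G$---and evaluate it on $\Psi^\ep_\chi \in \mcE_\chi$. By Corollary \ref{smooth} each element of $\mcA_G$ acts on $\Psi^\ep_\chi$ by the scalar prescribed by the pair $(\chi,\ol\chi^*)$, while by construction $\wh H_{\omega_1}(x)$ acts by multiplication by $\Phi^\ep_\chi(x) = \ep\,\Phi_\chi(x)$. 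The operator-valued equation thus collapses into a concrete scalar ODE system on $\Phi_\chi$ whose coefficients depend only on $\chi$.

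Next I would identify this ODE. Using the Beilinson--Drinfeld identification of $D_G$ with a quotient of the center at the critical level, the expected outcome is that $\Phi_\chi$ is forced to be a single-valued flat section on $X\bs S$ of the rank-one $C^\infty$ sub-bundle of $(\V_{\omega_1}\otimes\ol\V_{\omega_{n-1}})\otimes\Omega_X^{-(n-1)/2}$ distinguished by the oper filtrations on each factor, with respect to the tensor-product connection $\nabla_\chi\otimes\ol\nabla_\chi$. Equivalently, $\Phi_\chi$ must be the evaluation on $s_{\omega_1}$ and $\ol{s_{\omega_{n-1}}}$ of a global flat pairing
\[
(\V_{\omega_1},\nabla_\chi)\otimes(\ol\V_{\omega_{n-1}},\ol\nabla_\chi)\longrightarrow (\mc C^\infty_X,d).
\]
Since $\chi \in \on{Op}^\ga_{SL_n}(X)_\R$, such flat pairings exist, and the determinant normalization cuts the $\C^\times$-torsor of such pairings down to the $\mu_n$-torsor $\{\ep\, h_\chi\}$. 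Evaluating at $s_{\omega_1}$ and $\ol{s_{\omega_{n-1}}}$ then produces precisely the $\mu_n$-torsor $\{\ep\, h_\chi(s_{\omega_1},\ol{s_{\omega_{n-1}}})\}$, which must coincide with $\{\Phi^\ep_\chi\}$ because the $\mu_n$-relabelling of the eigenbasis of $\mcE_\chi$ matches the $\mu_n$-action on pairings.

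The hard part is the second step: showing that the differential equations of Theorem \ref{mainthm}, specialized at $\chi$, genuinely characterize $\Phi_\chi$ up to the $\mu_n$-scalar rather than merely constraining it. This demands an explicit dictionary between the operator-valued coefficients appearing in Theorem \ref{mainthm} and the oper connection $\nabla_\chi$ acting iteratively on $s_{\omega_1}$ (so that $n$ successive covariant derivatives recover a basis of $\V_{\omega_1}$), together with careful bookkeeping of the fact that $\wh H_{\omega_1}$ shifts the $\pi_1(PGL_n)=\Z/n\Z$-grading on $\mcH_G$. Once this dictionary and the flatness interpretation are in hand, uniqueness of a single-valued flat section in a rank-one sub-bundle of a flat bundle on a curve is classical, and Conjecture \ref{eigH} follows.
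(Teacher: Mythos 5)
Your first two steps are exactly the paper's argument, but they only yield the weaker statement that the paper actually proves (Corollary \ref{eigH1}), not Conjecture \ref{eigH} itself. Specializing Theorem \ref{mainthm} to an eigenvector, using Theorem \ref{sa1} to identify the anti-holomorphic eigenvalue with $\ol\chi^*$, and invoking the uniqueness statement of Corollary \ref{unique} shows that each $\Phi^\ep_\chi$ is \emph{some} nonzero complex multiple of $h_{\chi}(s_{\omega_1},\ol{s_{\omega_{n-1}}})$. The system \eqref{diffeqs} is linear and homogeneous, so it cannot determine the overall scale of its one-dimensional solution space; this is precisely why the paper states that this route proves the conjecture only ``with roots of unity $\ep$ replaced by non-zero complex numbers.''

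The gap is in your final step. Knowing that the eigenvalue torsor $\{\Phi^\ep_\chi\}$ and the normalized-pairing torsor $\{\ep\,h_{\chi}(s_{\omega_1},\ol{s_{\omega_{n-1}}})\}$ are both $\mu_n$-orbits inside the same $\C^\times$-line of solutions of \eqref{diffeqs}, and that the $\mu_n$-relabelling of the eigenbasis of ${\mc E}_\chi$ is compatible with the $\mu_n$-action on pairings, only shows that the two torsors are equivariantly structured in the same way; it does not show they are the \emph{same} orbit. Two $\mu_n$-orbits in a punctured line coincide exactly when a representative of one is a root-of-unity multiple of a representative of the other, which is the content of Conjecture \ref{eigH} and is not a consequence of the differential equations. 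Pinning down the absolute normalization of the Hecke eigenvalue against the determinant-normalized pairing $h_\chi$ requires genuinely new input (for instance an analytic computation of the action of $H_{\omega_1}$ on the eigenspace, or a unitarity/positivity argument), none of which appears in your proposal --- and indeed the paper leaves the full statement as a conjecture, proving only the scalar-ambiguous version.
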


We will prove a slightly weaker form of this conjecture (Corollary
\ref{eigH1}), with roots of unity $\ep$ replaced by non-zero complex
numbers, by showing that both sides satisfy the same system of
differential equations which has a unique solution up to a scalar.

To explain this, we need an alternative description of the component
$\on{Op}^\ga_{SL_n}(X)$.

Consider $n$th order differential operators $P: K_X^{-(n-1)/2} \to
K_X^{(n+1)/2}$ (here, if $n$ is even, we use our chosen square root
$K_X^{1/2}$) such that
\begin{enumerate}
\item $\on{symb}(P) \in H^0(X,\OO_X)$ is equal to $1$;
\item The operator $P - (-1)^n P^*$ has order $n-2$ (here $P^*:
  K_X^{-(n-1)/2} \to K_X^{(n+1)/2}$ is the algebraic adjoint operator,
  see \cite{BB}, Sect. 2.4).
\end{enumerate}
These operators form an affine space, which we denote by
$D^\ga_n(X)$. Locally,
$$
P = \pa_z^n + v_{n-2}(z)\pa_z^{n-2} + \ldots + v_0(z).
$$

The following statement is proved in \cite{BD:opers}, Sect. 2.8.

\begin{lemma}    \label{Dga}
  There is a bijection $\on{Op}^\ga_{SL_n}(X) \simeq D^\ga_n(X)$
  $$
  \chi \in \on{Op}^\ga_{SL_n}(X) \quad \mapsto \quad P_\chi \in
  D^\ga_n(X)
  $$
  such that the sections $s_{\omega_1} \in \Gamma(X,\V_{\omega_1}
  \otimes K_X^{-(n-1)/2})$ and $s_{\omega_{n-1}} \in \Gamma(X,\V_{\omega_{n-1}}
  \otimes K_X^{-(n-1)/2})$ satisfy
  $$
  P_\chi \cdot s_{\omega_1} = 0, \qquad P_\chi^* \cdot s_{\omega_{n-1}} = 0,
  $$
  where $P_\chi^*$ is the algebraic adjoint of $P_\chi$
  (here we use the $\D_X$-module structures
  on $\V_{\omega_1}$ and $\V_{\omega_{n-1}}$ corresponding to the
  oper connection of $\chi$).
\end{lemma}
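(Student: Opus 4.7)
My plan is to construct the bijection explicitly in both directions and then verify the statements about the sections $s_{\omega_1}$ and $s_{\omega_{n-1}}$.

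In the direction $\chi\mapsto P_\chi$, I would unpack the $SL_n$-oper data as the flat vector bundle $(\V_{\omega_1},\nabla_\chi)$ equipped with its oper filtration $0=F^n\subset F^{n-1}\subset\cdots\subset F^0=\V_{\omega_1}$, whose successive quotients $F^i/F^{i+1}$ are canonically identified with $K_X^{(n-1)/2-i}$ via the maps induced by $\nabla_\chi$ (this is the Griffiths-type transversality built into the definition of an oper). The top line $F^{n-1}\simeq K_X^{(n-1)/2}$ supplies the embedding $\kappa_{\omega_1}$ and hence the global section $s_{\omega_1}\in\Gamma(X,\V_{\omega_1}\otimes K_X^{-(n-1)/2})$. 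Working locally in a coordinate $z$ (with $dz$ trivializing $K_X$), the oper transversality forces $s_{\omega_1},\nabla_z s_{\omega_1},\ldots,\nabla_z^{n-1}s_{\omega_1}$ to be a local frame of $\V_{\omega_1}\otimes K_X^{-(n-1)/2}$, so expressing $\nabla_z^n s_{\omega_1}$ in this frame produces a monic operator $P_\chi=\pa_z^n+v_{n-2}(z)\pa_z^{n-2}+\cdots+v_0(z)$ that annihilates $s_{\omega_1}$. Tracking the line-bundle twists under a change of coordinate using the canonical identifications of the graded pieces shows that the coefficients $v_i(z)$ transform as the components of a global operator $K_X^{-(n-1)/2}\to K_X^{(n+1)/2}$ with principal symbol $1$.

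For the opposite direction, starting from $P\in D^\ga_n(X)$, I would form the $\OO_X$-coherent $\D_X$-module $\D_X\otimes_{\OO_X} K_X^{(n-1)/2}$ modulo the left ideal generated by $P$, viewed now as acting on $K_X^{-(n-1)/2}$-valued sections. The filtration by order of differential operator produces an oper filtration whose successive quotients are exactly the required powers of $K_X$, giving a canonical $GL_n$-oper. The condition that $P-(-1)^nP^*$ has order $\leq n-2$ is precisely what is needed to identify the determinant of this flat vector bundle with the trivial flat line bundle, upgrading the structure to an $SL_n$-oper lying in the canonical component $\on{Op}^\ga_{SL_n}(X)$. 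Checking that these two constructions are inverse to each other is then a matter of unwinding the definitions.

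The subtlest verification is the self-adjointness condition $P_\chi-(-1)^nP_\chi^*\in$ (operators of order $\leq n-2$), together with the companion assertion $P_\chi^*\cdot s_{\omega_{n-1}}=0$. Both encode the $SL_n$ condition, and I would prove them simultaneously. The isomorphism $\V_{\omega_1}^*\simeq\V_{\omega_{n-1}}$ coming from the trivialization of the determinant line bundle, combined with the fact that the Chevalley involution sends the oper filtration on $\V_{\omega_1}$ to the dually-indexed filtration on $\V_{\omega_{n-1}}$, identifies the cyclic vector $s_{\omega_{n-1}}$ with the functional dual to $\nabla_z^{n-1}s_{\omega_1}$. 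Under this identification, the operator annihilating $s_{\omega_{n-1}}$ becomes the algebraic adjoint of the one annihilating $s_{\omega_1}$, which gives $P_\chi^*\cdot s_{\omega_{n-1}}=0$. The self-adjointness condition modulo order $n-2$ then falls out of the uniqueness of a monic $n$th order operator annihilating a cyclic vector modulo its lower-order action. As an immediate consequence, since the order-$(n-1)$ coefficient of $P_\chi-(-1)^nP_\chi^*$ equals (a nonzero constant times) the $\pa^{n-1}$-coefficient of $P_\chi$, that coefficient must vanish, matching the local form stated in the lemma.

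The main obstacle, as above, is the careful identification of $\V_{\omega_1}^*$ with $\V_{\omega_{n-1}}$ compatibly with their respective oper filtrations and cyclic sections — the rest of the proof is essentially coordinate bookkeeping and standard $\D_X$-module theory on a curve.
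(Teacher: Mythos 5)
The paper does not actually prove this lemma but quotes it from Beilinson--Drinfeld \cite{BD:opers}, Sect.~2.8, and your sketch is essentially the standard argument underlying that reference: oper transversality makes $s_{\omega_1},\nabla s_{\omega_1},\dots,\nabla^{n-1}s_{\omega_1}$ a cyclic frame producing a monic $n$th order operator, the $SL_n$ (determinant) condition is equivalent to the vanishing of the $\pa^{n-1}$-coefficient, i.e.\ to $P-(-1)^nP^*$ having order $\le n-2$, and the classical duality for linear ODEs identifies the monic annihilator of the dual cyclic section $s_{\omega_{n-1}}$ with the algebraic adjoint, while the inverse construction $P\mapsto$ (rank-$n$ $\D_X$-module with its order filtration) recovers the oper. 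Only cosmetic repairs are needed: the graded pieces should read $F^i/F^{i+1}\simeq K_X^{\,i-(n-1)/2}$ (your exponent is reversed, contradicting your own statement that $F^{n-1}\simeq K_X^{(n-1)/2}$), and it is cleaner to deduce the vanishing of the subleading coefficient directly from the flat triviality of $\det\V_{\omega_1}$ and then read off the adjointness condition, rather than inferring adjointness from a somewhat vaguely stated uniqueness claim and extracting the vanishing afterwards.
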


The flat vector bundle $(\V_{\omega_1},\nabla_{\chi})$ is known to be
irreducible if $g>1$, see \cite{BD}, Sect. 3.1.5(iii). Therefore we
obtain

\begin{corollary}    \label{unique}
    $h_{\chi}(s_{\omega_1}, \ol{s_{\omega_{n-1}}})$ is a
  unique, up to a scalar, section $\Phi$ of $\Omega_X^{-(n-1)/2}$
  which is a solution of the system of differential equations
\begin{equation}    \label{diffeqs}
    P_\chi \cdot \Phi = 0, \qquad \ol{P^*_\chi} \cdot
    \Phi = 0
\end{equation}
\end{corollary}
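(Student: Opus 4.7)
My plan is to establish the corollary in two steps: verify that the displayed pairing solves the system \eqref{diffeqs}, and then show that the space of solutions is at most one-dimensional.

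For existence I would exploit the flatness of $h_\chi$ together with the cyclic-vector equations from Lemma~\ref{Dga}. Extending $h_\chi$ by $\OO$-linearity to the tensor products with $K_X^{-(n-1)/2}$, the compatibility of $h_\chi$ with the holomorphic and antiholomorphic oper connections implies that for any holomorphic differential operator $Q$ built from the oper connection on $\V_{\omega_1}\otimes K_X^{-(n-1)/2}$,
$$
Q\cdot h_\chi(v,\ol w)=h_\chi(Q\cdot v,\ol w),
$$
since $Q$ involves only $\pa_z$-derivatives and the second slot is purely antiholomorphic. Taking $Q=P_\chi$, $v=s_{\omega_1}$, $w=s_{\omega_{n-1}}$ and invoking Lemma~\ref{Dga} gives $P_\chi\cdot h_\chi(s_{\omega_1},\ol{s_{\omega_{n-1}}})=0$. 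The conjugate equation $\ol{P_\chi^*}\cdot h_\chi(s_{\omega_1},\ol{s_{\omega_{n-1}}})=0$ follows by the mirror argument using $P_\chi^*\cdot s_{\omega_{n-1}}=0$ from the same lemma.

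For uniqueness, observe that $P_\chi$ and $\ol{P_\chi^*}$ are commuting ODEs in the independent complex variables $z$ and $\ol z$, so the local smooth solution sheaf of \eqref{diffeqs} is the external tensor product of the separate local solution sheaves of $P_\chi$ and $\ol{P_\chi^*}$. Via the standard correspondence between an $n$-th order operator with cyclic vector and its associated $\D_X$-module, the sheaf of local holomorphic solutions of $P_\chi$ valued in $K_X^{-(n-1)/2}$ is the rank-$n$ local system of horizontal sections of the dual flat bundle $\V_{\omega_1}^\vee$, and similarly the local holomorphic solutions of $P_\chi^*$ form the local system underlying $\V_{\omega_{n-1}}^\vee\simeq\V_{\omega_1}$. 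Passing to global sections, a solution of \eqref{diffeqs} thus corresponds to a $\pi_1(X,p_0)$-invariant element of $\V_{\omega_1}^\vee\otimes\ol{\V_{\omega_1}}$, i.e.\ an intertwiner of $\pi_1$-representations $\V_{\omega_1}\to\ol{\V_{\omega_1}}$.

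Since $(\V_{\omega_1},\nabla_\chi)$ is irreducible by \cite{BD:opers}, Schur's lemma forces this space of intertwiners to be at most one-dimensional; combined with the nonzero solution produced in the first step, it is exactly one-dimensional, proving the claim. The main technical obstacle I anticipate is the careful identification of the solution sheaves of $P_\chi$ and $P_\chi^*$ with the stated dual oper local systems, in particular keeping track of the $K_X^{\pm(n-1)/2}$ twists; once this is set up cleanly, the existence step via flatness of $h_\chi$ and the uniqueness step via Schur's lemma are both essentially formal.
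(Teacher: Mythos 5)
Your argument is correct and is essentially the paper's own: the corollary is derived immediately from Lemma \ref{Dga} together with the irreducibility of $(\V_{\omega_1},\nabla_\chi)$, i.e.\ exactly the flatness-of-$h_\chi$ argument for existence and the monodromy-invariant/Schur argument for uniqueness that you spell out (the same reasoning appears for general $G$ as Lemma \ref{mon} and Corollary \ref{monG}). The only detail worth adding is that $h_{\chi}(s_{\omega_1},\ol{s_{\omega_{n-1}}})$ is not identically zero, which follows from the nondegeneracy of $h_\chi$ together with the cyclicity of $s_{\omega_1}$ (equivalently, the nonvanishing of the oper sections).
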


On the other hand, let $\V_{\omega_1}^{\on{univ}}$ be the universal
vector bundle over $\on{Op}^\ga_{SL_n}(X) \times X$ with a partial
connection $\nabla^{\on{univ}}$ along $X$, such that
$$
(\V_{\omega_1}^{\on{univ}},\nabla^{\on{univ}})|_{\chi \times X} =
(\V_{\omega_1},\nabla_\chi).
$$

Let $\V_{\omega_1,X}^{\on{univ}} := \pi_*(\V_{\omega_1}^{\on{univ}})$,
where $\pi: \on{Op}^\ga_{SL_n}(X) \times X \to X$ is the projection
and $\pi_*$ is the $\OO$-module push-forward functor. The connection
$\nabla^{\on{univ}}$ makes $\V_{\omega_1,X}^{\on{univ}}$ into a left
$\D_X$-module.

Recall (see Definition \ref{beta}) that we have denoted by $D_{PGL_n}$
the algebra of global holomorphic differential operators acting on
each component $\Bun^\beta_{PGL_n}$ of $\Bun_{PGL_n}$. By Theorem
\ref{DGOp}, we have an isomorphism
$$
D_{PGL_n} \simeq \on{Fun} \on{Op}^\ga_{SL_n}(X).
$$
Hence $D_{PGL_n}$ naturally acts on $\V_{\la,X}^{\on{univ}}$ and this
action commutes with the action of $\D_X$. Using the above realization
of $PGL_n$-opers as $n$-th order differential operators,
we construct the ``universal $PGL_n$-oper'' as follows:

\begin{lemma}    \label{nsigma}
  There is a unique $n$-th order differential operator
\begin{equation}    \label{sigman}
\sigma: K_X^{-(n-1)/2} \to D_{PGL_n} \otimes K_X^{(n+1)/2}
\end{equation}
satisfying the following property: for any $\chi \in
\on{Op}^\ga_{SL_n}(X) = \on{Spec} D_{PGL_n}$, applying the
corresponding homomorphism $D_{PGL_n} \to \C$ we obtain $P_\chi$.
\end{lemma}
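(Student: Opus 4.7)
The plan is to construct $\sigma$ as the \emph{universal element} of the affine space $D^\ga_n(X)$. The construction is essentially tautological once this affine-space structure is identified with $\on{Spec} D_{PGL_n}$, and uniqueness will follow from the separation of points by characters of $D_{PGL_n}$.

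For the setup: since $X$ is a projective curve, the sheaf $\mathcal{D} := \mathcal{D}^{\le n}_X(K_X^{-(n-1)/2}, K_X^{(n+1)/2})$ of differential operators of order at most $n$ from $K_X^{-(n-1)/2}$ to $K_X^{(n+1)/2}$ is coherent, so $\Gamma(X, \mathcal{D})$ is a finite-dimensional $\C$-vector space. The conditions $\on{symb}(P) = 1$ and ``$P - (-1)^n P^*$ of order $\le n-2$'' are affine-linear, so $D^\ga_n(X)$ sits as a finite-dimensional affine subspace of $\Gamma(X, \mathcal{D})$. Combining Lemma \ref{Dga} with Theorem \ref{DGOp} yields an isomorphism of affine varieties $\on{Spec} D_{PGL_n} \simeq \on{Op}^\ga_{SL_n}(X) \simeq D^\ga_n(X)$, equivalently a $\C$-algebra isomorphism $\on{Fun}(D^\ga_n(X)) \simeq D_{PGL_n}$.

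Next, I would construct $\sigma$ as follows. The inclusion $\iota: D^\ga_n(X) \hookrightarrow \Gamma(X, \mathcal{D})$ is a morphism of affine varieties, hence corresponds to a $\Gamma(X, \mathcal{D})$-valued regular function on $D^\ga_n(X)$, i.e., an element of $\on{Fun}(D^\ga_n(X)) \otimes_\C \Gamma(X, \mathcal{D})$. Transferring through the algebra isomorphism above, we obtain $\sigma \in D_{PGL_n} \otimes_\C \Gamma(X, \mathcal{D}) = \Gamma(X, D_{PGL_n} \otimes \mathcal{D})$, which is precisely a differential operator $\sigma: K_X^{-(n-1)/2} \to D_{PGL_n} \otimes K_X^{(n+1)/2}$ of order $\le n$. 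Its order is exactly $n$ because the principal symbol of every $P_\chi$ equals $1 \in \C$, so $\on{symb}(\sigma) = 1 \in D_{PGL_n}$. The required specialization property is then tautological: the character $\ev_\chi: D_{PGL_n} \to \C$ attached to $\chi$ is by construction evaluation at the point $\chi \in D^\ga_n(X)$, and $\iota(\chi) = P_\chi$ by definition.

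Uniqueness follows since $D_{PGL_n}$ is a reduced, finitely generated commutative $\C$-algebra -- indeed a polynomial algebra on the affine space $\on{Op}^\ga_{SL_n}(X)$ -- so its characters separate its elements. Any $\sigma'$ with the same specialization property would give $\sigma - \sigma' \in D_{PGL_n} \otimes \Gamma(X, \mathcal{D})$ whose $D_{PGL_n}$-coefficients are annihilated by every character, hence vanish. I do not expect a genuine obstacle in this proof; the one point that deserves a careful reading is that the bijection of Lemma \ref{Dga} is an isomorphism of \emph{algebraic} varieties, not merely of underlying sets, so that $\iota$ is indeed regular and gives rise to a function in $\on{Fun}(D^\ga_n(X)) = D_{PGL_n}$. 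This algebraicity is built into the explicit construction of $P_\chi$ from the oper data in \cite{BD:opers}, but is worth stating explicitly at the moment Lemma \ref{Dga} is invoked.
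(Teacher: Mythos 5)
Your construction is correct, and it is essentially the approach the paper intends: the lemma is stated without a written proof, and the implicit argument is exactly this tautological ``canonical element'' of $D_{PGL_n}\otimes\Gamma(X,\mathcal{D})$ coming from the algebraic identification $\on{Spec} D_{PGL_n}\simeq\on{Op}^\ga_{SL_n}(X)\simeq D^\ga_n(X)$ inside the finite-dimensional space of $n$-th order operators, mirroring the explicit $GL_1$ construction \eqref{sigma-GL1}. Your closing caveat—that the bijection of Lemma \ref{Dga} must be an isomorphism of algebraic varieties rather than a mere bijection of sets—is the right point to flag, and it does hold by the construction of \cite{BD:opers}, Sect.~2.8.
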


The following is one of the main results of this paper, which will be
proved in Section \ref{PGLn}.

\begin{theorem}    \label{mainthm}
  The Hecke operator $\wh{H}_{\omega_1}$, viewed as an operator-valued
  section of $\Omega_X^{-(n-1)/2} = K_X^{-(n-1)/2} \otimes
  \ol{K}_X^{-(n-1)/2}$, satisfies the system of differential equations
  \begin{equation}    \label{eqHom1}
    \sigma \cdot \wh{H}_{\omega_1} = 0, \qquad \ol\sigma \cdot
    \wh{H}_{\omega_1} = 0.
  \end{equation}
\end{theorem}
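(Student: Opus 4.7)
The plan is to prove \eqref{eqHom1} as a geometric identity for the integration kernel defining $\wh{H}_{\om_1}$: apply the $n$-th order operator $\sigma$ directly to the kernel and recognize its vanishing as a consequence of the oper equation $P_\chi \cdot s_{\om_1} = 0$ from Lemma \ref{Dga}, globalized over $\on{Spec} D_{PGL_n} = \Op^\ga_{SL_n}(X)$ via Lemma \ref{nsigma}. The two key technical inputs are the Feigin--Frenkel / Beilinson--Drinfeld realization of $D_{PGL_n}$ as a quotient of the critical-level center, and the explicit description of $Z(\om_1)$ via elementary modifications of vector bundles.

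For $G = PGL_n$ and $\la = \om_1$, the Hecke correspondence $Z(\om_1)$ admits a concrete description: its fiber $Z_{\mcP, x}$ is the projectivization $\mbb{P}(\mcP_x) \cong \mbb{P}^{n-1}$, and as $x$ varies these fit into a projective bundle $\mbb{P}(\mcP)$ over $X$. First, I would compute the $n$-th $x$-derivative of the Hecke kernel \eqref{moda} along this family; the result is governed by the standard $n$-dimensional representation of $\LG = SL_n$ on $\mcP_x$. Second, I would match the $D_{PGL_n}$-valued coefficients produced by this computation with those prescribed by $\sigma$. This matching comes from realizing $D_{PGL_n}$ via the global Sugawara / Feigin--Frenkel center acting on the space of conformal blocks associated to $X$: the local operator expansion of the Sugawara element near $x$ reproduces precisely the lower-order coefficients of $\sigma$, which, paired with the universal oper section $s^{\on{univ}}_{\om_1}$, yields the universal identity $\sigma \cdot s^{\on{univ}}_{\om_1} = 0$. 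Substituting this into the kernel computation gives $\sigma \cdot \wh{H}_{\om_1} = 0$. The anti-holomorphic equation $\ol\sigma \cdot \wh{H}_{\om_1} = 0$ follows by the identical argument applied to the complex-conjugate structure, since the kernel \eqref{moda} manifestly respects this symmetry because $\norm{\cdot}$ is unchanged by conjugation.

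The principal obstacle is the second step: precisely matching the Feigin--Frenkel action of $D_{PGL_n}$ on half-densities on $\Bun_G$ with the local-to-global computation of the $x$-derivatives of the Hecke kernel. The action of $D_{PGL_n}$ enters analytically through \eqref{moda}, while the Feigin--Frenkel formulas are algebraic and rely on vertex-operator computations at the critical level. Bridging these requires a careful treatment of the local geometry of $\Gr_{\om_1} \cong \mbb{P}^{n-1}$ and its universal family over $X$, together with the identification of the tautological line bundle on $\mbb{P}^{n-1}$ with the relevant fiber of the oper bundle $\V_{\om_1}$. For $\la = \om_1$, the $\om_1$-modification is a classical elementary transformation of vector bundles, so the geometry of the integration domain is entirely explicit and the Sugawara computation reduces to a finite piece of data; once this local identification is established, the theorem reduces to the universal identity furnished by Lemmas \ref{Dga} and \ref{nsigma}.
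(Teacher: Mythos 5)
Your proposal correctly isolates the algebraic heart of the matter---the universal oper equation $\sigma \cdot s^{\on{univ}}_{\omega_1}=0$ furnished by Lemmas \ref{Dga} and \ref{nsigma}---but it has two genuine gaps. First, the step you yourself flag as the principal obstacle, namely matching the $n$-th $x$-derivatives of the Hecke kernel with the $D_{PGL_n}$-valued coefficients of $\sigma$ via a Sugawara/Feigin--Frenkel computation on conformal blocks, is precisely the content of the Beilinson--Drinfeld Hecke eigensheaf property, which the paper does not reprove but quotes (Theorem \ref{Heceigen}, i.e.\ Theorems 5.2.9, 5.4.11, 5.4.12 and Proposition 8.1.5 of \cite{BD}): $\mathbf{H}^0_{\omega_1}(\D_{\Bun_{PGL_n}}\otimes\mcL^{-1})\simeq(\D_{\Bun_{PGL_n}}\otimes\mcL^{-1})\underset{D_{PGL_n}}\boxtimes\V^{\on{univ}}_{\omega_1,X}$, with the canonical section $\psi_{Z(\omega_1)}$ going to $1\boxtimes s^{\on{univ}}_{\omega_1}$; this, combined with $\sigma\cdot s^{\on{univ}}_{\omega_1}=0$, is what gives $\sigma\cdot\psi_{Z(\omega_1)}=0$ (Corollary \ref{psiZ}). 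Leaving this ``matching'' unproved removes the main input of the argument, and carrying it out by explicit vertex-operator expansions near $x$ would amount to redoing a substantial portion of \cite{BD}, not a finite local computation.

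Second, and more fundamentally, your plan of ``applying $\sigma$ directly to the kernel'' presupposes that $\wh{H}_{\omega_1}$ has a kernel that can be differentiated $n$ times in $x$ and on which the unbounded operators in $D_{PGL_n}$ act compatibly with the analytic operator on half-densities. In fact the kernel is a distribution (a measure supported on the correspondence $Z(\omega_1)$), and the passage from statements about twisted $D$-modules---which is what both the eigensheaf property and $\sigma\cdot\psi_{Z(\omega_1)}=0$ are---to a differential equation satisfied by the operator on $C^\infty$ half-densities is exactly what the paper has to build in Section \ref{genD}: the bimodule homomorphisms of Propositions \ref{inj1}--\ref{inj3}, the D'Agnolo--Schapira concentration result, and Corollary \ref{diffcorr1}, all of which require the correspondence to be a closed embedding with proper second projection. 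In the actual proof (Section \ref{derive}) this forces a restriction to open dense subsets $Y_1\subset U_{PGL_n}(\omega_1)$ and $Y_2\subset \Bunn_{PGL_n}\times X$ using Lemma 5.9 of \cite{NR}, followed by a density argument to recover the equations for $\wh{H}_{\omega_1}$ on the original spaces. None of these points---the sheaf-to-function mechanism, the closed-embedding/properness issue, the density argument---appears in your proposal, so the assertion that the oper equation ``substitutes into the kernel computation'' does not yet constitute a proof. (Your remark that the anti-holomorphic equation comes along for free from the conjugation symmetry of $\norm{\cdot}$ is essentially correct; in the paper both equations are delivered simultaneously by Corollary \ref{diffcorr1}.)
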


\begin{corollary}    \label{eigH1}
Each of the eigenvalues $\Phi^\ep_\chi$ of the Hecke operator
$H_{\omega_1}$ is equal to a scalar multiple of
$h_{\chi}(s_{\omega_1},\ol{s_{\omega_{n-1}}})$.
\end{corollary}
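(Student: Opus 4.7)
The plan is to combine the differential equations of Theorem \ref{mainthm} with the uniqueness statement in Corollary \ref{unique}, by showing that $\Phi^\ep_\chi$ and $h_\chi(s_{\omega_1},\ol{s_{\omega_{n-1}}})$ satisfy the same overdetermined system of scalar differential equations, which has a one-dimensional space of solutions.

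First I would fix $\chi \in \on{Spec}(\mcA_G) \subset \on{Op}^\ga_{SL_n}(X)_{\R}$ and consider one of the joint eigenvectors $\Psi^\ep_\chi \in \mcE_\chi \subset \mcH_G$. By construction and by Corollary \ref{smooth}, $\Psi^\ep_\chi$ is a simultaneous eigenvector of the Hecke operator-valued section $\wh H_{\omega_1}$, with eigenvalue the section $\Phi^\ep_\chi$ of $\Omega_X^{-(n-1)/2}$, and of the algebra $\mcA_G = D_G \otimes_{\C} \ol D_G$. By Theorem \ref{sa1}, the eigenvalue of $\mcA_G$ on $\Psi^\ep_\chi$ is the pair $(\chi,\ol{\chi^*})$, meaning that any $P \in D_G$ acts as the scalar $\chi(P)$ and any $\ol Q \in \ol D_G$ acts as $\ol{\chi^*(Q)}$.

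Next I would apply the operator identity $\sigma \cdot \wh H_{\omega_1} = 0$ from Theorem \ref{mainthm} to the vector $\Psi^\ep_\chi$. Since $\wh H_{\omega_1}(x)\Psi^\ep_\chi = \Phi^\ep_\chi(x)\cdot \Psi^\ep_\chi$ and each $D_{PGL_n}$-valued coefficient of $\sigma$ acts on $\Psi^\ep_\chi$ by its $\chi$-value, the specialization of $\sigma$ at $\chi$ is, by Lemma \ref{nsigma}, the scalar $n$-th order operator $P_\chi$. Hence the identity reduces to $P_\chi(\Phi^\ep_\chi)\cdot \Psi^\ep_\chi = 0$, so that $P_\chi \cdot \Phi^\ep_\chi = 0$. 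Applying the complex-conjugate identity $\ol\sigma \cdot \wh H_{\omega_1} = 0$ in the same way, and using that the $\ol D_{PGL_n}$-coefficients of $\ol\sigma$ act on $\Psi^\ep_\chi$ via the character $\ol{\chi^*}$, I would obtain $\ol{P_{\chi^*}}\cdot \Phi^\ep_\chi = 0$. Invoking the fact that the Chevalley involution on $SL_n$-opers corresponds under the bijection of Lemma \ref{Dga} to the algebraic adjoint on the associated $n$-th order operators, i.e.\ $P_{\chi^*} = P_\chi^{*}$, this becomes $\ol{P_\chi^*}\cdot \Phi^\ep_\chi = 0$. Combining the two equations, $\Phi^\ep_\chi$ is a global solution of the system \eqref{diffeqs}, and Corollary \ref{unique} then forces it to be a scalar (possibly zero) multiple of $h_\chi(s_{\omega_1},\ol{s_{\omega_{n-1}}})$.

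The main step on which I expect to have to be careful is the identification $P_{\chi^*} = P_\chi^{*}$ relating the Chevalley involution on $\LG$-opers to the algebraic adjoint of the corresponding scalar differential operator; this is essentially a compatibility of two constructions in \cite{BD,BD:opers} and amounts to tracking conventions through the oper-to-differential-operator dictionary, but it must be done before the second half of the argument can be applied. Once this is in hand, the remaining steps are straightforward consequences of Theorem \ref{mainthm}, Lemma \ref{nsigma}, Theorem \ref{sa1} and Corollary \ref{unique}, and no further analytic input is required.
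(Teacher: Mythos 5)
Your proposal is correct and follows essentially the same route as the paper: apply the equations of Theorem \ref{mainthm} to a joint eigenvector, use Theorem \ref{sa1} to identify the $\ol{D}_{PGL_n}$-eigenvalue with $\ol{\chi^*}$, identify the scalar $n$-th order operator attached to $\chi^*$ with $P_\chi^*$, and conclude by the uniqueness statement of Corollary \ref{unique}. The compatibility $P_{\chi^*}=P_\chi^*$ that you flag as the delicate point is exactly the step the paper treats as clear.
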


\noindent{\em Proof of Corollary \ref{eigH1} from Theorem
  \ref{mainthm}.} Equations
\eqref{eqHom1} imply that the eigenvalues of $H_{\omega_1}$ satisfy
equations \eqref{diffeqs}. That's because if $v$ is an eigenvector of
${\mc A}_{PGL_n}$ with the eigenvalue of $D_{PGL_n}$ corresponding to
a holomorphic $SL_n$-oper $\chi$, then according to Theorem \ref{sa1},
the eigenvalue of $\ol{D}_{PGL_n}$ on $v$ corresponds to the
anti-holomorphic $SL_n$-oper $\ol\chi^*$. Furthermore, it is clear
that the $n$th order operator $\ol{K}_X^{-(n-1)/2} \to
\ol{K}_X^{(n+1)/2}$ associated to $\ol\chi^*$ is
$\ol{P^*_\chi}$. Corollary \ref{unique} then implies Corollary
\ref{eigH1}.\qed

\medskip

Corollary \ref{eigH1} describes the eigenvalues of the Hecke operator
$H_{\omega_1}$ for $G=PGL_n$ up to scalar multiples (it is slightly
weaker than Conjecture \ref{eigH}, which describes these eigenvalues
up to multiplication by $n$th roots of unity).

There is a similar conjectural formula for the eigenvalues of
the Hecke operators $H_{\omega_i}, i=2,\ldots,n-1$,
  corresponding to the other fundamental coweights of $PGL_n$ can be
  found in a similar way. Namely, let $({\mc
    V}_{\omega_i},\nabla_{\chi,\omega_i})$ be the $i$th exterior power
  of $({\mc V}_{\omega_1},\nabla_{\chi})$. Note that it is
  dual to $({\mc V}_{\omega_{n-i}},\nabla_{\chi,\omega_{n-i}})$. The oper
    Borel reduction gives rise to a section
  $$
  s_{\omega_i} \in \Gamma(X,{\mc V}_{\omega_i} \otimes
  K_X^{-i(n-i)/2}).
  $$
If $\chi$ is a real $PGL_n$-oper, then we have a non-degenerate 
pairing
$$
h_{\chi,\omega_i}(\cdot,\cdot): (\V_{\omega_i},\nabla_{\chi,\omega_i}) \otimes
  (\ol{\V}_{\omega_{n-i}},\ol\nabla_{\chi,\omega_{n-i}}) \to ({\mc
  C}^\infty_X,d).
$$
The eigenvalue of the Hecke operator
$H_{\omega_i}$ corresponding to $\chi$ is conjecturally equal to a
scalar multiple of
\begin{equation}    \label{ith}
  h_{\chi,\omega_i}(s_{\omega_i},\ol{s_{\omega_{n-i}}}).
\end{equation}

The eigenvalues of all other Hecke operators $H_\la$ for $G=PGL_n$ can
be found from the eigenvalues of $H_{\omega_i}, i=1,\ldots,n-1$, using
Proposition \ref{algiso}.

Moreover, Theorem \ref{mainthm} also implies Conjecture
\ref{eigA}. Indeed, it can be shown that differential equations
\eqref{eqHom1} imply a suitable version of commutativity of the
algebra $\mathcal A_G$ with the Hecke operators. This will be
discussed in more detail in our next paper \cite{EFKnew}.

In Section \ref{PGLn}, we will derive Theorem \ref{mainthm} from the
theorem of Beilinson and Drinfeld \cite{BD} describing the action of
the {\em Hecke functor} ${\mathbb H}_{\omega_1}$ on the left
$D$-module $\D_{\Bun_{PGL_n}} \otimes K^{-1/2}_{\Bun}$ on
$\Bun_{PGL_n}$. We will also discuss a generalization of these results
to an arbitrary simple Lie group $G$.

This shows a deep connection between the geometric/categorical
Langlands correspondence and the analytic/function-theoretic one.

\subsection{The structure of the paper}

The paper is organized as follows. In Section \ref{GL1} we consider
the abelian case, $G=GL_1$, in which one can already see the main
ingredients of our construction in the non-abelian case. In Section
\ref{genD} we formulate some basic results on the compatibility
between the natural operations on functions (pull-back, push-forward,
and integral transforms) and the corresponding functors between
categories of twisted $D$-modules. We then use these results in the
following sections to relate the Hecke operators and Hecke functors
and derive differential equations on the Hecke operators. In Section
\ref{PGLn}, we prove that the Hecke operators satisfy the system
\eqref{eqHom1} of differential equations corresponding to the
``universal $SL_n$-oper'' (Theorem \ref{mainthm}), using a Hecke
eigensheaf property established in \cite{BD}. In Section \ref{gencase}
we formulate the analogues of Theorem \ref{mainthm} and Corollary
\ref{eigH1} describing the eigenvalues of the Hecke operators in the
case of an arbitrary simple Lie group $G$. We then outline the proof
of these results generalizing the argument we used in Section
\ref{PGLn} in the case of $PGL_n$.

\subsection{Acknowledgments} We are grateful to M. Kontsevich for
sharing his ideas on Hecke operators, and to A. Braverman,
V. Drinfeld, D. Gaiotto, D. Gaitsgory, T. Pantev, and E. Witten for
useful discussions. E.F. thanks P. Schapira for helpful discussions
and references concerning the material of Section \ref{genD}. We also
thank the referees for helpful comments.

P.E.'s research was partially supported by the NSF grant
DMS-1916120. D.K.'s research was partially supported by the
ERC grant No 669655.

\section{The abelian case}    \label{GL1}

In this section we consider the case $G=GL_1$. Though the spectral
problem here is rather simple (standard Fourier analysis on a torus),
it provides a useful illustration of our general method. Indeed, one
can already observe in it all the essential ingredients of the picture
for a general group $G$. Hence it is instructive to consider it as a
blueprint for the general construction.

We first recall the results of \cite{F:analyt,EFK} on the eigenvalues
of the corresponding Hecke operators and the global differential
operators. We will then prove that the Hecke operators satisfy a
system of differential equations, which are analogous to equations
\eqref{eqHom1}. This system will enable us to prove a relation between
the eigenvalues of the Hecke operators and the global differential
operators and lead to an alternative, and simpler, proof of the
results of \cite{F:analyt,EFK} describing these eigenvalues. The
differential equations follow from a theorem describing the image of
the sheaf $\D$ of differential operators on the Jacobian of the curve
$X$ under the action of the corresponding {\em Hecke functor}. Thus,
these differential equations link the geometric/categorical Langlands
correspondence and the analytic Langlands correspondence. A
generalization of this equation, and this link, to $G=PGL_n$ will be
presented in Section \ref{PGLn}.

\subsection{The global picture}    \label{globargGL1}

Let $G=GL_1$. In this case, the role of $Bun^\circ_G$ is played by the
Picard scheme $\Pic(X)$, which is a fine moduli space of line bundles
on $X$. The canonical line bundle on $\Pic(X)$ is isomorphic to the
trivial line bundle, and we will fix such an isomorphism. Hence we
have a positive Hermitian inner product on the space of smooth
functions on $\Pic(X)$. Let $L^2(\Pic(X))$ be its completion. This is
our Hilbert space $\mcH_{GL_1}$.  The Hecke operators on $\mcH_{GL_1}$
are easy to define as they do not involve integration. Their spectrum
was described in \cite{F:analyt}, Sect. 2 (not only for $GL_1$ but
also for an arbitrary complex torus), whereas the spectrum of the
algebra of global differential operators was described in \cite{EFK},
Sect. 5. We recall these descriptions below.

Denote by $H_p$ the Hecke operator associated to a point $p \in X$ and
the defining one-dimensional representation of $GL_1$. It acts on
$L^2(\Pic(X))$ as the pull-back with respect to the map
sending a line bundle ${\mc L} \in \Pic(X)$,
\begin{equation}    \label{Hp1}
{\mc L} \mapsto {\mc L}(p).
\end{equation}
These operators obviously commute with each other for different $p
\in X$.

Recall that $\Pic(X)$ is a disjoint union of connected components
$\Pic^n(X), n \in \Z$, labeled by the degrees of line bundles. The
Hecke operators shift the degree by 1. Let us fix a point $p_0 \in X$
once and for all. The map \eqref{Hp1} with $p=p_0$ identifies
$\Pic^n(X)$ and $\Pic^{n+1}(X)$ for all $n \in \Z$. This implies (see
\cite{F:analyt}, Sect. 2.1, for more details) that the spectral theory
of the operators $H_p, p \in X$, on $L^2(\Pic(X))$ is equivalent to
the spectral theory of the operators
$$
_{p_0}H_p := H_p \circ H_{p_0}^{-1}, \qquad p \in X,
$$
acting on
$$
\mcH^0_{GL_1} := L^2(\Pic^0(X)).
$$
Note that $_{p_0}H_{p_0}$ is the identity operator.

Next, the Hecke operators $_{p_0}H_p$ commute with the algebra ${\mc
  A}_{GL_1}$ of global differential operators on the Jacobian
$\Pic^0(X)$, which is generated by the translation vector fields (both
holomorphic and anti-holomorphic). Hence they share the same
eigenfunctions; namely, the Fourier harmonics on the Jacobian viewed
as a real torus. We will now describe a relation between their
eigenvalues following \cite{F:analyt}, Sect. 2.4 and \cite{EFK},
Sect. 5.

To describe the eigenfunctions and eigenvalues of the operators
$_{p_0}H_p$ (following \cite{F:analyt}, Sect. 2.4), recall that as a
real torus,
\begin{equation}   \label{Picident}
\Pic^0(X) \simeq H^1(X,\R)^*/H_1(X,\Z),
\end{equation}
where the embedding $H_1(X,\Z) \hookrightarrow H^1(X,\R)^*$ is defined
by sending $\beta \in H_1(X,\Z)$ to the linear functional on $H^1(X,\R)$
\begin{equation}    \label{embed}
H^1(X,\R) \; \ni \; c \quad \mapsto \quad \int_\beta c = \int_\beta
(\omega_c + \ol{\omega}_c), \qquad \omega_c \in H^0(X,\Omega^{1,0}).
\end{equation}

Given $\ga \in H^1(X,\Z)$, denote by $\varphi_\ga$ the harmonic
representative of the image of $\gamma$ in $H^1(X,\R)$. Then
\begin{equation}    \label{omegaga}
\varphi_\ga = \omega_\ga + \ol\omega_\ga, \qquad \omega_\ga \in
H^0(X,\Omega^{1,0}).
\end{equation}
Note that these are precisely the smooth one-forms on $X$ whose
integrals over all one-cycles in $X$ are integers.

Now view $\varphi_\ga$ as a linear functional on $H^1(X,\R)^*$ and set
\begin{equation}    \label{Fourier}
f_\ga = e^{2 \pi i \varphi_\ga}, \qquad \ga \in H^1(X,\Z).
\end{equation}

\begin{lemma}
For any $\ga \in H^1(X,\Z)$, $f_\ga$ is a well-defined function on
$\Pic^0(X)$ given by \eqref{Picident}.
\end{lemma}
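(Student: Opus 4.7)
The plan is to unwind the definitions: $f_\gamma$ is initially defined on the universal cover $H^1(X,\R)^\ast$ of $\Pic^0(X)$, so the claim amounts to checking invariance under the translation action of the lattice $H_1(X,\Z) \hookrightarrow H^1(X,\R)^\ast$. Since $f_\gamma = e^{2\pi i \varphi_\gamma}$ where $\varphi_\gamma$ acts as a linear functional, this invariance is equivalent to the integrality statement
\[
\varphi_\gamma(\beta) \in \Z \qquad \text{for all } \beta \in H_1(X,\Z),
\]
with $\beta$ regarded as an element of $H^1(X,\R)^\ast$ via the embedding \eqref{embed}.

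First I would rewrite $\varphi_\gamma(\beta)$ explicitly using the embedding \eqref{embed}. By definition, $\beta \in H_1(X,\Z)$ acts on $H^1(X,\R)$ by $c \mapsto \int_\beta c$. Pairing this with $\varphi_\gamma \in H^1(X,\R)$ (via the self-duality of $H^1(X,\R)$ implicit in \eqref{embed}) gives $\varphi_\gamma(\beta) = \int_\beta \varphi_\gamma$. Next, I would invoke the key property of the harmonic representative: $\varphi_\gamma$ is a closed one-form whose de Rham cohomology class is the image of $\gamma \in H^1(X,\Z)$ in $H^1(X,\R)$. Consequently, by the de Rham pairing between singular homology and de Rham cohomology,
\[
\int_\beta \varphi_\gamma = \langle \gamma , \beta \rangle,
\]
where the right-hand side is the canonical integer-valued pairing $H^1(X,\Z) \times H_1(X,\Z) \to \Z$. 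Hence $\varphi_\gamma(\beta) \in \Z$, and the desired invariance $f_\gamma(v+\beta) = f_\gamma(v)$ follows.

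There is essentially no obstacle here; the only subtlety is bookkeeping: one must verify that the pairing induced by the embedding \eqref{embed} agrees with the topological pairing $\langle \cdot,\cdot\rangle$ when $\varphi_\gamma$ is expressed via its Hodge decomposition $\varphi_\gamma = \omega_\gamma + \overline{\omega_\gamma}$ as in \eqref{omegaga}. This is immediate because \eqref{embed} integrates $c = \omega_c + \overline{\omega_c}$ over $\beta$, which by definition is the period pairing. The integrality of periods of integral cohomology classes then concludes the argument.
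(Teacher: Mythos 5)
Your proof is correct and matches the paper's (essentially unstated) argument: the paper disposes of the lemma with the remark that the harmonic forms $\varphi_\ga$, $\ga \in H^1(X,\Z)$, are exactly the one-forms with integer periods over all one-cycles, which is precisely the integrality $\int_\beta \varphi_\ga = \langle \ga,\beta\rangle \in \Z$ that you verify to show invariance of $e^{2\pi i \varphi_\ga}$ under translation by the lattice $H_1(X,\Z) \hookrightarrow H^1(X,\R)^*$. Your bookkeeping via the embedding \eqref{embed} and the period pairing is exactly the intended reasoning.
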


The functions $f_\ga, \ga \in H^1(X,\Z)$, are the {\em Fourier
  harmonics} of the torus $\Pic^0(X)$. They form an orthogonal basis
of the Hilbert space $\mcH^0_{GL_1}$.

The following statement is proved in \cite{F:analyt}, Theorem 2.4.

\begin{proposition}    \label{eig H}
  The function $f_\ga, \ga \in H^1(X,\Z)$, is an eigenfunction of the
  Hecke operators $_{p_0}H_p$. The eigenvalue $F_\ga(p)$ of
  $_{p_0}H_p$ on $f_\ga$ is given by the formula
  \begin{equation}
    F_\ga(p) = \on{exp} \left( \int_{p_0}^p 2\pi
    i(\omega_\ga+\ol\omega_\ga) \right)
  \end{equation}
where the integral is taken over any path connecting $p_0$ to $p$ (the
integral does not depend on the choice of this path).
\end{proposition}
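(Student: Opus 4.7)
The plan is to compute the action of ${}_{p_0}H_p$ on $f_\ga$ directly, using the real-torus description \eqref{Picident} of $\Pic^0(X)$. First I would unwind the definition of ${}_{p_0}H_p = H_p \circ H_{p_0}^{-1}$: since $H_p$ is pull-back along the map $\mcL \mapsto \mcL(p)$, the composition ${}_{p_0}H_p$ is pull-back along the translation $\mcL \mapsto \mcL \otimes \mcO(p-p_0)$ of $\Pic^0(X)$. Thus it suffices to describe this translation under the identification \eqref{Picident} and then evaluate $f_\ga$ on the translate of any representative $\ell \in H^1(X,\R)^*$ of $\mcL$.

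Next I would identify the class of $\mcO(p-p_0)$ in $H^1(X,\R)^*/H_1(X,\Z)$ via the Abel--Jacobi construction. Concretely, for any path $\alpha$ from $p_0$ to $p$ in $X$, define $\ell_{p,p_0}^\alpha \in H^1(X,\R)^*$ by
$$
\ell_{p,p_0}^\alpha(c) := \int_\alpha (\omega_c + \ol\omega_c), \qquad c \in H^1(X,\R),
$$
where $\omega_c + \ol\omega_c$ is the harmonic representative of $c$. Two different choices of path differ by a cycle $\beta \in H_1(X,\Z)$, so $\ell_{p,p_0}^\alpha$ is well-defined modulo the image of $H_1(X,\Z)$ under the embedding \eqref{embed}. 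By the standard Abel--Jacobi theorem, the class of $\ell_{p,p_0}^\alpha$ in $\Pic^0(X)$ coincides with $\mcO(p-p_0)$.

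Given these identifications, for $\mcL \in \Pic^0(X)$ represented by $\ell \in H^1(X,\R)^*$,
$$
({}_{p_0}H_p \cdot f_\ga)(\mcL) \; = \; f_\ga(\mcL \otimes \mcO(p-p_0)) \; = \; e^{2\pi i (\ell + \ell_{p,p_0}^\alpha)(\varphi_\ga)} \; = \; e^{2\pi i \ell_{p,p_0}^\alpha(\varphi_\ga)} \, f_\ga(\mcL),
$$
and the prefactor $e^{2\pi i \ell_{p,p_0}^\alpha(\varphi_\ga)} = \exp\!\left( 2\pi i \int_{p_0}^p(\omega_\ga+\ol\omega_\ga)\right)$ is the asserted $F_\ga(p)$. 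The only point requiring care is path-independence: replacing $\alpha$ by $\alpha + \beta$ with $\beta \in H_1(X,\Z)$ changes the integral by $\int_\beta \varphi_\ga \in \Z$, since by construction $\varphi_\ga$ is the harmonic representative of an integral cohomology class and so has integer periods. Hence the exponential is well-defined, and no substantive obstacle remains; the proof is essentially a bookkeeping exercise resting on (i) the real-torus structure \eqref{Picident} of $\Pic^0(X)$ and (ii) the Abel--Jacobi description of $\mcO(p-p_0)$ in terms of path integrals of harmonic forms.
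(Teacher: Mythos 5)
Your computation is correct: ${}_{p_0}H_p$ is indeed pull-back along the translation $\mcL \mapsto \mcL \otimes \OO_X(p-p_0)$ (this is exactly the map \eqref{q1}), the class of $\OO_X(p-p_0)$ in the real torus \eqref{Picident} is the functional $c \mapsto \int_{p_0}^p(\omega_c+\ol\omega_c)$ by the real form of Abel--Jacobi (the isomorphism $H^0(X,K_X)^* \simeq H^1(X,\R)^*$, $\lambda \mapsto (c \mapsto 2\,\mathrm{Re}\,\lambda(\omega_c))$, matches the classical period lattice with the lattice \eqref{embed}, which is the only convention-matching point you need to check), and integrality of the periods of $\varphi_\ga$ gives path-independence; so the character $f_\ga$ picks up exactly the factor $F_\ga(p)$. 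This is essentially the direct argument of \cite{F:analyt}, Theorem 2.4, which is where the paper takes Proposition \ref{eig H} from without reproving it. The route the paper itself develops in Section \ref{GL1} is different: rather than computing the translation action on Fourier harmonics, it derives the differential equations \eqref{HGL1} satisfied by the Hecke operator from the Hecke eigensheaf property of $\D_{GL_1}$ (Theorem \ref{Hecke-GL1} together with Proposition \ref{eqq1} and Corollary \ref{from D to fun}), obtaining Proposition \ref{Fp1} and hence the relation \eqref{relGL1} between Hecke eigenvalues and the eigenvalues of the translation-invariant differential operators; Propositions \ref{eig H} and \ref{eig A} are then both recovered from this relation (and formula \eqref{Hp} makes the eigenvalue explicit). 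Your approach buys an elementary, self-contained proof of the eigenvalue formula with no mention of differential operators; the paper's approach buys the link between Hecke eigenvalues and the spectrum of $\mcA_{GL_1}$ (i.e.\ $GL_1$-opers with real monodromy) and, more importantly, a method that generalizes to nonabelian $G$, where no direct torus computation is available, as carried out in Sections \ref{PGLn} and \ref{gencase}.
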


Now consider the algebra of global differential
operators on $\Pic^0(X)$,
$$
{\mc A}_{GL_1} = D_{GL_1} \otimes \ol{D}_{GL_1},
$$
where $D_{GL_1}$ (resp., $\ol{D}_{GL_1}$) is the algebra of global
holomorphic (resp., anti-holomorphic) differential operators on
$\Pic^0(X)$. Since $\Pic^0(X)$ is compact and has an abelian group
structure, we have
$$
D_{GL_1} = \on{Sym}(\Theta \Pic^0(X)), \qquad \ol{D}_{GL_1} =
\on{Sym}(\ol\Theta \Pic^0(X)),
$$
where $\Theta \Pic^0(X)$ (resp., $\ol\Theta \Pic^0(X)$) is the
commutative Lie algebra of holomorphic (resp., anti-holomorphic)
translation vector fields on $\Pic^0(X)$. We have
$$
\Theta \Pic^0(X) \simeq H^0(X,\Omega^{1,0})^*,
\qquad \ol\Theta \Pic^0(X) \simeq H^0(X,\Omega^{0,1})^*.
$$
Therefore the eigenvalues of the algebra ${\mc A}_{GL_1}$ on any
joint eigenvector are encoded by a pair $({\mb a},{\mb b})$, where
${\mb a} \in H^0(X,\Omega^{1,0})$ and ${\mb b} \in
H^0(X,\Omega^{0,1})$.

We associate to ${\mb a}$ a holomorphic $GL_1$-oper, i.e. the trivial
line bundle on $X$ with the holomorphic connection $d + {\mb a}$. And
we associate to ${\mb b}$ an anti-holomorphic $GL_1$-oper, i.e. the
trivial line bundle on $X$ with the anti-holomorphic connection $d +
{\mb b}$.

\begin{proposition}[\cite{EFK}, Theorem 5.4]    \label{eig A}
  \hfill
  \begin{enumerate}
    \item[(1)]
  The eigenvalues of ${\mc A}_{GL_1}$ on $f_\ga$ are given by
  the pairs
$$
({\mb a},{\mb b}) = (2\pi i\omega_\ga,2\pi i \,
  \ol\omega_\ga).
$$

\item[(2)] The $GL_1$-opers $d + 2\pi i\omega_\ga, \ga \in H^1(X,\Z)$,
  are all holomorphic $GL_1$-opers on $X$ with real
  monodromy (i.e. the monodromy representation takes values in
  $GL_1(\R) \subset GL_1(\C)$).

\item[(3)] The spectrum of ${\mc A}_{GL_1}$ on $\mcH^0_{GL_1}$ is in
bijection with the set of holomorphic $GL_1$-opers on $X$ with real
monodromy.
\end{enumerate}
\end{proposition}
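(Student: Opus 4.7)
The plan is to compute the eigenvalues directly from the explicit formula \eqref{Fourier} for the Fourier harmonics $f_\gamma$, and then analyze the monodromy of the resulting $GL_1$-opers using the Hodge-theoretic identification \eqref{Picident}.

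For part (1), lift $f_\gamma = e^{2\pi i \varphi_\gamma}$ to the universal cover $H^1(X,\mathbb{R})^*$ of $\Pic^0(X)$, where it is a pure exponential of the real-linear functional $\varphi_\gamma = \omega_\gamma + \bar\omega_\gamma$. A holomorphic translation vector field $\xi \in H^0(X,\Omega^{1,0})^*$ extends to a $\mathbb{C}$-linear functional on $H^1(X,\mathbb{C})^* = H^0(X,\Omega^{1,0})^* \oplus H^0(X,\Omega^{0,1})^*$ which vanishes on the anti-holomorphic summand, so pairing it against $\varphi_\gamma$ picks out only the $(1,0)$-part. Differentiating then gives $\xi \cdot f_\gamma = 2\pi i\,\langle \xi, \omega_\gamma\rangle f_\gamma$, which means the eigenvalue of $D_{GL_1}$ on $f_\gamma$ is encoded by $\mathbf{a} = 2\pi i\,\omega_\gamma$; an identical argument for anti-holomorphic vector fields yields $\mathbf{b} = 2\pi i\,\bar\omega_\gamma$.

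For part (2), the monodromy of the holomorphic connection $d + 2\pi i\,\omega_\gamma$ around a cycle $\beta \in H_1(X,\mathbb{Z})$ is $\exp\!\bigl(-2\pi i \int_\beta \omega_\gamma\bigr)$. The key identity is
\[
2\,\mathrm{Re}\!\left(\int_\beta \omega_\gamma\right) = \int_\beta (\omega_\gamma + \bar\omega_\gamma) = \int_\beta \varphi_\gamma \in \mathbb{Z},
\]
where the integrality comes from the very construction of $\varphi_\gamma$ in \eqref{embed}--\eqref{omegaga}. This forces the imaginary part of the exponent into $\pi\mathbb{Z}$, so the monodromy lies in $\mathbb{R}^*$. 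Conversely, given a real-monodromy oper $d + \omega$ with $\omega \in H^0(X,\Omega^{1,0})$, the condition $\int_\beta \omega \in \mathbb{R} + i\pi\mathbb{Z}$ translates, upon setting $\omega' := \omega/(2\pi i)$, into $\int_\beta(\omega' + \bar\omega') \in \mathbb{Z}$ for every $\beta$. Hodge theory then produces a unique $\gamma \in H^1(X,\mathbb{Z})$ whose harmonic representative equals $\omega' + \bar\omega'$, and uniqueness of the Hodge decomposition gives $\omega_\gamma = \omega'$, i.e.\ $\omega = 2\pi i\,\omega_\gamma$.

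For part (3), the Fourier harmonics $\{f_\gamma\}_{\gamma \in H^1(X,\mathbb{Z})}$ form a complete orthogonal basis of $\mathcal{H}^0_{GL_1}$, so parts (1) and (2) exhaust the joint spectrum of $\mathcal{A}_{GL_1}$; the map $\gamma \mapsto \omega_\gamma$ is injective because the projection of $H^1(X,\mathbb{R}) \subset H^0(X,\Omega^{1,0}) \oplus H^0(X,\Omega^{0,1})$ onto the $(1,0)$-summand is injective on real classes, so distinct $\gamma$ give distinct eigenvalues and hence a bijection between the spectrum and $\on{Op}_{GL_1}(X)_{\mathbb{R}}$. The main bookkeeping issue, rather than any analytic obstacle, is aligning the factors of $2\pi i$ and the Hodge components between the harmonic decomposition $\varphi_\gamma = \omega_\gamma + \bar\omega_\gamma$ on one side and the oper connection form on the other; once that is fixed everything follows from the explicit formula \eqref{Fourier}.
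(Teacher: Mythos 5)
Your argument is correct, and it is essentially a direct proof: part (1) by differentiating the explicit Fourier harmonics $f_\ga = e^{2\pi i \varphi_\ga}$ against constant (anti)holomorphic vector fields and using the Hodge decomposition of the complexified functional $\varphi_\ga$, part (2) by computing the monodromy $\exp\bigl(-2\pi i\int_\beta \omega_\ga\bigr)$ and using integrality of the periods of $\varphi_\ga$ in both directions, and part (3) by completeness of the eigenbasis plus injectivity of $\ga \mapsto \omega_\ga$. This is the route of the cited source ([EFK], Theorem 5.4), which is where the present paper takes the statement from; but the paper's own in-text derivation (the Remark following Theorem \ref{Fp}) is different: there Propositions \ref{eig H} and \ref{eig A} are deduced from the relation \eqref{relGL1} between Hecke and differential-operator eigenvalues, with single-valuedness of $F(p)$ forcing ${\mb a}+{\mb b} = 2\pi i\varphi_\ga$ (formula \eqref{a+b}) and self-adjointness of $\xi - \ol\xi$ and $(\xi+\ol\xi)/i$ forcing ${\mb b} = -\ol{\mb a}$ (formula \eqref{a-b}). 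Your computation is more elementary and self-contained (it never invokes the Hecke operators), which is a virtue if one only wants Proposition \ref{eig A}; the paper's route, by contrast, is deliberately organized so that the same differential equations \eqref{HGL1} satisfied by the Hecke operators yield both spectra at once, since that mechanism is the blueprint generalized to nonabelian $G$ in Sections \ref{PGLn} and \ref{gencase}. One small wording issue: a holomorphic translation vector field $\xi \in H^0(X,\Omega^{1,0})^*$ is an element of the $(1,0)$-summand of the complexified tangent space $H^1(X,\C)^*$ (equivalently, a functional on $H^1(X,\C)$ vanishing on $H^0(X,\Omega^{0,1})$), not "a functional on $H^1(X,\C)^*$"; the intended pairing $\langle \xi,\varphi_\ga\rangle = \langle \xi,\omega_\ga\rangle$ is nevertheless exactly right, so this is a slip of phrasing rather than a gap.
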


Combining Propositions \ref{eig H} and \ref{eig A}, we obtain the
following relation between the eigenvalues of the Hecke operators and
${\mc A}_{GL_1}$.

\begin{theorem}    \label{Fp}
Let $F(p)$ and $({\mb a},{\mb b})$ be the eigenvalues of the Hecke
operators $_{p_0}H_p, p \in X$, and ${\mc A}_{GL_1}$ on a joint
eigenfunction in $L^2(\Pic^0(X))$, respectively. Then
\begin{equation}    \label{relGL1}
F(p) = \on{exp} \left( \int_{p_0}^p ({\mb a}+{\mb b}) \right)
\end{equation}
and ${\mb b} = -\ol{\mb a}$.
\end{theorem}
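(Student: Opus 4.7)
The plan is to deduce Theorem \ref{Fp} as a direct combination of Propositions \ref{eig H} and \ref{eig A}, both of which have already been recorded in the excerpt. Since these propositions diagonalize both the family of Hecke operators $\{{}_{p_0}H_p\}_{p\in X}$ and the algebra $\mcA_{GL_1}$ in the common eigenbasis of Fourier harmonics $\{f_\ga\}_{\ga\in H^1(X,\Z)}$ on $\Pic^0(X)$, the theorem amounts to a basis-free rewriting of these results together with a uniqueness argument.

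First I would check that the joint spectrum is simple. By Proposition \ref{eig A}(1), the eigenvalue pair of $\mcA_{GL_1}$ on $f_\ga$ is $(2\pi i\omega_\ga, 2\pi i\,\ol\omega_\ga)$. The holomorphic form $\omega_\ga$ determines the real harmonic one-form $\varphi_\ga = \omega_\ga + \ol\omega_\ga$ of \eqref{omegaga}, and hence the integer cohomology class $\ga$, so the map $\ga\mapsto(2\pi i\omega_\ga,2\pi i\ol\omega_\ga)$ is injective. Therefore $\mcA_{GL_1}$ already has simple spectrum on $\mcH^0_{GL_1}$, and since $\{f_\ga\}$ is an orthogonal basis, every joint eigenfunction of $\mcA_{GL_1}$ and the $\{{}_{p_0}H_p\}$ must be a nonzero scalar multiple of some single $f_\ga$.

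Given such a $\ga$, the two claimed formulas follow immediately. Proposition \ref{eig A}(1) gives $(\mb a,\mb b) = (2\pi i\omega_\ga, 2\pi i\ol\omega_\ga)$, and a direct check of Hodge types yields $\ol{\mb a} = \ol{2\pi i\omega_\ga} = -2\pi i\ol\omega_\ga = -\mb b$, establishing $\mb b=-\ol{\mb a}$. For the Hecke eigenvalue, Proposition \ref{eig H} and the identity $\mb a+\mb b = 2\pi i(\omega_\ga+\ol\omega_\ga) = 2\pi i\varphi_\ga$ give
\begin{equation*}
F(p) \;=\; \exp\!\left(\int_{p_0}^p 2\pi i(\omega_\ga+\ol\omega_\ga)\right) \;=\; \exp\!\left(\int_{p_0}^p (\mb a+\mb b)\right),
\end{equation*}
which is \eqref{relGL1}. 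Path-independence is automatic because $\varphi_\ga$ has integer periods by \eqref{embed}, so different paths from $p_0$ to $p$ change the integral by elements of $2\pi i\Z$ and leave $\exp$ invariant.

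There is no substantive obstacle in this argument: the analytic content has all been concentrated in Propositions \ref{eig H} and \ref{eig A}. The only point worth stating carefully is the simplicity of the joint spectrum of $\mcA_{GL_1}$, which upgrades the two propositions (originally stated at the level of the basis $\{f_\ga\}$) into a statement about arbitrary joint eigenfunctions and lets us identify the complex conjugation relation $\mb b = -\ol{\mb a}$ with the reality of the monodromy asserted in Proposition \ref{eig A}(2).
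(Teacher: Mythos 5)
Your proposal is correct and follows essentially the same route as the paper, which obtains Theorem \ref{Fp} precisely by combining Propositions \ref{eig H} and \ref{eig A} (your added remark on the simplicity of the joint spectrum just makes explicit the reduction to the basis $\{f_\ga\}$ that the paper leaves implicit). The paper's later derivation via the Hecke functor acting on $\D_{GL_1}$ is presented only as an alternative proof, so no discrepancy arises.
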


This is equivalent to the following statement. Denote by $\pa$ and
$\ol\pa$ the holomorphic and anti-holomorphic de Rham differentials,
respectively.

\begin{proposition}    \label{Fp1}
The function $F(p)$ is the single-valued solution of the differential
equation
\begin{equation}    \label{diffGL1}
dF = ({\mb a}+{\mb b})F
\end{equation}
or equivalently, the system
\begin{equation}    \label{diffGL1-1}
\pa F = {\mb a} F, \qquad \ol\pa F = {\mb b} F
\end{equation}
normalized so that $F(p_0)=1$.
\end{proposition}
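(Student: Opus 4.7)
The plan is to deduce Proposition \ref{Fp1} directly from Theorem \ref{Fp}. Starting from the explicit formula $F(p) = \exp\bigl(\int_{p_0}^p ({\mb a}+{\mb b})\bigr)$ given there, differentiating with respect to the upper limit immediately yields $dF = ({\mb a}+{\mb b})F$, and setting $p=p_0$ gives $F(p_0)=1$. Decomposing $d = \pa + \ol\pa$ and using that ${\mb a} \in H^0(X,\Omega^{1,0})$ is of type $(1,0)$ while ${\mb b} \in H^0(X,\Omega^{0,1})$ is of type $(0,1)$ then recovers the two separate equations \eqref{diffGL1-1}.

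The next step is to verify that the formula of Theorem \ref{Fp} really defines a single-valued function on $X$. By Proposition \ref{eig A}, the pair $({\mb a},{\mb b})$ is of the form $(2\pi i \omega_\ga, 2\pi i \ol\omega_\ga)$ for some $\ga \in H^1(X,\Z)$, hence ${\mb a}+{\mb b} = 2\pi i \varphi_\ga$ where $\varphi_\ga = \omega_\ga+\ol\omega_\ga$ is the harmonic representative of an integral de Rham class on $X$. Its periods over cycles in $H_1(X,\Z)$ are therefore integers, so the exponential of $\int_{p_0}^p({\mb a}+{\mb b})$ is independent of the chosen path from $p_0$ to $p$. This also confirms the accompanying relation ${\mb b} = -\ol{\mb a}$ stated in Theorem \ref{Fp}, since $\ol{\mb a} = -2\pi i \ol\omega_\ga = -{\mb b}$.

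For uniqueness, suppose $\widetilde F$ is any single-valued solution of \eqref{diffGL1} on $X$ with $\widetilde F(p_0)=1$. Since $F$ is nowhere vanishing, the ratio $\widetilde F/F$ is well defined and satisfies $d(\widetilde F/F) = 0$; as $X$ is connected, this ratio is constant, and its value at $p_0$ forces it to equal $1$. The main obstacle, such as it is, is not computational but conceptual: all of the substantive analytic work has been done in establishing Theorem \ref{Fp}, and the content of Proposition \ref{Fp1} is to repackage the integral formula there as the unique single-valued horizontal section of the flat line bundle on $X$ whose connection $1$-form is ${\mb a}+{\mb b}$. This reformulation is what makes the analogy with the differential equations \eqref{eqHom1} for Hecke operators on $Bun_G$ in the non-abelian case transparent.
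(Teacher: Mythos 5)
Your argument is correct: differentiating the explicit formula \eqref{relGL1} of Theorem \ref{Fp}, separating types to get \eqref{diffGL1-1}, checking single-valuedness of the exponential via the integrality of the periods of $2\pi i\varphi_\ga$ (using Proposition \ref{eig A}), and proving uniqueness by the ratio argument are all valid, and this is essentially how the paper itself first justifies Proposition \ref{Fp1} — it is stated as being ``equivalent'' to Theorem \ref{Fp}, which in turn is obtained by combining the quoted Propositions \ref{eig H} and \ref{eig A}. However, this is not the proof the paper actually features: the remainder of Section 2 is devoted to an \emph{alternative} proof that deliberately avoids Propositions \ref{eig H} and \ref{eig A}. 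There one establishes the Hecke eigensheaf property of $\D_{GL_1}$ (Theorem \ref{Hecke-GL1}), deduces the equation $\pa_X 1_{q_1} = \sigma\cdot 1_{q_1}$ for the canonical section (Proposition \ref{eqq1}), and then transports it to the operator ${}_{p_0}\wh{H}$ via Corollary \ref{from D to fun}, obtaining the system \eqref{HGL1}, of which Proposition \ref{Fp1} is the spectral consequence; single-valuedness of $F(p)$ is then automatic (it is the eigenvalue of an operator depending on $p\in X$), and, as the remark following the proposition explains, the explicit spectral descriptions of Propositions \ref{eig H} and \ref{eig A} can be \emph{re-derived} from Proposition \ref{Fp1} rather than used as input. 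So your route buys brevity at the cost of resting on the previously known eigenvalue computations (and running the logic in the direction opposite to the one the paper wants), while the paper's route is self-contained at the spectral level and, more importantly, is the abelian blueprint for the differential equations \eqref{eqHom1} in the nonabelian case, where no analogue of Propositions \ref{eig H} and \ref{eig A} is available in advance.
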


\begin{remark}
Theorem \ref{Fp} does not specify the possible values of the
one-forms ${\mb a}$ and ${\mb b}$ that appear in this relation (which
we already know from Propositions \ref{eig H} and \ref{eig
  A}). But these values can be readily obtained from the relation
\eqref{relGL1}.

Indeed, single-valuedness of the function $F(p)$ given by
\eqref{relGL1} implies that the integrals of the one-form ${\mb a} +
{\mb b}$ over all one-cycles in $X$ are integer multiples of
$2\pi i$. This is equivalent to
\begin{equation}    \label{a+b}
{\mb a} + {\mb b} = 2\pi i \varphi_\ga,
\end{equation}
where $\varphi_\ga = \omega_\ga + \ol\omega_\ga$ is the harmonic
one-form introduced above, for some $\ga \in H^1(X,\Z)$. On the other
hand, the self-adjointness on $L^2(\Pic^0(X))$ of the operators of the
form $\xi - \ol\xi$ and $(\xi + \ol\xi)/i$, where $\xi$ is any
holomorphic translation vector field on $\Pic^0(X)$, implies that
\begin{equation}    \label{a-b}
  {\mb b} = - \ol{\mb a}
\end{equation}
Combining formulas \eqref{a+b} and \eqref{a-b}, we obtain that
$$
{\mb a} = 2\pi i \omega_\ga, \qquad {\mb b} = 2\pi i \, \ol\omega_\ga
$$
for some $\ga \in H^1(X,\Z)$.

Therefore, we can derive Propositions \ref{eig H} and \ref{eig A} from
Theorem \ref{Fp}. Thus, Theorem \ref{Fp} (or equivalently, Proposition
\ref{Fp1}) is the key statement that yields explicit formulas for the
eigenvalues of both Hecke operators and the global differential
operators.\qed
\end{remark}

In the rest of this section, we will give an alternative proof of
Proposition \ref{Fp1} using the action of the {\em Hecke functors}
$_{p_0}{\mathbf H}_p$ (categorical versions of the Hecke operators
$_{p_0}H_p$) on the sheaf $\D_{GL_1}$ of differential operators on
$\Pic^0(X)$. This will be our blueprint for proving analogous
statements for a general group $G$.

\subsection{Hecke operators and Hecke functors}

Consider the $GL_1$ version of the Hecke correspondence
\begin{equation}    \label{Hecke GL1}
{\mc Hecke} = \{ ({\mc L},{\mc L}(p_0-p),p) \} \subset
\Pic^0(X) \times \Pic^0(X) \times X
\end{equation}
and let $q_1, q_2: {\mc Hecke} \to \Pic^0(X)$ be the two projections
and $q_3: {\mc Hecke} \to X$. We have
$$
q_2 \times q_3: {\mc Hecke} \overset{\sim}\longrightarrow \Pic^0(X) \times
X
$$
and with this isomorphism, $q_1$ becomes the map
\begin{eqnarray}
  q_1: \Pic^0(X) \times X & \to & \Pic^0(X) \\ \label{q1}
  {\mc L} & \mapsto & {\mc L}(p-p_0)
\end{eqnarray}

Denote by $_{p_0}\wh{H}_p$ the restriction of $_{p_0}H_p$ to the dense
subspace of $C^\infty$ functions on $\Pic^0(X)$. This is the operator
of pulling back a $C^\infty$ function under the map $q_1$ and
restricting the result to $\Pic^0(X) \times p$. As $p$ varies along
$X$, these operators combine into a single operator
$$
_{p_0}\wh{H}: C^\infty(\Pic^0(X)) \to C^\infty(\Pic^0(X) \times X).
$$

We are going to relate it to the {\em Hecke functor}
$$
_{p_0}{\mathbf H} := q_1^*: \on{Mod}(\D_{\Pic^0(X)}) \to
\on{Mod}(\D_{\Pic^0(X) \times X}),
$$
the $D$-module pull-back functor.

\subsection{Another derivation of the differential equations}


We will now derive the differential equations \eqref{diffGL1-1}
appearing in Proposition \ref{Fp1} using Corollary \ref{from D to fun}
from Section \ref{pull}, in which we will take $Z = \Pic^0(X) \times
X, Y = \Pic^0(X)$ and let $q_1$ be the map $Z \to Y$ given by
\eqref{q1}. Denote the corresponding section $1_{Z \to Y}$ (see
Section \ref{pull}) by $1_{q_1}$. The equations \eqref{diffGL1-1} will
follow from Corollary \ref{from D to fun} and the fact that $1_{q_1}$
satisfies the differential equation of Proposition \ref{eqq1} below.

To prove the Proposition \ref{eqq1}, denote by $\D_{GL_1}$ the sheaf
of holomorphic differential operators on $\Pic^0(X)$. The algebra
$$
D_{GL_1} = \Gamma(\Pic^0(X),\D_{GL_1}) = \on{Sym}(\Theta \Pic^0(X))
$$
acts by endomorphisms of $\D_{GL_1}$ from the right, and this action
commutes with the left action of $\D_{GL_1}$. We know that
$$
\on{Spec} D_{GL_1} \simeq H^0(X,\Omega^{1,0}) =
\on{Op}_{GL_1}(X).
$$
Recall that a (holomorphic) $GL_1$-oper on $X$ is a holomorphic
connection $\nabla$ on the trivial line bundle on $X$, so we can write
\begin{equation}    \label{nablaa}
\nabla = \nabla_{\mb a} := d + {\mb a},
\end{equation}
where ${\mb a} \in H^0(X,\Omega^{1,0})$.

The trivial line bundle $\V$ on $\on{Op}_{GL_1}(X) \times X$ is
equipped with a partial connection $\nabla^{\on{univ}}$ along $X$,
whose restriction to $\nabla_{\mb a} \times X \subset
\on{Op}_{GL_1}(X) \times X$ is the connection $\nabla_{\mb a}$ on
$X$. Thus, $(\V,\nabla^{\on{univ}})$ is the universal $GL_1$-oper on
$\on{Op}_{GL_1}(X) \times X$. We can write an explicit formula for
$\nabla^{\on{univ}}$.

Let $W$ be a finite-dimensional vector space and $A = \on{Fun} W =
\on{Sym} W^*$. The canonical element of $W^* \otimes W$ gives rise to
an element of $A \otimes W$. Taking $W = H^0(X,\Omega^{1,0})$, we
obtain a holomorphic one-form $\sigma$ with values in $\on{Fun}
\on{Op}_{GL_1}(X)$. Then
\begin{equation}    \label{nabla-GL1}
  \nabla^{\on{univ}} = d + \sigma.
\end{equation}

Explicitly, if $\{ \omega_i, i=1,\ldots,g \}$ is a
basis of $H^0(X,\Omega^{1,0})$ and $\{ b_i, i=1,\ldots,g \}$ is the
dual basis in $H^0(X,\Omega^{1,0})^* \simeq \Theta \Pic^0(X)$, then
\begin{equation}    \label{sigma-GL1}
\sigma = \sum_{i=1}^g b_i \omega_i.
\end{equation}

Let $\V_X := \pi_*(\V)$, where $\pi$ is the projection
$\on{Op}_{GL_1}(X) \times X \to X$. The connection
$\nabla^{\on{univ}}$ makes $\V_X$ into a left $\D_X$-module.

Moreover, the unit $1 \in \on{Fun} \on{Op}_{GL_1}(X)$ gives rise to a
global section of $\V_X$, which we denote by $1_{\V_X}$. In addition,
$\V_X$ is equipped with an action of $\on{Fun} \on{Op}_{GL_1}(X)
\simeq D_{GL_1}$ which commutes with the action of $\D_X$.

This allows us to define the following $D$-module on $\Pic^0(X) \times
X$:
$$
\D_{GL_1} \underset{D_{GL_1}}\boxtimes \V_X.
$$
The algebra $D_{GL_1}$ acts on it by endomorphisms which commute with
the action of the sheaf $\D_{\Pic^0(X) \times X}$.

Define its global section $s$ by the formula
$$
s := 1 \boxtimes 1_{\V_X}.
$$
The element \eqref{sigma-GL1} gives rise to a linear operator
$$
\sigma: \D_{GL_1}
\underset{D_{GL_1}}\boxtimes \V_X \to \D_{GL_1}
\underset{D_{GL_1}}\boxtimes (\V_X \otimes K_X).
$$
(namely, we interpret the $b_i$ as elements of $\on{Sym}(\Theta
\Pic^0(X)) = D_{GL_1}$).

The $\D_X$-module structure on $\D_{GL_1} \underset{D_{GL_1}}\boxtimes
\V_X$ allows us to define the action of the holomorphic de Rham
differential $\pa_X$ along $X$ on sections of $\D_{GL_1}
\underset{D_{GL_1}}\boxtimes \V_X$,
$$
\pa_X: \D_{GL_1}
\underset{D_{GL_1}}\boxtimes \V_X \to \D_{GL_1}
\underset{D_{GL_1}}\boxtimes (\V_X \otimes K_X).
$$
Formula \eqref{nabla-GL1} readily implies the following.

\begin{lemma}    \label{seq}
  The section $s$ satisfies the equation
\begin{equation}    \label{seq1}
  \pa_X s = \sigma \cdot s.
\end{equation}
\end{lemma}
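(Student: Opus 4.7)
The strategy is a direct computation unpacking the two structures at play: the $\D_X$-action on $\V_X$ defined via the universal connection $\nabla^{\on{univ}}$, and the balancing of the tensor product over $D_{GL_1}$, which lets us move $D_{GL_1}$-elements across $\boxtimes$. Since $\D_{GL_1}$ has no $X$-dependence, the $\D_X$-action on $\D_{GL_1} \underset{D_{GL_1}}\boxtimes \V_X$ is carried entirely by the second factor, so I would begin with
$$
\pa_X s \;=\; 1 \boxtimes \pa_X(1_{\V_X}).
$$

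First, I would compute $\pa_X(1_{\V_X})$ using the formula $\nabla^{\on{univ}} = d + \sigma$ given in \eqref{nabla-GL1}. Because the section $1 \in \on{Fun}(\on{Op}_{GL_1}(X) \times X)$ is annihilated by the ordinary de Rham differential $d$, the $\D_X$-action on its push-forward reduces to multiplication by the connection one-form:
$$
\pa_X(1_{\V_X}) \;=\; \sigma \cdot 1_{\V_X} \;=\; \sum_{i=1}^g \omega_i \otimes (b_i \cdot 1_{\V_X}),
$$
where each $b_i \in D_{GL_1}$ acts on $\V_X$ through the identification $D_{GL_1} \simeq \on{Fun}(\on{Op}_{GL_1}(X))$ as multiplication by the linear coordinate function $b_i$ on the affine space $\on{Op}_{GL_1}(X)$.

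The final step is to transport the $b_i$ across the tensor product. Since $b_i \in D_{GL_1}$, and $\D_{GL_1}$ is a right $D_{GL_1}$-module (as recalled just before the lemma), the balancing identity in $\D_{GL_1} \underset{D_{GL_1}}\boxtimes \V_X$ gives $1 \boxtimes (b_i \cdot 1_{\V_X}) = b_i \boxtimes 1_{\V_X}$. Assembling the pieces,
$$
\pa_X s \;=\; \sum_{i=1}^g \omega_i \cdot (b_i \boxtimes 1_{\V_X}) \;=\; \left(\sum_{i=1}^g b_i \omega_i\right) \boxtimes 1_{\V_X} \;=\; \sigma \cdot s,
$$
the last equality being the definition of the linear operator $\sigma$ on $\D_{GL_1} \underset{D_{GL_1}}\boxtimes \V_X$ in which the $b_i$ are interpreted as elements of $\on{Sym}(\Theta \Pic^0(X)) = D_{GL_1}$ acting on the first factor. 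The only point requiring verification is that the two roles of $\sigma$ --- as a connection one-form multiplying $\V_X$ by functions on $\on{Op}_{GL_1}(X)$, and as a differential operator built from $D_{GL_1}$-multiplication on $\D_{GL_1}$ --- are compatible; this is tautological from the way $\V_X$ and the isomorphism $D_{GL_1} \simeq \on{Fun} \on{Op}_{GL_1}(X)$ are set up. I do not anticipate a genuine obstacle here: the lemma is a formal bookkeeping identity which records how the universal $GL_1$-oper is packaged inside the $D$-module $\D_{GL_1} \underset{D_{GL_1}}\boxtimes \V_X$, and it is precisely this identity that will make Corollary \ref{from D to fun} applicable to yield \eqref{diffGL1-1}.
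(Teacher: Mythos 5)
Your proof is correct and matches the paper's intent exactly: the paper simply asserts that formula \eqref{nabla-GL1}, $\nabla^{\on{univ}} = d + \sigma$, ``readily implies'' the lemma, and your computation (the constant section is killed by $d$, so $\pa_X 1_{\V_X} = \sigma \cdot 1_{\V_X}$, then move the $b_i$ across the balanced tensor product over $D_{GL_1}$) is precisely the unpacking of that assertion. No gaps.
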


\begin{corollary}    \label{modified}
The $D$-module $\D_{GL_1} \underset{D_{GL_1}}\boxtimes \V_X$ is
isomorphic to $\D_{GL_1} \boxtimes \OO_X$ with the action of $\D_X$ on
the second factor modified so that the holomorphic de Rham
differential $\pa_X$ acts as follows
\begin{equation}    \label{paX}
  \pa_X \mapsto 1 \boxtimes \pa_X + \sigma
\end{equation}
where $\sigma \mapsto \sum_{i=1}^g b_i \boxtimes \omega_i$.
\end{corollary}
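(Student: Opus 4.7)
The plan is to unwind the tensor product, using the fact that the universal oper bundle $\V$ is trivial as an $\OO$-module, and then translate Lemma \ref{seq} into the asserted action of $\pa_X$.

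First, I would observe that as an $\OO$-module, $\V$ is the trivial line bundle on $\on{Op}_{GL_1}(X) \times X$, so the push-forward $\V_X = \pi_*(\V)$ is isomorphic, as a module over $\on{Fun}\on{Op}_{GL_1}(X) \otimes \OO_X \simeq D_{GL_1} \otimes \OO_X$, to $D_{GL_1} \boxtimes \OO_X$ with the global section $1_{\V_X}$ corresponding to $1 \boxtimes 1$. Under this identification, the connection $\nabla^{\on{univ}} = d + \sigma$ endows $\V_X$ with the $\D_X$-module structure in which $\pa_X$ acts as $\pa_X \otimes 1 + \sigma\cdot$, where the multiplication by $\sigma = \sum_i b_i \omega_i$ is via the $D_{GL_1}$-factor.

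Next I would tensor over $D_{GL_1}$: since $\V_X \simeq D_{GL_1} \boxtimes \OO_X$ as a $D_{GL_1}$-module (with $D_{GL_1}$ acting on the first factor), we obtain a canonical isomorphism of $\D_{\Pic^0(X)} \boxtimes \OO_X$-modules
\begin{equation*}
\D_{GL_1} \underset{D_{GL_1}}\boxtimes \V_X \; \simeq \; \D_{GL_1} \underset{D_{GL_1}}\boxtimes (D_{GL_1} \boxtimes \OO_X) \; \simeq \; \D_{GL_1} \boxtimes \OO_X,
\end{equation*}
sending the section $s = 1 \boxtimes 1_{\V_X}$ to $1 \boxtimes 1$. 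The transport of the $\D_X$-action through this isomorphism is the content of the corollary: since $b_i \in D_{GL_1}$ acts on $\D_{GL_1}$ from the right and this right action is intertwined with the left action by the identification (or simply, by commutativity of the symbol algebra, since $D_{GL_1} = \on{Sym}(\Theta \Pic^0(X))$ is commutative and acts from the right on $\D_{GL_1}$), pushing $\sigma$ across the tensor product produces the formula $\pa_X \mapsto 1 \boxtimes \pa_X + \sigma$ with $\sigma = \sum_i b_i \boxtimes \omega_i$, where now $b_i$ acts by left multiplication on $\D_{GL_1}$.

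Finally, I would verify consistency by checking this action on the distinguished section $s$: the right-hand side applied to $s$ equals $0 + \sum_i b_i \boxtimes \omega_i$, which matches Lemma \ref{seq} exactly. Since $s$ generates $\D_{GL_1} \underset{D_{GL_1}}\boxtimes \V_X$ over $\D_{\Pic^0(X)} \boxtimes \D_X$ (indeed, $1_{\V_X}$ generates $\V_X$ as a $D_{GL_1} \otimes \OO_X$-module, and hence $s$ generates the tensor product as a $\D_{\Pic^0(X)} \boxtimes \OO_X$-module), and the stated operator commutes with the $\D_{\Pic^0(X)} \boxtimes \OO_X$-action in the same way as the genuine $\pa_X$, the two actions agree everywhere. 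The only subtle point—and the part I would write most carefully—is the passage between the right $D_{GL_1}$-action on $\D_{GL_1}$ used implicitly in forming the tensor product and the left-multiplication appearance of $b_i$ in the final formula; this is straightforward here because $D_{GL_1}$ is commutative, but it is the one place where the abelian nature of $GL_1$ makes the identification so clean.
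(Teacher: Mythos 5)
Your overall route is the same as the paper's: trivialize the universal oper bundle, so that $\V_X \simeq D_{GL_1} \otimes \OO_X$ with $\pa_X$ acting through $\nabla^{\on{univ}} = d + \sigma$, then tensor with $\D_{GL_1}$ over $D_{GL_1}$ and transport the action. But the step you yourself single out as the delicate one is exactly where the proposal goes wrong. Transporting $\sigma$ through the identification $P \otimes (b \otimes f) \mapsto Pb \otimes f$ produces \emph{right} multiplication by $b_i$ on the first factor, i.e. the action of $b_i$ through the right $D_{GL_1}$-action on $\D_{GL_1}$ --- not left multiplication. Your justification, that the right action ``is intertwined with the left action \dots because $D_{GL_1}$ is commutative,'' confuses commutativity of the algebra $D_{GL_1} = \on{Sym}(\Theta\Pic^0(X))$ with centrality of its elements in the sheaf $\D_{GL_1}$: for a local (non-invariant) function $f$ one has $b_i f - f b_i = b_i(f) \neq 0$, so left and right multiplication by $b_i$ on $\D_{GL_1}$ differ by $[b_i,\cdot\,]$, and the ``left multiplication'' version of \eqref{paX} is a genuinely different (and incorrect) operator.

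Your own uniqueness argument detects this if run honestly. Checking the formula only on the generator $s = 1 \boxtimes 1_{\V_X}$ proves nothing here, because left and right multiplication by $b_i$ agree on $1$; what propagates the identity from $s$ to all sections is that the candidate operator must commute with the left $\D_{\Pic^0(X)}$-action (the genuine $\pa_X$ acts only through the $\V_X$-factor, hence does commute). Right multiplication by $b_i$ commutes with left multiplication by everything; left multiplication by $b_i$ does not, so the operator you wrote down fails the very commutation property your argument requires. With ``left multiplication'' replaced by ``the right $D_{GL_1}$-action, i.e. right multiplication by the translation-invariant vector field $b_i$,'' your proof is correct and coincides with the paper's; this convention is also what makes the subsequent identification in the proof of Theorem \ref{Hecke-GL1} come out right, where the action of $1 \boxtimes \xi$ on $\D_A \boxtimes \OO_A$ is $P \boxtimes f \mapsto P \boxtimes \xi(f) + P\xi \boxtimes f$.
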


Now recall the $D$-module $q_1^*(\D_X)$ on $\Pic^0(X) \times X$ and
its section $1_{q_1}$. The algebra $D_{GL_1}$ naturally acts on
$q_1^*(\D_X)$ by endomorphisms commuting with the $D$-module
structure.

\begin{theorem}    \label{Hecke-GL1}
There is an isomorphism of $D$-modules on $\Pic^0(X) \times X$
equipped with a commuting action of $D_{GL_1}$,
\begin{equation}    \label{eigen-GL1}
  q_1^*(\D_{GL_1}) \simeq \D_{GL_1} \underset{D_{GL_1}}\boxtimes \V_X,
\end{equation}
under which the section $1_{q_1}$ is mapped to $s$.
\end{theorem}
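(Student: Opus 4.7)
The plan is to present both sides as cyclic $(\D_{\Pic^0(X) \times X}, D_{GL_1})$-bimodules generated by the distinguished sections $1_{q_1}$ and $s$, with identical defining relations; the assignment $1_{q_1} \mapsto s$ will then extend uniquely to the desired isomorphism.

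First, compute the differential of $q_1$. Using the group structure of $\Pic^0(X)$, factor $q_1 = m \circ (\id \times \widetilde{AJ})$, where $m$ is group multiplication and $\widetilde{AJ}(p) = \OO(p - p_0)$. Then $dq_1$ is the identity in the $\Pic^0(X)$-direction and, in the $X$-direction near a point $p$, sends $\pa_z \mapsto \sum_i f_i(z)\, b_i$, where $\omega_i = f_i(z)\, dz$ locally -- this is the Abel--Jacobi derivative in the basis of $\Theta\Pic^0(X)$ dual to $\{\omega_i\}$. Consequently the sheaf of relative vector fields for $q_1$ is locally generated by $\pa_z - \sum_i f_i(z)\, b_i$, where each $b_i$ is lifted to act only on the $\Pic^0(X)$-factor of $\Pic^0(X) \times X$.

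For the LHS, the standard formula for the $\D$-module pull-back, applied to the generator $1_{q_1} = 1 \otimes 1$ of the transfer bimodule $q_1^*(\D_{GL_1})$, yields two relations: $b_j \cdot 1_{q_1} = 1_{q_1} \cdot b_j$ (equating the left-action of $b_j$ as a vector field with the right $D_{GL_1}$-action on the generator) and $\pa_z \cdot 1_{q_1} = \sum_i f_i(z)\, 1_{q_1} \cdot b_i$ (the vanishing of the relative vector field on the generator). These exhibit $q_1^*(\D_{GL_1})$ as cyclic over $(\D_{\Pic^0(X) \times X}, D_{GL_1})$ on $1_{q_1}$ with precisely these relations.

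For the RHS, Corollary \ref{modified} identifies $\D_{GL_1} \boxtimes_{D_{GL_1}} \V_X$ with $\D_{GL_1} \boxtimes \OO_X$ equipped with the modified $\D_X$-action $\pa_X \mapsto 1 \boxtimes \pa_X + \sigma$. The generator $s = 1 \boxtimes 1_{\V_X}$ tautologically satisfies $b_j \cdot s = s \cdot b_j$, and Lemma \ref{seq} gives $\pa_z \cdot s = \sum_i f_i(z)\, b_i \cdot s$. Since both sides are cyclic on their generators with the same defining relations, the map $1_{q_1} \mapsto s$ extends to a surjective bimodule map; comparing underlying $\OO_{\Pic^0(X) \times X}$-modules -- both locally isomorphic to $\OO \otimes_\C D_{GL_1}$ with the right $D_{GL_1}$-action by multiplication on the second factor -- shows the map is an isomorphism. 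The main conceptual point to verify, and the heart of the argument, is that the 1-form $\sigma = \sum_i b_i \omega_i$ defining the universal $GL_1$-oper on the RHS coincides with the derivative of the Abel--Jacobi map controlling the pull-back on the LHS; this is the abelian avatar of the general principle, developed in Section \ref{PGLn}, linking Hecke functors to universal-oper differential equations.
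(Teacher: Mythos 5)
Your proposal is correct and takes essentially the same route as the paper: both rest on the presentation $\D_{Z \to Y} = \D_Z/(\D_Z \cdot \Theta_{Z/Y})$ of $q_1^*(\D_{GL_1})$, the identification of the relative vector fields using translation invariance together with the Abel--Jacobi derivative (equivalently, the fact that pull-back of one-forms under the Abel--Jacobi map is the identity on $H^0(X,K_X)$), and Corollary \ref{modified} with Lemma \ref{seq} on the right-hand side. The only difference is cosmetic: the paper first computes $m^*(\D_A)$ for the multiplication map of a general abelian variety and then restricts along the Abel--Jacobi embedding of $X$, whereas you compute the vertical vector fields of $q_1$ directly on $\Pic^0(X) \times X$ and phrase the conclusion as a matching of cyclic generators and relations.
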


\begin{proof}
By Corollary \ref{modified}, we need to prove that $q_1^*(\D_{GL_1})
\simeq \D_{GL_1} \boxtimes \OO_X$, with the modified action of $\D_X$
on the second factor.

Let $A$ be an abelian variety. It comes equipped with the
group homomorphism $m: A \times A \to A$. We claim that $m^*(\D_A)
\simeq \D_A \boxtimes \OO_A$, with $\D_A$ corresponding to the first
factor acting on itself from the left, and $\D_A$ corresponding to the
second factor acting on $\OO_A$ in a modified way so that the
corresponding action of the holomorphic de Rham differential is given
by the formula
\begin{equation}    \label{paA}
\pa_A \mapsto 1 \boxtimes \pa_A + \sigma_A,
\end{equation}
where
$$
\sigma_A = \sum_i b_i \boxtimes \omega_i.
$$
Here $\{ \omega_i \}$ is a basis in the space $T^*_1 A = H^0(A,K_A)$
and $\{ b_i \}$ is the dual basis in the dual vector space $T_1 A$,
which we identify with the space $\Theta(A)$ of holomorphic
translation vector fields on $A$ (so the $b_i$ can be viewed as global
differential operators on $A$).

To see this, we invoke formula \eqref{Z to Y} from the next section
which shows that
\begin{equation}    \label{DA}
m^*(\D_A) = \D_{A \times A}/(\D_{A \times A} \cdot \Theta_{A \times A/A})
\end{equation}
where $\Theta_{A \times A/A}$ is the sheaf of vertical vector fields
for the morphism $m: A \times A \to A$. It is generated by the global
translation vector fields on $A \times A$ of the form $\xi \boxtimes 1
- 1 \boxtimes \xi$, where $\xi \in \Theta(A)$. The quotient \eqref{DA}
can be identified with $\D_A \boxtimes \OO_A$ but then the action of
the vector field $1 \boxtimes \xi$ corresponding to the element $\xi$
in the second factor $\D_A$ is given by the action of $\xi \boxtimes
1$ corresponding to the the element $\xi$ in first factor
$\D_A$, which coincides with the above description.

Now we apply this to the case $A = \Pic^0(X)$ (so $\D_A = \D_{GL_1}$)
and observe that
$$
q_1^*(\D_{GL_1}) \simeq (m^*(\D_{GL_1}))|_{\Pic^0(X) \times X}
$$
where $m$ is the multiplication map on $\Pic^0(X)$ and $X$ is embedded
into the second factor $\Pic^0(X)$ via the Abel--Jacobi map $X \to
\Pic^0(X)$ sending $p \mapsto \OO_X(p-p_0)$.

The theorem now follows from the fact that the linear map from
$$
H^0(\Pic^0(X),K_{\Pic^0(X)}) \simeq H^0(X,K_X)
$$
to $H^0(X,K_X)$ corresponding to the pull-back of a one-form under the
Abel-Jacobi map is the identity, so that formula \eqref{paA} becomes
\eqref{paX}.
\end{proof}

Using Theorem \ref{Hecke-GL1} and Lemma \ref{seq}, we obtain the
sought-after differential equation on $1_{q_1}$.

\begin{proposition}    \label{eqq1}
  $\pa_X 1_{q_1} = \sigma \cdot 1_{q_1}.$
\end{proposition}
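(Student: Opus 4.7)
The plan is to deduce Proposition \ref{eqq1} directly by transporting the equation of Lemma \ref{seq} across the isomorphism of Theorem \ref{Hecke-GL1}. The key observation is that the isomorphism
\[
q_1^*(\D_{GL_1}) \;\simeq\; \D_{GL_1} \underset{D_{GL_1}}\boxtimes \V_X
\]
is stated to be an isomorphism of $\D_{\Pic^0(X) \times X}$-modules equipped with a commuting $D_{GL_1}$-action. In particular, it intertwines the action of the vertical holomorphic de Rham differential $\pa_X$ (which is part of the $\D_{\Pic^0(X)\times X}$-module structure) and the action of any element of $D_{GL_1} \otimes H^0(X,\Omega^{1,0})$, such as $\sigma = \sum_i b_i \boxtimes \omega_i$.

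First I would record that under the isomorphism of Theorem \ref{Hecke-GL1}, the section $1_{q_1} \in q_1^*(\D_{GL_1})$ is identified with the section $s = 1 \boxtimes 1_{\V_X}$. Next, I would invoke Lemma \ref{seq}, which asserts that $\pa_X s = \sigma \cdot s$ on the right-hand side. Since the isomorphism respects both the action of $\pa_X$ (through the $\D_{\Pic^0(X) \times X}$-module structure) and the action of $\sigma$ (through the commuting $D_{GL_1}$-action combined with multiplication by the one-form components $\omega_i$), the identity $\pa_X s = \sigma \cdot s$ pulls back to an identity of sections on the left-hand side; namely, $\pa_X 1_{q_1} = \sigma \cdot 1_{q_1}$.

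The only step requiring any care is checking that the identification $1_{q_1} \mapsto s$ really is the image of the distinguished section under the isomorphism of Theorem \ref{Hecke-GL1}, and that the $D_{GL_1}$-action appearing in $\sigma \cdot (-)$ on both sides is the same one. The former is explicit in the statement of Theorem \ref{Hecke-GL1}; the latter follows from the construction of the isomorphism in the proof of that theorem, where the right $D_{GL_1}$-action on $q_1^*(\D_{GL_1}) = (m^*\D_{\Pic^0(X)})|_{\Pic^0(X)\times X}$ corresponds under the identification $m^*(\D_{\Pic^0(X)}) \simeq \D_{\Pic^0(X)} \boxtimes \OO_{\Pic^0(X)}$ to the left action on the first factor, which is precisely the $D_{GL_1}$-action used to form the tensor product $\D_{GL_1} \underset{D_{GL_1}}\boxtimes \V_X$. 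Thus the proposition follows with no additional calculation, and the role of this section is purely to package Theorem \ref{Hecke-GL1} and Lemma \ref{seq} into the form needed for the derivation of \eqref{diffGL1-1} in the subsequent application of Corollary \ref{from D to fun}.
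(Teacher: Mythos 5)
Your proposal is correct and follows the same route as the paper: the paper obtains Proposition \ref{eqq1} precisely by transporting the equation $\pa_X s = \sigma \cdot s$ of Lemma \ref{seq} through the isomorphism of Theorem \ref{Hecke-GL1}, which sends $1_{q_1}$ to $s$ and respects both the $\D_{\Pic^0(X)\times X}$-module structure and the commuting $D_{GL_1}$-action. Your additional check that the two $D_{GL_1}$-actions match under the identification $m^*(\D_{\Pic^0(X)}) \simeq \D_{\Pic^0(X)} \boxtimes \OO_{\Pic^0(X)}$ is consistent with the construction in the proof of that theorem.
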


Corollary \ref{from D to fun}, which is proved in Section \ref{pull}
below, gives us a way to produce differential equations on the
operator of pull-back of functions from differential equations
satisfied by the unit section of the $\D$-module pull-back of the
sheaf of differential operators. More precisely, we will use Corollary
\ref{from D to fun} in the setting where the map $p: Z \to Y$ is $q_1:
\Pic^0(X) \times X \to \Pic^0(X)$, so that ${\mb p}\ud_{C^\infty} =
    {}_{p_0}\wh{H}$ and $1_{Z \to Y} = 1_{q_1}$. Combined with Proposition
    \ref{eqq1}, Corollary \ref{from D to fun} gives us the main result
    of this section:

\begin{theorem}
  The Hecke operator $_{p_0}\wh{H}$, viewed as a smooth function on $X$
  with values in operators on $C^\infty(\Pic^0(X))$, satisfies the
  system of differential equations
\begin{equation}    \label{HGL1}
\pa_X \cdot {}_{p_0}\wh{H} = \sigma \cdot  {}_{p_0}\wh{H}, \qquad
\ol\pa_X \cdot {}_{p_0}\wh{H} = \ol\sigma \cdot  {}_{p_0}\wh{H}.
\end{equation}
\end{theorem}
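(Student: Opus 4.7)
The plan is to obtain the system \eqref{HGL1} as a direct application of Corollary \ref{from D to fun} to the setup built up in the preceding subsection, so essentially all the work has already been distributed across Proposition \ref{eqq1} and the general transfer principle. Concretely, I would specialize Corollary \ref{from D to fun} to the map $p = q_1 : Z = \Pic^0(X) \times X \to Y = \Pic^0(X)$, where $q_1$ sends $(\mcL, p) \mapsto \mcL(p - p_0)$. Under this specialization the pull-back of $C^\infty$ functions along $q_1$ is exactly the Hecke operator $_{p_0}\wh{H}$ (viewed as sending a function on $\Pic^0(X)$ to a function on $\Pic^0(X) \times X$), and the section $1_{Z \to Y}$ of $q_1^*(\D_{GL_1})$ is precisely $1_{q_1}$.

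Next, I would invoke Proposition \ref{eqq1}, which asserts $\pa_X 1_{q_1} = \sigma \cdot 1_{q_1}$ inside $q_1^*(\D_{GL_1})$. The content of Corollary \ref{from D to fun} is that any such differential equation satisfied by $1_{Z \to Y}$ in the $\D$-module pull-back transfers verbatim to an equation satisfied by the pull-back operator on $C^\infty$ sections, with $\D$-module composition replaced by composition of operators. Applied here, this converts $\pa_X 1_{q_1} = \sigma \cdot 1_{q_1}$ into the first equation $\pa_X \cdot {}_{p_0}\wh{H} = \sigma \cdot {}_{p_0}\wh{H}$ on the operator-valued section over $X$.

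For the second equation, I would simply run the same argument with antiholomorphic data. Since $q_1$ is a holomorphic map of complex manifolds, its conjugate is an antiholomorphic map to which the same formalism applies; alternatively, I would take the complex conjugate of the first equation, using that $_{p_0}\wh{H}$ is real (the pull-back along a holomorphic map preserves real-valued functions) and that $\ol{\sigma}$ is the canonical antiholomorphic one-form with values in $\ol{D}_{GL_1}$. Either way one obtains $\ol{\pa}_X \cdot {}_{p_0}\wh{H} = \ol{\sigma} \cdot {}_{p_0}\wh{H}$.

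The only non-trivial step is verifying that the hypotheses of Corollary \ref{from D to fun} apply cleanly in this setting, but since $q_1$ is a smooth morphism of smooth complex manifolds and the $D$-module computation in Theorem \ref{Hecke-GL1} gives an explicit presentation of $q_1^*(\D_{GL_1})$ as $\D_{GL_1} \boxtimes \OO_X$ with twisted $\D_X$-action, this verification is essentially bookkeeping. The main conceptual obstacle — identifying the Hecke functor pull-back with the $D$-module controlled by the universal oper connection — has already been discharged in Theorem \ref{Hecke-GL1}, so the remaining argument is purely formal transport through Corollary \ref{from D to fun}.
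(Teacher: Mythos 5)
Your proposal is correct and follows essentially the same route as the paper: the paper likewise proves the theorem by specializing Corollary \ref{from D to fun} to $p = q_1: \Pic^0(X) \times X \to \Pic^0(X)$, identifying ${\mb p}\ud_{C^\infty}$ with ${}_{p_0}\wh{H}$ and $1_{Z \to Y}$ with $1_{q_1}$, and feeding in Proposition \ref{eqq1} (which rests on Theorem \ref{Hecke-GL1} and Lemma \ref{seq}). The only cosmetic difference is that the anti-holomorphic equation is already built into the statement of Corollary \ref{from D to fun} (which yields both $P$ and $\ol{P}$ annihilating the pull-back), so your separate conjugation argument just re-derives what that corollary packages.
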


This theorem implies Proposition \ref{Fp1}. Thus, we have obtained an
alternative proof of Proposition \ref{Fp1} which relies on the Hecke
eigensheaf property (Theorem \ref{Hecke-GL1}) of the sheaf $\D_{GL_1}$
(in the sense of Remark \ref{chii} below).

In other words, the differential equation on the eigenvalues of the
{\em Hecke operators} follows from the description of the action of
the {\em Hecke functor} on the sheaf $\D_{GL_1}$.

\begin{remark}
The equations \eqref{HGL1} yield the following explicit formula for
the Hecke operators in terms of the translation vector fields:
\begin{equation}    \label{Hp}
_{p_0} H_p = \on{exp} \left( \int_{p_0}^p (\sigma +
  \ol\sigma) \right) = \on{exp} \left( \int_{p_0}^p \left(
  \sum_{i=1}^g b_i \omega_i + \sum_{i=1}^g \ol{b}_i \ol\omega_i
  \right) \right)
\end{equation}
where the $b_i$ and $\ol{b}_i$ are viewed as translation vector fields
on $\Pic^0(X)$ (see formula \eqref{sigma-GL1}), whereas the $\omega_i$
and $\ol\omega_i$ are one-forms on $X$ which are integrated over a
path connecting the points $p_0$ and $p$. Integrality properties of
the eigenvalues of the $b_i$ and $\ol{b}_i$ then imply that the
exponential of this integral does not depend on the choice of such
path. This way we obtain another interpretation of the relation
between the eigenvalues from Theorem \ref{Fp}. However, the
path-independence of \eqref{Hp} is not obvious, and this is why we
prefer to express this relation in terms of the system
\eqref{HGL1}.\qed
\end{remark}

\begin{remark}    \label{chii}
  For ${\mb a} \in \on{Op}_{GL_1}(X) = \on{Spec} D_{GL_1}$, let
  $I_{\mb a}$ be the corresponding ideal in $D_{GL_1}$. Define the
  following $D$-module on $\Pic^0(X)$:
$$
  \Delta_{\mb a} := \D_{GL_1}/(\D_{GL_1} \cdot I_{\mb a}).
$$
  Theorem \ref{Hecke-GL1} implies that
$$
  q_1^*(\Delta_{\mb a}) \simeq \Delta_{\mb a} \boxtimes \V_{\mb a}
  $$
  where $\V_{\mb a} = (\OO_X,\nabla_{\mb a})$ is the flat line bundle
  corresponding to ${\mb a}$ (see formula \eqref{nablaa}). The last
  isomorphism expresses the fact that $\Delta_{\mb a}$ is the
  restriction to $\Pic^0(X)$ of a {\em Hecke eigensheaf} on $\Pic(X)$
  with the eigenvalue $\V_{\mb a}$. In fact, Theorem \ref{Hecke-GL1} is
  equivalent to this statement for all ${\mb a} \in
  \on{Op}_{GL_1}(X)$. In this sense, $\D_{GL_1}$ is the universal
  Hecke eigesheaf parametrized by all $GL_1$-opers on $X$.

  Here's a categorical interpretation. Recall that a $D$-module
  version of the Fourier--Mukai transform \cite{L,R} establishes an
  equivalence between the category of coherent $D$-modules on
  $\Pic^0(X)$ and the category of coherent sheaves on
  $\on{Loc}_{GL_1}$, the moduli space of flat line bundles on $X$ (it
  is isomorphic to an affine bundle over $\Pic^0(X)$). The space
  $\on{Op}_{GL_1}(X)$ can be a realized as a subvariety of
  $\on{Loc}_{GL_1}$ (it is the fiber over the point corresponding to
  the trivial line bundle in $\Pic^0(X)$). Let
  $\on{Coh}(\on{Op}_{GL_1}(X))$ be the category of coherent sheaves
  supported (scheme-theoretically) on $\on{Op}_{GL_1}(X) \subset
  \on{Loc}_{GL_1}$. The restriction of the above Fourier--Mukai
  transform to $\on{Coh}(\on{Op}_{GL_1}(X))$ gives rise to an
  equivalence $E$ between this (abelian) category and the category of
  $D$-modules ${\mc K}$ on $\Pic^0(X)$ with finite global
  presentation, i.e. such that there is an exact sequence
      $$
      \D_{GL_1}^{\oplus m} \to \D_{GL_1}^{\oplus n} \to
             {\mc K}
      \to 0.
      $$
  This equivalence $E$ takes an object ${\mc F}$ of
  $\on{Coh}(\on{Op}_{GL_1}(X))$ to
    $$
    E({\mc F}) := \D_{GL_1} \underset{D_{GL_1}}\otimes
    F, \qquad F := \Gamma(\on{Op}_{GL_1}(X),{\mc F}),
      $$
      where we use the fact that $D_{GL_1} \simeq \on{Fun}
      \on{Op}_{GL_1}(X)$ (see \cite{FT}, Sect. 2).

      It follows from this definition that $E(\OO_{\mb a}) =
      \Delta_{\mb a}$ and $E(\OO_{\on{Op}_{GL_1}(X)}) = \D_{GL_1}$.
      \qed
\end{remark}

\section{Generalities on $D$-modules and integral
  transforms}    \label{genD}

In this section we discuss the compatibility between natural functors
on the categories of (twisted) $\D$-modules and the corresponding
operations on sections of line bundles. Though most of the results of
this section are fairly strightforward, we were unable to find them in
the literature. We expect that these results are likely to have other
applications, so it is worthwhile to record them here.

We note that closely related topics have been discussed in the works
by A. D'Agnolo and P. Schapira \cite{DS} and A. Goncharov \cite{Gon},
and in fact, we will use a result of \cite{DS} below. Also, in writing
this section we have benefited from the advice of P. Schapira.

First, we discuss the pull-back functor (Section \ref{pull}), then the
pull-back functor in the setting of twisted differential operators
(Section \ref{pull1}), then the push-forward functor (Section
\ref{push}), and finally the integral transform functors associated to
correspondences (Section \ref{inttr}). We will use the
  results of the last subsection (specifically, Corollary
  \ref{diffcorr1}) to prove Theorem \ref{mainthm} in Section
  \ref{PGLn}. Roughly speaking, the differential equations
  \eqref{eqHom1} on the Hecke operator will follow from certain
  properties of the corresponding Hecke functor established in
  \cite{BD}.

\begin{remark}    \label{Grothendieck}
In the case of a curve over a finite field $\Fq$, one can pass from
Hecke eigensheaves to Hecke eigenfunctions using Grothendieck's {\em
  faisceaux-fonctions} correspondence. Namely, taking the trace of the
Frobenius (a topological generator of the Galois group of $\Fq$) on
the stalks of a Hecke eigensheaf on $\Bun_G$, we obtain a function on
$\Bun_G(\Fq)$. Crucially, this function is a Hecke eigenfunction
because Grothendieck's correspondence is compatible with natural
operations on sheaves and functions.

In the case of a curve defined over $\C$, there is no Frobenius as the
field of complex numbers is algebraically closed. Instead, we
construct Hecke eigenfunctions as single-valued bilinear combinations
of sections of the corresponding Hecke eigensheaf $\Delta_\chi$ (see
formula \eqref{Deltachi}) and sections of a complex conjugate sheaf,
as explained in Section 1.5 of \cite{EFK} and Section \ref{caseC}
above. One could argue that this procedure is what replaces taking the
traces of the Frobenius in the case of a curve over $\C$, but the
question remains why the resulting function is an eigenfunction of the
Hecke operators. The answer is that the results of this section enable
us to derive the Hecke eigenfunction property (and to compute the
corresponding eigenvalues) from the Hecke eigensheaf property of
$\Delta_\chi$, using the cyclicity of the Hecke eigensheaf
$\Delta_\chi$ (viewed as a twisted $D$-module on $\Bun_G$) and the
cyclicity of the corresponding ``Hecke eigenvalues'' (viewed as
twisted $D$-modules on the curve $X$). Thus, the results established
in this section may be viewed as an analogue in the complex case of
the compatibility of Grothendieck's correspondence with natural
operations on sheaves and functions.\qed
\end{remark}

\subsection{Pull-back}    \label{pull}

For a smooth complex manifold $Y$, denote by $\OO_Y$ and $\D_Y$ the
sheaves of holomorphic functions and differential operators on $Y$,
respectively (in the analytic topology). Let $\on{Mod}(\D_Y)$ be the
category of left $\D_Y$-modules.

\begin{remark}
  In what follows, we can take as $\D_Y$ the sheaf of algebraic
  differential operators on $Y$ and the category of modules over
  it. Then we have analogous statements as well.\qed
\end{remark}

Given sheaves of algebras $\mcA$ and $\mcB$ on $Y$, an
$(\mcA,\mcB)$-{\em bimodule} is, by definition, a sheaf of modules
over $\mcA \otimes \mcB^{\on{opp}}$ on $Y$.

We need to recall some facts about the pull-back functor for
$D$-modules. Let $p: Z \to Y$ be a submersion of smooth complex
manifolds. Denote by $p\ud$ the sheaf-theoretic pull-back functor.

The $\OO$-module pull-back functor $p^*$ is defined as follows. If
$\mcF$ is an $\OO_Y$-module, then the $\OO_Z$-module $p^*(\mcF)$ is
$$
p^*(\mcF) := \OO_Z \underset{p\ud(\OO_Y)}\otimes p\ud(\mcF).
$$

If $\mcF$ is a left $\D_Y$-module, then $p^*(\mcF)$ has a natural
structure of left $\D_Z$-module. To explain this, let
$\Theta_{Z/Y}$ be the relative tangent sheaf of the morphism $p: Z \to
Y$ (its sections are vertical vector fields on $Z$ with respect to
$p$). This is a sheaf of Lie algebras. It acts by commutator on
$\D_Z$, and this action preserves the left ideal $(\D_Z \cdot
\Theta_{Z/Y}) \subset \D_Z$. Moreover, we have
\begin{equation}    \label{DY1}
    (\D_Z/(\D_Z \cdot \Theta_{Z/Y}))^{\Theta_{Z/Y}} \simeq
      p\ud(\D_Y)
\end{equation}
and hence
\begin{equation}    \label{Z to Y}
    \D_{Z \to Y} := \D_Z/(\D_Z \cdot \Theta_{Z/Y})
\end{equation}
is naturally a $(\D_Z,p\ud(\D_Y))$-bimodule.

\begin{remark}
Since $p$ is a submersion,
\begin{equation}    \label{ZtoY}
\D_{Z \to Y} \simeq p^*(\D_Y) = \OO_Z \underset{p\ud(\OO_Y)}\otimes
p\ud(\D_Y)
\end{equation}
as an $(\OO_Z,p\ud(\D_Y))$-bimodule, and this is how $\D_{Z \to Y}$ is
usually defined. Defining it by formula \eqref{Z to Y} for such
$p$ has the advantage that it makes the $\D_Z$-module structure on it
manifest.\qed
\end{remark}

The isomorphism \eqref{ZtoY} implies that the $\OO$-module pull-back
$p^*(\mcF)$ of a left $\D_Y$-module $\mcF$ can be written as
$$
p^*(\mcF) \simeq \D_{Z \to Y} \underset{p\ud(\D_Y)}\otimes p\ud(\mcF)
$$
and hence the $\OO_Z$-module structure on $p^*(\mcF)$ naturally
extends to a $\D_Z$-module structure. Thus, we obtain the pull-back
functor for $D$-modules, which we denote in the same way as the
$\OO$-module pull-back:
$$
p^*: \on{Mod}(\D_Y) \to \on{Mod}(\D_Z).
$$

On the other hand, let ${\mc C}^\infty_Y$ and ${\mc C}^\infty_Z$ be
the sheaves of $\C$-valued $C^\infty$ functions on the smooth real
manifolds underlying $Y$ and $Z$, respectively (in the analytic
topology). Consider the internal Hom sheaf $${\mc Hom}(p\ud({\mc
  C}^\infty_Y),{\mc C}^\infty_Z)$$ on $Z$ in the category of sheaves
of vector spaces. By definition, for an open subset $U
\subset Z$,
\begin{equation}    \label{cont}
{\mc Hom}(p\ud({\mc C}^\infty_Y),{\mc
  C}^\infty_Z)(U) := \on{Hom}(p\ud({\mc C}^\infty_Y)|_U,{\mc
  C}^\infty_Z|_U)
\end{equation}
(as well-known, the presheaf defined by this formula is a sheaf). In
particular, we have a special global section ${\mb p}\ud_{C^\infty}$
of ${\mc Hom}(p\ud({\mc C}^\infty_Y),{\mc C}^\infty_Z)$ over $Z$,
which corresponds to the pull-back of smooth functions from $Y$ to
$Z$. More precisely, for any open $U \subset Z$, the restriction of
${\mb p}\ud_{C^\infty}$ to $U$ maps $f \in (p\ud({\mc C}^\infty_Y))(U)
= C^\infty(p(U))$ to its pull-back to $U$ via $p$ (note that $p(U)$ is
open because $p$ is a submersion).

\begin{lemma}    \label{action1}
The sheaf ${\mc Hom}(p\ud({\mc C}^\infty_Y),{\mc C}^\infty_Z)$ has a
natural $(\D_Z,p\ud(\D_Y))$-bimodule structure, which is defined as
follows: for every open $U \subset Z$, given $P \in \D_Z(U), Q \in
(p\ud(\D_Y))(U) = \D_Y(p(U))$, and $\phi \in {\mc Hom}(p\ud({\mc
  C}^\infty_Y),{\mc C}^\infty_Z)(U)$, which is a compatible system $\{
\phi_{U'} \in \on{Hom}(C^\infty(p(U')),C^\infty(U')) \}$ for open
subsets $U' \subset U$,
$$
\phi \mapsto P \circ \phi \circ Q,
$$
where $P \circ \phi \circ Q$ stands for the compatible system $\{
P|_{U'} \circ \phi_{U'} \circ Q|_{p(U')} \, | \, U' \subset U\}$.
\end{lemma}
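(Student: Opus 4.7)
The plan is to verify that the prescribed formula $\phi \mapsto P \circ \phi \circ Q$ makes sense and defines an action satisfying the bimodule axioms. The content is essentially formal, breaking into three checks: (a) post-composition by $P$ and pre-composition by $Q$ are well-defined operations on the Hom-sheaf; (b) these two operations commute with each other, yielding a bimodule structure; (c) everything is compatible with restriction to open subsets.

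First I would observe that $\mcC^\infty_Z$ carries a canonical left $\D_Z$-module structure: a holomorphic differential operator on $Z$ acts on smooth $\C$-valued functions via the usual Wirtinger formalism, differentiating in the holomorphic directions. Applying $p\ud$ to the analogous statement on $Y$, the sheaf $p\ud(\mcC^\infty_Y)$ is a left $p\ud(\D_Y)$-module. Hence for any open $U \subset Z$, any $P \in \D_Z(U)$ defines an endomorphism of the sheaf $\mcC^\infty_Z|_U$ of vector spaces, so post-composition $\phi \mapsto P \circ \phi$ yields an endomorphism of ${\mc Hom}(p\ud(\mcC^\infty_Y)|_U, \mcC^\infty_Z|_U)$; similarly, any $Q \in (p\ud(\D_Y))(U) = \D_Y(p(U))$ acts on $p\ud(\mcC^\infty_Y)|_U$, so pre-composition $\phi \mapsto \phi \circ Q$ yields another endomorphism of the Hom-sheaf. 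Here $\phi$ is realized as a compatible family $\{\phi_{U'}\}_{U' \subset U}$, and $P \circ \phi \circ Q$ is defined termwise by the family $\{P|_{U'} \circ \phi_{U'} \circ Q|_{p(U')}\}$; compatibility of this family under further restriction is immediate from the compatibility of $\phi$ with restriction.

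Next I would check the bimodule axioms. The identity $(P \circ \phi) \circ Q = P \circ (\phi \circ Q)$ is pure associativity of composition of maps of sheaves, which gives the commuting of the left $\D_Z$- and right $p\ud(\D_Y)$-actions. The algebra-action identities $(P_1 P_2) \circ \phi = P_1 \circ (P_2 \circ \phi)$ and $\phi \circ (Q_1 Q_2) = (\phi \circ Q_1) \circ Q_2$ likewise follow from associativity together with the fact that $\D_Z$ acts on $\mcC^\infty_Z$ and $p\ud(\D_Y)$ acts on $p\ud(\mcC^\infty_Y)$ as algebras. Finally, the compatibility of these actions with restriction to open subsets $V \subset U$ is built into the very definition of the actions on the compatible systems.

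There is no serious obstacle here; the lemma is a formal unwinding of definitions. The only minor point requiring care is to realize that the compatible-system description of a section of ${\mc Hom}(p\ud(\mcC^\infty_Y), \mcC^\infty_Z)$ over $U$ is the correct framework in which to define $P \circ \phi \circ Q$, since the left and right operators live naturally on $Z$ and on $Y$ respectively, and the composition must be interpreted termwise over every $U' \subset U$. Once this bookkeeping is in place, the verification is automatic.
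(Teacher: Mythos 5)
Your verification is correct and matches the paper's treatment: the paper states this lemma without proof, regarding it as a routine unwinding of the definitions, and your termwise definition of $P \circ \phi \circ Q$ on compatible systems, the associativity check for the commuting left and right actions, and the compatibility with restriction are exactly the checks implicitly intended. Nothing is missing.
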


On the other hand, we have the $(\D_Z,p\ud(\D_Y))$-bimodule $\D_{Z \to
  Y}$ given by \eqref{Z to Y}. The unit $1_Y \in \D_Y$ gives rise to a
global section $1_{Z \to Y}$ of $\D_{Z \to Y}$.

\begin{proposition}    \label{inj1}
  There is a unique injective homomorphism of
  $(\D_Z,p\ud(\D_Y))$-bimodules
  $$
  \D_{Z \to Y} \to {\mc Hom}(p\ud({\mc C}^\infty_Y),{\mc
  C}^\infty_Z)
  $$
  sending $1_{Z \to Y}$ to ${\mb p}\ud_{C^\infty}$.
\end{proposition}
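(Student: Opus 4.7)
The plan is to first construct the map at the level of generators, then verify compatibility with the bimodule structure, and finally argue injectivity via local coordinates.

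First I would note that $\D_{Z \to Y}$ is generated as a left $\D_Z$-module by $1_{Z \to Y}$, since it is the quotient of $\D_Z$ by a left ideal. This gives uniqueness immediately, and it tells us how to define the map: send $P \cdot 1_{Z \to Y} \mapsto P \circ {\mb p}\ud_{C^\infty}$ for $P \in \D_Z$. For well-definedness I must check that the left ideal $\D_Z \cdot \Theta_{Z/Y}$ annihilates ${\mb p}\ud_{C^\infty}$ inside the bimodule ${\mc Hom}(p\ud(\mcC^\infty_Y), \mcC^\infty_Z)$. This is exactly the statement that vertical vector fields kill pull-backs: if $\xi \in \Theta_{Z/Y}$ and $f \in \mcC^\infty_Y$, then $\xi(p^* f) = (dp(\xi))(f) \circ p = 0$ because $dp(\xi) = 0$. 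Hence the assignment descends to a left $\D_Z$-linear map $\D_{Z \to Y} \to {\mc Hom}(p\ud(\mcC^\infty_Y), \mcC^\infty_Z)$ sending $1_{Z \to Y}$ to ${\mb p}\ud_{C^\infty}$.

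Next I would verify right $p\ud(\D_Y)$-linearity. This is a local computation: choose coordinates $y_1, \ldots, y_m$ on $Y$ and extend to $z_1, \ldots, z_m, w_1, \ldots, w_k$ on $Z$ with $z_i = p^*(y_i)$. Then $\Theta_{Z/Y}$ is locally spanned by $\pa_{w_j}$, and the right $p\ud(\D_Y)$-action on $\D_{Z \to Y}$ is characterized by $1_{Z \to Y} \cdot \pa_{y_i} = \pa_{z_i} \cdot 1_{Z \to Y}$. On the target, ${\mb p}\ud_{C^\infty} \circ \pa_{y_i}$ sends $f$ to $p^*(\pa_{y_i} f) = \pa_{z_i}(p^* f) = (\pa_{z_i} \circ {\mb p}\ud_{C^\infty})(f)$, matching the relation above. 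Since these relations generate the right action locally, right linearity holds globally.

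For injectivity, I would work locally in the same coordinates. Then $\D_{Z \to Y}$ is a free left $\OO_Z$-module with basis $\{\pa_z^{\alpha} \cdot 1_{Z \to Y}\}$ indexed by multi-indices $\alpha$ in the horizontal directions. A local section has the form $\sum_\alpha g_\alpha(z,w)\, \pa_z^\alpha \cdot 1_{Z \to Y}$ and maps to the operator sending $f \in \mcC^\infty(p(U))$ to $\sum_\alpha g_\alpha \cdot p^*(\pa_y^\alpha f)$. Evaluating this at a point $(z_0, w_0)$ on the pull-back of the polynomial $(y - p(z_0))^\beta/\beta!$ yields $g_\beta(z_0, w_0)$. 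Hence if the operator vanishes, all coefficients vanish, giving stalkwise injectivity and therefore sheaf-theoretic injectivity.

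The main obstacle, such as it is, is bookkeeping: making sure the two natural right $p\ud(\D_Y)$-actions (the intrinsic one on the quotient $\D_Z/(\D_Z \cdot \Theta_{Z/Y})$ coming from \eqref{DY1} and the composition action on ${\mc Hom}$) really match without ambiguity. This is a coordinate-free way of stating the chain rule and is essentially the content of formula \eqref{ZtoY}; once that identification is fixed, all three steps above are straightforward. The result of \cite{DS} mentioned in the preamble could be invoked to shortcut the injectivity verification if a cleaner statement is desired.
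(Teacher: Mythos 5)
Your proposal is correct and follows essentially the same route as the paper: the key point in both is that vertical vector fields annihilate pull-backs, so the cyclic $(\D_Z,p\ud(\D_Y))$-submodule of ${\mc Hom}(p\ud({\mc C}^\infty_Y),{\mc C}^\infty_Z)$ generated by ${\mb p}\ud_{C^\infty}$ is identified with $\D_{Z \to Y}$, with the identification checked in local product coordinates. Your explicit evaluation on pull-backs of polynomials just spells out the injectivity step that the paper relegates to the local-coordinate remark following its proof.
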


\begin{proof}
Let
$$
{\mc A}_{Z,Y} := \D_Z \cdot {\mb p}\ud_{C^\infty}.
$$
For every open $U \subset Z$, we have
$$
\xi \cdot {\mb p}\ud_{C^\infty}(f) = 0, \qquad \forall \xi \in
\Theta_{Z/Y}(U), \quad \forall f \in C^\infty(p(U)).
$$
It follows that ${\mc A}_{Z,Y}$ is naturally isomorphic to the right
hand side of \eqref{Z to Y} and hence to $\D_{Z \to Y}$ as a left
$\D_Z$-module. The isomorphism \eqref{DY1} defines the structure of
a $(\D_Z,p\ud(\D_Y))$-bimodule on both ${\mc A}_{Z,Y}$ and $\D_{Z \to
  Y}$. It is clear that the former is compatible with the
$(\D_Z,p\ud(\D_Y))$-bimodule structure on ${\mc Hom}(p\ud({\mc
  C}^\infty_Y),{\mc C}^\infty_Z)$.
\end{proof}

\begin{remark} Concretely, we can choose a sufficiently fine open
  covering of $Z$, so that each neighborhood is isomorphic to the
  product of two balls $B_m \in \C^m$ and $B_n \in \C^n$ and the
  restriction of the map $p$ to it is isomorphic to the projection
  $B_m \times B_n \to B_m$. Let $\{ y_1,\ldots,y_m \}$ and $\{
  x_1,\ldots,x_n \}$ be coordinates on $B_m$ and $B_n$. It is clear
  that the spaces of sections of both ${\mc A}_{Z,Y}$ and $\D_{Z \to
    Y}$ are both isomorphic to ${\mc
    Hol}(y_1,\ldots,y_m,x_1,\ldots,x_n) \otimes
  \C[\pa_{y_1},\ldots,\pa_{y_m}]$, where ${\mc
    Hol}(y_1,\ldots,y_m,x_1,\ldots,x_n)$ denotes the space of
  holomorphic functions on $B_m \times B_n$. \qed
\end{remark}

\begin{corollary}    \label{from D to fun}
  Suppose that $P \cdot 1_{Z \to Y} = 0$ for some $P \in
  \Gamma(Z,\D_Z)$. Then
\begin{equation}
  P \cdot {\mb p}\ud_{C^\infty} = 0, \qquad \ol{P} \cdot
  {\mb p}\ud_{C^\infty} = 0.
\end{equation}
\end{corollary}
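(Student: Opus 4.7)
The plan is to deduce both vanishings from Proposition \ref{inj1} (the first one directly, the second one via its obvious anti-holomorphic analogue). For the first equation $P \cdot {\mb p}\ud_{C^\infty}=0$, we invoke Proposition \ref{inj1} immediately: the embedding $\D_{Z\to Y}\hookrightarrow {\mc Hom}(p\ud({\mc C}^\infty_Y),{\mc C}^\infty_Z)$ is a morphism of left $\D_Z$-modules sending $1_{Z\to Y}$ to ${\mb p}\ud_{C^\infty}$, so the hypothesis $P\cdot 1_{Z\to Y}=0$ is transported across the injection to give $P\cdot {\mb p}\ud_{C^\infty}=0$.

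For the second equation, I would set up a parallel anti-holomorphic picture. The sheaf ${\mc C}^\infty_Z$ carries a second commuting action by $\D_{\ol Z}$, the sheaf of differential operators on the complex manifold with opposite complex structure (equivalently, differentiation by anti-holomorphic vector fields), and similarly for $Y$. The pull-back ${\mb p}\ud_{C^\infty}$ commutes with this second structure in exactly the same way it commutes with the holomorphic one, because $p$ is holomorphic and hence also holomorphic as a map $\ol Z\to\ol Y$. Therefore the construction of Lemma \ref{action1} and Proposition \ref{inj1} goes through verbatim with $Z,Y$ replaced by $\ol Z,\ol Y$, producing an injective homomorphism of $(\D_{\ol Z},p\ud(\D_{\ol Y}))$-bimodules $\D_{\ol Z\to\ol Y}\hookrightarrow {\mc Hom}(p\ud({\mc C}^\infty_Y),{\mc C}^\infty_Z)$ which sends $1_{\ol Z\to\ol Y}$ to the same section ${\mb p}\ud_{C^\infty}$.

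It remains to check that $\ol P\cdot 1_{\ol Z\to\ol Y}=0$ in $\D_{\ol Z\to\ol Y}$. By the defining formula \eqref{Z to Y}, the hypothesis $P\cdot 1_{Z\to Y}=0$ is equivalent to $P\in \D_Z\cdot \Theta_{Z/Y}$, i.e.\ locally $P=\sum_i Q_i\xi_i$ with $\xi_i$ vertical holomorphic vector fields along $p$. Applying complex conjugation termwise, $\ol P=\sum_i \ol{Q_i}\,\ol{\xi_i}$ with $\ol{\xi_i}\in \Theta_{\ol Z/\ol Y}$, so $\ol P\in \D_{\ol Z}\cdot \Theta_{\ol Z/\ol Y}$. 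Hence $\ol P\cdot 1_{\ol Z\to\ol Y}=0$, and applying the anti-holomorphic embedding yields $\ol P\cdot {\mb p}\ud_{C^\infty}=0$.

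I do not expect a genuine obstacle here; the only point worth being careful about is the verification that the anti-holomorphic analogue of Proposition \ref{inj1} is truly formal, which amounts to noting that the two actions of $\D_Z$ and $\D_{\ol Z}$ on ${\mc C}^\infty_Z$ commute, and that the defining property of ${\mb p}\ud_{C^\infty}$ (pull-back of smooth functions along a holomorphic submersion) is symmetric with respect to the two complex structures.
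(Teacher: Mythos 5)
Your proof is correct and follows the paper's intended deduction: the first identity is transported through the bimodule homomorphism of Proposition \ref{inj1}, and the second follows because $P\cdot 1_{Z\to Y}=0$ means $P$ lies (locally) in $\D_Z\cdot\Theta_{Z/Y}$, whence $\ol P$ lies in the conjugate ideal and kills pull-backs of smooth functions. (The anti-holomorphic half can be obtained even more quickly by applying the first identity to $\ol f$ and conjugating, but your conjugate-bimodule formulation is equally valid.)
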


\subsection{Pull-back in the twisted setting}    \label{pull1}

The results of Section \ref{pull} can be generalized to twisted
differential operators. Namely, let $\mcL$ be a holomorphic line
bundle on $Y$. Recall that in Section \ref{mainobj}, using the norm
map $a\mapsto \norm{a} = |a|^2$ from $\C^\times$ to $\R_{>0}$, we
associated to $\mcL$ a $C^\infty$ complex line bundle
$$
\norm{\mcL} = |\mcL|^2
$$
with the structure group $\R_{>0}$ on $Y$, viewed as a complex
analytic variety. Clearly,
\begin{equation}    \label{mcL}
|\mcL|^2 \simeq (\mcL \otimes \ol\mcL)  \underset{\OO_Y \otimes
  \ol{\OO}_Y}\otimes {\mc C}^\infty_Y.
\end{equation}
In other words, if the transition functions of $\mcL$ are $\{
g_{\alpha\beta} \}$, then the transition functions of $|\mcL|^2$
are $\{ |g_{\alpha\beta}|^2 \}$. 

Consider the sheaf of (twisted) differential operators acting on
$\mcL$ (see \cite{BB}),
$$
\D_{Y,\mcL} := \mcL \underset{\OO_Y}\otimes \D_Y
\underset{\OO_Y}\otimes \mcL^{-1}.
$$
Likewise, we have the
sheaf $\D_{Z,p^*{\mcL}}$ of differential operators acting on the line
bundle $p^*(\mcL)$ on $Z$.

The role of $\D_{Z \to Y}$ is now played by
\begin{equation}    \label{role}
\D_{Z \to Y,\mcL} := p^*({\mcL}) \underset{\OO_Z}\otimes p^*(\D_Y
\underset{\OO_Y}\otimes \mcL^{-1}) \simeq p^*(\mcL)
\underset{\OO_Z}\otimes \D_{Z \to Y} \underset{p^{-1}(\OO_Y)}\otimes
p^{-1}(\mcL),
\end{equation}
which is naturally a
$(\D_{Z,p^*(\mcL)},p\ud(\D_{Y,\mcL}))$-bimodule. The unit $1_Y \in
\D_Y$ gives rise to a global section $1_{Z \to Y,\mcL}$ of $\D_{Z \to
  Y,\mcL}$.

On the other hand, let ${\mc C}^\infty_{Y,|\mcL|^2}$ and ${\mc
  C}^\infty_{Z,|p^*(\mcL)|^2}$ be the sheaves of $C^\infty$ sections of
the line bundles $|\mcL|^2$ on $Y$ and $|p^*(\mcL)|^2$ on $Z$,
respectively. Then $\D_{Y,\mcL}$ naturally acts on ${\mc
    C}^\infty_{Y,|\mcL|^2}$ and $\D_{Z,p^*(\mcL)}$ naturally acts on ${\mc
    C}^\infty_{Z,|p^*(\mcL)|^2}$.

Consider the internal Hom sheaf ${\mc
  Hom}(p\ud({\mc C}^\infty_{Y,|\mcL|^2}),{\mc
  C}^\infty_{Z,|p^*(\mcL)|^2})$ on $Z$ and its global section
${\mb p}\ud_{\mcL}$ corresponding to the natural pull-back map
\begin{equation}    \label{pullback1}
{\mb p}\ud_{\mcL}: C^\infty(Y,|\mcL|^2) \to C^\infty(Z,|p^*(\mcL)|^2).
\end{equation}
The sheaf ${\mc Hom}(p\ud({\mc C}^\infty_{Y,|\mcL|^2}),{\mc
  C}^\infty_{Z,|p^*(\mcL)|^2})$ has the structure of a
$(\D_{Z,p^*(\mcL)},p\ud(\D_{Y,\mcL}))$-bimo\-dule defined in the same
way as in Lemma \ref{action1}.

\begin{proposition}    \label{inj2}
  There is a unique injective homomorphism of
  $(\D_{Z,p^*(\mcL)},p\ud(\D_{Y,\mcL}))$-bimo\-dules
  $$
  \D_{Z \to Y,\mcL} \to {\mc Hom}(p\ud({\mc C}^\infty_{Y,|\mcL|^2}),{\mc
  C}^\infty_{Z,|p^*(\mcL)|^2})
  $$
  sending $1_{Z \to Y,\mcL}$ to ${\mb p}\ud_{\mcL}$.
\end{proposition}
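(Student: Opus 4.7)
The plan is to reduce Proposition \ref{inj2} to the untwisted Proposition \ref{inj1} by local trivialization of $\mcL$, using the fact that all the constructions in sight (the sheaf of twisted differential operators, the $C^\infty$ sheaf of $|\mcL|^2$, and the pull-back operator) behave functorially under multiplication by nowhere-vanishing holomorphic functions.

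First, I would choose an open cover $\{U_\alpha\}$ of $Y$ together with nowhere-vanishing holomorphic sections $s_\alpha$ of $\mcL|_{U_\alpha}$, with transition functions $g_{\alpha\beta} = s_\beta/s_\alpha \in \OO_Y^\times(U_\alpha \cap U_\beta)$. Let $V_\alpha = p^{-1}(U_\alpha)$, so $p^*s_\alpha$ trivializes $p^*(\mcL)|_{V_\alpha}$ and $|s_\alpha|^2$, $|p^*s_\alpha|^2$ trivialize the corresponding $C^\infty$ line bundles. Over $U_\alpha$ I would identify $\D_{Y,\mcL}|_{U_\alpha} \simeq \D_Y|_{U_\alpha}$ via conjugation by $s_\alpha$, and identify sections of ${\mc C}^\infty_{Y,|\mcL|^2}|_{U_\alpha}$ with $C^\infty$ functions via $f \mapsto f\cdot|s_\alpha|^2$; similarly for $Z$. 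Under these identifications $\D_{Z \to Y,\mcL}|_{V_\alpha}$ matches $\D_{Z \to Y}|_{V_\alpha}$, the section $1_{Z\to Y,\mcL}$ matches $1_{Z\to Y}$, and the pull-back map \eqref{pullback1} becomes the untwisted pull-back ${\mb p}\ud_{C^\infty}$ of Section \ref{pull}.

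Second, applying Proposition \ref{inj1} locally on each $V_\alpha$ produces injective bimodule homomorphisms
\[
\Phi_\alpha : \D_{Z \to Y,\mcL}|_{V_\alpha} \to {\mc Hom}\bigl(p\ud({\mc C}^\infty_{Y,|\mcL|^2})|_{V_\alpha},\, {\mc C}^\infty_{Z,|p^*(\mcL)|^2}|_{V_\alpha}\bigr)
\]
sending the local generator $1_{Z\to Y,\mcL}|_{V_\alpha}$ to ${\mb p}\ud_{\mcL}|_{V_\alpha}$. The gluing step is to check that $\Phi_\alpha$ and $\Phi_\beta$ agree on $V_\alpha \cap V_\beta$. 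The only discrepancy in the two trivializations is multiplication by the pull-back of $g_{\alpha\beta}$ (on the $Z$-side) and $g_{\alpha\beta}$ (on the $Y$-side); since these are holomorphic and nowhere zero, they intertwine both the left $\D_{Z,p^*\mcL}$-action and the right $p\ud(\D_{Y,\mcL})$-action with their untwisted counterparts, and they map ${\mb p}\ud_{C^\infty}$ to itself composed with multiplication by $|g_{\alpha\beta}|^2 = g_{\alpha\beta}\ol{g_{\alpha\beta}}$, which matches the transition on ${\mc C}^\infty_{Y,|\mcL|^2}$. Hence the $\Phi_\alpha$ glue to a global bimodule map $\Phi : \D_{Z \to Y,\mcL} \to {\mc Hom}(p\ud({\mc C}^\infty_{Y,|\mcL|^2}),{\mc C}^\infty_{Z,|p^*(\mcL)|^2})$ sending $1_{Z\to Y,\mcL}$ to ${\mb p}\ud_{\mcL}$. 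Injectivity of $\Phi$ is local and follows from injectivity of each $\Phi_\alpha$.

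For uniqueness, I would use that $1_{Z\to Y,\mcL}$ generates $\D_{Z \to Y,\mcL}$ as a left $\D_{Z,p^*(\mcL)}$-module (this is inherited from the untwisted case via \eqref{role}), so any bimodule homomorphism is determined by the image of this generator. The main bookkeeping obstacle, as expected in any twisted statement of this kind, is keeping track of how the antiholomorphic factor in $|\mcL|^2 = \mcL \otimes \ol\mcL \otimes_{\OO \otimes \ol\OO} {\mc C}^\infty_Y$ of \eqref{mcL} enters: it is crucial that $g_{\alpha\beta}$ is \emph{holomorphic}, so that conjugation by $|g_{\alpha\beta}|^2$ on the $C^\infty$ side commutes with the holomorphic differential operators acting on $\D_{Y,\mcL}$, making the comparison map well-defined after descent.
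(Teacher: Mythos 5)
Your proposal is correct, but its mechanics differ from the paper's. You trivialize $\mcL$ on an open cover, conjugate all twisted objects to their untwisted counterparts, apply Proposition \ref{inj1} on each $V_\alpha = p^{-1}(U_\alpha)$, and then glue, checking that the holomorphic transition functions $g_{\alpha\beta}$ (acting through $|g_{\alpha\beta}|^2$ on the $C^\infty$ side) intertwine the local maps; uniqueness comes from the fact that $1_{Z\to Y,\mcL}$ generates $\D_{Z\to Y,\mcL}$ as a left $\D_{Z,p^*(\mcL)}$-module. The paper instead argues intrinsically, with no choice of trivialization and no cocycle check: it first records the twisted analogues of \eqref{Z to Y} and \eqref{DY1}, namely $\D_{Z \to Y,\mcL} \simeq \D_{Z,p^*(\mcL)}/(\D_{Z,p^*(\mcL)} \cdot \Theta_{Z/Y})$ together with the corresponding invariants description of $p\ud(\D_{Y,\mcL})$, and then considers the cyclic submodule ${\mc A}_{Z,Y,\mcL} := \D_{Z,p^*(\mcL)} \cdot {\mb p}\ud_{\mcL}$ of the internal Hom sheaf; since the vertical vector fields $\Theta_{Z/Y}$ annihilate ${\mb p}\ud_{\mcL}$, the assignment $P \mapsto P\cdot {\mb p}\ud_{\mcL}$ descends from this quotient presentation globally, and Proposition \ref{inj1} enters only on sufficiently small open subsets, to identify ${\mc A}_{Z,Y,\mcL}$ with $\D_{Z\to Y,\mcL}$ (i.e.\ to exclude extra relations, which gives injectivity). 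So both arguments reduce locally to the untwisted case; yours is more explicit and makes transparent where holomorphy of the transition functions is used, at the price of choosing trivializations and verifying the descent data, whereas the paper's buys a choice-free construction of the global homomorphism, with the same cyclicity argument yielding uniqueness. (A minor notational slip: your trivialization of ${\mc C}^\infty_{Y,|\mcL|^2}$ should send a function $\phi$ to the section $\phi\,|s_\alpha|^2$, i.e.\ divide sections by $|s_\alpha|^2$, but this does not affect the argument.)
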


\begin{proof}
According to the definition \eqref{role}, we have
\begin{equation}    \label{DZY}
\D_{Z \to Y,\mcL} \simeq p^*(\mcL) \underset{\OO_Z}\otimes \D_{Z \to Y}
\underset{p^{-1}(\OO_Y)}\otimes p^{-1}(\mcL).
\end{equation}
Moreover, isomorphisms \eqref{Z to Y} and \eqref{DY1} imply similar
isomorphisms in the twisted case (note that $\Theta_{Z/Y}$ naturally
embeds into $\D_{Z,p^*(\mcL)}$):
\begin{equation}    \label{Z to Y1}
    \D_{Z \to Y,\mcL} \simeq \D_{Z,p^*(\mcL)}/(\D_{Z,p^*(\mcL)} \cdot
    \Theta_{Z/Y}),
\end{equation}
\begin{equation}    \label{DY2}
p\ud(\D_{Y,\mcL}) \simeq (\D_{Z,p^*(\mcL)}/(\D_{Z,p^*(\mcL)} \cdot
\Theta_{Z/Y}))^{\Theta_{Z/Y}}.
\end{equation}

Now we argue in the same way as in the proof of Proposition
\ref{inj1}. Let
$$
{\mc A}_{Z,Y,\mcL} := \D_{Z,p^*(\mcL)} \cdot {\mb p}\ud_{C^\infty}.
$$
For every open $U \subset Z$, we have
$$
\xi \cdot {\mb p}\ud_{\mcL}(f) = 0, \qquad \forall \xi \in
\Theta_{Z/Y}(U), \quad \forall f \in C^\infty(p(U),\mcL).
$$
By restricting to sufficiently small open subsets, we obtain from
Proposition \ref{inj1} that ${\mc A}_{Z,Y,\mcL}$ is naturally
isomorphic to the right hand side of \eqref{Z to Y1} and hence to
$\D_{Z \to Y,\mcL}$ as a left $\D_{Z,p^*(\mcL)}$-module. Using the
isomorphism \eqref{DY2}, we obtain a
$(\D_{Z,p^*(\mcL)},p\ud(\D_{Y,\mcL}))$-bimodule structure on both
${\mc A}_{Z,Y,\mcL}$ and $\D_{Z \to Y,\mcL}$ and therefore an
isomorphism between ${\mc A}_{Z,Y,\mcL}$ and $\D_{Z \to Y,\mcL}$ as
$(\D_{Z,p^*(\mcL)},p\ud(\D_{Y,\mcL}))$-bimodules, sending $1_{Z \to
  Y,\mcL}$ to ${\mb p}\ud_{\mcL}$. By construction, the former is
precisely the $(\D_{Z,p^*(\mcL)},p\ud(\D_{Y,\mcL}))$-submodule of
\linebreak ${\mc Hom}(p\ud({\mc C}^\infty_{Y,|\mcL|^2}),{\mc
  C}^\infty_{Z,|p^*(\mcL)|^2})$ generated by ${\mb p}\ud_{\mcL}$.
\end{proof}

\subsection{Push-forward}    \label{push}

Suppose that $p: Z \to Y$ is a submersion with compact fibers, and
denote by $K_{Z/Y}$ the corresponding relative canonical
bundle. Then
$$
\Omega_{Z/Y} := |K_{Z/Y}|^2
$$
is the $C^\infty$ line bundle of relative densities. Let $K_Y$ and
$K_Z$ be the canonical line bundles on $Y$ and $Z$, respectively, and
$\Omega_Y := |K_Y|^2$ and $\Omega_Z := |K_Z|^2$ the $C^\infty$ line
bundles of densities on $Y$ and $Z$, respectively. We have
$$
\Omega_{Z/Y} \simeq \Omega_Z \underset{\OO_Z}\otimes
p^*(\Omega_Y^{-1}).
$$

Denote by $p\ld$ the sheaf-theoretic push-forward functor. Let again
$\mcL$ be a line bundle on $Y$. Consider the internal Hom sheaf (where
$\otimes$ stands for $\underset{{\mc C}^\infty_Y}\otimes$ or
$\underset{{\mc C}^\infty_Z}\otimes$)
$$
{\mc Hom}(p\ld({\mc
  C}^\infty_{Z,|p^*(\mcL)|^2 \otimes \Omega_Z}),{\mc
  C}^\infty_{Y,|\mcL|^2 \otimes \Omega_Y}) \simeq {\mc Hom}(p\ld({\mc
  C}^\infty_{Z,|p^*(\mcL)|^2 \otimes \Omega_{Z/Y}|}),{\mc
  C}^\infty_{Y,|\mcL|^2})
$$
on $Y$ and its global section
${\mb p}_!^{\mcL}$ corresponding to the integration map
\begin{equation}    \label{p!}
{\mb p}_!^{\mcL}: C^\infty(Z,|p^*(\mcL)|^2 \otimes \Omega_{Z/Y}) \to
C^\infty(Y,|\mcL|^2)
\end{equation}

Set
$$
\mcM := p^*(\mcL) \underset{\OO_Z}\otimes K_{Z/Y},
$$
so that
$$
|\mcM|^2 = |p^*(\mcL)|^2 \underset{{\mc C}^\infty_Z}\otimes \Omega_{Z/Y}.
$$

\begin{lemma}    \label{action2}
The sheaf ${\mc Hom}(p\ld({\mc C}^\infty_{Z,|\mcM|^2}),{\mc
  C}^\infty_{Y,|\mcL|^2})$ has a natural
$(\D_{Y,\mcL},p\ld(\D_{Z,\mcM}))$-bimo\-dule structure, which is
defined as follows: for every open $U \subset Y$, given $R \in
(p\ld(\D_{Z,\mcM}))(U) = \D_{Z,\mcM}(p^{-1}(U)), S \in
\D_{Y,\mcL}(U)$, and $\phi \in {\mc Hom}(p\ld({\mc
  C}^\infty_{Z,|\mcM|^2}),{\mc C}^\infty_{Y,|\mcL|^2})(U)$, which is a
compatible system $\{ \phi_{U'} \in
\on{Hom}(C^\infty(p^{-1}(U'),|\M|^2),C^\infty(U',|\mcL|^2)) \}$ for open
subsets $U' \subset U$,
$$
\phi \mapsto S \circ \phi \circ R,
$$
where $S \circ \phi \circ R$ stands for the compatible system $\{
S|_{U'} \circ \phi_{U'} \circ R|_{p^{-1}(U')} \, | \, U' \subset U\}$.
\end{lemma}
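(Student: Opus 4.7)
The plan is to verify that the prescribed formula $\phi \mapsto S \circ \phi \circ R$ defines a bimodule structure on the internal $\on{Hom}$ sheaf. Since the statement is essentially a definitional bookkeeping exercise, I would organize the proof as three routine checks. First, I would show that for each open $U \subset Y$, the formula produces a well-defined element of ${\mc Hom}(p\ld({\mc C}^\infty_{Z,|\mcM|^2}),{\mc C}^\infty_{Y,|\mcL|^2})(U)$, that is, a family of $\C$-linear maps indexed by $U' \subset U$ that is compatible with restriction. Given $R \in \D_{Z,\mcM}(p^{-1}(U))$ and $S \in \D_{Y,\mcL}(U)$, the sheaf property of $\D_{Z,\mcM}$ and $\D_{Y,\mcL}$ provides restrictions $R|_{p^{-1}(U')}$ and $S|_{U'}$ acting $\C$-linearly on smooth sections of $|\mcM|^2$ over $p^{-1}(U')$ and of $|\mcL|^2$ over $U'$ respectively, so $S|_{U'} \circ \phi_{U'} \circ R|_{p^{-1}(U')}$ is a $\C$-linear map of the required type. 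Compatibility of this new family with further restriction $U'' \subset U'$ follows from the compatibility of the $\phi_{U'}$'s together with the fact that restriction of smooth sections commutes with the action of differential operators.

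Next, I would verify the algebraic axioms. The module axioms $(S_1 S_2) \circ \phi \circ R = S_1 \circ (S_2 \circ \phi \circ R)$ and $\phi \circ (R_1 R_2) = (\phi \circ R_1) \circ R_2$, together with the fact that the unit elements act trivially, are immediate from associativity of composition of $\C$-linear maps. The commutativity of the two actions, which is the defining condition for a bimodule, is also immediate from associativity: $S \circ (\phi \circ R) = (S \circ \phi) \circ R$. Each of these identities holds on every $U' \subset U$ simultaneously, so the local formula indeed determines a bimodule structure on the internal $\on{Hom}$ sheaf over $Y$.

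There is no substantial obstacle in this lemma; the only point worth flagging is that $\D_{Z,\mcM}$ and $\D_{Y,\mcL}$ act by $\C$-linear (not $\OO$-linear) endomorphisms of smooth sections, so the composition $S \circ \phi \circ R$ needs no additional twist to make sense and the formula is manifestly well-posed. With the choice $|\mcM|^2 = |p^*(\mcL)|^2 \otimes \Omega_{Z/Y}$, the integration map ${\mb p}_!^{\mcL}$ of \eqref{p!} is a distinguished global section of this internal $\on{Hom}$ sheaf, and I expect it to play the same role here that $1_{Z\to Y,\mcL}$ played in Propositions \ref{inj1} and \ref{inj2}: it will be the canonical section through which one transfers differential equations on push-forwards of $\D$-modules into differential equations satisfied by push-forwards of smooth sections, and this is what the subsequent subsection on integral transforms will exploit to analyze the Hecke operators.
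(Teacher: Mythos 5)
Your verification is correct and matches the paper's treatment: the paper states this lemma as a routine definitional fact (with no written proof, exactly as for Lemma \ref{action1}), and your checks of well-definedness under restriction, the module axioms, and the commutation of the two actions via associativity of composition are precisely the implicit content. Your closing remark about the role of ${\mb p}_!^{\mcL}$ as the distinguished section is also consistent with how the paper proceeds in Proposition \ref{inj3} and Section \ref{inttr}.
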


On the other hand, denote by $p^D_*$ the (derived) $D$-module
push-forward functor
$$
p^D_*: D^b(\D_Z) \to D^b(\D_Y).
$$
If $\mcF$ is a $\D_Z$-module, then it follows from the definition
(see \cite{Ka}, Sect. 4.6) that
\begin{equation}    \label{free}
p^D_*({\mc F}) = R^\dt p_*(\D_{Y \lr Z}
\underset{\D_Z}{\overset{L}\otimes}
{\mc F}),
\end{equation}
where
$$
\D_{Y \lr Z} := K_Z \underset{\OO_Z}\otimes \D_{Z \to Y}
\underset{p\ud(\OO_Y)}\otimes p\ud(K_Y^{-1})
$$
is a sheaf on $Z$ with a natural structure of
$(p\ud(\D_Y),\D_Z)$-bimodule.

The right action of $\D_Y$ on $K_Y$ gives rise to a canonical
isomorphism
$$
\D_Y^{\on{opp}} \simeq K_Y \underset{\OO_Y}\otimes \D_Y
\underset{\OO_Y}\otimes K_Y^{-1},
$$
which yields an identification (see \cite{Ka}, Remark 4.18)
\begin{equation}    \label{DY from Z}
\D_{Y \lr Z} \simeq p\ud(\D_Y) \underset{p\ud(\OO_Y)}\otimes
K_{Z/Y}.
\end{equation}

Let us apply $p^{D,0}_* := H^0 p^D_*$ to the left $\D_Z$-module $\D_Z
\underset{\OO_Z}\otimes \mcM^{-1}$. Since it is free,
$\overset{L}\otimes$ in formula \eqref{free} is the ordinary
$\otimes$. Note also that it carries a commuting right
$\D_{Z,\mcM}$-module structure and hence is a
$(\D_Z,\D_{Z,\mcM})$-bimodule. This implies that the sheaf $p^{D,0}_*(\D_Z
\underset{\OO_Z}\otimes \mcM^{-1})$ is a
$(\D_Y,p\ld(\D_{Z,\mcM}))$-bimodule.

Setting
$$
\D_{Y \lr Z,\mcM} := \D_{Y \lr Z}
\underset{\OO_Z}\otimes \mcM^{-1} \simeq p\ud(\D_Y)
\underset{p\ud(\OO_Y)}\otimes p^*(\mcL^{-1}),
$$
we obtain an isomorphism
$$
p^{D,0}_*(\D_Z
\underset{\OO_Z}\otimes \mcM^{-1}) \simeq p_*(\D_{Y \lr
  Z,\mcM}).
$$

Define the following $(\D_{Y,\mcL},p\ld(\D_{Z,\mcM}))$-bimodule on
$Y$:
$$
\wt\D_{Y \lr Z,\mcM} := \mcL \underset{\OO_Y}\otimes p^{D,0}_*(\D_Z
\underset{\OO_Z}\otimes \mcM^{-1}) \simeq \mcL \underset{\OO_Y}\otimes
p_*(\D_{Y \lr Z,\mcM}).
$$
It follows from the definition of the functor $p^{D,0}_*$ that the unit
$1_Z \in \D_Z$ gives rise to a global section $1_{Y \lr Z,\mcL}$ of
$\wt\D_{Y \lr Z,\mcM}$.

\begin{proposition}    \label{inj3}
There is a unique injective homomorphism of
$(\D_{Y,\mcL},p\ld(\D_{Z,\mcM}))$-bimodules
  $$
  \wt\D_{Y \lr Z,\mcM} \to {\mc Hom}(p\ld({\mc
  C}^\infty_{Z,|\mcM|^2}),{\mc C}^\infty_{Y,|\mcL|^2})
  $$
sending $1_{Y \lr Z,\mcL}$ to ${\mb p}_!^{\mcL}$.
\end{proposition}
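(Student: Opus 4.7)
My plan is to closely mirror the proofs of Propositions~\ref{inj1} and \ref{inj2}, with the vanishing identity $\xi \cdot {\mb p}\ud_{C^\infty}(f) = 0$ for vertical vector fields $\xi$ replaced by its integration-by-parts dual, Stokes' theorem: $\int_{Z_y} L_\xi \omega = 0$ for any relative top form $\omega$, since the fiber $Z_y$ is compact without boundary. Uniqueness is immediate once one observes, using formula~\eqref{DY from Z} and the simplification $\D_{Y \lr Z,\mcM} \simeq p\ud(\D_Y) \underset{p\ud(\OO_Y)}\otimes p^*(\mcL^{-1})$, that $\wt\D_{Y \lr Z,\mcM}$ is generated as a $(\D_{Y,\mcL}, p\ld(\D_{Z,\mcM}))$-bimodule by the global section $1_{Y \lr Z,\mcL}$, so any such bimodule homomorphism is determined by its value there.

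For existence, I would introduce the sub-bimodule ${\mc B}_{Y \lr Z,\mcM}$ of ${\mc Hom}(p\ld({\mc C}^\infty_{Z,|\mcM|^2}),{\mc C}^\infty_{Y,|\mcL|^2})$ generated by ${\mb p}_!^{\mcL}$ under the action of Lemma~\ref{action2}, and construct a surjection $\wt\D_{Y \lr Z,\mcM} \twoheadrightarrow {\mc B}_{Y \lr Z,\mcM}$ sending $1_{Y \lr Z,\mcL}$ to ${\mb p}_!^{\mcL}$. The only nontrivial relation to verify comes from the right $\D_Z$-action on $\D_{Y \lr Z}$ encoded by~\eqref{DY from Z}: for $\xi \in \Theta_{Z/Y}$, the relation $Q\xi \otimes \omega = -Q \otimes L_\xi \omega$ in $\D_{Y \lr Z}$ must be matched, after tensoring with $\mcM^{-1}$ and applying the proposed map, by the Stokes identity $\int_{Z_y}(\xi \cdot s)\,\omega = -\int_{Z_y} s\, L_\xi \omega$ on each compact fiber; a standard induction on the order of $Q$ reduces the general case to this single relation.

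The main obstacle will be injectivity. My plan is to argue locally: cover $Y$ by coordinate patches $V$ with coordinates $(y_1, \ldots, y_m)$ over which $p^{-1}(V)$ admits local fiber coordinates $(x_1,\ldots,x_n)$ and $p$ is projection, and trivialize $\mcL$ and $\mcM$. By the simplified description of $\D_{Y \lr Z,\mcM}$ following~\eqref{DY from Z}, a local section of $\wt\D_{Y \lr Z,\mcM}$ then admits a unique normal form $\sum_I f_I(y,x)\,\pa_y^I$ with holomorphic coefficients (and fixed frames of the relevant line bundles suppressed), and its image under the proposed map is the operator sending $s \in C^\infty(p^{-1}(V),|\mcM|^2)$ to $\sum_I \pa_y^I \int_{Z_y} f_I(y,x)\, s(y,x)\,|dx|^2$. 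To show that vanishing of this operator forces each $f_I$ to vanish, I would test against $s = \phi(y)\psi(x)$ with $\phi(y) = e^{i\lambda \cdot y}\chi(y)$, where $\chi$ is a cutoff near an arbitrary point $y_0$; extracting the leading-order behavior in $|\lambda|$ separates the different multi-indices $I$ and isolates each integral $\int_{Z_{y_0}} f_I(y_0,x)\psi(x)\,|dx|^2$, and varying $\psi \in {\mc C}^\infty_c(Z_{y_0})$ then forces $f_I(y_0,\cdot) \equiv 0$ by density. Since $y_0$ is arbitrary and $f_I$ is holomorphic, $f_I \equiv 0$, giving injectivity. The twisting by the $C^\infty$ line bundles $|\mcL|^2$ and $|\mcM|^2$ from Section~\ref{mainobj} is absorbed tautologically by the trivializations and presents no further difficulty.
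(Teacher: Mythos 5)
Your proposal follows essentially the same route as the paper: the key identity is the same Stokes-type vanishing ${\mb p}_!^{\mcL}(\xi \cdot f)=0$ for vertical vector fields $\xi$ on the compact fibers, combined with the identification \eqref{DY from Z} of $\wt\D_{Y \lr Z,\mcM}$ with the quotient of $p\ld(\D_{Z,\mcM})$ by the right ideal generated by $\Theta_{Z/Y}$, and cyclic generation by $1_{Y \lr Z,\mcL}$ for uniqueness. The only difference is that you spell out the injectivity step by an explicit local normal form and testing argument, which the paper leaves implicit by reducing to $\mcL=\OO_Y$ and referring to the analogous local descriptions in Propositions \ref{inj1} and \ref{inj2}; this is a correct filling-in of detail, not a different method.
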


\begin{proof}
  Let us prove the statement in the case $\mcL = \OO_Y$, so that
  $\mcM = K_{Z/Y}$.

  Recall the relative tangent sheaf $\Theta_{Z/Y}$. The sheaf
  $p_*(\Theta_{Z/Y})$ naturally acts on $p_*(K_{Z/Y})$ and on
  $p\ld({\mc C}^\infty_{Z,\Omega_{Z/Y}})$ by Lie derivatives. Hence
  $\Theta_{Z/Y}$ embeds into $p\ld(\D_{Z,K_{Z/Y}})$ as a subsheaf
  of Lie algebras, and so it acts on $p\ld(\D_{Z,K_{Z/Y}})$ by
  commutators. For every open $U \subset Y$, we have
  $$
  {\mb p}_!^{\mcL}(\xi
  \cdot f) = 0, \qquad \forall \xi \in
  p_*(\Theta_{Z/Y})(U), \quad \forall f \in
  C^\infty(p^{-1}(U),\Omega_{Z/Y}).
  $$
  It follows that
  $$
  p\ld(\D_{Z,K_{Z/Y}})^{\on{opp}} \cdot {\mb p}_!^{\mcL} \simeq 
  p\ld(\D_{Z,K_{Z/Y}})/(p\ld(\Theta_{Z/Y}) \cdot
  p\ld(\D_{Z,K_{Z/Y}})).
  $$
  Formula \eqref{DY from Z} implies that
  $$
  p\ld(\D_{Z,K_{Z/Y}})/(p\ld(\Theta_{Z/Y}) \cdot
  p\ld(\D_{Z,K_{Z/Y}})) \simeq \wt\D_{Y \lr Z,K_{Y/Z}}.
  $$
  This completes the proof of the proposition for $\mcL = \OO_Y$, so
  that $\mcM = K_{Y/Z}$. For a general line bundle $\mcL$, the
  statement of the proposition is derived in a similar way (compare
  with Propositions \ref{inj1} and \ref{inj2}).
\end{proof}

\subsection{Integral transforms}    \label{inttr}

Consider a correspondence
\begin{equation}    \label{Hecke cor2}
\begin{array}{ccccc}
& & Z & & \\
& \stackrel{p_1}\swarrow & & \stackrel{p_2}\searrow & \\
Y_1 & & & & Y_2
\end{array}
\end{equation}
Assume that $p_1$ and $p_2$ are submersions with compact
fibers. Let $\mcL_1$ and $\mcL_2$ be line bundles on $Y_1$ and $Y_2$,
respectively, such that
\begin{equation}    \label{isoma}
p_1^*(\mcL_1) \simeq p_2^*(\mcL_2) \underset{\OO_Z}\otimes
K_{Z/Y_2}.
\end{equation}
In what follows, we will fix such an isomorphism.

Consider the Hom sheaf on $Y_2$,
\begin{equation}    \label{Homsh}
{\mc Hom}(p_{2*} p_1\ud({\mc C}^\infty_{Y_1,\mcL_1}),{\mc
  C}^\infty_{Y_2,\mcL_2}).
\end{equation}
Combining Lemmas \ref{action1} and
\ref{action2}, we obtain that it is naturally a
$(\D_{Y_2,\mcL_2},p_{2*} p_1\ud(\D_{Y_1,\mcL_1}))$-bimodule.

Recall that we have a section
$$
{\mb p}_{2!}^{\mcL_2} \in {\mc Hom}(p_{2*}({\mc
  C}^\infty_{Z,|p_2^*(\mcL_2)|^2 \otimes \Omega_{Z/Y_2}}),{\mc
  C}^\infty_{Y_2,|\mcL_2|^2}) = {\mc Hom}(p_{2*}({\mc
  C}^\infty_{Z,|p_1^*(\mcL_1)|^2}),{\mc
  C}^\infty_{Y_2,|\mcL_2|^2})
$$
(here we apply the isomorphism \eqref{isoma}). We also have a section
$$
{\mb p}\ud_{1,\mc L_1} \in {\mc
  Hom}(p\ud_1({\mc C}^\infty_{Y_1,|\mcL_1|^2}),{\mc
  C}^\infty_{Z,|p_1^*(\mcL_1)|^2})
$$
which gives rise to a section
$$
p_{2*}({\mb p}\ud_{1,\mc L_1}) \in {\mc
  Hom}(p_{2*} p\ud_1({\mc C}^\infty_{Y_1,|\mcL_1|^2}),p_{2*}({\mc
  C}^\infty_{Z,|p_1^*(\mcL_1)|^2})).
$$
Denote by $H^{\mcL_1,\mcL_2}_{Z,Y_1,Y_2}$, or $H_Z$ for short, the
composition
$$
H_Z := {\mb p}_{2!}^{\mcL_2} \circ
p_{2*}({\mb p}^{-1}_{1,\mcL_1}),
$$
which is a section of the sheaf \eqref{Homsh}. This is the {\em
  integral transform} associated to the correspondence $Z$ and the
line bundles $\mcL_1$ and $\mcL_2$.

On the other hand, recall the
$(\D_{Z,p^*(\mcL_1)},p_1\ud(\D_{Y_1,\mcL_1}))$-bimodule $\D_{Z \to
  Y_1,\mcL_1}$ and the
$(\D_{Y,\mcL_2},p_{2*}(\D_{Z,\mcM_2}))$-bimodule $\wt\D_{Y \lr
  Z,\mcM_2}$, where
$$
\mcM_2 = p_2^*(\mcL_2) \underset{\OO_Z}\otimes
K_{Z/Y_2}.
$$
Formula \eqref{isoma} implies that
$$
\D_{Z,p^*(\mcL_1)} \simeq \D_{Z,\mcM_2}.
$$
Therefore we can form the tensor product
\begin{equation}    \label{DZYY}
  \D^{\mcL_1,\mcL_2}_{Y_2 \lr Z \to Y_1} := \wt\D_{Y \lr Z,\mcM_2}
  \underset{p_{2*}(\D_{Z,\mcM_2})}\otimes
  p_{2*}(\D_{Z \to Y_1,\mcL_1})
\end{equation}
which is naturally a $(\D_{Y_2,\mcL_2},p_{2*}
p_1\ud(\D_{Y_1,\mcL_1}))$-bimodule. The canonical sections of
$\wt\D_{Y \lr Z,\mcM_2}$ and $\D_{Z \to Y_1,\mcL_1}$ introduced above
give us a global section of $\D^{\mcL_1,\mcL_2}_{Y_2 \lr Z \to Y_1}$,
which we denote by $1_{Y_2 \lr Z \to Y_1}$.

Propositions \ref{inj2} and \ref{inj3} imply the following.

\begin{proposition}    \label{HH1}
  There is a unique homomorphism of $(\D_{Y_2,\mcL_2},p_{2*}
  p_1\ud(\D_{Y_1,\mcL_1}))$-bimo\-dules
  $$
  \D^{\mcL_1,\mcL_2}_{Y_2 \lr Z \to Y_1} \to {\mc
    Hom}(p_{2*} p_1\ud({\mc C}^\infty_{Y_1,|\mcL_1|^2}),{\mc
    C}^\infty_{Y_2,|\mcL_2|^2})
  $$
  sending $1_{Y_2 \lr Z \to Y_1}$ to $H_Z$.
\end{proposition}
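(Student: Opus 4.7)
The plan is to construct the desired bimodule homomorphism by composing the injections of Propositions \ref{inj2} and \ref{inj3}, then to extract uniqueness from the fact that $1_{Y_2 \lr Z \to Y_1}$ is a cyclic generator. Proposition \ref{inj2} supplies an injective $(\D_{Z,p_1^*(\mcL_1)},\,p_1\ud(\D_{Y_1,\mcL_1}))$-bimodule map
$$
\phi_1 : \D_{Z \to Y_1,\mcL_1} \to {\mc Hom}\bigl(p_1\ud({\mc C}^\infty_{Y_1,|\mcL_1|^2}),\,{\mc C}^\infty_{Z,|p_1^*(\mcL_1)|^2}\bigr)
$$
sending $1_{Z\to Y_1,\mcL_1}$ to ${\mb p}\ud_{1,\mcL_1}$. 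Applying the left-exact direct image $p_{2*}$, postcomposing with the tautological sheaf morphism $p_{2*}{\mc Hom}(\mcF,\mcG) \to {\mc Hom}(p_{2*}\mcF,p_{2*}\mcG)$, and using the identifications $\D_{Z,p_1^*(\mcL_1)}\simeq\D_{Z,\mcM_2}$ and $|p_1^*(\mcL_1)|^2 \simeq |\mcM_2|^2$ coming from \eqref{isoma}, we obtain a map of $(p_{2*}(\D_{Z,\mcM_2}),\,p_{2*}p_1\ud(\D_{Y_1,\mcL_1}))$-bimodules
$$
\Phi_1 : p_{2*}(\D_{Z \to Y_1,\mcL_1}) \to {\mc Hom}\bigl(p_{2*}p_1\ud({\mc C}^\infty_{Y_1,|\mcL_1|^2}),\,p_{2*}({\mc C}^\infty_{Z,|\mcM_2|^2})\bigr).
$$
In parallel, Proposition \ref{inj3} applied with $\mcL=\mcL_2$ gives an injective $(\D_{Y_2,\mcL_2},p_{2*}(\D_{Z,\mcM_2}))$-bimodule map
$$
\phi_2 : \wt\D_{Y_2 \lr Z,\mcM_2} \to {\mc Hom}\bigl(p_{2*}({\mc C}^\infty_{Z,|\mcM_2|^2}),\,{\mc C}^\infty_{Y_2,|\mcL_2|^2}\bigr)
$$
sending $1_{Y_2\lr Z,\mcL_2}$ to ${\mb p}_{2!}^{\mcL_2}$.

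Composition of Hom sheaves then yields the rule $(a,b)\mapsto \phi_2(a)\circ \Phi_1(b)$, and the key claim is that it descends to a morphism
$$
\wt\D_{Y_2 \lr Z,\mcM_2} \underset{p_{2*}(\D_{Z,\mcM_2})}{\otimes} p_{2*}(\D_{Z \to Y_1,\mcL_1}) \to {\mc Hom}\bigl(p_{2*}p_1\ud({\mc C}^\infty_{Y_1,|\mcL_1|^2}),\,{\mc C}^\infty_{Y_2,|\mcL_2|^2}\bigr).
$$
For any local section $P$ of $p_{2*}(\D_{Z,\mcM_2})$, the right $p_{2*}(\D_{Z,\mcM_2})$-linearity of $\phi_2$ (encoded by Lemma \ref{action2}) gives $\phi_2(aP) = \phi_2(a)\circ P$, while the left linearity of $\Phi_1$ (encoded by Lemma \ref{action1}) gives $\Phi_1(Pb) = P\circ \Phi_1(b)$, so that $\phi_2(aP)\circ \Phi_1(b) = \phi_2(a)\circ \Phi_1(Pb)$. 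This is exactly the relation that permits descent through the relative tensor product. Compatibility with the outer $\D_{Y_2,\mcL_2}$- and $p_{2*}p_1\ud(\D_{Y_1,\mcL_1})$-actions is automatic. By construction, $1_{Y_2\lr Z,\mcL_2}\otimes 1_{Z\to Y_1,\mcL_1} = 1_{Y_2\lr Z\to Y_1}$ is sent to ${\mb p}_{2!}^{\mcL_2}\circ p_{2*}({\mb p}\ud_{1,\mcL_1}) = H_Z$.

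Uniqueness follows because $1_{Y_2\lr Z\to Y_1}$ generates $\D^{\mcL_1,\mcL_2}_{Y_2\lr Z\to Y_1}$ cyclically as a $(\D_{Y_2,\mcL_2},p_{2*}p_1\ud(\D_{Y_1,\mcL_1}))$-bimodule. Via \eqref{DZYY} this reduces to the analogous generation statements for $1_{Z\to Y_1,\mcL_1}\in \D_{Z\to Y_1,\mcL_1}$, which is immediate from the presentation \eqref{Z to Y1}, and for $1_{Y_2\lr Z,\mcL_2}\in \wt\D_{Y_2\lr Z,\mcM_2}$, which follows from \eqref{DY from Z} together with the definition of $\wt\D_{Y\lr Z,\mcM}$. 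The main technical point in the whole argument is the descent to the relative tensor product; everything else is a formal diagram chase assembling the two injective maps of Propositions \ref{inj2} and \ref{inj3}.
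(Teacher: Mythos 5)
Your proposal is correct and follows essentially the paper's own route: the paper simply asserts that Propositions \ref{inj2} and \ref{inj3} imply Proposition \ref{HH1}, and your argument—applying $p_{2*}$ to the map of Proposition \ref{inj2}, composing with the map of Proposition \ref{inj3} via the identification \eqref{isoma}, descending through the relative tensor product \eqref{DZYY} by the balanced-bilinearity of the middle $p_{2*}(\D_{Z,\mcM_2})$-action, and deducing uniqueness from the cyclicity of $1_{Y_2 \lr Z \to Y_1}$—is exactly the intended assembly of those two results, just with the details written out.
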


This proposition has the following obvious corollary.

\begin{corollary}    \label{cor}
  Suppose that $P \cdot 1_{Y_2 \lr Z \to Y_1} = 0$
  for some $P \in \Gamma(Y_2,\D_{Y_2,\mcL_2})$. Then
  $$
  P \cdot H_Z = 0, \qquad \ol{P} \cdot
  H_Z = 0.
  $$
\end{corollary}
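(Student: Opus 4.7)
The plan is to deduce Corollary \ref{cor} directly from Proposition \ref{HH1} together with a short complex-conjugation argument. Let $\Phi$ denote the $(\D_{Y_2,\mcL_2},p_{2*}p_1\ud(\D_{Y_1,\mcL_1}))$-bimodule homomorphism of Proposition \ref{HH1}, which by construction sends $1_{Y_2 \lr Z \to Y_1}$ to $H_Z$. Since $\Phi$ is in particular left $\D_{Y_2,\mcL_2}$-linear, the hypothesis $P \cdot 1_{Y_2 \lr Z \to Y_1} = 0$ gives
$$
P \cdot H_Z \;=\; P \cdot \Phi(1_{Y_2 \lr Z \to Y_1}) \;=\; \Phi(P \cdot 1_{Y_2 \lr Z \to Y_1}) \;=\; 0,
$$
which is the first identity. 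No further work is required for this half.

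For the second identity $\ol{P} \cdot H_Z = 0$, the plan is to exploit the fact that the entire construction of Sections \ref{pull}--\ref{inttr} has a mirror anti-holomorphic version. Concretely, one reruns Propositions \ref{inj1}, \ref{inj2}, \ref{inj3}, and \ref{HH1} with $\OO_{Y_i}$ replaced by $\ol\OO_{Y_i}$, $\D_{Y_i,\mcL_i}$ replaced by $\ol\D_{Y_i,\ol\mcL_i}$, and each $\mcL_i$ replaced by $\ol\mcL_i$. The $C^\infty$ line bundle $|\mcL_i|^2 = \mcL_i \otimes \ol\mcL_i \underset{\OO_{Y_i} \otimes \ol\OO_{Y_i}}\otimes \mc{C}^\infty_{Y_i}$ is symmetric in $\mcL_i$ and $\ol\mcL_i$, and pull-back of smooth sections under the submersion $p_1$ and integration along the compact fibers of $p_2$ both commute with complex conjugation of sections. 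Hence the Hom sheaf \eqref{Homsh} and the distinguished section $H_Z$ inside it are unchanged under the mirror procedure, and Proposition \ref{HH1} produces a second bimodule homomorphism
$$
\ol{\Phi}: \ol{\D}^{\ol\mcL_1,\ol\mcL_2}_{Y_2 \lr Z \to Y_1} \longrightarrow \mc{Hom}\bigl(p_{2*}p_1\ud(\mc{C}^\infty_{Y_1,|\mcL_1|^2}), \mc{C}^\infty_{Y_2,|\mcL_2|^2}\bigr)
$$
sending the canonical generator $\ol{1}_{Y_2 \lr Z \to Y_1}$ to the same $H_Z$.

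Complex conjugating the hypothesis $P \cdot 1_{Y_2 \lr Z \to Y_1} = 0$ inside $\D^{\mcL_1,\mcL_2}_{Y_2 \lr Z \to Y_1}$ yields $\ol{P} \cdot \ol{1}_{Y_2 \lr Z \to Y_1} = 0$ inside the mirror bimodule $\ol{\D}^{\ol\mcL_1,\ol\mcL_2}_{Y_2 \lr Z \to Y_1}$, and applying $\ol{\Phi}$ together with its left $\ol\D_{Y_2,\ol\mcL_2}$-linearity gives $\ol{P} \cdot H_Z = 0$, as desired.

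There is no serious obstacle here once Proposition \ref{HH1} is available; the only step requiring care is verifying that the anti-holomorphic counterparts of Propositions \ref{inj1}--\ref{inj3} really do produce the \emph{same} section $H_Z$ in the \emph{same} Hom sheaf. This reduces to the observation that $|\mcL|^2$ depends symmetrically on $\mcL$ and $\ol\mcL$ (its transition functions $|g_{\alpha\beta}|^2$ are real and positive) and that the operations $\mathbf{p}\ud_{1,\mcL_1}$ and $\mathbf{p}_{2!}^{\mcL_2}$ defining $H_Z$ are manifestly compatible with complex conjugation of smooth sections. Once this bookkeeping is done, the two identities of Corollary \ref{cor} are formal consequences of Proposition \ref{HH1} and its mirror.
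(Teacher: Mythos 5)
Your proposal is correct and matches the paper's intent: Corollary \ref{cor} is stated there as an immediate consequence of Proposition \ref{HH1}, with the first identity following from left $\D_{Y_2,\mcL_2}$-linearity of the bimodule map and the second from the anti-holomorphic mirror of the same construction, which yields the same section $H_Z$ of the same Hom sheaf because $|\mcL|^2$ and the pull-back/fiber-integration operations are conjugation-symmetric. Your write-up simply makes explicit the bookkeeping the paper leaves as obvious.
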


In other words, holomorphic differential equations satisfied by
the section $1_{Y_2 \lr Z \to Y_1}$ give rise to differential
equations on the integral transform $H_Z$ obtained from the
correspondence $Z$.

\medskip

Now we want to connect this to the $D$-module {\em integral transform
  functor} associated to the correspondence $Z$. Namely, we have the
functor
\begin{eqnarray}    \label{functor1}
  {\mathbf H}^\dt_Z: D^b(\D_{Y_1}) &\to & D^b(\D_{Y_2}) \\
  \label{functor}
{\mc G} &\mapsto & p^D_{2*}(p_1^*({\mc G})).
\end{eqnarray}
Let
\begin{equation}    \label{sheaf}
  {\mc H}_Z := {\mc L}_2 \underset{\OO_{Y_2}}\otimes {\mathbf
    H}^0_Z(\D_{Y_1} \underset{\OO_{Y_1}}\otimes {\mc L}^{-1}).
\end{equation}
This is a $(\D_{Y_2,\mcL_2},p_{2*}
p_1\ud(\D_{Y_1,\mcL_1}))$-bimodule. The unit $1_{Y_1}$ gives rise to a
global section of ${\mc H}_Z$, which we denote by $\psi_Z$.

Recall formula \eqref{free} for the functor $p^D_{2*}$. The next
result follows from \cite{DS}, Prop. 2.12.

\begin{proposition}[\cite{DS}]
Suppose that the map $Z \to Y_1 \times Y_2$ is a closed
embedding. Then $\D_{Y_2 \lr Z} \underset{\D_Z}{\overset{L}\otimes}
\D_{Z \to Y_1}$ is concentrated in cohomological degree $0$.
\end{proposition}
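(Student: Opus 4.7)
The plan is to prove the proposition by working locally on $Z$, constructing a Koszul-type resolution of $\D_{Z \to Y_1}$ as a left $\D_Z$-module, and then showing that after tensoring over $\D_Z$ with the right $\D_Z$-module $\D_{Y_2 \lr Z}$, the resulting complex is acyclic in positive degrees. The closed-embedding hypothesis together with the submersivity of $p_1$ and $p_2$ is what makes the required acyclicity hold.

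First I would note that the question is \emph{local on $Z$}, so I can choose a coordinate patch in $Y_1 \times Y_2$ of the form $(y_1,\dots,y_m,x_1,\dots,x_n)$ (with the $y$'s giving coordinates on $Y_1$ and the $x$'s on $Y_2$) and, using the fact that $Z \hookrightarrow Y_1 \times Y_2$ is a closed embedding with both projections submersive, adapt the coordinates so that $Z$ is cut out by a regular sequence whose differentials project isomorphically onto complementary subspaces of $T Y_1$ and $T Y_2$ at the point. This provides, locally, a product decomposition $Z \simeq U_1 \times U_2$ compatible with the two projections, and identifies $\Theta_{Z/Y_1}$ with a subbundle of $p_2^{-1}(\Theta_{Y_2})|_Z$ that fills out the $p_2$-tangent directions missing from $T Z$.

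Next I would invoke the standard Spencer/Koszul resolution of $\D_{Z \to Y_1}$ as a left $\D_Z$-module, available because $p_1$ is a submersion of relative dimension $d$:
$$
0 \to \D_Z \underset{\OO_Z}\otimes \wedge^{d} \Theta_{Z/Y_1} \to \cdots \to \D_Z \underset{\OO_Z}\otimes \Theta_{Z/Y_1} \to \D_Z \to \D_{Z \to Y_1} \to 0,
$$
with differentials defined by the action of $\Theta_{Z/Y_1} \subset \D_Z$ on the left. Each term is a locally free left $\D_Z$-module, so this resolution computes $\D_{Y_2 \lr Z} \underset{\D_Z}{\overset{L}\otimes} \D_{Z \to Y_1}$ by applying $\D_{Y_2 \lr Z} \underset{\D_Z}\otimes (-)$ termwise. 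Using the identification $\D_{Y_2 \lr Z} \simeq p_2^{-1}(\D_{Y_2}) \underset{p_2^{-1}(\OO_{Y_2})}\otimes K_{Z/Y_2}$ from \eqref{DY from Z}, the resulting complex becomes a Koszul-type complex on $p_2^{-1}(\D_{Y_2}) \otimes K_{Z/Y_2}$ with differentials given by the right action of $\Theta_{Z/Y_1}$ viewed inside $p_2^{-1}(\Theta_{Y_2})|_Z$.

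The main step, and the point where the closed-embedding hypothesis really enters, is to check that this Koszul complex is acyclic in positive degrees. In the local coordinates chosen above, $\Theta_{Z/Y_1}$ is spanned by the images, tangent to $Z$, of certain $\partial_{x_j}$'s on $Y_2$, and these are precisely a regular sequence of derivations on $p_2^{-1}(\D_{Y_2})$; taking associated graded with respect to the order filtration on $\D_{Y_2}$ reduces the acyclicity to the classical Koszul acyclicity for a regular sequence on a polynomial ring, whereupon a straightforward spectral-sequence argument transfers the conclusion back to $\D_{Y_2 \lr Z}$ itself. The main obstacle here is verifying that the transition from the local situation to a global statement is clean, which I expect to handle by noting that all constructions are functorial in coordinate changes adapted to the closed embedding. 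A cleaner, more conceptual route avoiding explicit coordinates would be to invoke the transverse/non-characteristic pull-back theorem of Kashiwara--Schapira: the closed embedding condition together with submersivity of $p_1,p_2$ forces the characteristic varieties of $\D_{Z \to Y_1}$ and $\D_{Y_2 \lr Z}$ (as $\D_Z$-modules on opposite sides) to meet transversally in $T^*Z$, which is exactly the Tor-vanishing statement the proposition asserts; this is essentially the argument given in \cite{DS}.
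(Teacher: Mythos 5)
The paper itself gives no argument for this statement: it is quoted directly from \cite{DS}, Prop.~2.12. So you are supplying an independent proof, and your overall strategy is a viable one: resolve $\D_{Z\to Y_1}$ by the relative Spencer complex $\D_Z\otimes_{\OO_Z}\wedge^\bullet\Theta_{Z/Y_1}$ (note the differential must use \emph{right} multiplication by the $\xi_i$ in the $\D_Z$-factor to be left $\D_Z$-linear), tensor with $\D_{Y_2\lr Z}$, and prove acyclicity in positive degrees of the resulting Koszul-type complex by passing to principal symbols. Since the assertion is local on $Z$, there is also no globalization issue to worry about.

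However, the step where the closed-embedding hypothesis is supposed to enter is not established as written, because it rests on two false claims. First, there is in general no local decomposition $Z\simeq U_1\times U_2$ compatible with both projections, nor can one adapt coordinates so that $Z$ is cut out by functions whose differentials split into complementary $TY_1$- and $TY_2$-pieces (already the diagonal $Z\subset Y\times Y$ violates both). Second, one cannot in general choose coordinates $x_j$ on $Y_2$ so that $\Theta_{Z/Y_1}$ is spanned by lifts of the $\partial_{x_j}$: the image of $\Theta_{Z/Y_1}$ in $p_2^*\Theta_{Y_2}$ is a subbundle that varies with the point of $Z$, not only with its image in $Y_2$ (think of an incidence correspondence). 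What the hypothesis actually gives, and all your argument needs, is that $\ker dp_1\cap\ker dp_2=0$, i.e.\ the bundle map $\Theta_{Z/Y_1}\to p_2^*\Theta_{Y_2}$ is injective on fibers. Then the principal symbols of any local frame of $\Theta_{Z/Y_1}$, acting by right multiplication on $\D_{Y_2\lr Z}$, are pointwise linearly independent fiberwise-linear functions on $Z\times_{Y_2}T^*Y_2$ (the Lie-derivative of $K_{Z/Y_2}$ and commutator terms being of lower order), hence a regular sequence in $\on{Sym}_{\OO_Z}(p_2^*\Theta_{Y_2})\simeq\gr\D_{Y_2\lr Z}$, and your graded Koszul argument plus the filtration spectral sequence then do give the vanishing; with this substitution the proof is correct. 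The hypothesis is genuinely needed: if $p_1=p_2$ is the projection $\C^2\to\C$, right multiplication by the vertical field acts on the normal form of $\D_{Y_2\lr Z}$ as a derivation of the coefficients and has a kernel, so the first Tor is nonzero. Finally, your closing remark that transversality of the characteristic varieties of $\D_{Z\to Y_1}$ and $\D_{Y_2\lr Z}$ in $T^*Z$ ``is exactly'' the Tor-vanishing is not a correct criterion as stated: for the diagonal both characteristic varieties are all of $T^*Z$, yet the Tor vanishes; the mechanism in \cite{DS} is a non-characteristic inverse-image statement, not transversality of these two varieties.
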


This proposition has the following corollary.

\begin{corollary}    \label{closed}
If $Z \to Y_1 \times Y_2$ is a closed embedding, then there is an
isomorphism of $(\D_{Y_2,\mcL_2},p_{2*}p_1\ud(\D_{Y_1,\mcL_1}))$-bimodules
\begin{equation}    \label{homom}
  \D^{\mcL_1,\mcL_2}_{Y_2 \lr Z \to Y_1} \simeq {\mc H}_Z
\end{equation}
under which $1_{Y_2 \lr Z \to Y_1}$ is mapped to $\psi_Z$.
\end{corollary}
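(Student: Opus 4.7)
The plan is to derive the isomorphism by unpacking the definition of $\mathcal{H}_Z$ and applying the D'Agnolo--Schapira proposition cited just above, which is exactly what lets us replace the derived tensor product implicit in $p^D_{2*}$ by an ordinary tensor product and thereby match the expression defining $\D^{\mcL_1,\mcL_2}_{Y_2 \lr Z \to Y_1}$.

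First I would expand $\mathcal{H}_Z = \mcL_2 \otimes_{\OO_{Y_2}} H^0 p^D_{2*}\bigl(p_1^*(\D_{Y_1} \otimes_{\OO_{Y_1}} \mcL_1^{-1})\bigr)$. Substituting formula \eqref{free} for $p^D_{2*}$ and using that the $D$-module pull-back satisfies
$$
p_1^*(\D_{Y_1} \otimes_{\OO_{Y_1}} \mcL_1^{-1}) \;\simeq\; \D_{Z \to Y_1} \otimes_{p_1\ud \OO_{Y_1}} p_1\ud(\mcL_1^{-1})
$$
(as a $(\D_Z, p_1\ud \D_{Y_1,\mcL_1})$-bimodule), and noting that tensoring with a line bundle is exact, the relevant derived tensor product becomes
$$
(\D_{Y_2 \lr Z} \otimes^L_{\D_Z} \D_{Z \to Y_1}) \otimes_{p_1\ud \OO_{Y_1}} p_1\ud(\mcL_1^{-1}).
$$
By the preceding proposition, the closed embedding hypothesis on $Z \hookrightarrow Y_1 \times Y_2$ guarantees that $\D_{Y_2 \lr Z} \otimes^L_{\D_Z} \D_{Z \to Y_1}$ sits in cohomological degree zero, so we may drop the $L$ and $H^0 \circ Rp_{2*}$ collapses to $p_{2*}$.

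Next I would rearrange using associativity of tensor products. Inserting $\mcM_2 \otimes_{\OO_Z} \mcM_2^{-1}$ on the left and absorbing the $\mcL_2$-twist, the first two factors assemble into $\wt\D_{Y \lr Z,\mcM_2} = \mcL_2 \otimes_{\OO_{Y_2}} p_{2*}(\D_{Y_2 \lr Z} \otimes_{\OO_Z} \mcM_2^{-1})$; the remaining $\mcL_1$-twist combines with $\D_{Z \to Y_1}$ to form $\D_{Z \to Y_1,\mcL_1}$. The isomorphism \eqref{isoma} is then used crucially to identify $\D_{Z,p_1^*(\mcL_1)} \simeq \D_{Z,\mcM_2}$, providing a common middle algebra over which the tensor product is formed, and the resulting expression is by definition
$$
\wt\D_{Y \lr Z,\mcM_2} \otimes_{p_{2*}(\D_{Z,\mcM_2})} p_{2*}(\D_{Z \to Y_1,\mcL_1}) \;=\; \D^{\mcL_1,\mcL_2}_{Y_2 \lr Z \to Y_1}.
$$
To track the sections, note that $\psi_Z$ arises from the unit $1_{Y_1} \in \D_{Y_1}$ together with the natural unit morphism of $p^D_{2*}$ applied to free $\D_Z$-modules; chasing through the identifications produces a ``unit tensor unit'' element on the right-hand side, which is exactly $1_{Y_2 \lr Z \to Y_1}$ by its own construction from the two canonical sections of the factors.

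The main obstacle I anticipate is not conceptual but rather the careful bookkeeping of the various line-bundle twists (by $\mcL_1$, $\mcL_2$, $K_{Z/Y_2}$, and $\mcM_2 = p_2^*(\mcL_2) \otimes K_{Z/Y_2}$) and the several bimodule structures carried by each factor. In particular, one must verify that the natural $(\D_{Z,p_1^*(\mcL_1)}, p_1\ud \D_{Y_1,\mcL_1})$-bimodule structure on $\D_{Z \to Y_1,\mcL_1}$ becomes a $(\D_{Z,\mcM_2}, p_1\ud \D_{Y_1,\mcL_1})$-bimodule via \eqref{isoma} in a manner compatible with the right $\D_{Z,\mcM_2}$-action on $\wt\D_{Y \lr Z,\mcM_2}$, so that the tensor product over $p_{2*}(\D_{Z,\mcM_2})$ is well-defined and matches the rearrangement above. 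Once this compatibility is in place, everything follows from exactness of line-bundle tensor products and the cited proposition.
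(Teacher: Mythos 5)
Your proposal is correct and follows essentially the same route the paper intends: the paper states Corollary \ref{closed} as an immediate consequence of the D'Agnolo--Schapira proposition, and your argument is just that deduction spelled out — unpack ${\mc H}_Z$ via formula \eqref{free}, use the degree-zero concentration of $\D_{Y_2 \lr Z} \overset{L}{\underset{\D_Z}\otimes} \D_{Z \to Y_1}$ to collapse $H^0 R p_{2*}$ to $p_{2*}$, insert $\mcM_2 \otimes \mcM_2^{-1}$ and use \eqref{isoma} to reassemble the factors into $\wt\D_{Y \lr Z,\mcM_2}$ and $\D_{Z \to Y_1,\mcL_1}$, and track the unit sections. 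The only point you compress (pushing the relative tensor product over $\D_{Z,\mcM_2}$ through $p_{2*}$ to get a tensor product over $p_{2*}(\D_{Z,\mcM_2})$) is handled at the same implicit level in the paper, so there is no substantive divergence.
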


Combining Corollaries \ref{cor} and \ref{closed}, we obtain the
following.

\begin{corollary}    \label{diffcorr}
Suppose that $Z \to Y_1 \times Y_2$ is a closed
embedding. If $P
\cdot \psi_Z = 0$ for some $P \in \Gamma(Y_2,\D_{Y_2,\mcL_2})$, then
  $$
  P \cdot H_Z = 0, \qquad \ol{P} \cdot
  H_Z = 0.
  $$
\end{corollary}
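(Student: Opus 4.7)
The plan is to assemble the two preceding corollaries directly. By Corollary \ref{closed}, the closed-embedding hypothesis $Z \hookrightarrow Y_1 \times Y_2$ produces an isomorphism of $(\D_{Y_2,\mcL_2}, p_{2*}p_1\ud(\D_{Y_1,\mcL_1}))$-bimodules
$$
\D^{\mcL_1,\mcL_2}_{Y_2 \lr Z \to Y_1} \; \simeq \; \mcH_Z
$$
sending the canonical generator $1_{Y_2 \lr Z \to Y_1}$ to $\psi_Z$. Because this is in particular an isomorphism of left $\D_{Y_2,\mcL_2}$-modules, the hypothesis $P \cdot \psi_Z = 0$ translates into the statement $P \cdot 1_{Y_2 \lr Z \to Y_1} = 0$ inside $\D^{\mcL_1,\mcL_2}_{Y_2 \lr Z \to Y_1}$. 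At that point Corollary \ref{cor} applies verbatim to yield both $P \cdot H_Z = 0$ and $\overline{P} \cdot H_Z = 0$, which is precisely the conclusion.

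There is essentially no genuine obstacle, since all the substantive content has been packaged into Corollaries \ref{closed} and \ref{cor}: the former uses the $\on{Tor}$-vanishing result of D'Agnolo--Schapira to identify the algebraic bimodule $\D^{\mcL_1,\mcL_2}_{Y_2 \lr Z \to Y_1}$ with the Hecke functor output $\mcH_Z$, and the latter rests on the injection of Proposition \ref{HH1} into the sheaf of $C^\infty$ homomorphisms. The only point worth underlining is why a holomorphic annihilation hypothesis produces an antiholomorphic equation on $H_Z$ as well: the target sheaf ${\mc Hom}(p_{2*}p_1\ud({\mc C}^\infty_{Y_1,|\mcL_1|^2}), {\mc C}^\infty_{Y_2,|\mcL_2|^2})$ is a module for both $\D_{Y_2,\mcL_2}$ and its complex conjugate, and the construction of $H_Z$ as pull-back composed with integration is manifestly compatible with complex conjugation; so the image of an equation $P \cdot s = 0$ under conjugation automatically gives $\overline{P} \cdot H_Z = 0$, which is what makes Corollary \ref{cor} say what it does.
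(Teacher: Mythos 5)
Your proposal is correct and coincides with the paper's own proof, which is exactly the one-line combination of Corollary \ref{closed} (transporting $P\cdot\psi_Z=0$ to $P\cdot 1_{Y_2 \lr Z \to Y_1}=0$ via the bimodule isomorphism) with Corollary \ref{cor}. Your closing remark about why the antiholomorphic equation $\ol{P}\cdot H_Z=0$ also follows is a fair gloss on what is already built into Corollary \ref{cor} and Proposition \ref{HH1}.
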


In our proof of Theorem \ref{mainthm} in the next section we will need
the following variant of Corollary \ref{diffcorr}. Let $\mcL'_2$ be
another line bundle on $Y_2$ and
$$
\D^{\mcL'_2}_{Y_2,\mcL_2} := \mcL'_2 \underset{\OO_{Y_2}}\otimes
\mcL_2^{-1} \underset{\OO_{Y_2}}\otimes \D_{Y_2,\mcL_2},
$$
the sheaf of differential operators acting from $\mcL_2$ to $\mcL'_2$
on $Y_2$.

\begin{corollary}    \label{diffcorr1}
Suppose that $Z \to Y_1 \times Y_2$ is a closed embedding. If $P \cdot
\psi_Z = 0$ for some $P \in \Gamma(Y_2,\D^{\mcL'_2}_{Y_2,\mcL_2})$, then
  $$
  P \cdot H_Z = 0, \qquad \ol{P} \cdot
  H_Z = 0.
  $$
\end{corollary}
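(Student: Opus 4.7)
The plan is to reduce Corollary \ref{diffcorr1} to Corollary \ref{diffcorr} by a local trivialization argument. Set $\mcN := \mcL'_2 \otimes_{\OO_{Y_2}} \mcL_2^{-1}$, so that by definition $\D^{\mcL'_2}_{Y_2,\mcL_2} = \mcN \otimes_{\OO_{Y_2}} \D_{Y_2,\mcL_2}$ as a right $\D_{Y_2,\mcL_2}$-module. Choose an open cover $\{U_\alpha\}$ of $Y_2$ on which $\mcN$ admits nowhere-vanishing holomorphic sections $s_\alpha$. Multiplication by $s_\alpha$ then induces an isomorphism of sheaves $\D_{Y_2,\mcL_2}|_{U_\alpha} \xrightarrow{\sim} \D^{\mcL'_2}_{Y_2,\mcL_2}|_{U_\alpha}$, and accordingly the global operator restricts uniquely as $P|_{U_\alpha} = s_\alpha \cdot Q_\alpha$ for some $Q_\alpha \in \Gamma(U_\alpha, \D_{Y_2,\mcL_2})$.

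The hypothesis $P \cdot \psi_Z = 0$, interpreted as the vanishing of a section of $\mcN \otimes_{\OO_{Y_2}} {\mc H}_Z$, restricts on $U_\alpha$ to $s_\alpha \otimes (Q_\alpha \cdot \psi_Z|_{U_\alpha}) = 0$; since $s_\alpha$ is nowhere zero, this is equivalent to $Q_\alpha \cdot \psi_Z|_{U_\alpha} = 0$. I then intend to apply the sheaf-theoretic content of Corollary \ref{diffcorr} on the open set $U_\alpha$. The point is that Corollary \ref{diffcorr} is assembled from Corollary \ref{closed} (a bimodule isomorphism $\D^{\mcL_1,\mcL_2}_{Y_2\lr Z\to Y_1} \simeq {\mc H}_Z$ sending $1_{Y_2 \lr Z \to Y_1} \mapsto \psi_Z$) and Proposition \ref{HH1} (a bimodule homomorphism to the Hom sheaf sending $1_{Y_2 \lr Z \to Y_1} \mapsto H_Z$); both are homomorphisms of sheaves of bimodules and therefore restrict to any open subset of $Y_2$. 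Applying this local version on $U_\alpha$ yields $Q_\alpha \cdot H_Z|_{U_\alpha} = 0$ and $\ol{Q_\alpha} \cdot H_Z|_{U_\alpha} = 0$. Multiplying back by $s_\alpha$ and $\ol{s_\alpha}$ respectively gives $P \cdot H_Z|_{U_\alpha} = 0$ and $\ol P \cdot H_Z|_{U_\alpha} = 0$, and since this holds on each member of a cover, the identities hold globally on $Y_2$.

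The only genuine subtlety — and hence the main thing to check carefully — is the assertion that the constructions in Proposition \ref{HH1} and Corollary \ref{closed} respect restriction to open subsets, so that the global Corollary \ref{diffcorr} admits an evident sheaf-local refinement. This is not really an obstacle, since all the bimodule structures and the maps $1_{Y_2 \lr Z \to Y_1} \mapsto \psi_Z \mapsto H_Z$ are manifestly defined sheaf-theoretically on $Y_2$, but it is the step that requires a brief justification in the write-up. Once this local refinement is in place, no further calculation is needed: the trivialize-locally, solve, and patch strategy produces both the holomorphic and anti-holomorphic conclusions simultaneously from the corresponding statements for $Q_\alpha$ furnished by Corollary \ref{diffcorr}.
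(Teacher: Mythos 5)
Your argument is correct and follows the route the paper intends: Corollary \ref{diffcorr1} is meant as an immediate variant of Corollary \ref{diffcorr}, i.e.\ it comes from the sheaf-level bimodule maps of Proposition \ref{HH1} and Corollary \ref{closed}, which do restrict to open subsets of $Y_2$, so your trivialize-locally-and-patch step is sound (including the anti-holomorphic half, which restricts for the same reason). The only simplification available is that these maps are in particular $\OO_{Y_2}$-linear, so one may tensor them globally with $\mcL'_2 \otimes_{\OO_{Y_2}} \mcL_2^{-1}$ and apply $P$ directly to $\psi_Z \mapsto H_Z$, making the open cover unnecessary.
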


\section{The case of $PGL_n$}    \label{PGLn}

In this section we consider the case of $G=PGL_n$ (so that $\LG=SL_n$)
and the Hecke correspondence $\ol{Z}(\omega_1)$ associated to $\la =
\omega_1$, the first fundamental coweight of $PGL_n$. There are two
advantages in this case that we will exploit: (1) $\ol{Z}(\omega_1) =
Z(\omega_1)$, i.e. the fibers of $q_2 \times q_3$ are isomorphic to a
closed $G[[z]]$-orbit $\Gr_{\omega_1}$, which is smooth and compact
(in fact, $\Gr_{\omega_1} \simeq \mP^{n-1}$); and (2) the special
section (corresponding to the oper Borel reduction) of the universal
oper bundle $\V_{\omega_1}$ satisfies an $n$th order differential
equation (see Lemma \ref{Dga}). Hence we will be able to apply
Corollary \ref{diffcorr1} in the case of the Hecke correspondence
$Z(\omega_1)$ to derive the differential equations \eqref{eqHom1} on
the Hecke operator $\wh{H}_{\omega_1}$ and thus prove Theorem
\ref{mainthm}.

Note that we also have the multiplicity one property (see
Proposition \ref{mone} and Theorem \ref{RS}). We will use the
notation of Section \ref{notation}.

\subsection{Hecke functor}

We follow the definition of the Hecke functor ${\mb H}^\dt_{\omega_1}$
given in \cite{BD} (where it is denoted by $T^\dt_{\omega_1}$), taking
into account the fact that in this case the fibers of the morphism
$q_2 \times q_3$ are isomorphic to $\ol\Gr_{\omega_1} \simeq {\mathbb
  P}^{n-1}$ and hence are smooth. The following definition is taken
from \cite{BD}, Sect. 5.2.4.

\begin{definition}[\cite{BD}]
  {\em The Hecke functor
    $$
    {\mb H}^\dt_{\omega_1}: D^b(\D_{\Bun_{PGL_n}}) \to
      D^b(\D_{\Bun_{PGL_n} \times X})
    $$
    is defined as follows. For a left $D$-module $M$ on
    $\Bun_{PGL_n}$,
\begin{equation}    \label{Tla}
{\mb H}^\dt_{\omega_1}(M) := (q_2 \times q_3)^D_*(q_1^*(M)),
\end{equation}
where $(q_2 \times q_3)^D_*$ denotes the derived direct image
functor for $D$-modules.}
\end{definition}

Denote by ${\mb H}^i_{\omega_1}$ the corresponding $i$th cohomology
functor. We now recall a theorem of Beilinson and Drinfeld \cite{BD}
describing the action of ${\mb H}^i_{\omega_1}$ on a specific
$D$-module on $\Bun_{PGL_n}$.

If $n$ is even, then to define this $D$-module, we need to pick a
square root $K_X^{1/2}$ of the canonical line bundle on $X$ (as in
Section \ref{intPGLn}). To it, one associates a specific square root
\begin{equation}    \label{sqroot}
  \mcL = K_{\Bun}^{1/2}
\end{equation}
of the canonical line bundle on $\Bun_{PGL_n}$ following
\cite{BD}, Sect. 4 (see also \cite{LS}). If $n$ is odd, the construction of
$\mcL$ does not require any choices.

Let $\D_{\Bun_{PGL_n}}$ be the sheaf of differential operators on
$\Bun_{PGL_n}$. Then
$$
\D_{\Bun_{PGL_n}} \otimes \mcL^{-1}
$$
is a left $D$-module on $\Bun_{PGL_n}$ equipped with the commuting
right action of the algebra
$$
D_{PGL_n} = D^\beta_{PGL_n} = \Gamma(\Bun^\beta_{PGL_n}, \mcL \otimes
\D_{\Bun_{PGL_n}} \otimes \mcL^{-1})
$$
Recall from Definition \ref{beta} that the right hand
  side does not depend on $\beta$. Moreover, we have
$$
D_{PGL_n} \simeq \on{Fun} \on{Op}^\ga_{SL_n}(X)
$$
according to the result of \cite{BD} which is quoted in Theorem
\ref{DGOp} above. Here $\ga$ denotes the isomorphism
  class of $K_X^{1/2}$, as in Theorem \ref{DGOp}.

\subsection{Hecke eigensheaf property}

Recall the left $D$-module $\V_{\omega_1,X}^{\on{univ}}$ on $X$
obtained from the universal oper bundle. It is equipped with a
commuting action of the above algebra $D_{PGL_n}$. Furthermore, by
definition of $SL_n$-opers, we have an embedding
$$
\kappa_{\omega_1}^{\on{univ}}: K_X^{(n-1)/2} \hookrightarrow
\V_{\omega_1,X}^{\on{univ}}
$$
and hence a section
$$
s_{\omega_1}^{\on{univ}} \in \Gamma(X,K_X^{-(n-1)/2} \otimes
\V_{\omega_1,X}^{\on{univ}}).
$$

Recall the $n$th order differential operator \eqref{sigman}. Lemmas
\ref{Dga} and \ref{nsigma} imply the following differential equation
on $s_{\omega_1}^{\on{univ}}$ (this is an analogue of equation
\eqref{seq1} in the case of $GL_1$):
\begin{equation}    \label{sigmas}
\sigma \cdot s_{\omega_1}^{\on{univ}} = 0.
\end{equation}
We will now use this equation to derive the system \eqref{eqHom1}.

Consider the isomorphism \eqref{cana}:
\begin{equation}    \label{cana1}
a : q_1^* (\mcL^{1/2}) \overset{\sim}\longrightarrow q_2^*
(\mcL)\otimes K_2 \otimes
q_3^* (K_X ^{-(n-1)/2})
\end{equation}
It gives rise to a section $\psi_{Z(\omega_1)}$ of $(\mcL
\boxtimes K_X^{-(n-1)/2}) \otimes {\mb
  H}^0_{\omega_1}(\D_{\Bun_{PGL_n}} \otimes \mcL^{-1})$.

The first part of the following theorem is Theorem 5.2.9 of
\cite{BD}. The second part follows from Theorems 5.4.11, 5.4.12, and
Proposition 8.1.5 of \cite{BD}.

\begin{theorem}[\cite{BD}]    \label{Heceigen}
  ${\mb H}^i_{\omega_1}(\D_{\Bun_{PGL_n}}
  \otimes \mcL^{-1}) = 0$ for $i \neq 0$, and
  $${\mb H}^0_{\omega_1}(\D_{\Bun_{PGL_n}}
  \otimes \mcL^{-1}) \simeq (\D_{\Bun_{PGL_n}}
  \otimes \mcL^{-1}) \underset{D_{PGL_n}}\boxtimes
  \V_{\omega_1,X}^{\on{univ}}
  $$
  as left $\D_{\Bun_{PGL_n}} \boxtimes \D_X$-modules equipped with a
  commuting action of $D_{PGL_n}$.

  Moreover, the section
  $\psi_{Z(\omega_1)}$ of
  $$(\mcL \boxtimes K_X^{-(n-1)/2}) \otimes {\mb
    H}^0_{\omega_1}(\D_{\Bun_{PGL_n}} \otimes
  \mcL^{-1}) \simeq (\mcL \otimes \D_{\Bun_{PGL_n}}
  \otimes \mcL^{-1}) \underset{D_{PGL_n}}\boxtimes (K_X^{-(n-1)/2} \otimes
  \V_{\omega_1,X}^{\on{univ}})
  $$
  coincides with $1 \boxtimes s_{\omega_1}^{\on{univ}}$.
\end{theorem}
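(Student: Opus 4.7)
The plan is to reduce the assertion to a fiberwise Hecke eigensheaf statement over $\on{Op}^\ga_{SL_n}(X)$ and then invoke the Beilinson--Drinfeld geometric Langlands construction together with the geometric Satake equivalence. Both sides of the claimed isomorphism are naturally $\D_{\Bun_{PGL_n}} \boxtimes \D_X$-modules and are flat over $D_{PGL_n} \simeq \on{Fun} \on{Op}^\ga_{SL_n}(X)$, so it suffices to produce a canonical isomorphism after specialization along every closed point $\chi \in \on{Op}^\ga_{SL_n}(X)$ and verify that these fiberwise isomorphisms vary $D_{PGL_n}$-linearly. Specialization of $\D_{\Bun_{PGL_n}} \otimes \mcL^{-1}$ at $\chi$ gives the Beilinson--Drinfeld twisted $D$-module $\Delta^0_\chi$ of \eqref{Delta0}, while specialization of $\V_{\omega_1,X}^{\on{univ}}$ gives the holomorphic flat bundle $(\V_{\omega_1},\nabla_\chi)$. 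Hence the core claim to establish is
$$
{\mb H}^i_{\omega_1}(\Delta^0_\chi) = 0 \text{ for } i \neq 0, \qquad {\mb H}^0_{\omega_1}(\Delta^0_\chi) \simeq \Delta^0_\chi \boxtimes (\V_{\omega_1},\nabla_\chi),
$$
functorially in $\chi$.

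To prove this pointwise statement, I would combine three inputs. First, the Feigin--Frenkel identification of the center of the affine Kac--Moody algebra $\wh\g_{\on{crit}}$ at the critical level with $\on{Fun}\on{Op}_{\LG}(D^\times)$, together with the localization functor from $\wh\g_{\on{crit}}$-modules to twisted $D$-modules on $\Bun_G$: this realizes $\Delta_\chi$ as the localization of a vacuum-type module with central character $\chi$ and identifies the global Hitchin algebra $D_{PGL_n}$ with $\on{Fun}\on{Op}^\ga_{SL_n}(X)$. Second, the geometric Satake equivalence, which identifies the kernel of the Hecke functor ${\mb H}_{\omega_1}$ with the IC-sheaf of $\ol\Gr_{\omega_1}$, corresponding under Satake to the standard representation of $SL_n$. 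Third, the chiral/factorization machinery of \cite{BD}, Sects.~5.3--5.4, which computes the convolution of this Satake kernel with the localized vacuum module at a moving point $x \in X$ and identifies the result with the corresponding oper-twisted local system, now realized as an honest holomorphic vector bundle with connection on $X$ with the correct $\chi$-dependence --- exactly $\V_{\omega_1,X}^{\on{univ}}$.

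The main technical obstacle is this last step: turning the abstract Satake identification into a geometric identification with the universal oper bundle $\V_{\omega_1,X}^{\on{univ}}$ in a way that is uniform in $\chi$. This is the heart of the Beilinson--Drinfeld argument and relies on the interaction, at a moving point $x$, between the Wakimoto realization of critical-level modules and the action of the center; one shows that the resulting connection on the Satake fiber is precisely the oper connection, and since both sides depend on $\chi$ through the same universal family, the pointwise isomorphism automatically assembles into a $D_{PGL_n}$-linear family isomorphism.

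The vanishing of ${\mb H}^i_{\omega_1}$ for $i \neq 0$ is comparatively soft: because $\omega_1$ is minuscule, $Z(\omega_1) = \ol{Z}(\omega_1)$ and the fibers of $q_2 \times q_3$ are smooth projective varieties isomorphic to $\ol\Gr_{\omega_1} \simeq \mP^{n-1}$, so $(q_2 \times q_3)^D_*$ applied to $q_1^*(\D_{\Bun_{PGL_n}} \otimes \mcL^{-1})$ is concentrated in a single cohomological degree by standard estimates for pushforward along a smooth proper morphism, combined with the line-bundle comparison \eqref{cana}. Finally, the identification $\psi_{Z(\omega_1)} = 1 \boxtimes s_{\omega_1}^{\on{univ}}$ is obtained by unwinding the construction: by definition $\psi_{Z(\omega_1)}$ is built from the canonical isomorphism \eqref{cana1} applied to the unit of $\D_{\Bun_{PGL_n}} \otimes \mcL^{-1}$, and under the isomorphism of the theorem the unit on the $\D_{\Bun_{PGL_n}}$-side is paired with the canonical Borel-reduction section $\kappa_{\omega_1}^{\on{univ}}\colon K_X^{(n-1)/2} \hookrightarrow \V_{\omega_1,X}^{\on{univ}}$, which is tautologically $s_{\omega_1}^{\on{univ}}$.
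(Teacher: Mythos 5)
The paper does not prove this statement at all: it is imported verbatim from Beilinson--Drinfeld, the first part being Theorem 5.2.9 of \cite{BD} and the identification of $\psi_{Z(\omega_1)}$ with $1 \boxtimes s_{\omega_1}^{\on{univ}}$ being assembled from Theorems 5.4.11, 5.4.12 and Proposition 8.1.5 of \cite{BD}. Your proposal is therefore an attempt to re-prove the BD theorem, and as written it has gaps at exactly the load-bearing points. The most serious one is the direction of your reduction: you propose to prove the pointwise eigensheaf property ${\mb H}^0_{\omega_1}(\Delta^0_\chi) \simeq \Delta^0_\chi \boxtimes (\V_{\omega_1},\nabla_\chi)$ for each $\chi$ and then let these ``automatically assemble'' into the universal $D_{PGL_n}$-linear isomorphism. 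In \cite{BD} the logic runs the other way (the pointwise statement, their Theorem 5.2.6, is a corollary of the universal one; cf.\ Remark \ref{chi1}), and for good reason: a collection of isomorphisms over the closed points of $\on{Op}^\ga_{SL_n}(X)$ yields an isomorphism of families only after one has constructed a canonical morphism between the two $D_{PGL_n}$-modules to be checked fiberwise, and your proposal never constructs such a morphism --- the sentence asserting that the assembly is automatic is precisely what has to be proven. Moreover, the flatness of ${\mb H}^0_{\omega_1}(\D_{\Bun_{PGL_n}} \otimes \mcL^{-1})$ over $D_{PGL_n}$, and even the claim that its fiber at $\chi$ is ${\mb H}^0_{\omega_1}(\Delta^0_\chi)$, require base change together with the very vanishing you are trying to establish, so the reduction is circular as stated.

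Two further steps are asserted to be easy but are not. The vanishing ${\mb H}^i_{\omega_1}=0$ for $i \neq 0$ does not follow from ``standard estimates for pushforward along a smooth proper morphism'': $D$-module pushforward along a proper morphism with fibers $\mP^{n-1}$ is in general spread over cohomological degrees $[-(n-1),n-1]$ (already the pushforward of the constant sheaf from $Y \times \mP^{n-1}$ to $Y$ shows this), so smoothness and properness of $q_2 \times q_3$ together with \eqref{cana} do not give concentration in degree $0$; in \cite{BD} this is a nontrivial exactness statement tied to the critical twist and the localization formalism. Likewise, the equality $\psi_{Z(\omega_1)} = 1 \boxtimes s_{\omega_1}^{\on{univ}}$ is not a tautological unwinding of \eqref{cana1}: matching the section coming from the Pfaffian/line-bundle isomorphism with the oper Borel-reduction section of the universal bundle is exactly the normalization that occupies Theorems 5.4.11, 5.4.12 and Proposition 8.1.5 of \cite{BD}. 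The middle portion of your sketch (Feigin--Frenkel center, critical-level localization, geometric Satake, the chiral computation at a moving point) correctly names the ingredients of the BD proof, but it explicitly defers the decisive step --- identifying the Satake fiber with connection as the universal oper bundle uniformly in $\chi$ --- to ``the heart of the Beilinson--Drinfeld argument,'' so the proposal is an outline of where the proof lives rather than a proof.
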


Theorem \ref{Heceigen} and equation \eqref{sigmas} immediately imply:

\begin{corollary}    \label{psiZ}
  The section $\psi_{Z(\omega_1)}$ satisfies
    \begin{equation}    \label{sigmapsi}
    \sigma \cdot \psi_{Z(\omega_1)} = 0.
    \end{equation}
\end{corollary}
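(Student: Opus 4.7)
The plan is to reduce the equation $\sigma\cdot\psi_{Z(\omega_1)}=0$ on $\Bun_{PGL_n}\times X$ to the already-known equation $\sigma\cdot s^{\on{univ}}_{\omega_1}=0$ on $X$, via the explicit identification of Theorem \ref{Heceigen}. The key observation is that $\sigma$, as defined in \eqref{sigman}, is an $n$-th order differential operator along $X$ whose coefficients lie in $D_{PGL_n}$, and hence it acts naturally on any sheaf that simultaneously carries a $\D_X$-module structure and a $D_{PGL_n}$-module structure.

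First, I would unpack the statement of Theorem \ref{Heceigen}: under the displayed isomorphism
$$
(\mcL\boxtimes K_X^{-(n-1)/2})\otimes{\mb H}^0_{\omega_1}(\D_{\Bun_{PGL_n}}\otimes\mcL^{-1})\simeq(\mcL\otimes\D_{\Bun_{PGL_n}}\otimes\mcL^{-1})\underset{D_{PGL_n}}\boxtimes(K_X^{-(n-1)/2}\otimes\V_{\omega_1,X}^{\on{univ}}),
$$
the section $\psi_{Z(\omega_1)}$ is sent to $1\boxtimes s_{\omega_1}^{\on{univ}}$. On the right-hand side, the $\D_X$-structure and the $D_{PGL_n}$-action live on the second tensor factor (they are the $\D_X$-module structure on $\V_{\omega_1,X}^{\on{univ}}$ and the $D_{PGL_n}$-action coming from the oper parameter), and these two actions commute. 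Therefore $\sigma$ acts on $1\boxtimes s_{\omega_1}^{\on{univ}}$ entirely through the second factor, producing $1\boxtimes(\sigma\cdot s_{\omega_1}^{\on{univ}})$; the fact that the coefficients of $\sigma$ sit in $D_{PGL_n}$ rather than $\C$ is harmless, because they can equivalently be pushed across the tensor product over $D_{PGL_n}$ to act on $\D_{\Bun_{PGL_n}}$, but in either interpretation the result is $1\boxtimes(\sigma\cdot s_{\omega_1}^{\on{univ}})$.

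Second, I would invoke equation \eqref{sigmas}, $\sigma\cdot s_{\omega_1}^{\on{univ}}=0$, which is itself a restatement of Lemmas \ref{Dga} and \ref{nsigma}: fiberwise over each $\chi\in\on{Op}^\ga_{SL_n}(X)$, the operator $\sigma$ specializes to $P_\chi$ and $s_{\omega_1}^{\on{univ}}$ specializes to $s_{\omega_1}$, and Lemma \ref{Dga} gives $P_\chi\cdot s_{\omega_1}=0$. Combining this with the previous paragraph yields $\sigma\cdot\psi_{Z(\omega_1)}=1\boxtimes 0=0$, which is the desired identity \eqref{sigmapsi}.

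There is no genuine obstacle; the only point that requires a moment of care is the bookkeeping for how $\sigma$, which has $D_{PGL_n}$-coefficients, acts on a tensor product over $D_{PGL_n}$. This is handled by the commutativity of the $D_{PGL_n}$-action with the $\D_X$-action on $\V_{\omega_1,X}^{\on{univ}}$ together with the defining relation of the tensor product $\underset{D_{PGL_n}}\boxtimes$, and once this is noted the conclusion is immediate from Theorem \ref{Heceigen} and \eqref{sigmas}.
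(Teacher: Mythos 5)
Your argument is exactly the paper's: Corollary \ref{psiZ} is stated there as an immediate consequence of Theorem \ref{Heceigen} (which identifies $\psi_{Z(\omega_1)}$ with $1\boxtimes s_{\omega_1}^{\on{univ}}$) together with equation \eqref{sigmas}, and your write-up simply spells out the routine bookkeeping of how $\sigma$, having coefficients in $D_{PGL_n}$, acts through the second factor of the tensor product over $D_{PGL_n}$. Correct, and same route as the paper.
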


Now we derive Theorem \ref{mainthm} from Corollary \ref{psiZ}
using Corollary \ref{diffcorr1} in the case of the Hecke correspondence
$Z(\omega_1)$.

\subsection{Proof of Theorem \ref{mainthm}}    \label{derive}
We are going to derive Theorem \ref{mainthm} from Corollary
\ref{diffcorr1} with $Z, Y_1, Y_2, \mcL_1, \mcL_2$, and $\mcL'_2$
defined below. Initially, we would like to take $Y_1 = \Bunn_{PGL_n}$,
$Y_2 = \Bunn_{PGL_n} \times X$, and $Z = Z(\omega_1)$ (see Section
\ref{notation}). However, in order for the morphism $Z \to Y_1 \times
Y_2$ to be a closed embedding and the morphism $Z \to Y_2$ to be
proper, we need to restrict $Z(\omega_1)$ to open dense subsets on
both sides, as we now explain.

Recall from formula \eqref{UG} that we have an open dense subvariety
of $\Bunn_{PGL_n}$,
\begin{equation}    \label{UG1}
U_{PGL_n}(\omega_1) = \{ \mcF \in \Bunn_{PGL_n} \, | \,
q_2(q_1^{-1}(\mcF)) \subset \Bunn_{PGL_n} \},
\end{equation}
which is dense by our assumption. It follows from
  \cite{NR}, Lemma 5.9,\footnote{We thank Tony Pantev for this
  reference and a helpful discussion. Note that the correspondence
  considered in \cite{NR} differs from the Hecke correspondence
  $Z(\omega_1)$ in that one of the two bundles is dualized; but since
  the dual of a stable bundle is stable, we can use Lemma 5.9 of
  \cite{NR} in our setting.} that there exist open dense subsets
  $$
  Y_1 \subset U_{PGL_n}(\omega_1), \qquad Y_2 \subset
  \Bunn_{PGL_n} \times X
  $$
such that the restriction $Z$ of $Z(\omega_1)$ to $Y_1 \times Y_2$
  is a closed embedding and the corresponding map $q_2 \times q_3: Z
  \to Y_2$ is proper. Denote by $p_1$ and $p_2$ the maps $q_1$ and
  $q_2 \times q_3$ restricted to $Y_1 \times Y_2$,
  respectively. Finally, set $\mcL_1 = K^{1/2}_{\Bun}$ and $\mcL_2 =
  K^{1/2}_{\Bun} \boxtimes K_X^{-(n-1)/2}$. Thus, we have $|\mcL_1|^2
  = \Omega^{1/2}_{\Bun}$ and $|\mcL_2|^2 = \Omega^{1/2}_{\Bun} \boxtimes
  \Omega_X^{-(n-1)/2}$. Then we have the isomorphism \eqref{isoma}
  which follows from the isomorphism \eqref{cana1} (see also formula
  \eqref{sqroot}).

Now we are in the setting of Corollary \ref{diffcorr1}, with $\mcL'_2
:= K^{1/2}_{\Bun} \boxtimes K_X^{(n+1)/2}$. Let $H_Z$ be the
corresponding integral transform operator. We then obtain from
Corollary \ref{diffcorr1} and equation \eqref{sigmapsi} that $H_Z$
satisfies the system of differential equations
\begin{equation}    \label{eqHom2}
    \sigma \cdot H_Z = 0, \qquad \ol\sigma \cdot H_Z = 0.
\end{equation}

Next, we relate $H_Z$ to our Hecke operator $\wh{H}_{\omega_1}$.
Recall from Section \ref{notation} that the operator
$\wh{H}_{\omega_1}$ is defined as the integral transform via the
correspondence $Z(\omega_1)$ from the space $V_{PGL_n}(\omega_1)$ of
smooth compactly supported sections of $|\mcL_1|^2 =
\Omega^{1/2}_{\Bun}$ on $U_{PGL_n}(\omega_1)$ to the space $V_{PGL_n}
\otimes \Gamma(X,\Omega_X^{-(n-1)/2})$ of smooth sections of
$|\mcL'_2|^2 = \Omega^{1/2}_{\Bun} \boxtimes \Omega_X^{(n+1)/2}$ on
$\Bunn_{PGL_n} \times X$ (in fact, the image consists of compactly
supported sections). Let $\wt{H}_{\omega_1}$ be the restriction of
$\wh{H}_{\omega_1}$ to the space of smooth sections of $|\mcL_1|^2$
that are compactly supported on $Y_1 \subset U_{PGL_n}(\omega_1)$,
followed by the restriction to $Y_2$ of the resulting section of
$|\mcL'_2|^2$ on $\Bunn_G \times X$. Thus, $\wt{H}_{\omega_1}$ acts
from the space of smooth compactly supported sections of $|\mcL_1|^2$
on $Y_1$ to the space of smooth sections of $|\mcL'_2|^2$ on $Y_2$.

It follows from the above definition that $\wt{H}_{\omega_1}$ is the
restriction of $H_Z$ to smooth compactly supported sections of
$|\mcL_1|^2$ on $Y_1$. Therefore $\wt{H}_{\omega_1}$ also
satisfies the system \eqref{eqHom2}. Since $Y_1$ is dense in
$U_{PGL_n}(\omega_1)$ and $Y_2$ is dense in $\Bunn_{PGL_n}
\times X$, this implies that $\wh{H}_{\omega_1}$ also satisfies this
system of equations. Thus, we obtain the system \eqref{eqHom1}. This
completes the proof of Theorem \ref{mainthm}.\qed

\begin{remark}
  It is possible to write an explicit formula for the differential
  operator $\sigma$ similar to formula \eqref{sigma-GL1} in the case
  of $GL_1$. For example, let $G=PGL_2$, so $\LG=SL_2$. Then the space
  $\on{Op}_{SL_2}^\ga(X)$ is an affine space over the vector space
  $H^0(X,K_X^2)$. Let us pick a point $\chi_0 \in
  \on{Op}_{SL_2}^\ga(X)$ and use it to identify
  $\on{Op}_{SL_2}^\ga(X)$ with $H^0(X,K_X^2)$. Let $\{ \varphi_i,
  i=1,\ldots,3g-3 \}$ be a basis of $H^0(X,K_X^2)$ and $\{ F_i,
  i=1,\ldots,3g-3 \}$ the set of generators of the polynomial algebra
  $\on{Fun} \on{Op}_{SL_2}^\ga(X)$ dual to this basis,
  i.e.
  $$
  F_i(\chi_0+\varphi_j) = \delta_{ij}.
$$
  Let $\{ D_i, i=1,\ldots,3g-3 \}$ be the global holomorphic
  differential operators on $\Bun_{PGL_2}$ corresponding to the $F_i$
  under the isomorphism $\on{Fun} \on{Op}_{SL_2}^\ga(X) \simeq
  D_{PGL_2}$.

  By Lemma \ref{Dga}, we have an isomorphism $\on{Op}^\ga_{SL_2}(X)
  \simeq D^\ga_2(X)$, where $D^\ga_2(X)$ is the space of projective
  connections on $X$ (corresponding to our choice of $K_X^{1/2}$).
  It sends
  $$
  \chi \in \on{Op}^\ga_{SL_2}(X) \quad \mapsto \quad P_\chi \in
  D^\ga_2(X).
  $$
Then we can write
\begin{equation}    \label{sigmaOp}
\sigma = P_{\chi_0} + \sum_{i=1}^{3g-3} D_i  \otimes \varphi_i
\; : \; K_X^{-1/2} \to D_{PGL_2} \otimes K_X^{3/2}
\end{equation}
(it is clear that this operator does not depend on the choice of
$\chi_0$).

Note that locally on $X$, after choosing a local coordinate $z$, we
can write the second-order differential operator $P_{\chi_0} \in
D^\ga_2(X)$ in the form
$$
P_{\chi_0} = \pa_z^2 + v_0(z).
$$

This gives a more concrete realization of the equations
\eqref{eqHom1}.\qed
\end{remark}

\begin{remark}    \label{chi1}
  For $\chi \in \on{Op}^\ga_{PGL_n}(X) = \on{Spec} D_{PGL_n}$, let
  $\C_\chi$ be the corresponding one-dimensional
  $D_{PGL_n}$-module. In \cite{BD}, Sect. 5.1.1, Beilinson and
  Drinfeld defined the following left $D$-module on $\Bun_{PGL_n}$:
$$
  \Delta^0_\chi := (\D_{\Bun_{PGL_n}} \otimes \mcL^{-1})
  \underset{D_{PGL_n}}\otimes \C_\chi
$$
  (this is the $D$-module
  from Theorem \ref{RS}). They derived from Theorem \ref{Heceigen}
  that ${\mb H}^i_{\omega_1}(\Delta^0_\chi) = 0$ for $i \neq 0$ and
$$
  {\mb H}^0_{\omega_1}(\Delta^0_\chi) \simeq \Delta^0_\chi
  \boxtimes (\V_{\omega_1},\nabla_\chi)
  $$
  (see \cite{BD}, Theorem 5.2.6). This means that $\Delta^0_\chi$ is
  a {\em Hecke eigensheaf} with respect to the flat $SL_n$-bundle
  corresponding to $\chi$ \cite{BD}. In this sense, $\D_{PGL_n}
  \otimes \mcL^{-1}$ is a universal Hecke eigesheaf parametrized by
  the component $\on{Op}^\ga_{SL_n}(X)$ of the space of $SL_n$-opers
  on $X$.

Recall the equivalence of categories in the abelian case obtained by
restriction of the $D$-module version of the Fourier--Mukai transform
discussed to Remark \ref{chii}. It has a non-abelian analogue (see
e.g. \cite{FT}). In the case of $G=PGL_n$, on one side we have the
category of coherent sheaves on $\on{Op}^\ga_{PGL_n}(X)$. On the other
side, we have the category of $D$-modules ${\mc K}$ on $\Bun_{PGL_n}$
with finite global presentation of the form
      $$
      (\D_{\Bun{PGL_n}} \otimes \mcL^{-1})^{\oplus m} \to (\D_{\Bun_{PGL_n}}
      \otimes \mcL^{-1})^{\oplus r} \to
             {\mc K}
      \to 0.
      $$
The equivalence $E$ takes an object ${\mc F}$ of the former category to
    $$
    E({\mc F}) := (\D_{\Bun_{PGL_n}} \otimes \mcL^{-1})
    \underset{D_{PGL_n}}\otimes F, \qquad F :=
    \Gamma(\on{Op}^\ga_{PGL_n}(X),{\mc F}).
    $$
    In particular, $E(\OO_{\on{Op}^\ga_{PGL_n}(X)}) = \D_{\Bun_{PGL_n}}
    \otimes \mcL^{-1}$ and $E(\OO_\chi) = \Delta^0_\chi$.  \qed
\end{remark}

\section{General case}    \label{gencase}

In this section we formulate the analogues of Theorem \ref{mainthm}
and Corollary \ref{eigH1} describing the eigenvalues of the Hecke
operators in the case of an arbitrary simple Lie group $G$ and outline
their proof following the argument of the previous section.

The case of $G=PGL_n$ and $\la = \omega_1$, which we considered in the
previous section, differs from the case of a general group $G$ and
dominant integral coweight $\lambda \in P^\vee_+$ in two ways. First,
in the case of $G=PGL_n$ and $\la = \omega_1$ we have $\ol{Z}(\la) =
Z(\la)$, i.e. the fibers $(\ol{q}_2 \times \ol{q}_3)^{-1}({\mc P},x)$
of the Hecke correspondence $\ol{Z}(\omega_1)$ are isomorphic to the
$PGL_n[[z]]$-orbit $\Gr_{\omega_1}$ in the affine Grassmannian of
$PGL_n$ which is smooth and compact. But for general $G$ and $\la \in
P^\vee_+$ these fibers are isomorphic to the closure of the
$G[[z]]$-orbit $\Gr_\la$ and are singular. Second, in the case of
$G=PGL_n$ and $\la = \omega_1$, the canonical section of the universal
oper bundle satisfies a scalar differential equation of Lemma
\ref{Dga}, but for general $G$ and $\lambda$ this is not the
case. Hence for general $G$ and $\lambda$ our construction needs some
modifications, which we discuss in this section.

Namely, we formulate an analogue (Conjecture \ref{eigHG}) of Corollary
\ref{eigH1} describing the eigenvalues of Hecke operators in terms of
real $\LG$-opers and an analogue (Conjecture \ref{eqHG}) of Theorem
\ref{mainthm} describing differential equations satisfied by the Hecke
operators. Under the irreducibility assumption of Corollary
\ref{monG}, Conjecture \ref{eigHG} follows from Conjecture \ref{eqHG}
in the same way as in the case of $PGL_n$.\footnote{In
  \cite{EFK4}, Proposition 3.39, we derive the statement of
  Conjecture \ref{eigHG} from Conjecture \ref{eqHG} without the
  irreducibility assumption of Corollary \ref{monG}.}

We expect that Conjecture \ref{eqHG}
can be derived from the Hecke eigensheaf property (established in
\cite{BD} and recalled in Section \ref{HeckeGen}) by an argument
analogous to the one we used in the proof of Theorem \ref{mainthm} in
Section \ref{derive} in the case of $PGL_n$. However, because the
morphism $\ol{q}_2 \times \ol{q}_3$ is not smooth in the general case
(in the sense of algebraic geometry), deriving Conjecture \ref{eqHG}
from this result requires additional care. We leave the details to a
follow-up paper.

\subsection{Eigenvalues of the Hecke operators}

Recall from the discussion before Theorem \ref{DGOp} that the space
$\on{Op}_{\LG}(X)$ of $\LG$-opers on $X$ has a canonical component
$\on{Op}^\ga_{\LG}(X)$ isomorphic to the affine space
$\on{Op}_{\LG_{\on{ad}}}(X)$. If the set $\{ \langle \la,\rho
\rangle, \la \in P^\vee_+ \}$ contains half-integers, then to specify
$\on{Op}^\ga_{\LG}(X)$ we need to choose a square root $K_X^{1/2}$ of
the canonical line bundle on $X$; then $\gamma$ denotes the
isomorphism class of this $K_X^{1/2}$ which we also use to construct a
square root $\mcL = K^{1/2}_{\Bun}$ of the canonical line bundle on
$\Bun_G$ (see the discussion before Theorem \ref{h}). Otherwise, the
component $\on{Op}^\ga_{\LG}(X)$ is well-defined without any choices.

For $\la \in P^\vee_+$, let $V_\la$ be the corresponding irreducible
finite-dimensional representation of $\LG$. Given $\chi \in
\on{Op}^\ga_{\LG}(X)$, we obtain a flat holomorphic vector bundle
$(\V_{\chi,\la},\nabla_{\chi,\la})$ on $X$. According to
  \cite{BD}, \S 3, the vector bundles $\V_{\chi,\la}$ are isomorphic
  to each other for all $\chi \in \on{Op}^\ga_{\LG}(X)$. Hence we will
  use the notation $\V_{\la}$.

The oper Borel reduction gives rise to an embedding
\begin{equation}    \label{kala}
  \kappa_\la: K_X^{\langle \la,\rho \rangle} \hookrightarrow
  \V_\la
\end{equation}
  and hence
  $$
  \wt\kappa_\la: \OO_X \hookrightarrow K_X^{-\langle \la,\rho \rangle} \otimes
  \V_\la.
  $$
  Let
  $$
  s_\la := \wt\kappa_\la(1) \in \Gamma(X,K_X^{-\langle \la,\rho \rangle} \otimes
  \V_\la).
  $$

Now suppose that $\chi \in \on{Op}^\ga_{\LG}(X)_{\R}$. Then we have an
isomorphism of $C^\infty$ flat bundles 
$$
  (\V_\la,\nabla_{\chi,\la}) \simeq (\ol{\V}_\la,\ol\nabla_{\chi,\la})
$$
and hence a pairing
  $$
  h_{\chi,\la}(\cdot,\cdot): (\V_\la,\nabla_{\chi,\la}) \otimes
  (\ol{\V}_{-w_0(\la)},\ol\nabla_{\chi,-w_0(\la)}) \to ({\mc
    C}^\infty_X,d)
  $$
  as $V^*_\la \simeq V_{-w_0(\la)}$. Since $\langle
  -w_0(\la),\rho \rangle = \langle \la,\rho \rangle$, we have
  $$
  \ol{s_{-w_0(\la)}} \in \Gamma(X,\ol{K}_X^{-\langle
    \la,\rho \rangle} \otimes \ol{\V}_{-w_0(\la)}.
  $$

  Recall that $\Bun_G$ has connected components $\Bun_G^\beta, \beta
  \in \pi_1(G)$, and we have a direct sum decomposition
\begin{equation}    \label{dirsum1}
\mcH_G = \bigoplus_{\beta \in \pi_1(G)} \mcH_G^\beta.
\end{equation}
According to Conjecture \ref{SpecA}, for each $\chi \in
\on{Op}^\ga_{\LG}(X)_{\R}$ we have a non-zero eigenspace of ${\mc
  A}_G$ in $\mcH_G^\beta$ for all $\beta \in \pi_1(G)$ (which is
one-dimensional by Proposition \ref{mone} and Theorem
\ref{RS}). We expect that the Hecke operator $H_\la$ preserves the
direct sum of these subspaces (see Section \ref{intPGLn} in the case
of $PGL_n$). Moreover, one can find out precisely how $H_\la$ permutes
different $\mcH_G^\beta$ by analyzing the action of the center
$Z(\LG)$ on $V_\la$ (which is naturally identified with the group of
characters of $\pi_1(G)$). As in the case of $PGL_n$, this implies
that the eigenvalues $\{ \Phi_\la(\chi) \}$ of $H_\la$ corresponding
to $\chi$ form a torsor over the group $\mu_{\alpha(G,\la)}$ of roots
of unity of some order $\alpha(G,\la)$; for example,
$\alpha(PGL_n,\omega_1)=n$ (this is so even for $G=SO_{4n}/\{ \pm I
\}$, when $\pi_1(G) \simeq \Z_2 \times \Z_2$). This is why we describe
these eigenvalues up to a scalar.

The following conjecture is an analogue of Corollary \ref{eigH1} for a
general group $G$.

\begin{conjecture}    \label{eigHG}
    For $\chi \in \on{Op}^\ga_{\LG}(X)_{\R}$, the section
    $\Phi_\la(\chi) \in \Gamma(X,\Omega_X^{-\langle \la,\rho \rangle})$
    is equal to
    \begin{equation}    \label{Phila}
    \Phi_\la(\chi) = h_{\chi,\la}(s_\la,\ol{s_{-w_0(\la)}})
    \end{equation}
    up to a scalar.
\end{conjecture}

Let us choose a system of embeddings of representations
    of $\LG$,
      \begin{equation}    \label{imath}
      \imath_{\la,\mu}: V_{\la+\mu} \hookrightarrow V_\la \otimes
      V_\mu, \qquad \la,\mu \in P^\vee_+
      \end{equation}
      satisfying $\imath_{\la,\mu} \circ \imath_{\la+\mu,\nu} =
      \imath_{\mu,\nu} \circ \imath_{\la,\mu+\nu}$. These give rise to
      a system of linear maps
  $$
  \wt\imath_{\la,\mu}: \Gamma(X,K_X^{-\langle \la+\mu,\rho \rangle} \otimes
  \V_{\la+\mu}) \to \Gamma(X,K_X^{-\langle \la,\rho \rangle} \otimes
  \V_\la) \otimes \Gamma(X,K_X^{-\langle \mu,\rho \rangle} \otimes
  \V_\mu)
  $$

  \begin{lemma}    \label{plucker}
  The sections $s_\la, \la \in P^\vee_+$, satisfy
  \begin{equation}
    \wt\imath_{\la,\mu}(s_{\la+\mu}) = s_\la \otimes s_\mu, \qquad \forall
    \la,\mu \in P^\vee_+.
  \end{equation}
  \end{lemma}

\begin{remark}
By Lemma \ref{plucker}, the sections
  $\Phi_\la(\chi)$ given by formula \eqref{Phila} satisfy
  \begin{equation}    \label{mult}
  \Phi_{\la+\mu}(\chi) = \Phi_\la(\chi) \Phi_\mu(\chi), \qquad \la,\mu
  \in P^\vee_+,
  \end{equation}
  provided that the pairings $h_{\chi,\la}, \la \in P^\vee_+$, are
  normalized so that the restriction of $h_{\chi,\la} \otimes
  h_{\chi,\mu}$ to the image of the embedding obtained from
  $\imath_{\la,\mu}$ (see formula \eqref{imath}) is equal to
  $h_{\chi,\la+\mu}$. Formula \eqref{mult} agrees with the relations
  $H_{\la+\mu} = H_\la \cdot H_\mu$ satisfied by the Hecke operators
  according to Proposition \ref{algiso}.
  
As in the case of $G=PGL_n$ (see Section \ref{intPGLn}), we expect
that this normalization condition fixes the pairings $h_{\chi,\la},
\la \in P^\vee_+$ (and hence the eigenvalues $\Phi_\la(\chi)$ of the
Hecke operators) up to a root of unity of order $\alpha(G,\la)$.
Conjecture \ref{eigHG} can then be refined to a statement that two
torsors over the corresponding group $\mu_{\alpha(G,\la)}$ of roots of
unity are equal to each other, similarly to Conjecture \ref{eigH} for
$G=PGL_n$.\qed
\end{remark}

\subsection{Analogue of the system of differential equations}

To prove Conjecture \ref{eigHG}, we need an analogue of the system of
differential equations which appear in Corollary \ref{unique} and
Theorem \ref{mainthm} in the case of $PGL_n$ and $\la=\omega_1$. In
that case, we used the possibility to interpret opers in terms of
scalar differential operators of order $n$ (see Lemma \ref{Dga}). Such
an interpretation is possible if $\la$ is such that $V_\la$ remains
irreducible under a principal $\sw_2$ subalgebra of $\lg$. It is known
that this happens for $\la=\omega_1$ and $\omega_\ell$ if $\g$ is of
type $A_\ell$, and for $\la=\omega_1$ if $\g$ is of types $B_\ell$,
$C_\ell$ and $G_2$.\footnote{This case is discussed in
  detail in \cite{EFK4}, Sect. 3.6.} For $A_\ell, B_\ell$, and $C_\ell$, the
corresponding scalar differential operators were described in
\cite{DrS,BD:opers}.  In the general case, we replace the equations
from Corollary \ref{unique} and Theorem \ref{mainthm} with a statement
(Lemma \ref{mon}) about the (twisted) $D$-module on $X$ obtained by
applying {\em all} possible (twisted) differential operators to the
canonical section $s_\la$.

Namely, let
$$
\D_{X,-\langle \la,\rho \rangle} := K_X^{-\langle \la,\rho \rangle}
\underset{\OO_X}\otimes \D_X \underset{\OO_X}\otimes K_X^{\langle
  \la,\rho \rangle}
$$
be the sheaf of differential operators acting on the line bundle
$K_X^{-\langle \la,\rho \rangle}$ on $X$. Set
$$
\mcV^K_\la := K_X^{-\langle \la,\rho \rangle} \underset{\OO_X}\otimes
\mcV_\la.
$$
The oper connection $\nabla_{\chi,\la}$ defines a $\D_X$-module
structure on $\V_\la$, and therefore a $\D_{X,-\langle \la,\rho
  \rangle}$-module structure on $\V^K_\la$. We will denote this
$\D_{X,-\langle \la,\rho \rangle}$-module by $\V^K_{\chi,\la}$. Recall
that we have a canonical section $s_\la \in \Gamma(X,\mcV^K_\la)$.

\begin{lemma}    \label{mon}
If the monodromy representation of the flat vector bundle
$(\mcV_\la,\nabla_{\chi,\la})$ is irreducible,\footnote{Denote by
$M_\chi$ the subgroup of $\LG$ which is the Zariski
  closure of the monodromy of $\chi \in \on{Op}^\ga_{\LG}(X)$. In
  \cite{EFK4}, Corollary 3.32, we prove that the subset of those
  $\chi$ for which $M_\chi=\LG$ (and so the flat vector bundles
  $(\mcV_\la,\nabla_{\chi,\la})$ are irreducible for all $\la \in
  P^\vee_+$) is dense in $\on{Op}^\ga_{\LG}(X)$. In general, according
  to \cite{EFK4}, Theorem 3.36, $M_\chi$ is a simple subgroup of $\LG$
  whose Lie algebra contains a principal $\sw_2$ subalgebra of $\lg$,
  and in this case $\chi$ is induced from an $M_\chi$-oper $\eta$ (if
  $\chi$ is a real oper, then so is $\eta$). For such $\LG$-opers,
  $(\mcV_\la,\nabla_{\chi,\la})$ decomposes into a direct sum of
  irreducible flat vector bundles according to the decomposition of
  $V_\la$ into irreducible representations of $M_\chi$.} then
\begin{equation}    \label{equal}
\D_{X,-\langle \la,\rho \rangle} \cdot s_\la = \mcV^K_{\chi,\la}.
\end{equation}
\end{lemma}

\begin{proof}
Note that $\D_{X,-\langle \la,\rho \rangle} \cdot s_\la$ is a non-zero
$\D_{X,-\langle \la,\rho \rangle}$-submodule of $\V^K_{\chi,\la}$. If
the monodromy representation of the flat vector bundle
$(\mcV_\la,\nabla_{\chi,\la})$ is irreducible, then
$(\mcV_\la,\nabla_{\chi,\la})$ is an irreducible $\D_X$-module, and so
$\V^K_{\chi,\la}$ is an irreducible $\D_{X,-\langle \la,\rho
  \rangle}$-module. Hence \eqref{equal} follows.
\end{proof}

Let $I_{\la,\chi}$ be the left annihilating ideal of $s_\la$ in the
sheaf $\D_{X,-\langle \la,\rho \rangle}$. Thus, we have an exact
sequence of left $\D_{X,-\langle \la,\rho \rangle}$-modules
$$
0 \to I_{\la,\chi} \to \D_{X,-\langle \la,\rho \rangle} \to
\mcV^K_{\chi,\la} \to 0
$$
Lemma \ref{mon} has the following immediate corollary.\footnote{In
\cite{EFK4}, Proposition 3.39, we prove the statement of Corollary
\ref{monG} in general (without assuming that the monodromy
representation of $(\mcV_\la,\nabla_{\chi,\la})$ is irreducible). In
\cite{EFK4}, Sect. 3.9, we use this result to describe an analogue of
the Langlands functoriality principle in the analytic Langlands
correspondence for curves over $\C$.}

\begin{corollary}    \label{monG}
  Suppose that the monodromy representation of the flat vector bundle
  $(\mcV_\la,\nabla_{\chi,\la})$, where $\chi \in \on{Op}^\ga_{\LG}(X)_{\R}$,
  is irreducible. Then $h_{\chi,\la}(s_\la, \ol{s_{-w_0(\la)}})$ is a
  unique, up to a scalar, non-zero section $\Psi_\la(\chi)$ of
  $\Omega_X^{-\langle \la,\rho \rangle}$ annihilated by the ideals
  $I_{\la,\chi}$ and $\ol{I_{-w_0(\la),\chi}}$.
\end{corollary}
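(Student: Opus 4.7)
The plan is to prove Corollary \ref{monG} in two steps: first, verify that $\Psi_\la(\chi) := h_{\chi,\la}(s_\la, \ol{s_{-w_0(\la)}})$ satisfies the two annihilation conditions; second, show that the space of such global solutions is at most one-dimensional, so that any non-zero solution is a scalar multiple of $\Psi_\la(\chi)$. The first step reduces to the flatness of the pairing $h_{\chi,\la}$, while the second uses the cyclicity statement in Lemma \ref{mon} together with Schur's lemma applied to the irreducible monodromy representation.

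For the annihilation, I would use that by construction $h_{\chi,\la}$ intertwines the holomorphic oper connection $\nabla_{\chi,\la}$ on $\V_\la$ and the antiholomorphic connection $\ol\nabla_{\chi,-w_0(\la)}$ on $\ol\V_{-w_0(\la)}$ with the de Rham differential $d$ on $\mcC^\infty_X$. Extending by the Leibniz rule, for any $P \in \D_{X,-\langle\la,\rho\rangle}$ one has
$$
P \cdot h_{\chi,\la}(s_\la, \ol{s_{-w_0(\la)}}) = h_{\chi,\la}(P \cdot s_\la, \ol{s_{-w_0(\la)}}),
$$
with the analogous identity in the antiholomorphic direction. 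Thus $P \in I_{\la,\chi}$ forces $P \cdot \Psi_\la(\chi) = 0$, and symmetrically for $\ol{I_{-w_0(\la),\chi}}$.

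For uniqueness, let $\Psi$ be any global smooth section of $\Omega_X^{-\langle\la,\rho\rangle}$ annihilated by both ideals. By Lemma \ref{mon}, the map $\D_{X,-\langle\la,\rho\rangle} \to \mcV^K_{\chi,\la}$ given by $P \mapsto P \cdot s_\la$ is surjective with kernel $I_{\la,\chi}$, so $\Psi$ induces a $\D_{X,-\langle\la,\rho\rangle}$-module homomorphism $\phi: \mcV^K_{\chi,\la} \to \mcE$ sending $s_\la \mapsto \Psi$, where $\mcE$ is the sheaf of smooth sections of $\Omega_X^{-\langle\la,\rho\rangle}$ regarded as a $\D$-module via the holomorphic connection on its $K_X^{-\langle\la,\rho\rangle}$ factor. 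The antiholomorphic system yields an analogous map $\bar\phi: \ol{\mcV^K_{\chi,-w_0(\la)}} \to \mcE$ sending $\ol{s_{-w_0(\la)}} \mapsto \Psi$. Passing to horizontal sections, the combined datum $(\phi,\bar\phi)$ corresponds to a global $\pi_1(X)$-invariant bilinear pairing on $V_\la^{\mathrm{hor}} \otimes \ol{V_{-w_0(\la)}^{\mathrm{hor}}}$. Using $h_{\chi,\la}$ to identify $\ol{V_{-w_0(\la)}^{\mathrm{hor}}} \cong (V_\la^{\mathrm{hor}})^\vee$ as $\pi_1(X)$-representations, this space becomes $\Hom_{\pi_1(X)}(V_\la^{\mathrm{hor}}, V_\la^{\mathrm{hor}})$, which by the assumed irreducibility and Schur's lemma is one-dimensional.

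The main subtlety is handling the horizontal-section correspondence for $\D$-module maps into the smooth target $\mcE$, so that the ``global invariant pairing'' interpretation is rigorous; once this analytic step is in place, the argument is essentially formal. One also needs that $\Psi_\la(\chi)$ itself is non-zero, so that it actually spans the one-dimensional solution space: this follows from the fact that the oper inclusions $\kappa_\la: K_X^{\langle\la,\rho\rangle} \hookrightarrow \V_\la$ and $\kappa_{-w_0(\la)}$ are sub-bundle inclusions whose images pair non-trivially under $h_{\chi,\la}$, a standard feature of the oper Borel reduction.
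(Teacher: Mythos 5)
Your argument is essentially the paper's intended one: the paper presents Corollary \ref{monG} as an immediate consequence of the cyclicity statement in Lemma \ref{mon} combined with irreducibility (Schur's lemma applied to flat pairings between $(\V_\la,\nabla_{\chi,\la})$ and $(\ol{\V}_{-w_0(\la)},\ol\nabla_{\chi,-w_0(\la)})$), which is exactly the mechanism you spell out. The only point worth tightening is the non-vanishing of $h_{\chi,\la}(s_\la,\ol{s_{-w_0(\la)}})$: rather than invoking a ``standard feature'' of the oper Borel reduction, observe that if it vanished identically then so would all the mixed derivatives $h_{\chi,\la}(P\cdot s_\la,\ol{Q\cdot s_{-w_0(\la)}})$, and Lemma \ref{mon} applied to both factors together with the non-degeneracy of $h_{\chi,\la}$ gives a contradiction.
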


\begin{remark}    \label{diffops}
 For $G=PGL_n$, $\la = \omega_1$, let
$$
I'_{\omega_1,\chi} := K_X^n \otimes I_{\omega_1,\chi},
$$
which is a left submodule of the $({\mc
  D}_{X,(n+1)/2},{\mc D}_{X,(-n+1)/2})$-bimodule of differential
operators acting from $K_X^{(-n+1)/2}$ to $K_X^{(n+1)/2}$. It has
the property that it is generated by a globally defined
$n$th order differential operator $P_\chi$ on $X$ associated to $\chi$
by Lemma \ref{Dga}, i.e.
$$
I'_{\omega_1,\chi} = {\mc D}_{X,(n+1)/2} \cdot P_\chi.
$$
Therefore in this case a section
annihilated by the ideal $I_{\la,\chi}$ is the same as a section
satisfying the $n$th order differential equation \eqref{diffeqs}.

For a general simple Lie group $G$, the ideal $I_{\la,\chi}$, or its
twist such as $I'_{\la,\chi}$, doesn't have such a
generator. But as the above corollary shows, this is not
necessary. What matters is the cyclicity of the $D$-module
$\mcV^K_{\chi,\la}$ (formula \eqref{equal}).

We note that it is the cyclicity of two types of twisted $D$-modules:
$\mcV^K_{\chi,\la}$ on $X$ and $\Delta_\chi$ on $\Bun_G$ (see formula
\eqref{Deltachi}), that enables us to link the geometric Langlands
correspondence and the analytic one (see Remark
\ref{Grothendieck}).\qed
\end{remark}

\begin{remark} Note that a non-zero section $\Psi_\la(\chi)$ from Corollary
  \ref{monG} satisfies
  $$
  (\D_{X,-\langle \la,\rho \rangle}  \otimes \ol{\D}_{X,-\langle
    \la,\rho \rangle})
  \cdot \Psi_\la(\chi) \simeq \V^K_{\chi,\la} \otimes
  \ol{\V}^K_{\chi,-w_0(\la)}.
  $$\qed
\end{remark}

Corollary \ref{monG} is an analogue of Corollary \ref{unique}. We are
going to formulate a conjectural analogue of Theorem \ref{mainthm}
(Conjecture \ref{eqHG}) in a similar way.

Let $\V_{\la}^{\on{univ}}$ be the
universal vector bundle over $\on{Op}^\ga_{\LG}(X) \times X$ with a
partial connection $\nabla^{\on{univ}}$ along $X$, such that
$$
(\V_{\la}^{\on{univ}},\nabla^{\on{univ}})|_{\chi \times X} =
(\V_{\la},\nabla_{\chi,\la}), \qquad \chi \in \on{Op}^\ga_{\LG}(X).
$$
Let $\pi:
\on{Op}^\ga_{\LG}(X) \times X \to X$ be the projection and set
$$
\V_{X,\la}^{\on{univ}} := \pi_*(\V_{\la}^{\on{univ}}).
$$
Then $K_X^{-\langle \la,\rho
  \rangle} \otimes \V_{X,\la}^{\on{univ}}$ is naturally a $\D_{X,-\langle
  \la,\rho \rangle}$-module on $X$, equipped with a commuting action
of $\on{Fun} \on{Op}^\ga_{\LG}(X) \simeq D_G$.

Moreover, the oper Borel reduction gives rise to an embedding
$$
\kappa_{\la}^{\on{univ}}: K_X^{-\langle \la,\rho \rangle} \hookrightarrow
\V_{\la,X}^{\on{univ}}
$$
and hence a canonical section
$$
s_{\la}^{\on{univ}} \in \Gamma(X,K_X^{-\langle \la,\rho
  \rangle} \otimes \V_{X,\la}^{\on{univ}}).
$$
Consider the cyclic $D_G \otimes \D_{X,-\langle \la,\rho
  \rangle}$-module generated by $s_{\la}^{\on{univ}}$:
$$
\V_{X,\la}^{K,\on{univ}} :=  (D_G \otimes \D_{X,-\langle \la,\rho
  \rangle}) \cdot s_{\la}^{\on{univ}}.
$$

Now recall that the Hecke operator $\wh{H}_\la$ is a section of
$\Omega_X^{-\langle \la,\rho \rangle}$ with values in operators $V_G(\la)
\to V_G$. Hence we can apply to it the sheaf $\D_{X,-\langle \la,\rho
  \rangle}$ as well as the algebra $D_G$, through its action on
$V_G$. The two actions commute, and they generate a $\D_{X,-\langle
  \la,\rho \rangle}$-module inside the sheaf of $C^\infty$ sections of
$\Omega_X^{-\langle \la,\rho \rangle}$ on $X$ with values in operators
$V_G(\la) \to V_G$. Let us denote this $\D_{X,-\langle
  \la,\rho \rangle}$-module by $\langle H_\la \rangle$.

Similarly, we can apply to $\wh{H}_\la$ the sheaf $\ol{\D}_{X,-\langle
  \la,\rho \rangle}$ and the algebra $\ol{D}_G$. Denote the resulting
$\ol{\D}_{X,-\langle \la,\rho \rangle}$-module by $\ol{\langle H_\la
  \rangle}$. The following is an analogue Theorem \ref{mainthm} for
a general group $G$.

\begin{conjecture}    \label{eqHG}
  There are isomorphisms
  \begin{equation}
    \langle H_\la
  \rangle \simeq \V_{X,\la}^{K,\on{univ}}, \qquad \ol{ \langle H_\la
      \rangle } \simeq \ol\V_{X,\la}^{K,\on{univ}}
  \end{equation}
  of $D_G \otimes \D_{X,-\langle \la,\rho \rangle}$-modules
  (resp. $\ol{D}_G \otimes \ol{\D}_{X,-\langle \la,\rho
    \rangle}$-modules).
\end{conjecture}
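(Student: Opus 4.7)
The plan is to follow the argument of Section \ref{derive} for $G = PGL_n$ and $\la = \omega_1$, replacing the single scalar differential operator $\sigma$ there by the full annihilator ideal of the canonical section $s_\la^{\on{univ}}$ of the universal oper bundle, as encoded in Lemma \ref{Ds}. The three main ingredients will be: (i) an analog of the Hecke eigensheaf property of Theorem \ref{Heceigen} for arbitrary $G$ and $\la$; (ii) the integral transform machinery of Section \ref{inttr}, suitably enhanced to track the commuting $D_G$-action; and (iii) a careful treatment of the singular fibers of $\ol{q}_2 \times \ol{q}_3$.

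First, I would invoke the following extension of Theorem \ref{Heceigen}, essentially established in Sect.~5 of \cite{BD} for general $G$ and $\la$: the Hecke functor $\mathbf{H}^\dt_\la$ applied to $\D_{\Bun_G} \otimes \mcL^{-1}$ is concentrated in cohomological degree zero, and there is a canonical isomorphism
\[
\mathbf{H}^0_\la(\D_{\Bun_G} \otimes \mcL^{-1}) \simeq (\D_{\Bun_G} \otimes \mcL^{-1}) \underset{D_G}\boxtimes \V_{X,\la}^{\on{univ}}
\]
as left $\D_{\Bun_G} \boxtimes \D_X$-modules carrying a commuting $D_G$-action, under which the canonical section $\psi_{Z(\la)}$ (obtained from \eqref{cana}) corresponds to $1 \boxtimes s_\la^{\on{univ}}$. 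Combined with Lemma \ref{Ds}, this produces the desired cyclicity statement: the $D_G \otimes \D_{X,-\langle \la,\rho \rangle}$-module generated by the appropriate twist of $\psi_{Z(\la)}$ is isomorphic to $\V_{X,\la}^{K,\on{univ}}$.

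Next, I would apply a variant of Corollary \ref{diffcorr1} to the restriction of $Z(\la)$ over open dense subsets $Y_1 \subset U_G(\la) \subset \Bunn_G$ and $Y_2 \subset \Bunn_G \times (X \bs S)$ on which the Hecke correspondence becomes a closed embedding with proper projection to $Y_2$, using the line bundles $\mcL_1 = K_{\Bun}^{1/2}$ and $\mcL_2 = K_{\Bun}^{1/2} \boxtimes K_X^{-\langle \la,\rho \rangle}$, so that \eqref{isoma} follows from \eqref{cana}. The variant that is needed must keep track of the commuting $D_G$-action throughout, so that elements of the annihilator ideal of $s_\la^{\on{univ}}$ inside $D_G \otimes \D_{X,-\langle \la,\rho \rangle}$ translate, via the bimodule homomorphism of Proposition \ref{HH1}, into relations satisfied by the corresponding integral transform on smooth sections. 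Combined with the cyclicity from Lemma \ref{Ds}, this yields the isomorphism $\langle H_\la \rangle \simeq \V_{X,\la}^{K,\on{univ}}$; the conjugate statement $\ol{\langle H_\la \rangle} \simeq \ol\V_{X,\la}^{K,\on{univ}}$ is then a formal consequence of passing to complex conjugates.

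The main obstacle, as flagged in the excerpt, is that for general $G$ and $\la$ the fibers $\ol\Gr_\la$ of $\ol{q}_2 \times \ol{q}_3$ are closed but singular subvarieties of the affine Grassmannian (with rational singularities, which is why the integral \eqref{intH} makes sense in the first place). This complicates two steps: the $D$-module pushforward $(\ol{q}_2 \times \ol{q}_3)^D_*$ could \emph{a priori} acquire higher cohomology, and the key result of \cite{DS} invoked in Corollary \ref{closed} assumes a closed embedding with a sufficiently smooth projection. To handle this, one can either replace $\ol\Gr_\la$ by a smooth resolution (for instance, a convolution or Bott--Samelson-type resolution) and propagate the comparison back, or restrict attention to the open stratum $Z(\la) \subset \ol{Z}(\la)$ and control the boundary contribution using the rational singularities of $\ol\Gr_\la$ together with the vanishing statements of \cite{BD} and \cite{BK}. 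Executing this geometric analysis carefully is the main new work required; once it is in place, the remaining steps are routine generalizations of the $PGL_n$ argument from Section \ref{derive}.
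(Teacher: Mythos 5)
Your proposal follows essentially the same route as the paper's own outline: the Beilinson--Drinfeld Hecke eigensheaf property (Theorem \ref{Heceigen1}) combined with Lemma \ref{Ds} yields the cyclicity statement (Corollary \ref{psiZla}), and one then adapts the integral-transform formalism of Section \ref{inttr} with the commuting $D_G$-action, exactly as in the $PGL_n$ argument of Section \ref{derive}. Be aware, though, that the paper keeps this statement as a conjecture precisely because the adaptation to the non-smooth morphism $\ol{q}_2 \times \ol{q}_3$ is deferred to a follow-up paper; like the paper, you identify this difficulty and sketch possible remedies (resolutions of $\ol\Gr_\la$, rational singularities) but do not execute them, so your text is a plan matching the paper's sketch rather than a complete proof.
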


In the case $G=PGL_n, \la=\omega_1$, this conjecture equivalent to the
statement of Theorem \ref{mainthm}.

Suppose that the monodromy representation of the flat vector bundle
$(V_\la,\nabla_{\chi,\la})$, where $\chi \in
\on{Op}^\ga_{\LG}(X)_{\R}$, is irreducible.\footnote{This
irreducibility assumption has been removed in \cite{EFK4}, Proposition
3.39.} Then Conjecture \ref{eigHG} follows from Conjecture \ref{eqHG}
and Corollary \ref{monG} in the same way as Corollary \ref{eigH1}
follows from Theorem \ref{mainthm} and Corollary \ref{unique} in the
case $G=PGL_n, \la=\omega_1$.

\subsection{Hecke eigensheaf property}    \label{HeckeGen}

As in the case of $PGL_n$,
we wish to derive Conjecture \ref{eqHG} using the formalism of Section
\ref{inttr} from the Hecke eigensheaf property established by
Beilinson and Drinfeld \cite{BD}.

We start by recalling the definition of the Hecke functor in the
general case from \cite{BD}.

We will use the notation of Section \ref{notation}. Consider the Hecke
correspondence $\ol{Z}(\la)$. The fibers of the morphism $\ol{q}_2
\times \ol{q}_3$ are isomorphic to the closure $\ol\Gr_\la$ of the
$G[[z]]$-orbit $\Gr_\la$ in the affine Grassmannian of $G$. We have
denoted by $Z(\la)$ the open dense part of $\ol{Z}(\la)$ such that the
fibers of $q_2 \times q_3$ restricted to $Z(\la)$ are isomorphic to
$\Gr_\la$, and we have denoted by $q_i$ the restriction of the
morphism $\ol{q}_i$ to $Z(\la)$.

The morphism $\ol{q}_2 \times \ol{q}_3$ is
  smooth (in the sense of algebraic geometry) if and only if $\la$ is
  minuscule. (In this case, we also have $\ol\Gr_\la = \Gr_\la$.)

The following definition is taken from \cite{BD}, Sect. 5.2.4.

\begin{definition}[\cite{BD}]
  {\em Let
  $M$ be a left $D$-module on $\Bun_G$. Denote by $q_1^\star(M)$ the
  intermediate extension to $Z(\la)$ of $q_1^*(M)$. (Locally, we
  can choose an isomorphism $\ol{Z}(\la) \simeq \Bun_G \times
  \ol\Gr_\la \times X$ so that $\ol{q}_1$ is the projection on the
  first factor; then $q_1^\star(M)$ can be identified with the
  exterior tensor product of $q_1^*(M)$, the irreducible $D$-module on
  $\ol\Gr_\la$, and $\OO_X$.) The Hecke functor is defined by the
  following formula:}
\begin{equation}    \label{Tla1}
{\mb H}^\dt_\la(M) := (\ol{q}_2 \times \ol{q}_3)^D_*(q_1^\star(M)).
\end{equation}
\end{definition}

Now consider the left $D$-module $\D_{\Bun_G} \otimes \mcL^{-1}$,
where $\mcL = K_{\Bun}^{1/2}$. It is equipped with the commuting right
action of
$$
D_G = \Gamma(\Bun^\beta_G, \mcL \otimes \D_{\Bun_G} \otimes \mcL^{-1})
\simeq \on{Fun} \on{Op}^\ga_{\LG}(X)
$$
(see \cite{BD}, Sect. 5.1.1).

The isomorphism \eqref{cana} gives rise to a section
$\psi_{Z(\la)}$ of $$(\mcL \boxtimes K_X^{-\langle \la,\rho
  \rangle}) \otimes {\mb H}^0_\la(\D_{\Bun_G} \otimes \mcL^{-1}).$$

The following theorem is due to \cite{BD} (see the references before
Theorem \ref{Heceigen} above).

\begin{theorem}[\cite{BD}]    \label{Heceigen1}
  ${\mb H}^i_{\la}(\D_{\Bun_G}
  \otimes \mcL^{-1}) = 0$ for $i \neq 0$, and
  $${\mb H}^0_{\la}(\D_{\Bun_G}
  \otimes \mcL^{-1}) \simeq (\D_{\Bun_G}
  \otimes \mcL^{-1}) \underset{D_G}\boxtimes
  \V_{\la,X}^{\on{univ}}
  $$
  as left $\D_{\Bun_G} \boxtimes \D_X$-modules equipped with a
  commuting action of $D_G$.
  
  Moreover, the section
  $\psi_{Z(\la)}$ of
  $$(\mcL \boxtimes K_X^{-\langle \la,\rho \rangle}) \otimes {\mb
      H}_\la(\D_{\Bun_G} \otimes \mcL^{-1}) \simeq (\mcL \otimes
    \D_{\Bun_G} \otimes \mcL^{-1}) \underset{D_G}\boxtimes
    (K_X^{-\langle \la,\rho \rangle} \otimes \V_{\la,X}^{\on{univ}})
  $$
  coincides with $1 \boxtimes s_{\la}^{\on{univ}}$.
\end{theorem}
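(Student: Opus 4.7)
The plan is to establish Theorem \ref{Heceigen1} by upgrading the fiber-wise Hecke eigensheaf property of Beilinson--Drinfeld to a $D_G$-equivariant (universal) statement. For each $\chi \in \on{Op}^\ga_{\LG}(X)$, let $\C_\chi$ be the corresponding residue field of $D_G \simeq \on{Fun} \on{Op}^\ga_{\LG}(X)$, and set $\Delta^0_\chi := (\D_{\Bun_G} \otimes \mcL^{-1}) \underset{D_G}\otimes \C_\chi$ as in Remark \ref{chi1}. The key input is Theorem 5.2.6 of \cite{BD}, which asserts that $\Delta^0_\chi$ is a Hecke eigensheaf with eigenvalue the flat $\LG$-bundle underlying $\chi$; concretely, ${\mb H}^i_\la(\Delta^0_\chi) = 0$ for $i \neq 0$ and ${\mb H}^0_\la(\Delta^0_\chi) \simeq \Delta^0_\chi \boxtimes (\V_\la, \nabla_{\chi,\la})$.

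The first step is to upgrade this to the universal setting. The Hecke functor \eqref{Tla1} is built out of the intermediate extension $q_1^\star$ and the derived push-forward $(\ol{q}_2 \times \ol{q}_3)^D_*$; both operations are functorial, hence preserve the right $D_G$-action on $\D_{\Bun_G} \otimes \mcL^{-1}$ (which commutes with the left $\D_{\Bun_G}$-action). By a base-change/flatness argument along $\on{Spec} D_G$, the fiber-wise vanishing yields ${\mb H}^i_\la(\D_{\Bun_G} \otimes \mcL^{-1}) = 0$ for $i \neq 0$, and the fiber-wise isomorphism assembles into the claimed isomorphism with $(\D_{\Bun_G} \otimes \mcL^{-1}) \underset{D_G}\boxtimes \V_{\la,X}^{\on{univ}}$, since both sides are $\D_{\Bun_G \times X}$-modules flat over $D_G$ whose fibers match canonically.

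The second step is to identify the distinguished section. The section $\psi_{Z(\la)}$ is built from the canonical isomorphism \eqref{cana} of Theorem \ref{h}, while $s_\la^{\on{univ}}$ is built from the oper Borel reduction $\kappa_\la^{\on{univ}}: K_X^{\langle \la,\rho \rangle} \hookrightarrow \V_{\la,X}^{\on{univ}}$. These two inputs are not independent: the Beilinson--Drinfeld construction of the eigensheaf isomorphism is produced precisely from the oper Borel reduction, via a semi-infinite Schubert stratum in $\ol\Gr_\la$. I would trace formula (241) of \cite{BD} (as is done in the proof of Theorem \ref{h}) through the Hecke eigensheaf construction to verify that the trivialization of the relative canonical bundle yielded by $a$ on this stratum matches $\kappa_\la^{\on{univ}}$ after the identifications of the first step, giving the required match of sections.

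The main obstacle, absent in the $G = PGL_n$, $\la = \omega_1$ case of Theorem \ref{Heceigen}, is that for general $G$ and $\la$ the fibers $\ol\Gr_\la$ of $\ol{q}_2 \times \ol{q}_3$ are singular, so one must use the intermediate extension $q_1^\star$ rather than the naive pull-back $q_1^*$, and the relevant identifications must be made on the IC sheaf of $\ol\Gr_\la$. I would handle this via the geometric Satake equivalence, which identifies this IC sheaf with the representation $V_\la$ of $\LG$, together with the factorizable structure of the affine Grassmannian to guarantee compatibility as $x$ varies across $X \setminus S$; this is the technical framework of \cite{BD} that underlies the theorem, and it is at this point that any honest proof must do real work rather than cite the $PGL_n$ argument verbatim.
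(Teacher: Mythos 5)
You should first note that the paper does not prove this statement at all: Theorem \ref{Heceigen1} is quoted verbatim from Beilinson--Drinfeld, with the vanishing and the universal isomorphism being Theorem 5.2.9 of \cite{BD} and the identification of the section $\psi_{Z(\la)}$ with $1 \boxtimes s_\la^{\on{univ}}$ following from Theorems 5.4.11, 5.4.12 and Proposition 8.1.5 of \cite{BD}. So your proposal is necessarily a different route --- an attempt to reconstruct the BD result --- and as such it contains a genuine gap. Your first step derives the universal ($D_G$-family) statement from the pointwise eigensheaf property of $\Delta^0_\chi$ (Theorem 5.2.6 of \cite{BD}) by a ``base-change/flatness argument along $\on{Spec} D_G$.'' This inverts the actual logical order: in \cite{BD} the pointwise statement for $\Delta^0_\chi$ is \emph{deduced} from the universal one, as the paper itself records in Remark \ref{chi1}, so leaning on 5.2.6 to produce 5.2.9 is essentially circular. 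More importantly, the assembly step is not a formal consequence of the fiberwise data. Knowing that for every closed point $\chi$ of $\on{Op}^\ga_{\LG}(X)$ one has ${\mb H}^0_\la(\Delta^0_\chi) \simeq \Delta^0_\chi \boxtimes (\V_\la,\nabla_{\chi,\la})$ does not by itself produce a morphism between ${\mb H}^0_\la(\D_{\Bun_G} \otimes \mcL^{-1})$ and $(\D_{\Bun_G} \otimes \mcL^{-1}) \underset{D_G}\boxtimes \V_{\la,X}^{\on{univ}}$: the individual isomorphisms are only canonical once one has a uniform construction, and exhibiting such a $D_G$-linear map of families is precisely the hard content of the BD construction (via the localization of modules over the center at the critical level and the Feigin--Frenkel/Satake identifications). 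Likewise, the vanishing of ${\mb H}^i_\la$ for $i \neq 0$ and the flatness over $D_G$ that your base-change argument presupposes are themselves nontrivial statements proved in \cite{BD}, not inputs one may assume; ``both sides are flat over $D_G$ whose fibers match canonically'' asserts exactly what needs to be constructed.

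Your second and third steps (tracing formula (241) to match $\psi_{Z(\la)}$, defined through the isomorphism \eqref{cana}, with the oper-Borel section $s_\la^{\on{univ}}$, and handling the singularity of $\ol\Gr_\la$ through the intermediate extension and geometric Satake) are pointed in the right direction and correctly identify where the real work lies for general $\la$; but as written they are a plan rather than an argument, and in any case the paper itself does none of this work --- it cites the relevant statements of \cite{BD} directly, both here and in the $PGL_n$ case (Theorem \ref{Heceigen}). If you want a self-contained treatment, the honest route is to follow \cite{BD}: construct the universal isomorphism first and only then specialize to individual opers, rather than trying to glue the specializations.
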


Theorem \ref{Heceigen1} implies

\begin{corollary}    \label{psiZla}
  There is an isomorphism
  \begin{equation}
  (D_G \otimes \D_{X,-\langle \la,\rho \rangle}) \cdot
  \psi_{Z(\la)} \simeq \V_{X,\la}^{K,\on{univ}}
  \end{equation}
  of $D_G \otimes \D_{X,-\langle \la,\rho \rangle}$-modules.
\end{corollary}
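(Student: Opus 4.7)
The plan is to deduce Corollary \ref{psiZla} as a direct consequence of its two cited inputs, Theorem \ref{Heceigen1} and Lemma \ref{Ds}. First, I apply Theorem \ref{Heceigen1} to translate the statement into the language of the balanced tensor product: under the resulting isomorphism
\[
(\mcL \boxtimes K_X^{-\langle\la,\rho\rangle}) \otimes {\mb H}^0_\la(\D_{\Bun_G}\otimes \mcL^{-1}) \;\simeq\; (\mcL \otimes \D_{\Bun_G} \otimes \mcL^{-1}) \underset{D_G}{\boxtimes} \V_{X,\la}^{K,\on{univ}},
\]
the section $\psi_{Z(\la)}$ is carried to $1 \boxtimes s_\la^{\on{univ}}$.

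Next, I observe that the algebra $D_G \otimes \D_{X,-\langle \la,\rho\rangle}$ acts on the right-hand side entirely through the second factor: the $D_G$-action from the left factor is balanced against that on the right by the tensor over $D_G$, while $\D_{X,-\langle\la,\rho\rangle}$ only operates on $X$. Consequently, for every $P \in D_G \otimes \D_{X,-\langle\la,\rho\rangle}$ one has
\[
P \cdot (1 \boxtimes s_\la^{\on{univ}}) \;=\; 1 \boxtimes (P \cdot s_\la^{\on{univ}}).
\]
By Lemma \ref{Ds}, as $P$ varies the elements $P \cdot s_\la^{\on{univ}}$ exhaust $\V_{X,\la}^{K,\on{univ}}$. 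Hence the cyclic $(D_G\otimes \D_{X,-\langle\la,\rho\rangle})$-submodule generated by $\psi_{Z(\la)}$ coincides with the image of the equivariant morphism
\[
\iota : \V_{X,\la}^{K,\on{univ}} \longrightarrow (\mcL \otimes \D_{\Bun_G} \otimes \mcL^{-1}) \underset{D_G}{\boxtimes} \V_{X,\la}^{K,\on{univ}}, \qquad v \mapsto 1 \boxtimes v.
\]

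The main potential obstacle is verifying that $\iota$ is injective, so that the cyclic submodule is genuinely isomorphic to $\V_{X,\la}^{K,\on{univ}}$ rather than a proper quotient. I plan to handle this by exploiting the order filtration on $\D_{\Bun_G}$: locally on $\Bun_G$, after trivializing $\mcL$, the zeroth-order piece $\OO_{\Bun_G}$ yields a canonical right $D_G$-linear retraction of the inclusion $D_G \hookrightarrow \mcL \otimes \D_{\Bun_G}\otimes \mcL^{-1}$, which in turn produces a left inverse to $\iota$. A cleaner global alternative, which I expect to work, is to fiber over $\on{Op}^\ga_{\LG}(X) = \on{Spec} D_G$ and check injectivity stalkwise: at each $\chi$, the statement reduces, by specializing Theorem \ref{Heceigen1}, to the Hecke eigensheaf property for $(\V_\la,\nabla_{\chi,\la})$ together with its irreducibility, and faithful flatness over $D_G$ then propagates injectivity back to the universal level.
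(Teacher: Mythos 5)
Your first two paragraphs are precisely the paper's proof: the paper deduces Corollary \ref{psiZla} in one line from Theorem \ref{Heceigen1} (which identifies $\psi_{Z(\la)}$ with $1 \boxtimes s_{\la}^{\on{univ}}$ and makes $D_G \otimes \D_{X,-\langle \la,\rho \rangle}$ act through the second factor of the balanced product, the $D_G$-action being absorbed by the balancing) and Lemma \ref{Ds} (which says these operators applied to $s_{\la}^{\on{univ}}$ sweep out $\V_{X,\la}^{K,\on{univ}}$). So the core of your argument coincides with the paper's.

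The additional point you raise --- injectivity of $\iota: v \mapsto 1 \boxtimes v$, i.e. that the cyclic module is $\V_{X,\la}^{K,\on{univ}}$ itself and not a proper quotient --- is a genuine subtlety that the paper leaves implicit, but your first proposed fix does not work. Projection onto the zeroth-order piece of the order filtration on $\mcL \otimes \D_{\Bun_G} \otimes \mcL^{-1}$ is not right $D_G$-linear (elements of $D_G$ have positive order, and right multiplication by them respects no splitting of the filtration), and producing \emph{any} right $D_G$-linear retraction onto $D_G$ is essentially as strong as what you want to prove: applying $\underset{D_G}\otimes \,\C_\chi$ to such a retraction would instantly give the nonvanishing of every $\Delta^0_\chi$, which in \cite{BD} is a nontrivial theorem resting on flatness properties of the (quantum) Hitchin map, not a formal consequence of the filtration. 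Your second route is the right one, but the fiberwise inputs should be named correctly: at each $\chi \in \on{Op}^\ga_{\LG}(X)$ what is needed is $\Delta^0_\chi \neq 0$, combined with flatness of $\mcL \otimes \D_{\Bun_G} \otimes \mcL^{-1}$ over $D_G$ to pass between the universal module and its fibers (both available from \cite{BD}); irreducibility of $(\V_\la,\nabla_{\chi,\la})$ is not the relevant ingredient here --- indeed it can fail for particular $\chi$ and $\la$, which is exactly why the paper invokes only the irreducibility of the universal oper bundle in Lemma \ref{Ds} rather than a fiberwise statement.
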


Conjecture \ref{eqHG} (and hence Conjecture \ref{eigHG} if the
monodromy representation of the flat vector bundle
$(V_\la,\nabla_{\chi,\la})$ is irreducible) can be
derived from Corollary \ref{psiZla} by adapting the results of Section
\ref{inttr} to the present situation (similarly to what we did in the
proof of Theorem \ref{mainthm} in Section \ref{derive} in the case of
$PGL_n$). For non-minuscule $\la$, this requires
additional care since the morphism $\ol{q}_2 \times \ol{q}_3$ is not
smooth in this case (in the sense of algebraic geometry). We leave the
details to a follow-up paper.

\subsection{Proof of Theorem \ref{h}}    \label{proofh}

In this subsection we derive Theorem \ref{h} from a local statement
about line bundles on the affine Grassmannian given in formula (241)
of \cite{BD} (it is reproduced in formula \eqref{241} below). All
ingredients are contained in \cite{BD}. We include the argument here
for completeness.

For a point $x$ of our curve $X$, let $F_x$ be the formal completion
of the field of rational functions on $X$ at $x$ and $\OO_x$ its ring
of integers. We will also use the notation $O = \C[[z]], F =
\C\zpart$. Let $\Aut O$ be the group of automorphisms
of $O$. It naturally acts on the formal loop group $G(F)$ preserving
the subgroup $G(O)$. Hence we obtain an action of $\Aut O$ on the
affine Grassmannian $\Gr = G(F)/G(O)$, which preserves the
$G(O)$-orbits $\Gr_\la, \la \in P^\vee_+$.

Consider first the case when the set $\{ \langle \la,\rho \rangle, \la
\in P^\vee_+ \}$ only contains integers. Let $\wh{M}$ be the
ind-scheme defined in \cite{BD}, Sect. 2.8.3, which parametrizes
quadruples $(x,t_x,\mcF,\ga_x)$, where $x$ is a point of our curve
$X$, $t_x$ is a formal coordinate at $x$ (so that we can identify
$\OO_x$ with $C[[t_x]]$), $\mcF$ is a $G$-bundle on $X$, and $\ga_x$
is a trivialization of $\mcF$ on the disc $D_x = \on{Spec} \OO_x$. The
projection
\begin{eqnarray}    \label{proj}
  \wh{M} &\to& \Bun_G \times X \\ \notag
  (x,t_x,\mcF,\ga_x) &\mapsto& (\mcF,x)
\end{eqnarray}
is a torsor for the group $\Aut O \ltimes G(O)$, which naturally acts
on $t_x$ and $\ga_x$. We use it to construct a functor ${\mathbf F}$ sending a
(ind-)scheme $Y$ with an action of $\Aut O \ltimes G(O)$ to a
(ind-)scheme over $\Bun_G \times X$,
$$
{\mathbf F}(Y) = {\mc Y} := \wh{M} \underset{\Aut O \ltimes G(O)}\times Y
$$
This is a generalization of the Gelfand--Kazhdan functor \cite{GK}
from (ind-)schemes equipped with an action of the group $\Aut O$ to
(ind-)schemes over $X$. Applying it to $\Gr_\la$, we obtain the scheme
$$
{\mathbf F}(\Gr_\la) = {\mc Gr}_\la := \wh{M} \underset{\Aut O \ltimes
  G(O)}\times \Gr_\la
$$
over $\Bun_G \times X$. Denote by $r$ the projection ${\mc Gr}_\la \to
\Bun_G \times X$.

\begin{proposition}[\cite{BD}, Sect. 5.2.2(ii)]    \label{identGrH}
  There is a natural isomorphism $Z(\la) \simeq {\mc Gr}_\la$ under
  which the projection $q_2 \times q_3: Z(\la) \to \Bun_G \times X$
  is identified with $r$.
\end{proposition}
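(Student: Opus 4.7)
The plan is to construct mutually inverse morphisms between $Z(\la)$ and ${\mc Gr}_\la$ over $\Bun_G \times X$ by a standard uniformization argument, using the Beauville--Laszlo gluing theorem to translate between global $G$-bundles with a modification at one point and local data on the formal disc.

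First, I would construct a morphism $\Phi: Z(\la) \to {\mc Gr}_\la$. Working $S$-locally on the base (where $S$ is a test scheme), start with an $S$-point of $Z(\la)$, i.e. a quadruple $(\mcF,\mcP,x,t)$ together with $B$-reductions at the points of $S$; since $x$ lies in $X \bs S$, these reductions are irrelevant to the local analysis and will simply be preserved. After passing to an \'etale cover if necessary, choose a formal coordinate $t_x$ at $x$ and a trivialization $\ga^\mcP_x$ of $\mcP|_{D_x}$; this produces an $S$-point of $\wh{M}$. Any trivialization $\ga^\mcF_x$ of $\mcF|_{D_x}$, combined with $t$ and $\ga^\mcP_x$, yields an element of $G(F_x)/G(\OO_x) = \Gr$, and the hypothesis that the relative position of $\mcF$ and $\mcP$ at $x$ equals $\la$ is precisely the assertion that this element lies in $\Gr_\la$. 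Changing $\ga^\mcF_x$ multiplies this element on the right by an element of $G(O)$, so the pair (point of $\wh{M}$, point of $\Gr_\la$) is well-defined up to the diagonal action of $\Aut O \ltimes G(O)$, yielding a canonical $S$-point of ${\mc Gr}_\la$.

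Next, I would construct the inverse $\Psi: {\mc Gr}_\la \to Z(\la)$ via Beauville--Laszlo gluing. Given $(x,t_x,\mcP,\ga^\mcP_x) \in \wh{M}$ and $g G(O) \in \Gr_\la$, form a new $G$-bundle $\mcF$ by gluing $\mcP|_{X \bs x}$ to the trivial bundle on $D_x$ via the transition function $\ga^\mcP_x \cdot g$ on the punctured disc $D_x^\times$; the Beauville--Laszlo theorem guarantees that $\mcF$ exists and is canonically determined. The gluing produces a tautological isomorphism $t: \mcF|_{X \bs x} \overset{\sim}\to \mcP|_{X \bs x}$, and the $B$-reductions on $\mcP$ at points of $S$ are carried over through $t$ to give $B$-reductions on $\mcF$. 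The condition $g \in \Gr_\la$ precisely ensures the relative position is $\la$. Replacing $(t_x,\ga^\mcP_x,g)$ by an $\Aut O \ltimes G(O)$-equivalent triple produces an isomorphic $\mcF$, so $\Psi$ is well-defined on ${\mc Gr}_\la$.

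Verifying $\Phi \circ \Psi = \on{id}$ and $\Psi \circ \Phi = \on{id}$ is a direct unwinding of the two constructions; both compositions recover the original data up to the canonical identification implicit in the Beauville--Laszlo theorem. Finally, the compatibility $q_2 \times q_3 = r$ holds by construction, since both $\Phi$ and $\Psi$ manifestly preserve the data of $(\mcP,x)$. The only genuinely delicate point in the argument is verifying descent along the $\Aut O \ltimes G(O)$-torsor \eqref{proj}, i.e.\ that the choice of formal coordinate and trivialization can be made \'etale-locally and that all constructions descend; this is standard in the theory of the Beilinson--Drinfeld Grassmannian and can be quoted from \cite{BD}, Sect.~5.3. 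The case where $\{\langle \la,\rho\rangle\}$ contains half-integers is handled identically, since the statement of the proposition concerns only the underlying schemes and not the line bundles from Theorem \ref{h}.
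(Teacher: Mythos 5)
The paper does not prove this proposition at all: it is imported verbatim from \cite{BD}, Sect.~5.2.2(ii), and the surrounding Section \ref{proofh} only uses it as an input to deduce Theorem \ref{h}. So your proposal is not competing with an argument in the paper; it supplies the standard uniformization proof, and in outline it is correct: encode $\mcF$ as a modification of $\mcP$ at $x$ via the Beauville--Laszlo theorem, note that the resulting point of the affine Grassmannian is well defined once $(t_x,\ga^{\mcP}_x)$ is chosen, and observe that changing $(t_x,\ga^{\mcP}_x)$ acts exactly by $\Aut O\ltimes G(O)$, so the construction descends along the torsor \eqref{proj} to an isomorphism over $\Bun_G\times X$ identifying $q_2\times q_3$ with $r$. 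Three bookkeeping points deserve attention. First, you correctly trivialize $\mcP$ (the $q_2$-bundle), which is forced by the requirement $q_2\times q_3=r$; but the paper's definition of $Z(\la)$ pins down the relative position by trivializing $\mcF$, so the group element your $\Phi$ produces is the inverse of the one in that definition, and its $G(O)$-orbit is $\Gr_{-w_0(\la)}$ rather than $\Gr_\la$ unless the conventions are aligned (e.g.\ by taking the inverse, or by the symmetry $\la=-w_0(\la)$ when it holds); this is a pure convention issue, also glossed over in the paper, but it should be fixed consistently since elsewhere the paper distinguishes $\la$ from $-w_0(\la)$. Second, existence of $(t_x,\ga^{\mcP}_x)$ only after a suitable local base change, and descent of the construction, is indeed the one delicate step; the right reference is \cite{BD}, Sects.~2.8.3 and 5.2.2 (not 5.3). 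Third, strictly speaking the identification takes place over $\Bun_G\times(X\bs S)$, with the $B$-reductions at $S$ carried along untouched, as you note. With these adjustments your argument is a complete and correct substitute for the citation.
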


Next, in \cite{BD}, Sect. 4.6, a local Pfaffian line bundle was
defined on $\Gr$. We will denote it by ${\mc L}_{\Gr}$. According to
the construction, ${\mc L}_{\Gr}$ is $\Aut O \ltimes
G(O)$-equivariant, and hence so is its restriction ${\mc L}_{\Gr_\la}$
to the $G(O)$-orbit $\Gr_\la$. Applying the above functor ${\mathbf F}$ to it,
we obtain a line bundle on ${\mc Gr}_\la$, which we will denote by
${\mc L}_{{\mc Gr}_\la}$.

In \cite{BD}, the line bundle ${\mc L}_{\Gr_\la}$ was described
explicitly. To explain this result, we need to define a certain
one-dimensional representation of $\Aut O \ltimes G(O)$. Namely, let
us assign to every element $\phi$ of $\Aut O$ the the image of $z \in
\C[[z]] = O$ under $\phi$. This assignment sets up a bijection between
$\Aut O$ and the space of formal power series
\begin{equation}    \label{rho}
\phi(z) = \sum_{n\geq 0} \phi_n z^{n+1} \in \C[[z]],
\end{equation}
where $\phi_0$ is invertible. In particular, we obtain a canonical
homomorphism $\ga: \Aut O \to {\mathbb G}_m, \phi(z) \mapsto \phi_0$.

\begin{definition}
{\em For $n \in \Z$, let ${\mathfrak o}(n)$ be the one-dimensional
representation of $\Aut O$ on which it acts via the composition of
$\ga$ and the character of ${\mathbb G}_m$ raising $\phi_0$ to the
power $n$. We extend it to a one-dimensional representation (denoted
in the same way) of $\Aut O \ltimes G(O)$.}
\end{definition}

\begin{theorem}[\cite{BD}, formula (241)]    \label{th241}
There is a canonical isomorphism of $\Aut O \ltimes G(O)$-equivariant
line bundles on $\Gr_\la$,
\begin{equation}    \label{241}
  {\mc L}_{\Gr_\la} \simeq K_{\Gr_\la} \otimes {\mathfrak o}_\la,
\end{equation}
where $K_{\Gr_\la}$ is the canonical line bundle on $\Gr_\la$ and
${\mathfrak o}_\la := {\mathfrak o}(-\langle \la,\rho \rangle)$.
\end{theorem}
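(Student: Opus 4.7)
The plan is to compare both line bundles by reducing to a fiberwise character computation at a single base point. Since $\Gr_\la$ is a homogeneous space $G(O)/P_\la$, where $P_\la$ is the stabilizer of $x_\la := \la(z) G(O)$, an $\Aut O \ltimes G(O)$-equivariant line bundle on $\Gr_\la$ is determined up to canonical isomorphism by the character through which $\Aut O \ltimes P_\la$ acts on its fiber at $x_\la$. Thus the theorem will follow from an explicit character identity, which will be obtained by computing each side separately and then comparing.

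First I would compute the character on $(K_{\Gr_\la})_{x_\la}$. The tangent space $T_{x_\la}\Gr_\la$ is naturally identified with the quotient $\g[[z]]/(\g[[z]] \cap \Ad(\la(z))\g[[z]])$, which decomposes into joint weight spaces for the Cartan of $G$ and for the scaling subgroup of $\Aut O$; dualizing and taking the top exterior power produces an explicit character of $\Aut O \ltimes P_\la$. Next, I would compute the character on $(\mathcal{L}_{\Gr_\la})_{x_\la}$: recall that $\mathcal{L}_{\Gr}$ arises from the Pfaffian (critical-level) Tate central extension of $\g\zpart$ with respect to the invariant form, so its fiber at $x_\la$ is canonically the Pfaffian of the discrepancy between the two Tate lattices $\g[[z]]$ and $\Ad(\la(z))\g[[z]]$. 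A standard regularized-trace computation for Tate spaces then yields the character of $\Aut O \ltimes P_\la$ on this Pfaffian. Finally, I would compare: the $P_\la$-parts of the two characters agree (equivalently, the two line bundles agree non-equivariantly in $\Pic(\Gr_\la)$, which can be checked on the generator by computing first Chern classes), while the $\Aut O$-parts differ by exactly $\phi_0^{-\langle \la, \rho \rangle}$, producing the twist by $\mathfrak{o}_\la$.

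The main obstacle is the careful bookkeeping in the Pfaffian step: one must fix a canonical splitting of the Tate extension compatible with both lattices and track the anomaly produced by $\Ad(\la(z))$. The crucial numerical input is that the half-sum of positive roots $\rho$ enters exactly once, reflecting the interplay between the dual Coxeter shift (absent at the critical level used to define the Pfaffian) and the $\la$-modification; this is precisely the step where the coefficient $-\langle \la, \rho \rangle$ appearing in $\mathfrak{o}_\la$ is produced. Once the Pfaffian character is pinned down, matching it against the character for $K_{\Gr_\la}$ yields a canonical equivariant isomorphism, and non-zero equivariant sections on both sides are then seen to correspond up to a unique scalar which can be normalized using the fiber at the trivial $G$-bundle in the globalization appearing in Theorem~\ref{h}.
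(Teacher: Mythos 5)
You should first be aware that the paper contains no proof of this statement: Theorem \ref{th241} is quoted verbatim from \cite{BD} (their formula (241)), and Section \ref{proofh} only explains how to globalize it, via the functor ${\mathbf F}$ and the Pfaffian bundle, to deduce Theorem \ref{h}. So your proposal has to be judged as an independent argument. As such, the overall strategy is reasonable: $\Aut O$ does fix the base point $x_\la=\la(z)G(O)$ (because $\la(\phi(z))\la(z)^{-1}=\la(\phi(z)/z)\in T(O)$), $\Gr_\la\simeq G(O)/P_\la$, and an $\Aut O\ltimes G(O)$-equivariant line bundle on this homogeneous space is determined up to isomorphism by the character of $\Aut O\ltimes P_\la$ on its fiber at $x_\la$. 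The numerology also checks out: $T_{x_\la}\Gr_\la \simeq \g[[z]]/(\g[[z]]\cap\Ad\la(z)\g[[z]])\simeq \bigoplus_{\al>0}\g_\al\otimes\bigl(\C[[z]]/z^{\langle\al,\la\rangle}\C[[z]]\bigr)$, so (up to uniform sign conventions) the $T$-character on $\det T_{x_\la}\Gr_\la$ is $\sum_{\al>0}\langle\al,\la\rangle\,\al$ and the rotation weight is $\sum_{\al>0}\binom{\langle\al,\la\rangle}{2}$, while half of the relative determinant of the lattices $\g[[z]]$ and $\Ad\la(z)\g[[z]]$ carries the same $T$-character and rotation weight $\tfrac12\sum_{\al>0}\langle\al,\la\rangle^2$; the discrepancy is $\tfrac12\sum_{\al>0}\langle\al,\la\rangle=\langle\la,\rho\rangle$, which is precisely the twist ${\mathfrak o}_\la$.

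However, as a proof your proposal has genuine gaps. First, the decisive step — that the fiber of Beilinson--Drinfeld's Pfaffian bundle at $x_\la$ is canonically the relative Pfaffian of the two lattices above, together with the precise $\Aut O\ltimes P_\la$-action on it — is only asserted (``a standard regularized-trace computation''); this is where all the content of \eqref{241} lives, and it must be extracted from the actual construction in \cite{BD}, Sect.~4.6. Moreover, your stated mechanism for the appearance of $\rho$ (a ``dual Coxeter shift at the critical level'') is not what produces it: the critical level explains why the Pfaffian (a square root of the adjoint determinant bundle) is the relevant twist globally, but the coefficient $-\langle\la,\rho\rangle$ arises from the elementary identity $\tfrac12 m^2-\binom{m}{2}=\tfrac{m}{2}$ summed over positive roots, as above. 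Second, equality of stabilizer characters yields an equivariant isomorphism only up to scalar; to get a \emph{canonical} isomorphism you must exhibit a canonical identification of the fibers at $x_\la$, and your proposed normalization via ``the fiber at the trivial $G$-bundle in the globalization appearing in Theorem \ref{h}'' is circular, since in the paper Theorem \ref{h} is deduced from \eqref{241}, and is in any case extraneous to this purely local statement. Third, the square root of the determinant character is a character of $\Aut O$ itself only because $\langle\la,\rho\rangle\in\Z$ in the case stated; for half-integral $\langle\la,\rho\rangle$ one must pass to the double cover $\Aut_2 O$, exactly as the paper does at the end of Section \ref{proofh}, and your argument should note where this integrality is used. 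Finally, the parenthetical claim that agreement of the $P_\la$-characters is ``equivalent'' to agreement in $\Pic(\Gr_\la)$ checked ``on the generator'' is loose: $\Pic(\Gr_\la)$ need not have rank one ($\Gr_\la$ is an affine bundle over a partial flag variety of $G$); the implication you need does hold because $G(O)$ has no nontrivial characters, but it is cleaner to compare the $P_\la$-characters directly, as your own weight computation in fact does.
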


We now derive Theorem \ref{h} from this result. First, we need the
following statement which is proved e.g. in \cite{FB}, Sect. 6.4.

\begin{lemma}    \label{o}
Under the functor ${\mathbf F}$ introduced above, the representation ${\mathfrak
  o}(n)$ of $\Aut O \ltimes G(O)$ goes to the line bundle
$r_X^*(K_X^n)$, where $r_X$ is the projection ${\mc Gr}_\la
\overset{r}\to \Bun_G \times X \to X$.
\end{lemma}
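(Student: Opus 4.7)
The approach is to exhibit an explicit $(\Aut O \ltimes G(O))$-equivariant trivialization of the pullback of $K_X^n$ to $\wh{M}$ that transforms with the weight defining ${\mathfrak o}(n)$. Since $\wh{M}$ is an $(\Aut O \ltimes G(O))$-torsor over $\Bun_G \times X$ via the projection \eqref{proj}, the functor $\mathbf F$ applied to a representation $V$ of $\Aut O \ltimes G(O)$ is the associated vector bundle $\wh{M} \times_{\Aut O \ltimes G(O)} V$ on $\Bun_G \times X$, and ${\mathbf F}(V)$ on ${\mc Gr}_\la$ is obtained by pulling back further via $r: {\mc Gr}_\la \to \Bun_G \times X$. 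Thus it suffices to identify $\mathbf F({\mathfrak o}(n))$ with the pullback of $K_X^n$ along $\pi_X: \Bun_G \times X \to X$, since $r_X = \pi_X \circ r$.

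First, I would define the tautological assignment
$$
\tau: \wh{M} \to \pi_X^*(K_X^n)\big|_{\wh{M}}, \qquad (x,t_x,\mcF,\ga_x) \mapsto (dt_x|_x)^n \in (K_X^n)_x,
$$
which is a nowhere-vanishing section since $t_x$ is a formal coordinate, so $dt_x|_x$ is a non-zero cotangent vector at $x$. Next, I would verify the equivariance of $\tau$. The subgroup $G(O)$ acts only on the trivialization $\ga_x$, fixing $x$ and $t_x$, so it fixes $\tau$. For $\phi \in \Aut O$ written as in \eqref{rho}, the action on the formal coordinate is $\phi \cdot t_x = \phi(t_x) = \phi_0 t_x + \phi_1 t_x^2 + \ldots$, hence
$$
d(\phi \cdot t_x)\big|_x = \phi_0 \, dt_x|_x,
$$
so $\tau$ transforms by $\phi_0^n$ under $\phi$. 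This is exactly the weight of the character defining ${\mathfrak o}(n)$.

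Consequently, multiplication by $\tau$ defines an $(\Aut O \ltimes G(O))$-equivariant isomorphism between the trivial line bundle on $\wh{M}$ twisted by ${\mathfrak o}(n)$ and $\pi_X^*(K_X^n)|_{\wh M}$ (the latter having trivial equivariant structure since it is pulled back from the base). Descending along the torsor $\wh M \to \Bun_G \times X$ gives an isomorphism $\mathbf F({\mathfrak o}(n)) \simeq \pi_X^*(K_X^n)$ of line bundles on $\Bun_G \times X$, and pulling back to ${\mc Gr}_\la$ via $r$ yields the desired identification $\mathbf F({\mathfrak o}(n)) \simeq r_X^*(K_X^n)$.

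The argument is essentially a bookkeeping check and there is no serious obstacle; the only point requiring care is to match conventions between (i) the action of $\Aut O$ on formal coordinates, (ii) the induced action on the cotangent fiber $K_X|_x$, and (iii) the definition of ${\mathfrak o}(n)$ via the character $\ga: \phi \mapsto \phi_0$. Once these are aligned, the equivariance is immediate and the descent is formal.
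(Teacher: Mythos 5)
Your argument is correct, and it is in substance the same one the paper relies on: the paper does not prove Lemma \ref{o} but cites \cite{FB}, Sect.\ 6.4, and what you have written is precisely the standard Gelfand--Kazhdan-style associated-bundle computation contained in that reference, made self-contained. The heart of the matter is exactly your observation that the tautological assignment $(x,t_x,\mcF,\ga_x)\mapsto (dt_x|_x)^n$ transforms under $\phi\in\Aut O$ by $\phi_0^n$ (with $G(O)$ acting trivially), so that it descends along the torsor $\wh{M}\to\Bun_G\times X$ to an isomorphism of ${\mathbf F}({\mathfrak o}(n))$ with the pullback of $K_X^n$, and then pulls back to ${\mc Gr}_\la$ via $r$ to give $r_X^*(K_X^n)$. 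The one point you rightly flag is the convention for how $\Aut O$ acts on the pair $(t_x,\ga_x)$ (whether $\phi$ replaces $t_x$ by $\phi(t_x)$ or by the coordinate $t'$ with $t_x=\phi(t')$, together with the left/right convention in forming the associated bundle); this choice is what distinguishes ${\mathfrak o}(n)$ from ${\mathfrak o}(-n)$, and it is pinned down by requiring consistency with the normalization in the lemma (equivalently with the conventions of \cite{BD} and \cite{FB}), after which your equivariance check settles the statement.
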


The functor ${\mathbf F}$ also sends $K_{\Gr_\la}$ to the
relative canonical line bundle of the morphism $r: {\mc Gr}_\la \to
\Bun_G \times X$, which by Proposition \ref{identGrH} is the line
bundle $K_2$ introduced in Section \ref{notation}. We also have $r_X
= q_3$ under the isomorphism of Proposition \ref{identGrH}. Therefore,
Theorem \ref{th241} and Lemma \ref{o} imply that there is a canonical
isomorphism
\begin{equation}    \label{241gl}
  {\mc L}_{{\mc Gr}_\la} \simeq K_2 \otimes
  q_3^*(K_X^{-\langle \la,\rho \rangle}).
\end{equation}

On the other hand, under our current assumption that the set $\{
\langle \la,\rho \rangle, \la \in P^\vee_+ \}$ only contains integers,
Beilinson--Drinfeld construction in \cite{BD}, Sect. 4.4.1, produces a
square root of the canonical line bundle on $\Bun_G$ (normalized by a
trivialization of its fiber at the trivial $G$-bundle). We will denote
it by $K^{1/2}_{\Bun}$. The results of \cite{BD}, Sects. 4.4.14 and
4.6, imply that under the isomorphism of Proposition \ref{identGrH},
we have a canonical identification
\begin{equation}    \label{canident}
{\mc L}_{{\mc Gr}_\la} \simeq q_1^*({K^{1/2}_{\Bun}}) \otimes
q_2^*(K^{1/2}_{\Bun})^{-1}.
\end{equation}
Combining the isomorphisms \eqref{241gl} and \eqref{canident}, we
obtain the isomorphism \eqref{cana}. This completes the proof of
Theorem \ref{h} under this assumption.

Now suppose that the set $\{ \langle \la,\rho \rangle, \la \in
P^\vee_+ \}$ contains half-integers. Then we modify the above argument
as follows. Define a double cover $\on{Aut}_2 O$ of the group $\Aut O$
as the subgroup of the group $\on{Aut} O \times {\mathbb G}_m$
consisting of pairs $(\phi,w)$, where $\phi$ is given by formula
\eqref{rho} and $w^2 = \phi_0$. Define the homomorphism $\gamma_2:
\on{Aut} O \to {\mathbb G}_m$ by the formula
\begin{equation}    \label{ga2}
\gamma_2(\phi,w)=w.
\end{equation}

Next, we fix a square root $K_X^{1/2}$ of the canonical line bundle
$K_X$ on $X$. As explained in \cite{BD}, Sect. 4.3.16, we can then
extend the above functor ${\mathbf F}$ to a functor ${\mathbf F}_2$
from (ind-)schemes $Y$ with an action of $\Aut_2 O \ltimes G(O)$ to
(ind-)schemes over $\Bun_G \times X$, which has the following defining
property. Let ${\mathfrak o}(m), m \in \frac{1}{2} \Z$, be the
one-dimensional representation of $\Aut_2 O \ltimes G(O)$ on which
$G(O)$ acts trivially and $\Aut_2 O$ acts as the composition of
$\ga_2$ given by formula \eqref{ga2} and the one-dimensional
representation of ${\mathbb G}_m$ given by $w \mapsto w^{2m}$. Then
${\mathbf F}_2$ sends ${\mathfrak o}(m)$ to the line bundle
$r_X^*(K_X^m)$ for all $m \in \frac{1}{2} \Z$ (here $K_X^m$ stands for
$(K_X^{1/2})^{2m}$, where $K_X^{1/2}$ is the chosen square root of
$K_X$).

As shown in \cite{BD}, a local Pfaffian line bundle ${\mc L}_{\Gr}$ on
$\Gr$ can still be defined in this case, but it is now $\Aut_2 O
\ltimes G(O)$-equivariant. Moreover, the isomorphism \eqref{241} then
holds as an isomorphism of $\Aut_2 O \ltimes G(O)$-equivariant line
bundles on $\Gr_\la$ and we also have the isomorphism
\eqref{canident}, where $K^{1/2}_{\Bun}$ denotes the square root of
the canonical line bundle on $\Bun_G$ associated to the above choice
of $K^{1/2}_X$ (see the discussion before Theorem \ref{h}). Therefore
the same argument proves the isomorphism \eqref{cana} in general. This
completes the proof of Theorem \ref{h}.


\begin{thebibliography}{GKM}

\bibitem[AGKRRV]{AGKRRV} D. Arinkin, D. Gaitsgory, D. Kazhdan,
S. Raskin, N. Rozenblyum, Y. Varshavsky, {\em The stack of local
  systems with restricted variation and geometric Langlands theory
  with nilpotent singular support}, arXiv:2010.01906.

\bibitem[BB]{BB} A. Beilinson and J. Bernstein, {\em A proof of
    Jantzen conjectures}, in I.M.Gelfand Seminar,
  eds. S. Gelfand, S. Gindikin, Adv. in Soviet Math. {\bf 16}, Part 1,
  pp. 1--50, Providence, AMS, 1993.

\bibitem[BD1]{BD} A.~Beilinson and V.~Drinfeld, {\em Quantization of
    Hitchin's integrable system and Hecke eigensheaves}, Preprint
  available at \newline
  \url{http://www.math.uchicago.edu/~drinfeld/langlands/QuantizationHitchin.pdf}

\bibitem[BD2]{BD:opers} A.~Beilinson and V.~Drinfeld, {\em Opers},
  arXiv:math/0501398.

\bibitem[BK]{BK} A. Braverman and D. Kazhdan, {\em Some examples of
    Hecke algebras for two-dimensional local fields}, Nagoya
  J. Math. {\bf 184} (2006) 57--84.

\bibitem[DS]{DS} A. D'Agnolo and P. Schapira, {\em Radon--Penrose
  Transform for ${\mc D}$-Modules}, J. Funct. Analysis {\bf 139} (1996)
  349--382.

\bibitem[DrS]{DrS} V.G. Drinfeld and V.V. Sokolov, {\em Equations of
  Korteweg-deVries type and simple Lie algebras}, Soviet Math.
  Dokl. {\bf 23} (1981) 457--462.

\bibitem[EFK1]{EFK} P. Etingof, E. Frenkel, and D. Kazhdan, {\em An
  analytic version of the Langlands correspondence for complex
  curves}, in Integrability,
Quantization, and Geometry, dedicated to Boris Dubrovin, Vol. II,
eds. S. Novikov, e.a., pp. 137--202, Proc. Symp. Pure Math. {\bf
  103.2}, AMS, 2021 (arXiv:1908.09677).

\bibitem[EFK2]{EFKnew} P. Etingof, E. Frenkel, and D. Kazhdan, {\em
  Analytic Langlands correspondence for $PGL_2$ on $\Bbb P^1$ with
  parabolic structures over local fields}, Geometric and Functional
  Analysis {\bf 32} (2022) 725--831 (arXiv:2106.05243).

\bibitem[EFK3]{EFK4} P. Etingof, E. Frenkel, and D. Kazhdan, {\em
  A general framework for the analytic Langlands correspondence}, to
  appear in Pure Appl. Math. Quart. (arXiv:arXiv:2311.03743).

\bibitem[FR]{FR} J. F\ae{}rgeman and S. Raskin,
{\em Non-vanishing of geometric Whittaker coefficients for reductive
  groups}, arXiv:2207.02955.

\bibitem[Fa]{F} G. Faltings, {\em Real projective structures on
    Riemann surfaces}, Compositio Mathematica, {\bf 48} (1983)
223--269.

\bibitem[FF]{FF} B.~Feigin and E.~Frenkel, {\em Affine Kac-Moody
    algebras at the critical level and Gelfand-Dikii algebras},
  Int. J. Mod. Phys. {\bf A7}, Suppl. 1A (1992) 197--215.

\bibitem[Fr1]{F:wak} E.~Frenkel, {\em Wakimoto modules, opers and the
    center at the critical level}, Adv. Math. {\bf 195} (2005)
  297--404 (arXiv:math/0210029).

\bibitem[Fr2]{F:analyt} E. Frenkel, {\em Is there an analytic theory of
    automorphic functions for complex algebraic curves?}  SIGMA {\bf
  16} (2020) 042 (arXiv:1812.08160).

\bibitem[FB]{FB} E.~Frenkel and D.~Ben-Zvi, {\em Vertex Algebras and
  Algebraic Curves}, Mathematical Surveys and Monographs {\bf 88},
  Second Edition, AMS, 2004.

\bibitem[FT]{FT} E. Frenkel and C. Teleman, {\em Geometric Langlands
  Correspondence Near Opers}, J. Ramanujan Math. Soc. {\bf 28A} (2013)
  123--147 (arXiv:1306.0876).

\bibitem[Ga]{Gai:outline} D. Gaitsgory, {\em Outline of the proof of
    the geometric Langlands conjecture for $GL_2$}, Ast\'erisque {\bf
    370} (2015) 1--112.

\bibitem[GKM]{GKM} D. Gallo, M. Kapovich, and A. Marden, {\em The
    monodromy groups of Schwarzian equations on closed Riemann
    surfaces}, Ann. Math. {\bf 151} (2000) 625--704.

\bibitem[GK]{GK} I.M. Gelfand and D. Kazhdan, {\em Some problems of
    differential geometry and the calculation of cohomologies of Lie
    algebras of vector fields}, Soviet Math. Dokl. {\bf 12} (1971)
  1367--1370.

\bibitem[Gol]{Go} W. Goldman, {\em Projective structures with Fuchsian
  holonomy}, J. Diff. Geom. {\bf 25} (1987) 297--326.

\bibitem[Gon]{Gon} A. B. Goncharov, {\em Differential Equations and
  Integral Geometry}, Adv. Math. {\bf 131} (1997) 279--343.

\bibitem[Ka]{Ka} M. Kashiwara, {\em D-modules and
Microlocal Calculus}, Transactions of Mathematical Monographs {\bf
  217}, AMS, 2003.

\bibitem[Ko]{Ko} M. Kontsevich, {\em Notes on motives in finite
  characteristic}, in Algebra, Arithmetic, and Geometry, in honor of
  Yu. I. Manin, vol. II, Progress in Mathematics {\bf 270}, Springer,
  2010, pp. 213--247 (arXiv:math/0702206).

\bibitem[L]{L:analyt} R.P.~Langlands, {\em On the analytic form of
    the geometric theory of automorphic forms}, Preprint
available at \url{http://publications.ias.edu/rpl/section/2659}

\bibitem[LS]{LS} Y. Laszlo and Ch. Sorger, {\em The line bundles on the
  moduli of parabolic $G$-bundles over curves and their sections},
  Ann. Sci. \'Ecole Norm. Sup. (4) {\bf 30} (1997) 499--525.

\bibitem[La]{L} G. Laumon, {\em Transformation de Fourier
  g\'{e}n\'{e}ralis\'{e}e}, Preprint alg-geom/9603004.

\bibitem[NR]{NR} M.S. Narasimhan and S. Ramanan, {\em Geometry of
  Hecke cycles}, in C. P. Ramanujam -- A Tribute, Tata Institute of
  Fundamental Research Studies in Mathematics {\bf 8}, pp. 291--345,
  Springer, 1978.

\bibitem[R]{R} M. Rothstein, {\em Connections on the total Picard sheaf
and the KP hierarchy}, Acta Applicandae Mathematicae {\bf 42} (1996)
297--308.

\bibitem[T]{Teschner} J. Teschner, {\em Quantisation conditions of
    the quantum Hitchin system and the real geometric Langlands
    correspondence}, in Geometry and Physics, Festschrift in honour of
  Nigel Hitchin, Vol. I, eds. A. Dancer, J.E. Andersen, and
  O. Garcia-Prada, pp. 347--375, Oxford University Press, 2018
  (arXiv:1707.07873).

\bibitem[W]{We} A. Weil, {\em Ad\`eles et groupes alg\'ebriques},
  S\'eminaire Bourbaki, {\bf 5}, Exp. 186, pp. 249--257, 1959.

\end{thebibliography}
\end{document}